\definecolor{darkblue}{rgb}{0.0,0.0,0.7}
\setlist[enumerate,1]{label={(\roman*)}}
\theoremstyle{plain}
\newtheorem{theorem}{Theorem}[section]
\newtheorem*{theorem*}{Theorem}
\newtheorem{corollary}[theorem]{Corollary}
\newtheorem{proposition}[theorem]{Proposition}
\newtheorem{lemma}[theorem]{Lemma}
\newtheorem{fact}[theorem]{Fact}
\newtheorem{problem}[theorem]{Problem}
\newtheorem*{problem*}{Problem}
\newtheorem{thmx}{Theorem}
\theoremstyle{definition}
\newtheorem{definition}[theorem]{Definition}
\newtheorem{example}[theorem]{Example}
\newtheorem{remark}[theorem]{Remark}
\numberwithin{equation}{section}
\numberwithin{figure}{section}
   \newcommand{\Z}{\mathbb{Z}}   \newcommand{\FF}{\mathbb{F}}  \newcommand{\EE}{\mathbb{E}} \newcommand{\K}{k}  \newcommand{\cal}[1]{{\mathcal{#1}}}
\newcommand*\cocolon{\nobreak
        \mskip6mu plus1mu
        \mathpunct{}\nonscript
        \mkern-\thinmuskip
        {:}\mskip2mu
        \relax
}
\renewcommand{\xrightarrow}[1]{\overset{#1}\to}
\renewcommand{\subset}{\subseteq}
\newcommand{\cA}{\mathcal{A}}
\newcommand{\cC}{\mathcal{C}}
\newcommand{\cD}{\mathcal{D}}
\newcommand{\cE}{\mathcal{E}}
\newcommand{\cF}{\mathcal{F}}
\newcommand{\cI}{\mathcal{I}}
\newcommand{\cP}{\mathcal{P}}
\newcommand{\cT}{\mathcal{T}}
\newcommand{\cW}{\mathcal{W}}
\newcommand{\bE}{\mathbb{E}}
\newcommand{\EbbE}{\mathbb{E}_{\mathrm{emb}}}
\newcommand{\isoto}{\stackrel{\sim}{\to}}
\newcommand{\longmap}[1]{\stackrel{#1}{\longrightarrow}}
\newcommand{\map}[1]{\stackrel{#1}{\to}}
\DeclareMathOperator{\add}{add}
\newcommand{\blank}{-}
\newcommand{\cohom}[1]{\mathrm{H}^{#1}}
\DeclareMathOperator{\coker}{coker}
\DeclareMathOperator{\cone}{cone}
\DeclareMathOperator{\End}{End}
	\DeclareMathOperator{\stabEnd}{\underline{End}}
\newcommand{\Ab}{\mathrm{Ab}}
\DeclareMathOperator{\Ext}{Ext}
\DeclareMathOperator{\gldim}{gl.dim}
\DeclareMathOperator{\Hom}{Hom}
	\DeclareMathOperator{\RHom}{\mathbf{R}Hom}
	\DeclareMathOperator{\stabHom}{\underline{Hom}}
	\DeclareMathOperator{\injstabHom}{\overline{Hom}}
\newcommand{\id}{\mathrm{id}}
\DeclareMathOperator{\im}{im}
\newcommand{\ltens}{\mathbin{\stackrel{\mathbf{L}}{\otimes}}}
\newcommand{\varltens}[1]{\mathbin{\otimes^{\mathbf{L}}_{#1}}}
\DeclareMathOperator{\Mor}{Mor}
\newcommand{\op}{\mathrm{op}}
\DeclareMathOperator{\Transp}{Tr}
\newcommand{\GrothGp}[1]{\mathrm{K}_0(#1)}
\newcommand{\bdd}{\mathrm{b}}
\newcommand{\Kb}[1]{\cal{K}^{\bdd}(#1)}
\newcommand{\dcat}[1][]{\cal{D}^{#1}}
\newcommand{\GP}{\operatorname{GP}}
\newcommand{\har}[1]{\cal{K}^{[-1,0]}(#1)}
\newcommand{\harp}[2][]{\cal{K}^{[-1,0]}_{#1}(#2)}
\DeclareMathOperator{\proj}{proj}
\DeclareMathOperator{\per}{per}
\renewcommand{\mod}{\operatorname{mod}}
\newcommand{\lperp}{{}^\perp}
	\newcommand{\rperp}{^\perp}
\newcommand{\stab}{\underline}
\newcommand{\rel}{\mathrm{rel}}
\newcommand{\FundDom}[1]{\mathcal{F}_{#1}}
\newcommand{\RelFundDom}[1]{\mathcal{F}_{#1}^{\rel}}
\newcommand{\der}[2]{\partial_{#1}{#2}}
\newcommand{\pp}[1]{\overline{#1}}
\DeclareMathOperator{\pvd}{pvd}
\newcommand{\RelClustCat}[1]{\mathcal{C}_{#1}^{\rel}}
\newcommand{\HiggsCat}[1]{\mathcal{H}_{#1}}
\newcommand{\HiggsCatWu}[1]{\mathcal{H}_{#1}}
\newcommand{\HiggsCatDual}[1]{\mathcal{H}^\vee_{#1}}
\newcommand{\StabHiggsCat}[1]{\underline{\mathcal{H}}_{#1}}
\newcommand{\infl}{\rightarrowtail}
\newcommand{\defl}{\twoheadrightarrow}
\newcommand{\s}{\mathfrak{s}}
\newcommand{\yo}{\text{\usefont{U}{min}{m}{n}\symbol{"48}}}
\newcommand{\jpne}{\text{\usefont{U}{min}{m}{n}\symbol{"2D}}}
\newcommand{\da}{\text{\usefont{U}{min}{m}{n}\symbol{'040}}}
\DeclareFontFamily{U}{min}{}
\DeclareFontShape{U}{min}{m}{n}{<-> dmjhira}{}
\newcommand{\blossom}{^\text{\ding{96}}} 
\title[Extriangulated ideal quotients]
{Extriangulated ideal quotients, with applications to cluster theory and gentle algebras}
\author[X. Fang] {Xin Fang}
\address[Xin Fang]
{Lehrstuhl für Algebra und Darstellungstheorie\\
RWTH Aachen University\\
Pontdriesch~10--16\\
52062 Aachen\\
Germany}
\email{\href{mailto:xinfang.math@gmail.com}{xinfang.math@gmail.com}}
\urladdr{\url{http://www.mi.uni-koeln.de/~xfang/}}
\author[M. Gorsky] {Mikhail Gorsky}
\address[Mikhail Gorsky]
{Department of Mathematics\\
University of Vienna\\
Oscar-Morgenstern Platz~1\\
1090 Vienna\\
Austria}
\email{\href{mailto:mikhail.gorskii@univie.ac.at}{mikhail.gorskii@univie.ac.at}}
\urladdr{\url{https://sites.google.com/site/homepageofmikhailgorsky/}}
\author[Y. Palu] {Yann Palu}
\address[Yann Palu]
{LAMFA\\
Universit\'e Picardie Jules Verne\\
33 Rue Saint-Leu\\
80039 Amiens\\
France}
\email{\href{mailto:yann.palu@u-picardie.fr}{yann.palu@u-picardie.fr}}
\urladdr{\url{http://www.lamfa.u-picardie.fr/palu/}}
\author[P.-G. Plamondon] {Pierre-Guy Plamondon}
\address[Pierre-Guy Plamondon]
{Laboratoire de Mathématiques de Versailles\\
UVSQ\\
CNRS\\
Université Paris-Saclay\\
45 Avenue des États Unis\\
78000 Versailles\\
France}
\email{\href{mailto:pierre-guy.plamondon@uvsq.fr}{pierre-guy.plamondon@uvsq.fr}}
\urladdr{\url{https://lmv.math.cnrs.fr/laboratoire/annuaire/membres-du-laboratoire/pierre-guy-plamondon/}}
\author[M. Pressland] {Matthew Pressland}
\address[Matthew Pressland]
{School of Mathematics \& Statistics\\
University of Glasgow\\
University Place\\
Glasgow\\
G12 8QQ\\
United Kingdom}
\email{\href{mailto:matthew.pressland@glasgow.ac.uk}{matthew.pressland@glasgow.ac.uk}}
\urladdr{\url{http://mdpressland.github.io}}
\begin{document}

\begin{abstract}
We extend results of Br\"ustle--Yang on ideal quotients of $2$-term subcategories of perfect derived categories of non-positive dg algebras to a relative setting. We find a new interpretation of such quotients: 
 they appear as prototypical examples of a new construction of  quotients of extriangulated categories by ideals generated by morphisms from injectives to projectives. We apply our results to Frobenius exact cluster categories and Higgs categories with suitable relative extriangulated structures, and to categories of walks related to gentle algebras.
In all three cases, the extriangulated structures are well-behaved (they are $0$-Auslander) and their quotients are equivalent to homotopy categories of two-term complexes of projectives over suitable finite-dimensional algebras.
\end{abstract}

\maketitle
\tableofcontents

\section{Introduction}

The additive categorification of cluster algebras was achieved through the introduction of cluster categories \cite{BMRRT, Amiot09}, which form a class of triangulated categories with useful homological properties.  Amiot defines a (generalised) cluster category as a Verdier quotient $\cC = \per \Gamma / \pvd \Gamma$ for a certain dg algebra $\Gamma$.  Importantly, she shows that there is a fundamental domain~$\cF$ in~$\per\Gamma$ which is additively equivalent to the Verdier quotient.  These constructions and properties were generalised to the relative setting, under the name of Higgs categories, by Wu \cite{Wu23}, requiring the more flexible setting of extriangulated categories \cite{NakaokaPalu}.  

It was observed by Brüstle and Yang in \cite{BrustleYang} that the cluster category defined from a dg algebra~$\Gamma$ is related to the category~$\harp{\proj \cohom{0}\Gamma}$ of~$2$-term complexes over its~$0$-th cohomology~$\cohom{0}\Gamma$: this latter category is the quotient of the fundamental domain~$\cF$ by the ideal of morphisms with codomain in~$\add\Gamma$ and domain in~$\add\Sigma\Gamma$.  The practical upshot is that~$\harp{\proj \cohom{0}\Gamma}$ can also be used for categorifying cluster algebras. This was done for instance in~\cite{PPPP}, where similarities between the extriangulated structure of~$\harp{\proj \cohom{0}\Gamma}$ and a suitable relative extriangulated structure of the cluster category (in the sense of~\cite{HerschendLiuNakaoka}) were exploited.  This observation still holds for Higgs categories, as demonstrated in the Auslander--Reiten quivers depicted in Figure~\ref{fig::Higgs}.

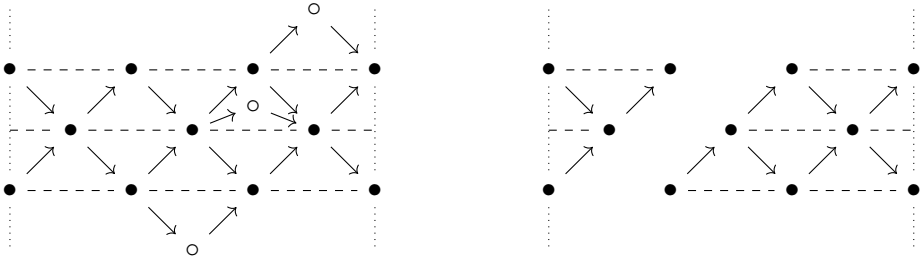
\begin{figure}[H]
\hspace{-2cm}
\begin{minipage}{.2\textwidth}
\begin{tikzpicture}[scale=.8]
\foreach \x/\y/\n in {0/0/M4, 0/-2/M9, 1/-1/M8, 2/0/M7, 2/-2/M3, 3/-1/M2, 4/0/M1, 4/-2/M6, 5/-1/M5, 6/0/M9b, 6/-2/M4b}
{\node at (\x,\y) (\n) {$\bullet$};}

\foreach \x/\y/\n in {3/-3/P6, 4/-0.6/P4, 5/1/P2}
{\node at (\x,\y) (\n) {$\circ$};} 

\path[->]
	(M4) edge (M8)
	(M9) edge (M8)
	(M8) edge (M7)
	(M8) edge (M3)
	(M7) edge (M2)
	(M3) edge (M2)
	(M3) edge (P6)
	(M2) edge (M1)
	(M2) edge (P4)
	(M2) edge (M6)
	(P6) edge (M6)
	(M1) edge (P2)
	(M1) edge (M5)
	(P4) edge (M5)
	(M6) edge (M5)
	(P2) edge (M9b)
	(M5) edge (M9b)
	(M5) edge (M4b);
\path[dashed]
	(0,-1) edge (M8)
	(M4) edge (M7)
	(M9) edge (M3)
	(M8) edge (M2)
	(M7) edge (M1)
	(M3) edge (M6)
	(M2) edge (M5)
	(M1) edge (M9b)
	(M6) edge (M4b)
	(M5) edge (6,-1);
\path[dotted]
	(0,1) edge (M4)
	(M4) edge (M9)
	(M9) edge (0,-3)
	(6,1) edge (M9b)
	(M9b) edge (M4b)
	(M4b) edge (6,-3);
\end{tikzpicture}
\end{minipage}
\qquad \qquad \qquad \qquad \qquad
\begin{minipage}{.2\textwidth}
\begin{tikzpicture}[scale=.8]
\foreach \x/\y/\n in {0/0/M4, 0/-2/M9, 1/-1/M8, 2/0/M7, 2/-2/M3, 3/-1/M2, 4/0/M1, 4/-2/M6, 5/-1/M5, 6/0/M9b, 6/-2/M4b}
{\node at (\x,\y) (\n) {$\bullet$};}

\path[->]
	(M4) edge (M8)
	(M9) edge (M8)
	(M8) edge (M7)
	(M3) edge (M2)
	(M2) edge (M1)
	(M2) edge (M6)
	(M1) edge (M5)
	(M6) edge (M5)
	(M5) edge (M9b)
	(M5) edge (M4b);
\path[dashed]
	(0,-1) edge (M8)
	(M4) edge (M7)
	(M3) edge (M6)
	(M2) edge (M5)
	(M1) edge (M9b)
	(M6) edge (M4b)
	(M5) edge (6,-1);
\path[dotted]
	(0,1) edge (M4)
	(M4) edge (M9)
	(M9) edge (0,-3)
	(6,1) edge (M9b)
	(M9b) edge (M4b)
	(M4b) edge (6,-3);
\end{tikzpicture}
\end{minipage}
\caption{On the left, a Higgs category in type $\sf{A}_3$ with principal coefficients.  On the right, its quotient by the ideal generated by morphisms from injectives to projectives in an appropriate relative extriangulated structure, which is equivalent to~$\harp{\proj k\vec{\sf{A}}_3}$.  Empty circles are projective-injective objects; dashed horizontal lines represent almost split conflations; dotted vertical lines are identified after a reflection.}
\label{fig::Higgs}
\end{figure}

The setting used in~\cite{PPPP} was axiomatised in~\cite{GNP2}, yielding the notion of a~$0$-Auslander extriangulated category.  This axiomatisation turns out to apply to a seemingly unrelated context: the study of~$\tau$-tilting theory for gentle algebras. In \cite{PaluPilaudPlamondon-nonkissing} (see also \cite{McConville,BrustleDouvilleMousavandThomasYildirim}), the $\tau$-rigidity of modules over a gentle algebra~$A$ given by a gentle quiver with relations~$(Q,R)$ was described using the combinatorics of walks on the blossoming quiver~$(Q\blossom,R\blossom)$ (which first appeared in \cite{Asashiba12}).  The model in~\cite{PaluPilaudPlamondon-nonkissing} contains seemingly superfluous combinatorial objects called straight walks; these were successfully categorified in~\cite{GNP2} as projective-injective objects in an exact~$0$-Auslander category called the category of walks and denoted by~$\mathcal{W}_A$.
It turns out that the category~$\harp{\proj A}$ can be recovered from an ideal subquotient of~$\harp{\proj A\blossom}$ or, alternatively, as an ideal quotient of~$\mathcal{W}_A$, as illustrated in Figure~\ref{fig::gentle}.

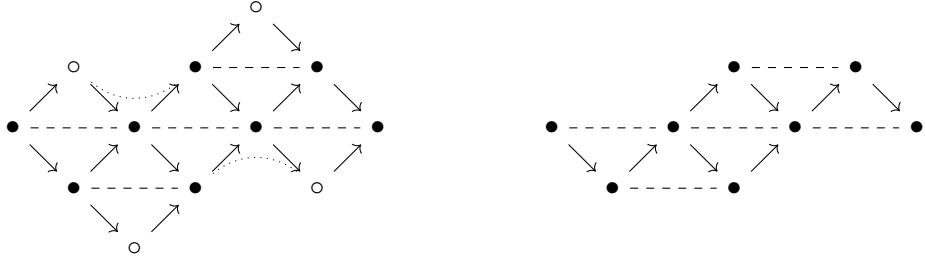
\begin{figure}[H]
\hspace{-2cm}
\begin{minipage}{.2\textwidth}
\begin{tikzpicture}[scale=.8]
\foreach \x/\y/\n in {0/-2/M1, 1/-3/M2, 2/-2/M3, 3/-3/M4, 3/-1/M5, 4/-2/M6, 5/-1/M7, 6/-2/M8}
{\node at (\x,\y) (\n) {$\bullet$};}

\foreach \x/\y/\n in {1/-1/P1, 2/-4/P2, 4/0/P3, 5/-3/P4}
{\node at (\x,\y) (\n) {$\circ$};} 

\path[->]
	(M1) edge (P1)
	(M1) edge (M2)
	(M2) edge (M3)
	(M2) edge (P2)
	(P1) edge (M3)
	(M3) edge (M4)
	(M3) edge (M5)
	(P2) edge (M4)
	(M5) edge (P3)
	(M5) edge (M6)
	(M4) edge (M6)
	(P3) edge (M7)
	(M6) edge (M7)
	(M6) edge (P4)
	(M7) edge (M8)
	(P4) edge (M8);
\path[dashed]
	(M1) edge (M3)
	(M3) edge (M6)
	(M6) edge (M8)
	(M5) edge (M7)
	(M2) edge (M4);
\path[dotted]
    (P1) edge[bend right=40] (M5)
    (M4) edge[bend left=40] (P4);
\end{tikzpicture}
\end{minipage}
\qquad \qquad \qquad \qquad \qquad
\begin{minipage}{.2\textwidth}
\begin{tikzpicture}[scale=.8]
\foreach \x/\y/\n in {0/-2/M1, 1/-3/M2, 2/-2/M3, 3/-3/M4, 3/-1/M5, 4/-2/M6, 5/-1/M7, 6/-2/M8}
{\node at (\x,\y) (\n) {$\bullet$};}

\path[->]
	(M1) edge (M2)
	(M2) edge (M3)
	(M3) edge (M4)
	(M3) edge (M5)
	(M5) edge (M6)
	(M4) edge (M6)
	(M6) edge (M7)
	(M7) edge (M8);
\path[dashed]
	(M1) edge (M3)
	(M3) edge (M6)
	(M6) edge (M8)
	(M5) edge (M7)
	(M2) edge (M4);
\end{tikzpicture}
\end{minipage}
\caption{On the left, the category of walks~$\cW_A$ of a gentle algebra of type~$\sf{A}_3$ with a relation.  On the right, the quotient of~$\cW_A$ by the ideal generated by morphisms from injectives to projectives, which is equivalent to~$\harp{\proj A}$.  Empty circles are projective-injective objects; dashed horizontal lines represent almost split conflations; curved dotted lines represent zero-relations that cannot be deduced from almost split conflations.}
\label{fig::gentle}
\end{figure}

In this paper, we show that the two situations above, where a category~$\harp{\proj A}$ is recovered as an ideal quotient of an extriangulated category, are special cases of a general phenomenon.  

Our first main result asserts that, under certain circumstances, ideal quotients of extriangulated categories inherit an extriangulated structure.  

\begin{thmx}[= Theorem \ref{t:extriangulatedQuotient}]
 Let $(\cal{C}, \mathbb{E}_{\cC}, \mathfrak{s})$ be an extriangulated category, let~$J_0$ be a class of morphisms with injective domain and projective codomain, and let~$J$ be the ideal generated by~$J_0$.  Let~$\pi\colon \cC \to \cC/J$ be the quotient functor.  Then there exists an (explicit) extriangulated structure on~$\cC/J$ such that~$\mathbb{E}_{\cC/J}(\pi(\blank), \pi(\blank)) = \mathbb{E}_{\cC}(\blank,\blank)$ and $(\pi, \id_{\mathbb{E}})$ is an extriangulated functor.
\end{thmx}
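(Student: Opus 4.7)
The strategy is to transport the extriangulated structure from $\cC$ to $\cC/J$ along the quotient functor~$\pi$, and to verify the axioms by lifting diagrams from $\cC/J$ to $\cC$.

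The crucial preliminary is that the bifunctor $\EE_\cC$ annihilates $J$ in both arguments. By definition, every morphism in $J(A,B)$ is a finite sum of compositions $h\circ f\circ g$ with $g\colon A\to i$, $f\in J_0(i,p)$, and $h\colon p\to B$, where $i$ is $\EE_\cC$-injective and $p$ is $\EE_\cC$-projective. Naturality forces the induced map $f_*\colon \EE_\cC(-,i)\to \EE_\cC(-,p)$ to factor through $\EE_\cC(-,i)=0$, so $(hfg)_*=0$; dually, $f^*\colon \EE_\cC(p,-)\to\EE_\cC(i,-)$ factors through $\EE_\cC(p,-)=0$, so $(hfg)^*=0$. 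Consequently, $\EE_\cC$ descends to a bifunctor $\EE_{\cC/J}$ on $\cC/J$ with $\EE_{\cC/J}(\pi(\blank),\pi(\blank))=\EE_\cC(\blank,\blank)$ as claimed.

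Next I would define the realization $\mathfrak{s}_{\cC/J}$ by setting $\mathfrak{s}_{\cC/J}(\delta) := [\pi X\to \pi Y\to \pi Z]$ whenever $\mathfrak{s}(\delta)=[X\to Y\to Z]$. Any two conflations realizing the same $\delta$ in $\cC$ are isomorphic as sequences in $\cC$, hence their images under the additive functor $\pi$ are isomorphic sequences in $\cC/J$, so $\mathfrak{s}_{\cC/J}$ is well-defined. Compatibility of $\mathfrak{s}_{\cC/J}$ with morphisms of extensions is inherited from $\mathfrak{s}$ via the same argument. Axioms (ET3), (ET4), and their duals are then verified by a lift-and-project strategy: any morphism in $\cC/J$ admits a (non-unique) lift to $\cC$, any equality in $\EE_{\cC/J}$ is literally an equality in $\EE_\cC$ independent of the chosen lift, and any realization in $\cC/J$ is the $\pi$-image of a realization in $\cC$; the corresponding axiom of $\cC$ then produces the required completion, whose image under $\pi$ answers the question in $\cC/J$. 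That $(\pi,\id_\EE)$ is an extriangulated functor is immediate by construction.

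The main obstacle is the first step. Once $\EE_\cC$ is known to descend, everything else is a formal consequence of lifting, since $\pi$ is surjective on objects and morphisms and $\EE$ is unchanged. This descent hinges essentially on the combined hypotheses that domains in $J_0$ are injective and codomains are projective: dropping either one breaks one of the two vanishing conditions and the argument collapses. A secondary care-point will be verifying that the canonical choice of realization, though depending on a section in $\cC$, is well-posed up to the equivalence relation defining $\mathfrak{s}_{\cC/J}$; this should follow routinely from the fact that $\pi$ carries isomorphisms of sequences to isomorphisms of sequences.
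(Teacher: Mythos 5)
Your first step (the descent of $\EE_{\cC}$ to $\cC/J$ and the definition of the induced realization) is correct and matches the paper's construction. The genuine gap is in the sentence dispatching (ET3), (ET4) and their duals by ``lift-and-project''. For (ET3) the hypothesis is a commutative square $\underline{j}\,\underline{f}=\underline{g}\,\underline{i}$ in $\cC/J$; lifting $\underline{f}$ and $\underline{g}$ arbitrarily to $\cC$ only gives $jf-gi\in J$, \emph{not} $jf=gi$, so (ET3) in $\cC$ cannot be invoked directly. (Your remark that ``any equality in $\EE_{\cC/J}$ is literally an equality in $\EE_{\cC}$'' concerns extension classes, not morphisms; equalities of morphisms in $\cC/J$ are only congruences modulo $J$ in $\cC$.) This is exactly the point where the paper works: writing $jf=gi+aj_0b$ with $j_0\colon I\to P$ in $J_0$, the injectivity of $I$ forces the map $b\colon X\to I$ to extend along the inflation $i$ (since $\cC(Y,I)\to\cC(X,I)\to\EE(Z,I)=0$ is exact), say $b=ci$; then $jf=(g+aj_0c)\,i$ is a genuinely commutative square in $\cC$, and $\underline{g+aj_0c}=\underline{g}$ because $aj_0c\in J$. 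Dually, (ET3\textsuperscript{op}) uses the projectivity of the codomain $P$ to lift along the relevant deflation. So the hypotheses on $J_0$ are not only what makes $\EE$ descend, as you claim; they are used a second time, and in a more delicate way (injectivity of the domain first, projectivity of the codomain dually), to repair lifted squares. Without this repair step the verification of (ET3) collapses, so the proposal as written is incomplete.
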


Such extriangulated ideal quotients behave well with respect to rigid objects; see Proposition~\ref{prop::rigid}.

Our second main result exhibits a setting general enough to cover all instances of Higgs categories and examples from gentle algebras, using non-positively graded (also called connective) dg categories.  It explains the appearance of categories of the form~$\harp{\proj A}$ as ideal quotients of~$0$-Auslander extriangulated categories.

\begin{thmx}[= Theorem~\ref{t:rel-2-term}, Theorem~\ref{t:rel-2-term-functors}, and Corollary~\ref{c:kerG}]
\label{theo::main2}
 Let $\cal{A}$ be a non-positively graded dg category, and let $\cal{E}\subseteq\cal{A}$ be a full subcategory which is covariantly finite as a subcategory of~$\per\cA$. Consider the subcategory $\cal{A}*\Sigma\cal{A}\subseteq\per\cal{A}$ of $2$-term objects, and write
\[\lperp(\Sigma\cal{E})=\{X\in\cal{A}*\Sigma\cal{A}:\text{$\Hom_{\per\cal{A}}(X,\Sigma E)=0$ for all $E\in\cal{E}$}\}\]
for the left perpendicular category of $\Sigma\cal{E}\subseteq\cal{A}*\Sigma\cal{A}$. Then
\begin{enumerate}
\item with the extriangulated structure induced by the embedding into $\cal{A}*\Sigma\cal{A}$ as an extension-closed subcategory, $\lperp(\Sigma\cal{E})$ is a $0$-Auslander extriangulated category;
\item the projective objects in $\lperp(\Sigma\cal{E})$ are those of $\cal{A}$, while the injective objects are those of the Bongartz co-completion of $\cal{E}$ in~$\cal{A}*\Sigma\cal{A}$;

\item in the commutative diagram
\begin{equation}
\begin{tikzcd}[column sep=40pt]
\per{\cal{A}}\arrow{r}{\blank\ltens_{\cal{A}}\cohom{0}\cal{A}}&\per\cohom{0}\cal{A}\arrow{r}{\blank\ltens_{\cohom{0}\cal{A}}\cohom{0}\cal{A}/\cohom{0}\cal{E}}&\per\cohom{0}\cal{A}/\cohom{0}\cal{E}\\
\lperp(\Sigma\cal{E})\arrow[hookrightarrow]{u}\arrow{r}{G}&\lperp(\Sigma\cohom{0}\cal{E})\arrow[hookrightarrow]{u}\arrow{r}{F}&\harp{\proj\cohom{0}\cal{A}/\cohom{0}\cal{E}},\arrow[hookrightarrow]{u}
\end{tikzcd}
\end{equation}
the functors $F$ and $G$, given by restricting the derived tensor products, are full and essentially surjective extriangulated functors. Moreover, $\ker(FG)$ is the ideal of $\lperp(\Sigma\cal{E})$ generated by morphisms from an injective object to a projective object. 
\end{enumerate}
\end{thmx}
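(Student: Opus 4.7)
The statement naturally decomposes into three phases corresponding to its three parts; I would tackle them in order.

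For (i) and (ii): The ambient category $\cA*\Sigma\cA$ is itself a $0$-Auslander extriangulated category with projectives $\cA$ and injectives $\Sigma\cA$, a fact that relies on the non-positive grading of $\cA$ because $\Hom_{\per\cA}(A,\Sigma B) = \cohom{1}\Hom_\cA(A,B) = 0$ for $A,B\in\cA$. Extension-closedness of $\lperp(\Sigma\cE)$ in $\cA*\Sigma\cA$ is then immediate from the long exact sequence obtained by applying $\Hom_{\per\cA}(\blank,\Sigma E)$ to a conflation, giving the induced extriangulated structure. Identification of the projectives is direct: each $A\in\cA$ lies in $\lperp(\Sigma\cE)$ by the vanishing above and remains projective in the extension-closed subcategory. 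For the injectives, I would invoke the covariant finiteness of $\cE$ in $\per\cA$ to produce, for every $A\in\cA$, a triangle $E_A\to T_A\to\Sigma A\to\Sigma E_A$ whose third map is a left $\cE$-approximation; the objects $T_A$ together with $\Sigma\cE$ form the building blocks of the Bongartz co-completion, and a routine check shows that every $X\in\lperp(\Sigma\cE)$ fits into a conflation $X \infl I(X) \defl P(X)$ with $I(X)$ of $T_A$-type and $P(X)$ in $\cA$, confirming both the injectivity claim and the $0$-Auslander property.

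For (iii), the well-definedness and extriangulated nature of $F$ and $G$ follow from the fact that $\blank\varltens{\cA}\cohom{0}\cA$ and $\blank\varltens{\cohom{0}\cA}(\cohom{0}\cA/\cohom{0}\cE)$ are exact functors of triangulated categories which preserve the $2$-term structures and the relevant perpendicularity conditions. Essential surjectivity and fullness reduce to standard lifting arguments for $2$-term complexes: a complex of $\cohom{0}\cA$-projectives lifts canonically to a representative in $\cA*\Sigma\cA$, and any chain map between the images admits a lift modulo strictly positive-degree terms, which are killed by $\blank\varltens{\cA}\cohom{0}\cA$ by non-positivity. Commutativity of the diagram is then a formality from associativity of derived tensor products.

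The computation of $\ker(FG)$ is the technical heart of the proof. Let $J$ denote the ideal generated by morphisms from injectives to projectives in $\lperp(\Sigma\cE)$. The inclusion $J\subseteq\ker(FG)$ is direct: $FG$ sends injectives to injectives and projectives to projectives of the target, and one verifies by a chain-map calculation that every morphism from an injective to a projective object in $\harp{\proj\cohom{0}\cA/\cohom{0}\cE}$ vanishes. The reverse inclusion is the main obstacle. Given $f\colon X\to Y$ with $FG(f)=0$, my plan is to fix a null-homotopy of $FG(f)$ in the target homotopy category and to use fullness of $FG$ together with the $0$-Auslander conflations of part (i) to lift this data to an explicit factorization $X\to I\to P\to Y$ with $I$ injective and $P$ projective in $\lperp(\Sigma\cE)$. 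Once this faithfulness modulo $J$ is established, combining it with fullness, essential surjectivity, and Theorem~A applied to the ideal $J$ identifies the quotient $\lperp(\Sigma\cE)/J$ with $\harp{\proj\cohom{0}\cA/\cohom{0}\cE}$, as required.
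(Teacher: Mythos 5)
Your overall architecture coincides with the paper's: extension-closedness via the cohomological functor $\Hom_{\per\cA}(\blank,\Sigma E)$, identification of projectives and injectives through the Bongartz co-completion, fullness and essential surjectivity by lifting along the degree-zero truncation (the Br\"ustle--Yang argument), and the kernel computation by lifting a null-homotopy and exploiting the $0$-Auslander structure to produce a factorisation through an injective and then a projective. Two points, however, are genuinely wrong or missing as written.

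First, your conflation $X\infl I(X)\defl P(X)$ with $I(X)$ injective and $P(X)\in\cA$ for \emph{arbitrary} $X\in\lperp(\Sigma\cE)$ cannot hold: since $P(X)$ would be projective, $\EE(P(X),X)=0$ and the conflation would split, forcing every object to be injective. What dominant dimension at least one actually requires is a conflation $P\infl E_P\defl E'_P$ only for \emph{projective} $P\in\cA$, with $E_P\in\add\cE$ projective-injective and $E'_P$ injective; this is precisely the left $\cE$-approximation triangle supplied by covariant finiteness, while the enough-projectives half of the $0$-Auslander property comes separately from the defining triangles $P_1\to P_0\to X$. Second, essential surjectivity of $F$ is not a standard lifting: a lift $\bar X=(\bar X^{-1}\to\bar X^0)$ in $\harp{\proj\cohom{0}\cA}$ of an object of $\harp{\proj\cohom{0}\cA/\cohom{0}\cE}$ will in general \emph{not} lie in $\lperp(\Sigma\cohom{0}\cE)$. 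The paper corrects the lift by replacing the differential $\bar x$ with $\left(\begin{smallmatrix}\bar x\\ \varphi\end{smallmatrix}\right)$, where $\varphi\colon\bar X^{-1}\to E_X$ is a left $\cohom{0}\cE$-approximation; this changes nothing after applying $\blank\otimes^{\mathbf{L}}_{\cohom{0}\cA}\cohom{0}\cA/\cohom{0}\cE$ but forces the required perpendicularity. Finally, your kernel sketch matches the paper's strategy but conceals the real difficulties: the lifted null-homotopy only makes the relevant squares commute modulo morphisms factoring through $\add\cE$ (one needs $X\in\lperp(\Sigma\cE)$ to kill the resulting error term and obtain $f=y't$), and a second correction of $t$ using fullness is required before the injective coresolution $X\to Q_X\to Q'_X$ can be used to yield the factorisation $X\to Q_X\to Y^0\to Y$ from an injective through a projective.
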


This result has a dual counterpart; see \cref{t:rel-2-term-dual,t:rel-2-term-dual-functors}.

In fact, this setting is ``as general as can be'' for our purposes: indeed, after some of the results of this paper were presented at ARTA IX in June 2023, Xiaofa Chen proved in \cite{chen20230} that all algebraic~$0$-Auslander extriangulated categories (i.e.\ those admitting an enhancement by an exact dg category in the sense of \cite{chen2023exact}) can be expressed as in the theorem using a non-positively graded dg category.
We also note a result by Yang~\cite{Yang-2term}, analogous to \cref{theo::main2}, in the setting of algebraic triangulated categories, and motivated by different applications.

We find three main applications of the above theorem.

\begin{enumerate}
 \item {\bf Frobenius exact categories.}  Motivated by the categorification of cluster algebras
by means of Frobenius exact categories (see for instance \cite{GLS-ParFlag, JKS, Pressland22}), we consider in Section~\ref{section:Frobenius} the following setting.  Let~$\cC$ be a Frobenius exact category with a cluster-tilting subcategory~$\cT$.  Then Theorem~\ref{t:addEquivalenceYoneda} allows us to apply Theorem~\ref{theo::main2} to the category $\cC$ equipped with certain exact substructures depending on $\cT$.  In this application,~$\cA = \cT$ and~$\cE$ is the subcategory of projective-injective objects of $\cC$.

 \item {\bf Higgs categories.} Cluster categories~\cite{Amiot09}, and more generally Higgs categories~\cite{Wu23}, are triangulated or extriangulated categories that play an important role in the categorification of cluster algebras.  If~$\cC$ is a Higgs category, then we show in Theorem~\ref{t:ClustCat-to-2term} that there is an extriangulated category~$\cC_T$, given by passing to an extriangulated substructure of $\cC$, to which Theorem~\ref{theo::main2} applies (with~$\cA = \Gamma$  the Ginzburg dg algebra of an ice quiver and~$\cE = \Gamma e$ its projective-injective direct summand): the  quotient of~$\cC_T$ by the ideal generated by morphisms from injectives to projectives is equivalent to~$\harp{\proj \underline{A}}$ for some algebra~$\underline{A}$.  This generalises a result observed for cluster categories in~\cite{BrustleYang}.

 \item {\bf Gentle algebras.}  Let~$(Q,R)$ be a gentle quiver with relations, let~$(Q\blossom, R\blossom)$ be its blossoming, and let~$A$ and~$A\blossom$ be the corresponding (finite-dimensional) gentle algebras.  The combinatorics of walks on~$(Q\blossom,R\blossom)$ was categorified in~\cite{GNP2} using a subcategory~$\cW_A$ of~$\mod A\blossom$.  In Section~\ref{section:gentle}, we apply Theorem~\ref{theo::main2} with~$\cA = A\blossom$ and~$\cE = e\blossom A\blossom$ to obtain the equivalence
 \[
  \lperp(\Sigma e\blossom A\blossom)/(e\blossom A\blossom) \xrightarrow{\sim} \harp{\proj A}
 \]
 in Proposition~\ref{prop::gentle-har}, and interpret~$\cW_A$ as an intermediate quotient of~$\lperp(\Sigma e\blossom A\blossom)$, so that the quotient of~$\cW_A$ by the ideal of morphism from injectives to projectives is also equivalent to~$\harp{\proj A}$ (Corollary~\ref{corollary: category of walks}).
\end{enumerate}

In all three applications, the extriangulated categories that appear are algebraic. 
We end with a problem that would strengthen the results of this paper by lifting the algebraicity assumptions.

\begin{problem*}[= Problem~\ref{p:0-auslander}]
 Determine whether the quotient of any (possibly non-algebraic) $0$-Auslander extriangulated category by the ideal generated by morphisms from injective objects to projective objects is equivalent to $\harp{\cal{A}}$ as an extriangulated category, where $\cal{A}$ is an additive category.
\end{problem*}

\section*{Structure of the paper}
The paper is organised as follows. In Section \ref{section:quotients}, we give a brief reminder on extriangulated categories and prove our first result on their ideal quotients. In Section \ref{section:Auslander}, we recall the definition, basic examples and properties of $0$-Auslander extriangulated categories, and discuss the appearance of such ideal quotients in this setting. 

Section \ref{s:rel-2-term} forms the technical core of the paper. We introduce relative $2$-term categories as certain subcategories of perfect derived categories. We prove that they are $0$-Auslander and show that restrictions of certain derived tensor products are examples of the aforementioned extriangulated ideal quotients.

The last sections are concerned with the applications of these constructions. In Sections \ref{section:Frobenius}, \ref{section:Higgs}, and \ref{section:Examples}, we discuss the applications to additive categorification of cluster algebras. In \ref{section:Frobenius},  we realise Frobenius exact cluster categories with relative exact structures as subcategories of relevant  $2$-term categories of projectives. In Section \ref{section:Higgs}, we show that endowing Wu's Higgs categories with suitable relative structures allows us to upgrade the $k$-linear equivalence between the Higgs category and a certain relative $2$-term category to an extriangulated equivalence. We use this to prove that a certain subcategory of $\harp{\proj A}$, where $A$ is the endomorphism algebra of the initial cluster-tilting object, is an extriangulated ideal quotient of the Higgs category. Both cases are illustrated by examples in Section \ref{section:Examples}.
In Section \ref{section:gentle}, we show that $2$-term categories of projectives over gentle algebras are ideal quotients of exact categories of walks.

\section*{Acknowledgements}

The authors would like to thank Thomas Brüstle, Xiaofa Chen, Yifei Cheng, Bernhard Keller, Hiroyuki Nakaoka, Hipolito Treffinger, Nicholas Williams, Dong Yang, and Alexandra Zvonareva for many interesting discussions related to this project.

The project began with discussions between some of the authors and Andrea Pasquali at the conference \emph{Journées d'Algèbre} held in March 2020 in Caen on the occasion of Bernard Leclerc's 60th birthday. We are very grateful to Andrea and to the organisers of the conference, Paolo Bellingeri and Cédric Lecouvey.

X.F.\ was partially supported by SFB-TRR 195 \emph{Symbolic tools in mathematics and their applications}.
M.G., Y.P., and P.-G.P.\ were partially supported by the French ANR grant CHARMS~(ANR-19-CE40-0017). P.-G.P.\ is partially funded by the Institut Universitaire de France (IUF). M.P.\ is supported by a postdoctoral fellowship (EP/T001771/2) from the UK's Engineering and Physical Sciences Research Council (EPSRC). 
This work is a part of a project that has received funding from the European Research Council (ERC) under the European Union's Horizon 2020 research and innovation programme (grant agreement No.~101001159).

Parts of this work were done when M.G., Y.P., and M.P.\ participated in the Junior Trimester Program \emph{New Trends in Representation Theory} at the Hausdorff Institute for Mathematics in Bonn, when M.P.\ and P.-G.P.\ participated in the programme \emph{Cluster Algebras and Representation Theory} at the Isaac Newton Institute for Mathematical Sciences, Cambridge (supported by EPSRC grant EP/R014604/1), and when P.-G.P.\ participated in the programme \emph{Representation Theory: Combinatorial Aspects and Applications} at the Centre for Advanced Study, Oslo.
Further work on this project was done during M.G.'s stays at the University of Stuttgart, and he is very grateful to Steffen Koenig for the hospitality.

\section{Extriangulated ideal quotients} \label{section:quotients}

\subsection{Extriangulated categories}
Throughout the paper, we work with extriangulated categories, introduced in~\cite{NakaokaPalu} and generalising both exact categories and triangulated categories. They axiomatise extension-closed subcategories of triangulated categories.

An \emph{extriangulated category} is a triple $(\cal{C}, \mathbb{E}, \mathfrak{s})$ consisting of
\begin{itemize}
 \item an additive category $\cal{C}$,
 \item an additive bifunctor $\mathbb{E}\colon\cal{C}^\mathrm{op}\times\cal{C}\to \mathrm{Ab}$ playing the role of an $\operatorname{Ext}^1$-bifunctor, and
 \item an \emph{additive realization} $\mathfrak{s}$ sending each element $\delta\in\EE(Z,X)$ to an equivalence class of weak kernel--cokernel pairs $X\overset{f}\to Y \overset{g}\to Z$ playing the role of short exact sequences or of distinguished triangles. In each such pair, we say that $f$ is an \emph{inflation}, $g$ is a \emph{deflation}, and the pair itself is a \emph{conflation}.
\end{itemize}
These data must satisfy certain axioms, which can be found in \cite{NakaokaPalu} and \cite[Appendix A]{GNP2} (see also \cite{Palu2023}), along with some basic notions and properties that will be used throughout the paper.
One also says that $(\cal{C}, \mathbb{E}, \mathfrak{s})$ is an \emph{extriangulated structure} on $\cal{C}$.

The first main property of extriangulated categories is that for any conflation $X\xrightarrow{f}Y\xrightarrow{g}Z$ 
realising $\delta \in \EE(Z, X)$, one has two associated exact sequences of natural transformations
\begin{equation} \label{eq:6term1}
\cC(Z,\blank)\overset{\blank\circ g}{\longrightarrow}\cC(Y,\blank)\overset{\blank\circ f}{\longrightarrow}\cC(X,\blank)\overset{\delta^\sharp}{\longrightarrow}\EE(Z,\blank)\overset{g^\ast}{\longrightarrow}\EE(Y,\blank)\overset{f^\ast}{\longrightarrow}\EE(X,\blank), 
\end{equation} 
\begin{equation} \label{eq:6term2}
\cC(\blank,X)\overset{f\circ\blank}{\longrightarrow}\cC(\blank,Y)\overset{g\circ\blank}{\longrightarrow}\cC(\blank,Z)\overset{\delta_\sharp}{\longrightarrow}\EE(\blank,X)\overset{f_\ast}{\longrightarrow}\EE(\blank,Y)\overset{g_\ast}{\longrightarrow}\EE(\blank,Z), 
\end{equation}
where $\delta^\sharp, \delta_\sharp$ are induced by the functoriality of $\EE$ in both arguments.

We recall that an object $X$ is called \emph{projective} (resp.\ \emph{injective}) if $\mathbb{E}(X,\blank) = 0$ (resp.\ $\mathbb{E}(\blank, X) = 0$). The \emph{stable category} $\underline{\cal{C}}$ of an extriangulated category $\cal{C}$ having enough projective objects is the additive category given by the quotient of $\cal{C}$ by the ideal of morphisms factoring through projective objects. Dually, if $\cal{C}$ has enough injectives, the costable category $\overline{\cal{C}}$  is the quotient of $\cal{C}$ by  the ideal of morphisms factoring through injective objects.\footnote{More general definitions apply when the categories do not have enough projectives or injectives, but we will never use these in this paper.}

An \emph{extriangulated substructure}, or \emph{relative extriangulated structure}, of $(\cal{C},\EE,\s)$ is a sub-bifunctor $\FF\leq\EE$ such that $(\cal{C},\FF,\s|_{\FF})$ is itself an extriangulated category. With each full subcategory $\cal{S} \subseteq \cal{C}$, one may associate two substructures of  $(\cal{C}, \mathbb{E}, \mathfrak{s})$ using the following statement, which is a direct application of results of Herschend, Liu and Nakaoka \cite[Prop.~3.16,~3.19]{HerschendLiuNakaoka}.
\begin{proposition}
\label{p:HLN-substruct}
Let $\cal{C}$ be an extriangulated category and let $\cal{S}\subseteq\cal{C}$ be a (full, additively closed) subcategory of $\cal{C}$. For $X,Z\in\cal{C}$ write
\begin{align*}
\EE_{\cal{S}}(Z,X)&:=\{\delta\in\EE(Z,X):\text{$(\delta_{\sharp})_S=0$ for all $S\in\cal{S}$}\},\\
\EE^{\cal{S}}(Z,X)&:=\{\delta\in\EE(Z,X):\text{$(\delta^{\sharp})_S=0$ for all $S\in\cal{S}$}\},
\end{align*}
where $(\delta_{\sharp})_S\colon\Hom_{\cal{C}}(S,Z)\to\EE_{\cal{C}}(S,X)$ and $(\delta^{\sharp})_S\colon\Hom_{\cal{C}}(X,S)\to\EE_{\cal{C}}(Z,S)$ are the connecting homomorphisms as in \eqref{eq:6term1} and \eqref{eq:6term2}. Then $\EE_{\cal{S}}$ and $\EE^{\cal{S}}$ are extriangulated substructures of $(\cal{C},\EE,\s)$.
\end{proposition}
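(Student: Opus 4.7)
The plan is to recognise $\EE_{\cal{S}}$ and $\EE^{\cal{S}}$ as instances of the ``closed'' sub-bifunctors considered in \cite{HerschendLiuNakaoka}, and then to invoke their Propositions~3.16 and~3.19 to conclude that each yields an extriangulated substructure of $(\cal{C},\EE,\s)$. The whole argument breaks into two verifications followed by a citation: sub-bifunctor property, and closedness in the sense of HLN.

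First, I would verify that $\EE_{\cal{S}}$ is a sub-bifunctor of $\EE$. The subgroup property is immediate from the additivity of the connecting homomorphism $(\delta_{\sharp})_{S}$ in the argument $\delta$. For functoriality in the first variable, given $\delta\in\EE_{\cal{S}}(Z,X)$, a morphism $g\colon Z'\to Z$, and any $h\colon S\to Z'$ with $S\in\cal{S}$, naturality yields
\[
((g^{\ast}\delta)_{\sharp})_{S}(h) \;=\; (\delta_{\sharp})_{S}(g\circ h) \;=\; 0,
\]
so $g^{\ast}\delta\in\EE_{\cal{S}}(Z',X)$. Functoriality in the second variable is obtained by the dual naturality statement for pushouts $f_{\ast}\delta$ along $f\colon X\to X'$, and the treatment of $\EE^{\cal{S}}$ is parallel, with $\delta^{\sharp}$ in place of $\delta_{\sharp}$.

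Secondly, I would check the closedness condition of \cite{HerschendLiuNakaoka}, which ensures that the class of $\s$-conflations realising elements of the sub-bifunctor satisfies (ET3), (ET3)$^{\op}$, (ET4), and (ET4)$^{\op}$. The nontrivial point is to confirm that the middle extensions produced by the octahedron-type constructions from a pair of composable $\EE_{\cal{S}}$-inflations still lie in $\EE_{\cal{S}}$; this reduces, via the long exact sequences \eqref{eq:6term1} and \eqref{eq:6term2} applied in the ambient category, to the same naturality argument used in the first step, forcing the relevant connecting maps with source in $\cal{S}$ to vanish. The situation for $\EE^{\cal{S}}$ is dual.

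I do not foresee a genuine obstacle: the entire argument rests on naturality of the connecting homomorphisms, and the real content is already carried by \cite[Prop.~3.16,~3.19]{HerschendLiuNakaoka}. The only point requiring care is matching our definitions with the precise form of the closedness criterion there, but the sub-bifunctors $\EE_{\cal{S}}$ and $\EE^{\cal{S}}$ are exactly the motivating examples for those propositions, so this matching is essentially automatic.
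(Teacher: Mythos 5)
Your proposal is correct and matches the paper's approach: the paper gives no independent argument and simply observes that the statement is a direct application of \cite[Prop.~3.16,~3.19]{HerschendLiuNakaoka}, which is exactly the citation your verification reduces to.
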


To simplify the notation, we will abbreviate $\cal{C}_{\cal{S}}=(\cal{C},\EE_{\cal{S}},\s|_{\EE_{\cal{S}}})$ and $\cal{C}^{\cal{S}}=(\cal{C},\EE^{\cal{S}},\s|_{\EE^{\cal{S}}})$, analogous to the abbreviation $\cal{C}=(\cal{C},\EE,\s)$ for $\cal{C}$ with its original extriangulated structure. 
We will use substructures of this form extensively in Sections \ref{section:Frobenius} and \ref{section:Higgs}.
Various further constructions of relative structures on extriangulated categories can be found in \cite{HerschendLiuNakaoka, chen2023exact, ogawa2022localization, PPPP, JorgShah, Sakai, Christ}.

\begin{definition}[{cf.~\cite[Defn.~2.32]{BTS}}]
Let $(\cal{C}_1,\EE_1,\s_1)$ and $(\cal{C}_2,\EE_2,\s_2)$ be extriangulated categories. An \emph{extriangulated functor} from $(\cal{C}_1,\EE_1,\s_1)$ to $(\cal{C}_2,\EE_2,\s_2)$ is a pair of an additive functor $F\colon\cal{C}_1\to\cal{C}_2$  together with a natural transformation $\alpha\colon\EE_1 \Rightarrow \EE_2 \circ (F^\op \times F)$ such that
\[\s_2(\alpha(\delta))=F(\s_1(\delta))\]
for any $\delta\in\EE_1(X,Y)$, where $F$ acts on conflations in the natural way. 
\end{definition}

Some authors call such pairs $(F, \alpha)$ \emph{exact functors} of extriangulated categories. We suppress $\alpha$ from the notation if it is clear from the context, but write the functor as the pair $(F,\alpha)$ when we wish to emphasise this extra data. Observe that if $\FF\leq\EE$ is an extriangulated substructure of $(\cal{C},\EE,\s)$, then $(\id_{\cal{C}},\iota)$ is an extriangulated functor, where $\iota\colon\FF\to\EE$ is the inclusion.

\begin{fact} \label{fact: equivalences}
    \begin{enumerate}
        \item \emph{(\cite[Prop.~2.13]{NOS})}
        An  extriangulated functor $(F, \alpha)$ is an equivalence of extriangulated categories, in the sense that it admits a quasi-inverse functor $(G, \beta)$, if and only if $F$ is an equivalence of additive categories and $\alpha$ is a natural isomorphism.
        \item\label{fact:equivalences-BTS} \emph{(\cite[Thm.~3.9]{BTS})} 
        Given an extriangulated category $(\cal{C}_1,\EE_1,\s_1)$ and an equivalence of additive categories $F\colon \cal{C}_1 \to \cal{C}_2$, there exists an extriangulated structure $(\cal{C}_2,\EE_2,\s_2)$ on $\cal{C}_2$ and a natural isomorphism $\alpha$ such that the pair $(F, \alpha)$ is an equivalence of extriangulated categories $(\cal{C}_1,\EE_1,\s_1) \to (\cal{C}_2,\EE_2,\s_2)$.
    \end{enumerate}
\end{fact}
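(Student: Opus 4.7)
The plan is to treat both parts as instances of transport of structure along an equivalence of additive categories, using the observation that the data $(\EE,\s)$ of an extriangulated category is preserved by any isomorphism of bifunctors that is compatible with realisations.

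For part (i), the forward direction is essentially formal: a quasi-inverse $(G,\beta)$ of $(F,\alpha)$ makes $G$ an additive quasi-inverse of $F$, and the isomorphisms $(G,\beta)\circ(F,\alpha)\simeq(\id,\id_{\EE_1})$ and $(F,\alpha)\circ(G,\beta)\simeq(\id,\id_{\EE_2})$ of extriangulated functors provide pointwise two-sided inverses for $\alpha$ after absorbing the unit and counit of the additive equivalence. For the converse, I would fix a quasi-inverse $G$ together with unit $\eta\colon\id\Rightarrow GF$ and counit $\epsilon\colon FG\Rightarrow\id$, and define
\[
\beta_{X,Y}:=\alpha_{GX,GY}^{-1}\circ\EE_2(\epsilon_X,\epsilon_Y^{-1})\colon\EE_2(X,Y)\to\EE_1(GX,GY),
\]
which is a natural isomorphism by construction. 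The only genuine check is the compatibility $\s_1(\beta(\delta))=G(\s_2(\delta))$, which follows from the analogous identity for $(F,\alpha)$ by applying $F$ to both sides and using naturality of $\epsilon$; with this, $(G,\beta)$ is then automatically a quasi-inverse of $(F,\alpha)$.

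For part (ii), I would again fix a quasi-inverse $G$ of $F$ with units $\eta$ and $\epsilon$, and transport the bifunctor by setting $\EE_2(Z,X):=\EE_1(GZ,GX)$, viewed as a bifunctor $\cC_2^\op\times\cC_2\to\Ab$ via $G$. For $\delta\in\EE_2(Z,X)$, I would realise it by choosing a conflation $GX\xrightarrow{f'}Y'\xrightarrow{g'}GZ$ in $\cC_1$ with $\s_1$-class $\delta$, and setting
\[
\s_2(\delta):=\bigl[X\xrightarrow{\epsilon_X\circ F(f')}FY'\xrightarrow{F(g')\circ\epsilon_Z^{-1}}Z\bigr].
\]
This is well-defined on equivalence classes because $F$ is an equivalence. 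The extriangulated axioms for $(\cC_2,\EE_2,\s_2)$ then follow from those of $(\cC_1,\EE_1,\s_1)$ by pulling each diagram in $\cC_2$ back along $G$, carrying out the required construction in $\cC_1$, and pushing forward along $F$. The natural isomorphism is $\alpha_{X,Y}:=\EE_1(\eta_X^{-1},\eta_Y)\colon\EE_1(X,Y)\to\EE_2(FX,FY)$, and the identity $\s_2(\alpha(\delta))=F(\s_1(\delta))$ holds directly from the definition of $\s_2$.

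The main technical obstacle is the bookkeeping required to check the extriangulated axioms in part (ii): conceptually everything is transported from $\cC_1$, but the composition-of-conflations axiom (ET4) and its dual require careful tracking of $\eta$ and $\epsilon$ through two layers of whiskering, especially because the mediating morphisms in these axioms are only unique up to the equivalence of conflations. Once part (ii) is in hand, part (i)$(\Leftarrow)$ can alternatively be deduced from it by identifying the transported structure on $\cC_2$ with the given one via $\alpha$, which yields a more symmetric argument.
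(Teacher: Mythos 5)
The paper offers no proof of this Fact: both parts are quoted from the literature, part (i) from \cite[Prop.~2.13]{NOS} and part (ii) from \cite[Thm.~3.9]{BTS}, so there is no in-paper argument to compare against. Your sketch is nonetheless a faithful reconstruction of what those references do: both are transport-of-structure arguments, with $\EE_2:=\EE_1(G\blank,G\blank)$ and $\s_2$ obtained by pushing realisations forward along $F$ and correcting by the counit, and with $\beta$ obtained from $\alpha^{-1}$ by whiskering with $\epsilon$. Two small points to tighten. First, the composites in your definition of $\s_2(\delta)$ are written in the wrong order: the maps should be $F(f')\circ\epsilon_X^{-1}\colon X\to FY'$ and $\epsilon_Z\circ F(g')\colon FY'\to Z$ (as written, $\epsilon_X\circ F(f')$ and $F(g')\circ\epsilon_Z^{-1}$ do not typecheck). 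Second, in part (i) the claim that $(G,\beta)$ is ``automatically'' a quasi-inverse still requires verifying that the unit $\eta$ and counit $\epsilon$ upgrade to isomorphisms of \emph{extriangulated} functors, i.e.\ that they intertwine $\beta_{F\blank,F\blank}\circ\alpha$ with $\id_{\EE_1}$ (and dually); this is routine but is the actual content of the ``only if'' direction as well, where the same compatibility is what forces $\alpha$ to be pointwise invertible. Neither point is a genuine gap, and the overall strategy is the standard one.
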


In the setting of extriangulated categories, various reasonable definitions of cluster-tilting subcategory turn out to be inequivalent.
For our purposes, we choose the one that is closest to the original definition of~\cite[Defn.~2.2]{Iyama-HigherAR}.

\begin{definition}
\label{def:CT}
 A full subcategory~$\cal{T}$ of~$\cal{C}$ is \emph{cluster-tilting} if it is functorially finite (i.e.\ it admits left and right approximations) and satisfies
\begin{align*}
 \cal{T}  &= \{X\in\cal{C}:\text{$\mathbb{E}(X,T)=0$ for all $T\in\cal{T}$}\} \\
          &= \{X\in\cal{C}:\text{$\mathbb{E}(T,X)=0$ for all $T\in\cal{T}$}\}.
\end{align*}
\end{definition}

\begin{remark}
\label{r:sff}
 Most of the time, we will be assuming that~$\cal{C}$ has enough projectives and enough injectives, in which case any cluster-tilting subcategory is \emph{strongly functorially finite} in the sense of \cite{zhou2018triangulated}: it admits right approximations that are deflations and left approximations that are inflations \cite[Rem.~3.22]{HLNII}. In general, a cluster-tilting subcategory as defined here may not be strongly functorially finite.
\end{remark}

\subsection{Extriangulated ideal quotients}

If~$\cC$ is an additive category and~$J$ is an ideal of~$\cC$, then the obvious functor~$\cC\to \cC/J$ is called an~\emph{ideal quotient}.  We consider a generalization of this notion to extriangulated functors.

\begin{definition}
We say that an extriangulated functor $(F, \alpha)\colon (\cal{C}_1,\EE_1,\s_1) \to (\cal{C}_2,\EE_2,\s_2)$ is an \emph{extriangulated ideal quotient} if $F$ is an ideal quotient of additive categories and $\alpha$ is the identity. 
\end{definition}

\begin{remark}
If $(F, \alpha)\colon (\cal{C}_1,\EE_1,\s_1) \to (\cal{C}_2,\EE_2,\s_2)$ is an extriangulated ideal quotient and $G\colon \cal{C}_2 \to \cal{C}_3$ is an additive equivalence, then by \cref{fact: equivalences}\ref{fact:equivalences-BTS} there exists an extriangulated structure on $\cal{C}_3$ such that $GF$ admits a natural upgrade to an extriangulated functor. In a sense, the data of
\begin{itemize}
    \item the category $ (\cal{C}_1,\EE_1,\s_1)$,
    \item  the ideal $\ker F$, and
    \item the equivalence $G$
\end{itemize}
give rise to an extriangulated structure on the category $\cal{C}_3$.
\end{remark}

We will now present a novel construction  of extriangulated ideal quotients which we will use throughout the paper.

Let $(\cal{C},\mathbb{E},\s)$ be an extriangulated category, let $J_0$ be a class of morphisms with injective domains and projective codomains, and let $J$ be the ideal generated by $J_0$.
We write $\underline{f}$ for the class in $\cal{C}/J$ of a morphism $f\in\Mor\cal{C}$.

Because any morphism $f\in J$ satisfies $\mathbb{E}(f,\blank)=0$ and $\mathbb{E}(\blank,f)=0$, there is an induced additive bifunctor $\mathbb{E}_J \colon (\cal{C}/J)^\text{op} \times \cal{C}/J \to \Ab$ such that
\begin{itemize}
\item for any $X,Z\in\cal{C}$, $\mathbb{E}_J(Z,X) = \mathbb{E}(Z,X)$,
\item for any $X,X',Z\in\cal{C}$, any $\delta\in\mathbb{E}(Z,X)$ and any $f:X\to X'$ in $\mbox{Mor} \cal{C}$, $(\underline{f})_\ast\delta = f_\ast\delta$, and
\item for any $X,Z,Z'\in\cal{C}$, any $\delta\in\mathbb{E}(Z,X)$ and any $g:Z'\to Z$ in $\mbox{Mor} \cal{C}$, $(\underline{g})^\ast\delta = g^\ast\delta$.
\end{itemize}

For any $X,Z\in\cal{C}$ and $\delta\in\mathbb{E}(Z,X)$, we let $\underline{\s}(\delta)$ be the class $[X\overset{\underline{i}}{\to}Y\overset{\underline{p}}{\to}Z]$, where $\s(\delta)=[X\overset{i}{\to}Y\overset{p}{\to}Z]$.
This assignment is well-defined, and yields an additive realization $\underline{\s}$ of $(\cal{C},\mathbb{E}_J)$.
In other words, conflations in $\cal{C}/J$ are of the form
$X\overset{\underline{i}}{\infl}Y\overset{\underline{p}}{\defl}Z\overset{\delta}{\dashrightarrow}$, for
$X\overset{i}{\infl}Y\overset{p}{\defl}Z\overset{\delta}{\dashrightarrow}$
a conflation in $\cal{C}$.

\begin{theorem}
\label{t:extriangulatedQuotient}
Endowed with the structure defined above, $\cal{C}/J$ is extriangulated and the projection functor $\cal{C} \to \cal{C}/J$ is an extriangulated ideal quotient.
\end{theorem}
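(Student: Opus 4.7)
My plan is to verify the axioms of an extriangulated category on $\cC/J$ directly, exploiting the fact that both $\mathbb{E}_J$ and $\underline{\s}$ are essentially inherited from $\cC$. The proof splits into checking well-definedness, verifying the axioms, and confirming the functoriality of the projection.

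First, I would check that $\mathbb{E}_J$ is a well-defined additive bifunctor on $\cC/J$. The crucial input is that every morphism $f \in J$ satisfies $\mathbb{E}(f,\blank)=0$ and $\mathbb{E}(\blank,f)=0$: such an $f$ factors through a morphism from an injective object $I$ to a projective object $P$, so naturality of $\mathbb{E}$ and the vanishing $\mathbb{E}(\blank,I)=\mathbb{E}(P,\blank)=0$ force the induced action to be zero. Thus $f_{\ast}$ and $f^{\ast}$ depend only on the class $\underline{f}$, and $\mathbb{E}_J$ is well-defined. In particular, any two lifts of the same class in $\cC/J$ induce the same action on $\mathbb{E}$. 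Likewise, $\underline{\s}$ is well-defined on equivalence classes of sequences, since any isomorphism between two realisations of $\delta$ in $\cC$ projects to the required isomorphism in $\cC/J$.

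Next, additivity of $\underline{\s}$ is automatic from additivity of $\s$, because $\pi$ commutes with direct sums and the zero realisations are preserved. For the lifting axiom (ET3), suppose $\underline{a}\colon X\to X'$ and $\underline{c}\colon Z\to Z'$ satisfy $\underline{a}_{\ast}\delta=\underline{c}^{\ast}\delta'$ in $\mathbb{E}_J$. By the remark on lifts above, this equality also holds in $\mathbb{E}$ for \emph{any} choice of lifts $a,c$ in $\cC$. Applying (ET3) in $\cC$ produces a morphism $b\colon Y\to Y'$ fitting into a morphism of conflations, which I then project to obtain the desired morphism of conflations in $\cC/J$. The argument for the dual axiom is symmetric. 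For (ET4), given conflations $X\to Y\to Z$ and $Y\to V\to W$ in $\cC/J$ realising classes $\delta$ and $\eta$, the very definition of conflations in the quotient provides lifts to conflations in $\cC$ having the same underlying objects; applying (ET4) in $\cC$ yields an object $U$ together with the octahedral data, which I then project down. The dual axiom is handled identically.

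Finally, the projection $\pi\colon\cC\to\cC/J$ paired with $\alpha=\id_{\mathbb{E}}$ is extriangulated essentially by construction: the equality $\mathbb{E}_J(\pi(Z),\pi(X))=\mathbb{E}(Z,X)$ is built into the definition, and $\underline{\s}(\delta)=\pi(\s(\delta))$ is exactly how $\underline{\s}$ was defined. Since $\pi$ is an ideal quotient of additive categories, this will show it is an extriangulated ideal quotient.

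I expect the only real obstacle to be the bookkeeping of lifts. The hypothesis that $J$ is generated by morphisms from injectives to projectives is precisely what makes this bookkeeping succeed: it ensures that the $\mathbb{E}$-action of a morphism depends only on its class modulo $J$, so that each axiom in $\cC/J$ reduces, via arbitrary choices of lifts, to its counterpart in $\cC$. Were $J$ generated by morphisms that did not vanish on $\mathbb{E}$, the bifunctor $\mathbb{E}_J$ would fail to descend in the first place, and nothing would work.
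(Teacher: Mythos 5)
There is a genuine gap, and it sits exactly where the paper's proof does its only real work: the axioms (ET3) and (ET3)$^{\mathrm{op}}$. What you verify under the name (ET3) is in fact the realization condition --- that a morphism of extensions $(\underline{a},\underline{c})$ with $\underline{a}_{\ast}\delta=\underline{c}^{\ast}\delta'$ lifts to a morphism of conflations --- which is part of (ET2), and that part does indeed descend by the lift-and-project argument you describe. The actual (ET3) takes as input a commutative \emph{square} over the two inflations, say $\underline{j}\,\underline{f}=\underline{g}\,\underline{i}$ with $i,j$ inflations, and asks for the completing morphism on the cokernel side. In $\cC/J$ such a square only yields $jf-gi\in J$ for chosen lifts, and one cannot invoke (ET3) in $\cC$ until the square commutes on the nose; commutativity of a square of morphisms does \emph{not} lift automatically the way equalities in $\mathbb{E}$ do. The paper repairs the lift: writing $jf-gi=aj_0b$ with $j_0\colon I\to P$, $I$ injective and $P$ projective, the map $b\colon X\to I$ extends along the inflation $i$ to some $c\colon Y\to I$ by injectivity of $I$, and replacing $g$ by $g+aj_0c$ produces a genuinely commuting square with $\underline{g+aj_0c}=\underline{g}$; dually, (ET3)$^{\mathrm{op}}$ uses projectivity of $P$ against the deflation. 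This is the only place where the precise shape of $J_0$ --- injective domain and projective codomain, factored in that order --- is used beyond well-definedness of $\mathbb{E}_J$, and your proposal never touches it.

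A concrete way to see that something is missing: every step of your argument applies verbatim to the ideal $J'=(\cal{I})\cap(\cal{P})$ of morphisms factoring through an injective and (separately) through a projective, since such morphisms also satisfy $\mathbb{E}(f,\blank)=0=\mathbb{E}(\blank,f)$. But \cref{e:3-cycleQuotient} shows that $\cC/J'$ need not admit an extriangulated structure for which the projection is an extriangulated ideal quotient. Hence well-definedness of the induced bifunctor together with ``lift, apply the axiom in $\cC$, project'' cannot suffice; the missing ingredient is precisely the correction of lifts in (ET3) and (ET3)$^{\mathrm{op}}$.
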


\begin{proof}
Axioms (ET1) and (ET2) follow from the definitions before \cref{t:extriangulatedQuotient} and axioms (ET4) and (ET4\textsuperscript{op}) immediately follow from the corresponding axioms in $\cal{C}$.
We give a detailed proof of axioms (ET3) and (ET3\textsuperscript{op}) so as highlight the role played by our assumptions on the morphisms in $J_0$.
We note the importance of factoring first through an injective and then through a projective.

Any commutative diagram in $\cal{C}/J$ of the form
\[\begin{tikzcd}
X \arrow[rightarrowtail]{r}{\underline{i}} \arrow{d}[swap]{\underline{f}} &
Y \arrow[twoheadrightarrow]{r}{\underline{p}} \arrow{d}{\underline{g}} &
Z \arrow[dashrightarrow]{r}{\delta} &
\phantom{\Sigma X} \\
X' \arrow[rightarrowtail]{r}{\underline{j}} &
Y' \arrow[twoheadrightarrow]{r}{\underline{q}} & 
Z' \arrow[dashrightarrow]{r}{\delta'} &
\phantom{\Sigma X'}
\end{tikzcd}\]
lifts to a diagram
\[\begin{tikzcd}[row sep=tiny]
X \arrow[rightarrowtail]{rrr}{i} \arrow{ddd}[swap]{f} \arrow{dr}[swap]{b} &
&
&
Y \arrow[twoheadrightarrow]{r}{p} \arrow{ddd}{g} \arrow[dotted]{dll}{c} &
Z \arrow[dashrightarrow]{r}{\delta} &
\phantom{\Sigma X} \\
&  I \arrow{dr}{j_0} & & & & \\
& & P \arrow{dr}{a} & & & \\
X' \arrow[rightarrowtail]{rrr}[swap]{j} & & &
Y' \arrow[twoheadrightarrow]{r}{q} & 
Z' \arrow[dashrightarrow]{r}{\delta'} &
\phantom{\Sigma X'}
\end{tikzcd}\]
in $\cal{C}$, where $j_0\in J_0$ and $jf=gi+aj_0b$.
Because $I$ is injective, there is a morphism $c$ (dotted in the diagram) such that $b=ci$.
Replacing $g$ by $g+aj_0c$, we obtain a commutative square in $\cal{C}$.
We can now apply axiom (ET3) in $\cal{C}$ and conclude that (ET3) holds in $\cal{C}/J$, using that $\underline{g+aj_0c}=\underline{g}$.
The proof for (ET3\textsuperscript{op}) is similar and relies on a diagram of the form:
\[\begin{tikzcd}[row sep=tiny]
Y \arrow[twoheadrightarrow]{rrr}{p} \arrow{ddd}[swap]{f} \arrow{dr} &
&
&
Z \arrow{ddd}{g} \\
&  I \arrow{dr} & & \\
& & P \arrow{dr} \arrow[dotted]{dll} & \\
Y' \arrow[twoheadrightarrow]{rrr}[swap]{q} & & &
Z'
\end{tikzcd}\]
\end{proof}

\begin{remark}
While this construction might not look symmetric in terms of the direction of morphisms, it actually is: the ideal $J^{\op}$ in $\cal{C}^{\op}$ is also generated by morphisms with injective domain and projective codomain, since injectives and projectives swap places in passing from $\cal{C}$ to $\cal{C}^{\op}$. We also have $\cal{C}^{\op} / J^{\op} = (\cal{C}/J)^{\op}$ (meaning that we have an equivalence given by the identity on objects and on morphisms). 
\end{remark}

\begin{corollary} \label{cor: 6term_in_quotient}
For any conflation $X\overset{\underline{f}}{\infl}Y\overset{\underline{g}}{\defl}Z\overset{\delta}{\dashrightarrow}$ 
realising $\delta \in \EE_J(Z, X) = \EE(Z, X)$, one has two associated exact sequences of natural transformations
\begin{equation} \label{eq:quotient_6term1}
\cC/J(Z,\blank)\overset{\blank\circ \underline{g}}{\longrightarrow}\cC/J(Y,\blank)\overset{\blank\circ \underline{f}}{\longrightarrow}\cC/J(X,\blank)\overset{\delta^\sharp}{\longrightarrow}\EE_J(Z,\blank)\overset{\underline{g}^\ast}{\longrightarrow}\EE_J(Y,\blank)\overset{\underline{f}^\ast}{\longrightarrow}\EE_J(X,\blank),
\end{equation} 
\begin{equation} \label{eq:quotient_6term2}
\cC/J(\blank,X)\overset{\underline{f}\circ\blank}{\longrightarrow}\cC/J(\blank,Y)\overset{\underline{g}\circ\blank}{\longrightarrow}\cC/J(\blank,Z)\overset{\delta_\sharp}{\longrightarrow}\EE_J(\blank,X)\overset{\underline{f}_\ast}{\longrightarrow}\EE_J(\blank,Y)\overset{\underline{g}_\ast}{\longrightarrow}\EE_J(\blank,Z). 
\end{equation}
\end{corollary}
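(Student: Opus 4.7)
The plan is to deduce the corollary directly from \cref{t:extriangulatedQuotient} together with the standard property of extriangulated categories recalled between displays \eqref{eq:6term1} and \eqref{eq:6term2}: namely, every conflation in any extriangulated category gives rise to two six-term exact sequences of natural transformations (this is \cite[Cor.~3.12]{NakaokaPalu}). Since $(\cC/J, \EE_J, \underline{\s})$ has been shown to be extriangulated in \cref{t:extriangulatedQuotient}, invoking this general fact for the conflation $X \overset{\underline{f}}{\infl} Y \overset{\underline{g}}{\defl} Z \overset{\delta}{\dashrightarrow}$ immediately yields \eqref{eq:quotient_6term1} and \eqref{eq:quotient_6term2}.

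The one thing worth spelling out is that the connecting maps $\delta^\sharp$ and $\delta_\sharp$ featured in the statement really are obtained by quotienting the corresponding maps in $\cC$ under the identifications $\EE_J(Z, X) = \EE(Z, X)$ and $\cC/J(X, W) = \cC(X, W)/J(X, W)$. This is forced by the construction of $\EE_J$ given before \cref{t:extriangulatedQuotient}: the relations $(\underline{f})_\ast = f_\ast$ and $(\underline{g})^\ast = g^\ast$ ensure that the naturality squares defining the connecting maps for $(\cC/J, \EE_J, \underline{\s})$ are precisely the images of those for $(\cC, \EE, \s)$. Equivalently, the projection $\pi\colon \cC \to \cC/J$ is an extriangulated functor with $\alpha = \id_\EE$ by \cref{t:extriangulatedQuotient}, and such functors intertwine connecting homomorphisms; so \eqref{eq:quotient_6term1} and \eqref{eq:quotient_6term2} are nothing but the images under $\pi$ of the analogous sequences for the lifted conflation $X \overset{f}{\infl} Y \overset{g}{\defl} Z \overset{\delta}{\dashrightarrow}$ in $\cC$.

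In short, once \cref{t:extriangulatedQuotient} is in hand, the corollary is a formal consequence and no genuine obstacle remains: the proof is a short bookkeeping exercise matching up notation, and the only substantive ingredient has already been supplied by the construction of the bifunctor $\EE_J$ and the verification of the extriangulated axioms for $(\cC/J, \EE_J, \underline{\s})$.
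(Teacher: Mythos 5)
Your proposal is correct and matches the paper's own proof, which simply observes that \eqref{eq:quotient_6term1} and \eqref{eq:quotient_6term2} are the general six-term sequences \eqref{eq:6term1} and \eqref{eq:6term2} applied to the extriangulated category $\cC/J$ furnished by \cref{t:extriangulatedQuotient}. Your additional remarks identifying the connecting maps with the images of those in $\cC$ are accurate bookkeeping that the paper leaves implicit.
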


\begin{proof}
These sequences are just \eqref{eq:6term1} and \eqref{eq:6term2} for the extriangulated category $\cC/J$.
\end{proof}

\begin{remark}
As a general observation, which we will discuss in detail in our examples of $0$-Auslander categories, let us note that by taking relative extriangulated structures on their domain, one can interpret some interesting ideal quotients as extriangulated ideal quotients. More precisely, given an ideal $J$ of morphisms in an extriangulated category $(\cal{C},\EE,\s)$, there may exist an extriangulated substructure $(\cal{C},\FF,\s|_{\FF})$ such that in this substructure $J$ is generated by morphisms with injective domain and projective codomain. In this case, if we equip $\cal{C}$ with this relative extriangulated structure then the ideal quotient $\cal{C}\to\cal{C}/J$ is extriangulated. See \cref{t:ClustCat-to-2term} for an example of this phenomenon.
\end{remark}

\begin{example} \label{ex:quotient_by_proj-inj}
Theorem \ref{t:extriangulatedQuotient} recovers \cite[Prop.~3.30]{NakaokaPalu}: the ideal quotient of an extriangulated category by any full additive subcategory closed under isomorphisms and direct summands, and whose objects are both projective and injective, carries an induced extriangulated structure. It is immediate that the quotient functor is then an extriangulated ideal quotient. See also \cite[Rem.~3.35]{NOS}.

\end{example}

\begin{remark} 
After we shared Theorem \ref{t:extriangulatedQuotient}  with Xiaofa Chen, he proved \cite[Prop.~6.22]{chen2023exact} that if $\cal{C}$ is algebraic, then so is $\cal{C}/J$. Here algebraicity means that the category is equivalent to the quotient of an exact category by a class of projective-injective objects. Equivalently, the category admits an enhancement by a dg category endowed with an exact dg structure in the sense of \cite{chen2023exact}. In the applications we consider in Sections \ref{section:Frobenius}, \ref{section:Higgs}, \ref{section:Examples} and \ref{section:gentle}, all the extriangulated categories involved are in fact algebraic.
\end{remark} 

\begin{definition}
 Recall that an object~$X\in\cal{C}$ is called \emph{rigid} if~$\mathbb{E}(X,X)=0$.
 It is \emph{maximal rigid} if $X\oplus Y$ is rigid if and only if~$Y\in\add X$.
 
\end{definition}

\begin{proposition}
\label{prop::rigid}
Let $\cal{C}$ be an extriangulated category, and $J$ an ideal generated by morphisms with injective domain and projective codomain.
\begin{enumerate}
\item The extriangulated functor $F\colon \cal{C} \to \cal{C}/J$ sends (maximal) rigid objects to (maximal) rigid objects.
\item\label{p:rigid_proj} For an object $X \in \cC$, the object $F(X)$ is projective (resp.\ injective) in $\cC/J$ if and only if $X$ is projective (resp.\ injective) in $\cC$.
\end{enumerate}
\end{proposition}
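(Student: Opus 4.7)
The proof plan rests almost entirely on the defining identity $\EE_{\cC/J}(F(X),F(Y)) = \EE_{\cC}(X,Y)$ for all $X,Y\in\cC$ from \cref{t:extriangulatedQuotient}, together with the fact that $F$ is the identity on objects.

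First I would dispatch part (ii). By definition, $F(X)$ is projective in $\cC/J$ if and only if $\EE_{\cC/J}(F(X),F(Y))=0$ for every object $F(Y)$ of $\cC/J$. Since $F$ is bijective on objects, every object of $\cC/J$ has the form $F(Y)$ for some $Y\in\cC$, and the identity above turns this vanishing into $\EE_{\cC}(X,Y)=0$ for all $Y\in\cC$, i.e.\ $X$ is projective in $\cC$. The injective case is dual (and can also be read off from the remark in the excerpt that $J^{\op}$ is again generated by morphisms from injectives to projectives in $\cC^{\op}$).

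Next, for part (i), rigidity of $X$ in $\cC$ means $\EE_{\cC}(X,X)=0$, which under the same identity equals $\EE_{\cC/J}(F(X),F(X))$, so $F(X)$ is rigid. For maximal rigidity, suppose $X$ is maximal rigid in $\cC$ and that $F(X)\oplus Z$ is rigid in $\cC/J$. Using again that $F$ is the identity on objects, write $Z=F(Y)$ for some $Y\in\cC$; then $F(X\oplus Y) \simeq F(X)\oplus F(Y)$ is rigid in $\cC/J$, and the defining identity gives $\EE_{\cC}(X\oplus Y,X\oplus Y)=0$. By maximal rigidity of $X$ in $\cC$, we get $Y\in\add X$ inside $\cC$, and applying the additive functor $F$ yields $Z=F(Y)\in\add F(X)$ inside $\cC/J$, as required.

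I do not expect a genuine obstacle here: the only place where one must be a little careful is the maximality argument, because $\add$ is a priori coarser in $\cC/J$ than in $\cC$ (morphisms in $J$ may identify non-isomorphic objects up to isomorphism in the quotient). The subtlety is handled simply by lifting the summand $Z$ to an actual object $Y$ of $\cC$ before invoking maximality in $\cC$, which is legitimate because the quotient functor acts as the identity on objects.
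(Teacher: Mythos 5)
Your proposal is correct and follows the same route as the paper, whose entire proof is the observation that $\mathbb{E}_J(Z,X)=\mathbb{E}(Z,X)$ for all $X,Z\in\cal{C}$; you simply spell out the consequences (including the lifting of summands in the maximality argument, which the paper leaves implicit).
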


\begin{proof}
Both parts follow immediately from the fact that $\mathbb{E}(Z,X)=\mathbb{E}_J(Z,X)$ for~$X, Z\in\cal{C}$.
\end{proof}

\begin{remark} \label{rem:various_quotients}
Assume that in $(\cal{C},\mathbb{E},\s)$ we have enough projectives $\cal{P}$ and enough injectives $\cal{I}$. Let $J$ be the ideal generated by all morphisms with injective domain and projective codomain. Then we have the diagram
\[\begin{tikzcd}[column sep=5pt]
& \cal{C} \arrow[twoheadrightarrow]{d} & \\
& \cal{C}/(\cal{P} \cap \cal{I}) \arrow[twoheadrightarrow]{d} & \\
& \cal{C}/J \arrow[twoheadrightarrow]{dl} \arrow[twoheadrightarrow]{dr} & \\
\underline{\cal{C}} & & \overline{\cal{C}}.
\end{tikzcd}\]
of ideal quotients of additive categories.

As we discussed in Theorem \ref{t:extriangulatedQuotient} and Example \ref{ex:quotient_by_proj-inj}, the categories $\cal{C}/(\cal{P} \cap \cal{I})$ and $\cal{C}/J$ carry extriangulated structures induced from $(\cal{C},\mathbb{E},\s)$, and the vertical functors $\cal{C} \twoheadrightarrow \cal{C}/(\cal{P} \cap \cal{I}) \twoheadrightarrow \cal{C}/J$ are extriangulated ideal quotients. The ideal quotients of additive categories $\cal{C}/J \to \underline{\cal{C}}$ and $\cal{C}/J \to \overline{\cal{C}}$ cannot generally be upgraded to extriangulated ideal quotients with the domain $(\cal{C}/J, \mathbb{E}_J, \underline{\s})$. To obtain $\underline{\cal{C}}$ and $\overline{\cal{C}}$, one could first take the maximal relative structure of $(\cal{C}/J, \mathbb{E}_J, \underline{\s})$ (or of $(\cal{C}, \mathbb{E}, \s)$) making all projective objects injective, resp.\ all injective objects projective, and then take the quotients by the ideals of morphisms factoring through these projective-injectives in these relative structures.
This procedure does upgrade additive ideal quotients to extriangulated ideal quotients, but at the expense of first taking a relative structure on the domain. See \cite{ogawa2022localization} for a detailed discussion of a similar procedure for functors with triangulated domain. See also the discussion in \cref{ss:costable} for the case of $0$-Auslander categories, the (co)stable categories of which are, in fact, abelian.
\end{remark}

\begin{remark}
It may appear natural to consider the quotient of $(\cal{C},\mathbb{E},\s)$ by the ideal $J' = ( \cal{I}) \cap (\cal{P})$ of morphisms which factor both through an injective and though a projective object (maybe in two different factorisations). Indeed, this quotient admits a natural additive bifunctor induced from $\EE$ by the quotient functor $F'\colon \cal{C} \to \cal{C}/J'$, since any  morphism $f \in J'$ satisfies $\EE(f, \blank) = 0$ and $\EE(\blank, f) = 0$. However, it does not always admit an extriangulated structure such that the ideal quotient functor $F'$ is extriangulated (when coupled with the identity natural transformation); in particular, $J'$ may not be generated by morphisms with injective domain and projective codomain. This is illustrated by the following example.
\end{remark}

\begin{example}
\label{e:3-cycleQuotient}
Let $Q$ be the cyclic quiver with $3$ vertices, and $I$ the ideal generated by all paths of length~$2$. In the category $\cal{C} = \cal{K}^{[-1,0]}(\proj kQ/I)$ there are no non-zero morphisms with injective domain and projective codomain, so 
\[\frac{\cal{C}}{( \cal{I} \to \cal{P})} = \cal{C}.\]
The Auslander--Reiten quiver of this category is drawn in Figure \ref{fig:AR_for_harp_cyclic_A3}.
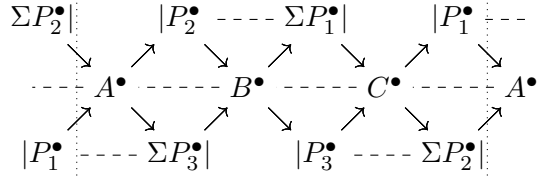
\begin{figure}\centering
\hspace{-3.3em}\begin{tikzpicture}[scale=0.9]
\foreach \n/\x/\y/\o in {1/0/0/|P_1^{\bullet}, 2/2/0/\Sigma P_3^{\bullet}|, 3/4/0/|P_3^{\bullet}, 4/6/0/\Sigma P_2^{\bullet}|, 5/1/1/A^{\bullet}, 6/3/1/B^{\bullet}, 7/5/1/C^{\bullet}, 8/2/2/|P_2^{\bullet}, 9/4/2/\Sigma P_1^{\bullet}|, 4b/0/2/\Sigma P_2^{\bullet}|, 1b/6/2/|P_1^{\bullet}, 5b/7/1/A^{\bullet}}
 {\draw (\x,\y) node (\n) {${\o}$};}
 \draw (-0.3,1) node (l) {};
 \draw (7.2,2) node (ru) {};
 \draw (7.2,0) node (rd) {};
\foreach \n/\i in {1/1, 5/2, 8/3}
 \foreach \s/\t in {1/5, 5/8, 5/2, 8/6, 2/6, 6/9, 6/3, 9/7, 3/7, 7/4, 4b/5, 7/1b, 4/5b, 1b/5b}
 {\path[->] (\s) edge (\t);}
 \foreach \l/\r in {1/2, 3/4, 5/6, 6/7, 8/9, l/5, 7/5b, 1b/ru}
 {\path[dashed] (\r) edge (\l);}
 \draw[dotted] (0.5,-0.3) -- (0.5,2.3);
 \draw[dotted] (6.5,-0.3) -- (6.5,2.3);
 \end{tikzpicture}
    \caption{The Auslander--Reiten quiver of the category $\cal{K}^{[-1,0]}(\proj kQ/I)$, for $Q$ a $3$-cycle and $I$ generated by all paths of length $2$. Here $P_i^{\bullet}$ denotes the stalk complex with the indecomposable projective object $P_i \in \mod kQ/I$ in degree $0$, for $i = 1, 2, 3$.}
    \label{fig:AR_for_harp_cyclic_A3}
\end{figure}

This example also serves to show that the functor~$\cC\to \frac{\cal{C}}{( \cal{I}) \cap (\cal{P})}$ cannot be an extriangulated ideal quotient, as we will now see. For each square mesh in the Auslander--Reiten quiver, the composition of morphisms on one side factors through a projective object (i.e.\ an object of the form $P_i^{\bullet}$) and the composition on the other side factors through an injective object (i.e.\ an object of the form $\Sigma P_j^{\bullet}$). Thus, in the quotient by $( \cal{I}) \cap (\cal{P})$, these compositions are sent to zero. It follows that the class of a non-split conflation represented by such a square is not a weak kernel--cokernel pair in $\frac{\cal{C}}{( \cal{I}) \cap (\cal{P})}$, and hence there is no extriangulated structure on this quotient in which it is a conflation.

Concretely, we have
\[\frac{\cal{C}}{( \cal{I}) \cap (\cal{P})} (A^{\bullet}, B^{\bullet}) = 0 \neq \cal{C}(A^{\bullet}, B^{\bullet}).
\]
Applying $\frac{\cal{C}}{( \cal{I}) \cap (\cal{P})} (\blank, B^{\bullet})$ to the pair of composable morphisms $A \to P_2^{\bullet} \oplus \Sigma P_3^{\bullet} \to B^{\bullet}$, the resulting $3$-term sequence is not exact at the middle term. Thus, the class of this pair of composable morphisms is not a weak kernel--cokernel pair in the additive category $\frac{\cal{C}}{( \cal{I}) \cap (\cal{P})}$. 
\end{example}

\section{\texorpdfstring{$0$-Auslander extriangulated categories}{0-Auslander extriangulated categories}} \label{section:Auslander}

\subsection{Generalities}
Following intuition from \cite{PPPP}, a class of particularly nice extriangulated categories, named $0$-Auslander, was studied in detail in \cite{GNP2}.

\begin{definition} \label{def:0-Auslander}
An extriangulated category $(\cal{C}, \EE, \s)$ is said to be \emph{$0$-Auslander} if it is hereditary, has enough projectives, and has dominant dimension at least one. Explicitly, this means that

\begin{enumerate}
\item for each object $X \in \cal{C}$, there exists a conflation $P_1 \rightarrowtail P_0 \twoheadrightarrow X$ where $P_1$ and $P_0$ are projective in $\cal{C}$, and
\item for each projective object $P \in \cal{C}$, there exists a conflation $P \rightarrowtail Q \twoheadrightarrow I$ where $Q$ is projective-injective and $I$ is injective in $\cal{C}$.
\end{enumerate}    
\end{definition}

By \cite[Prop.~3.6]{GNP2}, this happens if and only if $\cal{C}$ is hereditary, has enough injectives, and has codominant dimension at least one, and so the definition is self-dual.

$0$-Auslander categories have the following properties.

\begin{itemize}
\item The functor sending $X\in\cal{C}$ to $\frac{\cal{C}}{(\cal{P}\cap\cal{I})} (-,X)|_{\overline{\cal{P}}}$ induces an equivalence of categories
\[
\cal{C}/\cal{I} \simeq \mod \overline{\cal{P}},
\]
where $\overline{\cal{P}}$ is the image of $\cal{P}$ in $\cal{C}/{(\cal{P}\cap\cal{I})}$ (see \cite[Prop.~3.11]{GNP2}).
\item Exchanging $P$ and $I$ in conflations $P\infl Q\defl I$, with $Q$ projective-injective, gives  quasi-inverse equivalences of additive categories $\Sigma \colon \overline{\cal{P}} \overset{\sim}{\leftrightarrows} \underline{\cal{I}} \cocolon \Omega$, for $\overline{\cal{P}}$ as above and $\underline{\cal{I}}$ the image of $\cal{I}$ in $\cal{C}/(\cal{P}\cap\cal{I})$ (see \cite[Lem.~3.12]{GNP2}). 
\item Under suitable conditions on $\cal{C}$, several natural notions of maximality for rigid objects agree. In particular, a presilting object is silting if and only if it is tilting, if and only if it is cotilting, if and only if it is complete rigid (see \cite[Thm.~4.3]{GNP2}).
\item There is a natural theory of irreducible mutations of silting objects and subcategories. Notably, there is precisely one exchange triangle for every indecomposable summand of a silting object, and so the difference between left and right mutations is transparent (see \cite[Thm.~4.26, Cor.~4.28]{GNP2}).
\end{itemize}

Various examples of $0$-Auslander categories are discussed in \cite[\S3.3]{GNP2} and \cite[\S5]{Palu2023}. Some prototypical examples include
     \begin{itemize}
         \item Amiot's cluster categories \cite{Amiot09} with certain relative structures,
        \item  certain extriangulated categories related to gentle algebras, blossoming quivers, and walks \cite[\S6]{GNP2} are $0$-Auslander, which will be discussed in Section \ref{section:gentle},
		\item  extended cohearts of co-t-structures in the sense of \cite{PZ}, and
        \item in particular, the full extension-closed subcategory  $\add A \ast \add \Sigma A \subset \per A$, for a non-positive dg algebra $A$.
     \end{itemize}
As a special case of the final example, if $A$ is an ordinary algebra (i.e.\ a dg algebra concentrated in degree $0$), the category $\harp{A} \subset \Kb{\proj A}$ of $2$-term complexes of projectives up to homotopy is $0$-Auslander. Similarly, the category of morphisms of projectives is $0$-Auslander.
 
\subsection{Ideal quotients}

\begin{proposition} \label{prop:quotient_0-Auslander_parts}
Let $(\cal{C}, \EE, \s)$ be an extriangulated category and let $J$ be an ideal generated by morphisms with injective domain and projective codomain.
\begin{enumerate}
\item\label{p:quot_0-Auslander_hereditary} If $(\cal{C}, \EE, \s)$ is hereditary, then so is $\cal{C}/J$. 
\item\label{p:quot_0-Auslander_enough} If $(\cal{C}, \EE, \s)$ had enough projectives, resp.\ enough injectives, then so does $\cal{C}/J$.
\item\label{p:quot_0-Auslander_domdim} If  $(\cal{C}, \EE, \s)$ has (co)dominant dimension at least 1, then so does $\cal{C}/J$.
\end{enumerate}
\end{proposition}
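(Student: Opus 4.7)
The plan is to leverage the key identity $\EE_J(Z,X) = \EE(Z,X)$ for all $X,Z \in \cC$, together with the characterisation of projectives and injectives in $\cC/J$ given by Proposition \ref{prop::rigid}\ref{p:rigid_proj}, and the fact that the projection functor $F\colon\cC\to\cC/J$ sends conflations to conflations and is the identity on objects. Each of the three parts should reduce directly to the corresponding property in $\cC$ without any new computation.

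For \ref{p:quot_0-Auslander_hereditary}, I would observe that the six-term exact sequences for $\cC/J$ produced by Corollary \ref{cor: 6term_in_quotient} have $\EE$-terms that coincide with those of the corresponding six-term sequences in $\cC$. Hereditariness is the statement that the connecting maps $\EE(Z,W)\to\EE(Y,W)\to\EE(X,W)\to 0$ and their dual variant are right exact; since every conflation in $\cC/J$ lifts to a conflation in $\cC$ and the $\EE$-groups agree, this property transfers term-by-term. For \ref{p:quot_0-Auslander_enough}, given any $X \in \cC/J$ (equivalently $X\in\cC$), one picks a deflation $P \twoheadrightarrow X$ from a conflation with $P$ projective in $\cC$, and its image under $F$ is a deflation from the projective object $F(P)$ onto $X$ in $\cC/J$ by Proposition \ref{prop::rigid}\ref{p:rigid_proj}; the injective case is dual.

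For \ref{p:quot_0-Auslander_domdim}, the projectives of $\cC/J$ are exactly the projectives of $\cC$ by Proposition \ref{prop::rigid}\ref{p:rigid_proj}. Thus, given a projective $P$ of $\cC/J$, we may apply the dominant dimension assumption in $\cC$ to obtain a conflation $P \rightarrowtail Q \twoheadrightarrow I$ in $\cC$ with $Q$ projective-injective and $I$ injective; pushing this conflation forward through $F$ yields a conflation in $\cC/J$, and Proposition \ref{prop::rigid}\ref{p:rigid_proj} ensures that $F(Q)$ remains projective-injective and $F(I)$ remains injective. The codominant dimension case is dual.

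No substantive obstacle arises: the extriangulated structure on $\cC/J$ constructed in Theorem \ref{t:extriangulatedQuotient} was designed precisely so that conflations, $\EE$-groups, and the projective/injective classes behave compatibly with the quotient functor. The only point that warrants attention is to verify that projectives in $\cC/J$ are honestly accounted for by projectives in $\cC$ (and similarly for injectives) before invoking the hypothesis on $\cC$ in part \ref{p:quot_0-Auslander_domdim}, but this is immediate from Proposition \ref{prop::rigid}\ref{p:rigid_proj} since $F$ is the identity on objects.
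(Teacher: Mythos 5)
Your proposal is correct and follows essentially the same route as the paper: part \ref{p:quot_0-Auslander_hereditary} by comparing the six-term sequences \eqref{eq:6term1} and \eqref{eq:quotient_6term1} term-by-term using $\EE_J = \EE$, and parts \ref{p:quot_0-Auslander_enough} and \ref{p:quot_0-Auslander_domdim} by combining Proposition~\ref{prop::rigid}\ref{p:rigid_proj} with the fact that the quotient functor preserves and reflects conflations.
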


\begin{proof}
By \cite[Prop.~2.1]{GNP2}, one of equivalent definitions for an extriangulated category to be hereditary is the right exactness of the defining bifunctor in the first argument. By definition of $\EE_J$, the sequence \eqref{eq:6term1} becomes exact at $\EE(X, \blank)$ when extended by the $0$ natural transformation if and only if so does the sequence \eqref{eq:quotient_6term1}. This proves part \ref{p:quot_0-Auslander_hereditary}.

Parts \ref{p:quot_0-Auslander_enough} and \ref{p:quot_0-Auslander_domdim} follow from Proposition \ref{prop::rigid}\ref{p:rigid_proj} combined with the fact that $F$ preserves and reflects conflations.
\end{proof}

By comparing Proposition \ref{prop:quotient_0-Auslander_parts} with Definition \ref{def:0-Auslander}, we immediately get the following.

\begin{corollary}
 If  $(\cal{C}, \EE, \s)$ is $0$-Auslander, then so is $\cal{C}/J$.
\end{corollary}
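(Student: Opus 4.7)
The statement is an immediate packaging of the three parts of Proposition~\ref{prop:quotient_0-Auslander_parts} through Definition~\ref{def:0-Auslander}. My plan is simply to unwind the definition and read off each ingredient from the proposition.

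Concretely, recall that $(\cal{C},\EE,\s)$ being $0$-Auslander means that it is hereditary, has enough projectives, and has dominant dimension at least one. Assuming this, the plan is to verify the three conditions for $(\cal{C}/J,\EE_J,\underline{\s})$ separately: (i) $\cal{C}/J$ is hereditary by Proposition~\ref{prop:quotient_0-Auslander_parts}\ref{p:quot_0-Auslander_hereditary}; (ii) $\cal{C}/J$ has enough projectives by Proposition~\ref{prop:quotient_0-Auslander_parts}\ref{p:quot_0-Auslander_enough}; and (iii) $\cal{C}/J$ has dominant dimension at least one by Proposition~\ref{prop:quotient_0-Auslander_parts}\ref{p:quot_0-Auslander_domdim}. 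Taken together, these three facts are exactly the conditions in Definition~\ref{def:0-Auslander}, so $\cal{C}/J$ is $0$-Auslander.

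There is no real obstacle here: all the content has already been placed in Proposition~\ref{prop:quotient_0-Auslander_parts}, and the only thing to check is that its hypotheses match the definition. The one small point worth emphasising — if the reader has not internalised it from the preceding proof — is that because the quotient functor $F\colon\cal{C}\to\cal{C}/J$ preserves and reflects both projectives and injectives (Proposition~\ref{prop::rigid}\ref{p:rigid_proj}) and preserves and reflects conflations (by construction of $\underline{\s}$), the witnessing conflations for ``enough projectives'' and ``dominant dimension $\geq 1$'' in $\cal{C}$ descend directly to witnessing conflations of the same type in $\cal{C}/J$. Hence the corollary follows without any further argument.
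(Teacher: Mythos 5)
Your proof is correct and is exactly the paper's argument: the authors also derive the corollary immediately by matching the three parts of Proposition~\ref{prop:quotient_0-Auslander_parts} against the three conditions of Definition~\ref{def:0-Auslander}. Your extra remark about $F$ preserving and reflecting projectives, injectives and conflations is the same observation the paper uses inside the proof of that proposition, so nothing further is needed.
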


\begin{definition} \label{def:tilting}
Let $(\cal{C}, \EE, \s)$ be a $0$-Auslander extriangulated category.
A subcategory  $\cal{S}\subseteq\cal{C}$ is \emph{tilting} if $\EE(\cal{S},\cal{S})=0$ and for any projective object $P\in\cal{C}$ there is a conflation $P\infl S_0\defl S_1$ with $S_0,S_1\in\cal{S}$. An object $S \in \cal{C}$ is tilting if the category $\add S$ is tilting.
\end{definition}

Moreover, in a $0$-Auslander category being tilting is equivalent to being silting; see \cite[Prop.~5.5]{AdachiTsukamoto} or \cite[Thm.~4.3]{GNP2} for the definition and the proof. A theory of irreducible left and right mutations of silting subcategories in $0$-Auslander categories is given in \cite[Cor.~4.28]{GNP2}, and can be seen as a $2$-term version of the silting mutation considered in \cite{AdachiTsukamoto}.

\begin{proposition} \label{prop:poset_silting}
Assume that $(\cal{C}, \EE, \s)$ is $0$-Auslander. A subcategory $\cal{S}\subseteq\cal{C}$ is tilting (equivalently, silting) if and only if $F(\cal{S})$ is tilting.  Moreover, the functor $F\colon \cal{C}\to\cal{C}/J$ preserves and reflects irreducible mutations.
\end{proposition}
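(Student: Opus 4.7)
The plan is to observe that the data defining tilting subcategories and exchange conflations in a $0$-Auslander category are exactly what $F$ preserves and reflects. For the first claim, \cref{def:tilting} characterises $\cal{S}\subseteq\cal{C}$ as tilting by (i)~$\EE(\cal{S},\cal{S})=0$ and (ii)~every projective $P\in\cal{C}$ admits a conflation $P\infl S_0\defl S_1$ with $S_i\in\cal{S}$. By \cref{t:extriangulatedQuotient}, the functor $F$ is the identity on objects, satisfies $\EE_J(X,Y)=\EE(X,Y)$ for all $X,Y\in\cal{C}$, and every conflation in $\cal{C}/J$ has the form $X\overset{\underline{i}}{\infl}Y\overset{\underline{p}}{\defl}Z$ for some conflation $X\overset{i}{\infl}Y\overset{p}{\defl}Z$ in $\cal{C}$ on the same underlying objects. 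Combined with \cref{prop::rigid}\ref{p:rigid_proj}, which matches the projectives of $\cal{C}$ with those of $\cal{C}/J$, this immediately yields that conditions (i)~and (ii)~for $\cal{S}$ are equivalent to the analogous conditions for $F(\cal{S})$.

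For the second assertion I would invoke \cite[Cor.~4.28]{GNP2}: in a $0$-Auslander category, each indecomposable non-projective-injective summand $S$ of a silting subcategory $\cal{S}$ determines a unique right and a unique left irreducible mutation, characterised by exchange conflations of the form $S\infl E\defl S^{*}$ and $S'\infl E'\defl S$ in which $E$ and $E'$ are minimal approximations of $S$ by the remaining summands of $\cal{S}$. The plan is to push these exchange conflations through $F$, and to lift any exchange conflation witnessing a mutation in $\cal{C}/J$ back along $F$, using the three ingredients from the previous paragraph together with the fullness of $F$, which transfers approximations in both directions.

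The main obstacle is verifying that $F$ induces a bijection between the indecomposable non-projective-injective summands of $\cal{S}$ and of $F(\cal{S})$, and that minimality of approximations is preserved and reflected. Preservation of indecomposability follows from the ring surjection $\End_{\cal{C}}(S)\twoheadrightarrow\End_{\cal{C}/J}(F(S))$, whose target is a quotient of a local ring and so is either local or zero. The target is zero precisely when $\id_S\in J$; but since $J$ is generated by morphisms factoring as an injective followed by a projective, any such relation realises $S$ as a retract of both an injective and a projective object, which in a Krull--Schmidt context forces $S$ to be projective-injective, contradicting our assumption on $S$. The symmetric argument in $\cal{C}/J$ yields the reverse correspondence. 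Minimality of approximations then transfers by a parallel argument, reducing to the interplay between $F$, the ideal $J$, and the Krull--Schmidt decomposition of $E$ outside the projective-injective locus.
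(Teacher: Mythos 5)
Your proof follows essentially the same route as the paper's: the first claim is \cref{prop::rigid} together with the fact that $F$ preserves and reflects conflations (since $\EE_J=\EE$ and conflations in $\cal{C}/J$ are exactly the images of conflations in $\cal{C}$), and the second is the exchange-conflation description of irreducible mutations from \cite[Cor.~4.28]{GNP2} combined with the same fact. The additional verifications you supply --- that $F$ matches up indecomposable non-projective-injective summands via the surjection $\End_{\cal{C}}(S)\twoheadrightarrow\End_{\cal{C}/J}(F(S))$ and that minimality of approximations transfers --- are correct and merely fill in details the paper leaves implicit.
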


\begin{proof}
The first statement follows from Proposition \ref{prop::rigid} combined with the fact that $F$ preserves and reflects conflations. The second statement follows from the description of mutations via exchange conflations in \cite[Cor.~4.28]{GNP2}, again combined with $F$ preserving and reflecting conflations. 
\end{proof}

As mentioned earlier, for an algebra $A$, the category $\harp{\proj A} \subset \Kb{\proj A}$ of $2$-term complexes of projectives up to homotopy is $0$-Auslander. We will see in later sections that such categories often appear as quotients of more general $0$-Auslander categories by the ideal generated by all morphisms from injective to projective objects. This motivates the following problem.

\begin{problem}
\label{p:0-auslander}
Determine whether the quotient of any (possibly non-algebraic) $0$-Auslander
extriangulated category by the ideal generated by morphisms from injective objects to projective
objects is equivalent, as an extriangulated category, to $\harp{\cal{A}}$ for some additive category $\cal{A}$.
\end{problem}

Xiaofa Chen \cite{chen20230} resolves Problem \ref{p:0-auslander} positively in the case of algebraic $0$-Auslander categories. To further investigate Problem~\ref{p:0-auslander}, it would be interesting to find families of non-algebraic~$0$-Auslander extriangulated categories.

\subsection{(Co)stable categories and comparison of bifunctors} \label{ss:costable}

Let $\cal{C}$ be a $0$-Auslander extriangulated category.
Then the costable category $\overline{\cal{C}}$ is abelian, and the restricted Yoneda functor $X\mapsto\Hom(\blank, X)|_\cal{P}\colon \cal{C} \to \overline{\cal{C}}$  is half-exact. Let us take the maximal relative structure $(\mathbb{F}, \s_{\mathbb{F}})$ on $\cal{C}$ with respect to which this half-exact functor becomes exact. This is a well-defined extriangulated structure \cite{Sakai}. Moreover, this is precisely the maximal relative structure where injective objects become projective, which was discussed in \cref{rem:various_quotients}. Notably, this means that $\overline{\cal{C}}$ is the quotient of $\cal{C}$ by certain projective-injective objects in this relative structure. Since the conflations in the relative structure are precisely the lifts of short exact sequences in $\overline{\cal{C}}$, and each short exact sequence admits such a lift, we have
\[
\mathbb{F}(A, B) \cong \Ext^{1}(\Hom(\blank,  A)|_{\cal{P}}, \Hom(\blank, B)|_{\cal{P}})
\]
for all $A, B \in \cal{C}$, where $\Ext^1$ is the usual bifunctor of first extensions in the abelian category $\overline{\cal{C}}$.

This is a somewhat surprising fact. Indeed, here is what it means in the case of the $2$-term complexes of projectives up to homotopy over a finite-dimensional algebra.

\begin{lemma}
Given a finite-dimensional algebra $A$ over a field $\K$, the bifunctor \[\Ext^1_{A}\colon (\mod A)^{\op} \times \mod A \to \mod\K\] can be lifted to a bifunctor $\mathbb{F}\colon \har{\proj A}^{\op} \times \har{\proj A} \to \mod\K.$ Moreover, $\mathbb{F}$ is a sub-bifunctor of the bifunctor $\mathbb{E}$ defining the standard extriangulated structure on $\har{\proj A}$.
\end{lemma}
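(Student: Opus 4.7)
The plan is to specialise the construction of the relative substructure $(\mathbb{F}, \s_{\mathbb{F}})$ recalled just above the lemma to the prototypical $0$-Auslander category $\cal{C} = \har{\proj A}$, which is listed among the examples in \cref{section:Auslander}.

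First, I would make the ingredients of the general construction explicit in this case. The projective objects $\cal{P}$ of $\har{\proj A}$ are, up to isomorphism, the stalk complexes $P^{\bullet}$ with $P \in \proj A$ placed in degree $0$; the injectives are $\cal{I} = \Sigma\cal{P}$; and $\cal{P} \cap \cal{I} = 0$, so $\overline{\cal{P}} = \cal{P}$. For any $X \in \har{\proj A}$ and $P \in \proj A$, the natural isomorphism
\[
\Hom_{\har{\proj A}}(P^{\bullet}, X) \;\cong\; \Hom_A(P, H^0 X)
\]
identifies the restricted Yoneda functor $X \mapsto \Hom(\blank, X)|_{\cal{P}}$ with the cokernel functor $H^0 \colon \har{\proj A} \to \mod A$, and the equivalence $\cal{C}/\cal{I} \simeq \mod \overline{\cal{P}}$ recalled above the lemma specialises to $\overline{\cal{C}} \simeq \mod A$, an equivalence of abelian categories since $\overline{\cal{C}}$ is abelian in any $0$-Auslander category.

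Next, I would invoke the maximal relative substructure $(\mathbb{F}, \s_{\mathbb{F}}) \leq (\mathbb{E}, \s)$ on $\har{\proj A}$ with respect to which $H^0$ becomes exact. The isomorphism displayed immediately above the lemma then reads
\[
\mathbb{F}(X, Y) \;\cong\; \Ext^1_{\overline{\cal{C}}}\bigl(\Hom(\blank, X)|_{\cal{P}},\, \Hom(\blank, Y)|_{\cal{P}}\bigr) \;\cong\; \Ext^1_A(H^0 X, H^0 Y),
\]
the second isomorphism coming from the identification $\overline{\cal{C}} \simeq \mod A$ as abelian categories. This simultaneously produces the desired lift of $\Ext^1_A$ to a bifunctor on $\har{\proj A}^{\op} \times \har{\proj A}$ and exhibits it as a sub-bifunctor of $\mathbb{E}$, since by construction $(\mathbb{F}, \s_{\mathbb{F}})$ is a substructure of $(\mathbb{E}, \s)$.

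The main point requiring care is verifying that the equivalence $\overline{\cal{C}} \simeq \mod A$ respects $\Ext^1$ and not merely the underlying additive structure. This should follow from the description of $(\mathbb{F}, \s_{\mathbb{F}})$: its conflations are precisely those mapped by $H^0$ to short exact sequences in $\mod A$, and every short exact sequence in $\mod A$ lifts to such a conflation by choosing projective presentations of its outer terms compatibly. Combined with the fact that $\har{\proj A}$ is hereditary, this matches $\mathbb{F}(X, Y)$ with the usual Yoneda $\Ext^1_A(H^0 X, H^0 Y)$, completing the identification.
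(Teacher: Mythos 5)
Your proposal is correct and follows essentially the same route as the paper, which states the lemma as a direct specialisation of the immediately preceding general discussion (identifying the restricted Yoneda functor with $\cohom{0}$, the costable category with $\mod A$, and invoking the displayed isomorphism $\mathbb{F}(A,B)\cong\Ext^1(\Hom(\blank,A)|_{\cal{P}},\Hom(\blank,B)|_{\cal{P}})$). Your write-up in fact supplies more detail than the paper, which gives no separate proof.
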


Note that one could have rather expected that $\Ext^1_{ A}$ might be a quotient of $\mathbb{E}$ rather than a sub-bifunctor. Indeed, the Auslander--Reiten formulas (see, e.g., \cite[Thm.~IV.2.13]{assem_skowronski_simson_2006}) state that for any two $A$-modules $M, N$, we have an isomorphism
\begin{equation} \label{eq:AR_formulas}
\Ext^1_{A}(M, N) \cong D\injstabHom_{A}(N, \tau M),
\end{equation}
where $\tau$ denotes the Auslander--Reiten translation.
At the same time, it is known that for projective presentations $P_M, P_N$ of $M$ and $N$, we have an isomorphism
\begin{equation} \label{eq:PG_formula}
\mathbb{E}(P_M, P_N) \cong D\Hom_{A}(N, \tau M \oplus I),
\end{equation}
for some finite-dimensional injective $A$-module $I$ which vanishes if the projective presentation $P_M$ is minimal (see \cite[Lem.~2.6]{Plamondon13}). 
It is thus tempting to think that $\Ext^1_{ A}(\blank, \blank)$ can be obtained by taking a quotient of $\mathbb{E}$ by the subspace dual to that of certain morphisms factoring through injective $A$-modules.
This turns out to be a wrong---or, rather, a non-functorial---perspective: the subspace of morphisms one has to quotient out does not have a well-defined lift to the category $\har{\proj A}$. Note also that neither $\tau$ nor the isomorphism \eqref{eq:PG_formula} are functorial, while the isomorphism \eqref{eq:AR_formulas} is functorial in both arguments. 

A similar discussion can be made about stable categories of $0$-Auslander categories.

\subsection{Negative extensions}
For a class of $0$-Auslander categories embedded, as additive categories, into triangulated categories, a version of negative extensions in the sense of \cite{GNP1} (see also \cite{adachi2023intervals}) has been defined in \cite{GNP2}. These are bivariant $\delta$-functors formed by additive bifunctors $\EbbE^i$ for $i < 0,$ together with connecting maps natural with respect to extensions in each argument. 
The intuition behind the construction of $\EbbE^{-1}$ stemmed from the construction in \cref{section:quotients}. Let us briefly explain the setting and the relationship with the present work. 

Let $\cD$ be a triangulated category with a rigid, full subcategory $\cT$.
Define $\cC$ to be the full subcategory $\cT\ast\Sigma\cT$ of $\cD$.
Endow $\cD$ with the largest relative extriangulated structure $(\cD, \EE^{\Sigma \cT}, \s|_{\EE^{\Sigma\cal{T}}})$ making objects in $\Sigma\cT$ injective.
Then $\cC$ is extension-closed in $\cD$ for this extriangulated structure, and thus inherits an extriangulated structure.  The latter is $0$-Auslander, and has $\cT$ and $\Sigma\cT$ as the full subcategories of projectives and of injectives, respectively. The same structure was defined in \cite{GNP2} by restricting the largest relative structure on $\cD$ making objects in $\cT$ projective.

It is proved in \cite[\S7.4]{GNP2}  that the bifunctor $\EbbE^{-1}\colon \cC^{\op} \times \cC \to \Ab$ defined on objects by the formula
\[
 \EbbE^{-1}(C,A) := \frac{\cD}{( \Sigma\cT\to\Sigma^{-1}\cT)}(C,\Sigma^{-1}A),
\]
defined on morphisms in a natural way induced from $\cD(\blank, \blank)\colon \cD^{\op} \times \cD \to \Ab$, and considered together with certain natural transformations $\delta^\sharp_{-1}, \delta_\sharp^{-1}$, gives a \emph{first negative extension} on $(\cC, \EE^{\Sigma \cT}, \s|_{\EE^{\Sigma\cal{T}}})$; see \cite{adachi2023intervals} for the definition.

Note that $\EbbE^{-1}\colon \cC^{\op} \times \cC \to \Ab$ is just a restriction of the bifunctor $\frac{\cD}{( \Sigma\cT\to\Sigma^{-1}\cT)}$ to the full subcategory $\Sigma^{-1}\cT \ast \cT \ast \Sigma \cT \subset \cD$, considered with the maximal structure making objects in $\Sigma \cT$ injective. It is straightforward to check that in this subcategory, $\Sigma^{-1} \cT$ is the full subcategory of projectives. Thus, the bifunctor $\cD/( \Sigma\cT\to\Sigma^{-1}\cT)$, when restricted to this subcategory, is nothing by the quotient of the $\Hom$-bifunctor by the ideal of morphisms with injective domain and projective codomain. The proof given in \cite[\S7.4]{GNP2} of the fact $(\EbbE^{-1}, \delta^\sharp_{-1}, \delta_\sharp^{-1})$ is a first negative extension amounts, roughly speaking, to verifying that suitable lifts of the sequences \eqref{eq:quotient_6term1} and \eqref{eq:quotient_6term2} from  $\frac{\Sigma^{-1}\cT \ast \cT \ast \Sigma \cT}{(\Sigma\cT\to\Sigma^{-1}\cT)}$ to $\Sigma^{-1}\cT \ast \cT \ast \Sigma \cT$ (both considered with the maximal structures making objects in $\Sigma \cT$ injective) are exact.

\section{\texorpdfstring{Relative $2$-term categories}{Relative 2-term categories}}
\label{s:rel-2-term}

Let $\cal{A}$ be a
dg category, and $\per\cal{A}$ the category of perfect dg $\cal{A}$-modules, with morphisms considered up to homotopy. Then the Yoneda functor\footnote{The character $\yo$, pronounced `yo', is the first in the hiragana spelling of Yoneda (\yo\jpne\da). In \cite{RiehlVerity},
Riehl and Verity acknowledge Ravenel for this notation.} $\yo\colon\cal{A}\to\per{\cal{A}}$ defined by $\yo X=\Hom_{\cal{A}}(\blank,X)$ is a fully faithful embedding, and we view $\cal{A}$ as a full subcategory of $\per\cal{A}$ via this embedding.  We denote by~$\cA\ast\Sigma\cA$ the full subcategory of~$\per\cA$ of objects~$X$ that sit in a triangle~$A\to X\to \Sigma A' \to \Sigma A$, where~$A,A'\in \add \cA$. Note that we abuse notation by writing $\cA\ast\Sigma\cA$ for $\add\cA\ast\add\Sigma\cA$. Since~$\cA$ is non-positively graded,~$\cA\ast\Sigma \cA$ is stable under taking direct summands by~\cite[Lem.~2.6]{IyamaYang18}.  It is also stable under extensions, and thus inherits an extriangulated structure.

Recall that an object $Q\in\per{\cal{A}}$ is in the \emph{Bongartz co-completion} of $\cal{E}\subseteq \add\cA\ast\add\Sigma\cA$ if and only if there is a triangle
\begin{equation}
\label{eq:bong-co-comp}
\begin{tikzcd}
X\arrow{r}{\varphi}&E_X\arrow{r}&E'_X\arrow{r}&\Sigma X
\end{tikzcd}
\end{equation}
in which $X\in\cal{A}$, the morphism $\varphi$ is a left $\cal{E}$-approximation of $X$, and $Q\in\add(E_X\oplus E'_X)$.

\begin{theorem}
\label{t:rel-2-term}
Let $\cal{A}$ be a non-positively graded dg category, and let $\cal{E}\subseteq\cal{A}$ be a full subcategory which is covariantly finite as a subcategory of~$\per\cA$. Consider the subcategory $\cal{A}\ast\Sigma\cal{A}\subseteq\per\cal{A}$ of $2$-term objects, and write
\[\lperp(\Sigma\cal{E})=\{X\in\cal{A}\ast\Sigma\cal{A}:\text{$\Hom_{\cal{A}}(X,\Sigma E)=0$ for all $E\in\cal{E}$}\}\]
for the left perpendicular category of~$\Sigma\cal{E}$ in~$\cal{A}\ast\Sigma\cal{A}$. Then
\begin{enumerate}
\item\label{t:rel-2-term-ext-closed} $\lperp(\Sigma\cal{E})$ is extension-closed in $\per\cal{A}$, hence extriangulated,
\item\label{t:rel-2-term-proj-inj} the projective objects in $\lperp(\Sigma\cal{E})$ are those of $\add\cal{A}$, while the injective objects are those of the Bongartz co-completion of $\cal{E}\subseteq\cal{A}\ast\Sigma\cal{A}$,
\item\label{t:rel-2-term-0Aus} $\lperp(\Sigma\cal{E})$ is a $0$-Auslander extriangulated category.
\end{enumerate}
\end{theorem}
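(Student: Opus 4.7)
To prove \cref{t:rel-2-term}, my plan is to handle the three parts in sequence, exploiting throughout the non-positive grading of $\cA$ (which ensures that $\Hom_{\per\cA}(A,\Sigma^i A')=H^i\Hom_{\cA}(A,A')=0$ for $A,A'\in\cA$ and $i>0$) and the covariant finiteness of $\cE$. Part \ref{t:rel-2-term-ext-closed} is the starting point: given a distinguished triangle $X\to Y\to Z\to \Sigma X$ in $\per\cA$ with $X,Z\in\lperp(\Sigma\cE)$, the extension-closedness of $\cA\ast\Sigma\cA$ in $\per\cA$ places $Y$ in $\cA\ast\Sigma\cA$, and for each $E\in\cE$, applying $\Hom_{\per\cA}(\blank,\Sigma E)$ sandwiches $\Hom_{\per\cA}(Y,\Sigma E)$ between two vanishing groups, forcing $Y\in\lperp(\Sigma\cE)$. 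The extriangulated structure is then inherited from $\per\cA$.

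For the projectives in part \ref{t:rel-2-term-proj-inj}, non-positivity immediately gives $\cA\subseteq\lperp(\Sigma\cE)$, and a short long-exact-sequence argument using the defining triangle $B\to Y\to\Sigma B'$ of any $Y\in\cA\ast\Sigma\cA$ shows that $\Hom_{\per\cA}(A,\Sigma Y)=0$ for all $A\in\cA$; this places $\add\cA$ among the projectives. Conversely, a projective $P\in\lperp(\Sigma\cE)$ fits in a triangle $A'\to A\to P\to\Sigma A'$ with $A,A'\in\cA$, and rotating yields a conflation $A'\infl A\defl P$ in $\lperp(\Sigma\cE)$ (all three terms already lying there), which must split by projectivity, placing $P\in\add\cA$.

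The heart of the argument, and simultaneously the source of the injectives and of part \ref{t:rel-2-term-0Aus}, is the Bongartz co-completion. For each $X\in\cA$, covariant finiteness of $\cE$ provides a left $\cE$-approximation $\varphi_X\colon X\to E_X$, which I extend to a triangle $X\xrightarrow{\varphi_X}E_X\to E'_X\to\Sigma X$. The left approximation property forces $\Hom_{\per\cA}(E'_X,\Sigma E)=0$ for all $E\in\cE$, so $E'_X\in\lperp(\Sigma\cE)$; applying $\Hom_{\per\cA}(Y,\blank)$ to the same triangle for $Y\in\lperp(\Sigma\cE)$ and using the definition of $\lperp(\Sigma\cE)$ to kill $\Hom(Y,\Sigma E_X)$, together with the non-positive grading to kill $\Hom(Y,\Sigma^2 X)$, shows $\Hom(Y,\Sigma E'_X)=0$, so $E'_X$ is injective. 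Since $E_X\in\cE\subseteq\add\cA$ is manifestly both projective and injective, the conflation $X\infl E_X\defl E'_X$ realises the dominant-dimension-at-least-one axiom for every projective $X\in\cA$. Together with hereditariness (inherited from $\cA\ast\Sigma\cA$ via extension-closedness) and enough projectives (supplied by the rotation of the defining triangle of each object of $\lperp(\Sigma\cE)$), this yields the $0$-Auslander property. Finally, any injective $I\in\lperp(\Sigma\cE)$ embeds into a Bongartz object via such a conflation and then splits off, placing it in the Bongartz co-completion; the reverse inclusion is already established.

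The main obstacle I anticipate is the two-sided verification that $E'_X$ lies in $\lperp(\Sigma\cE)$ and is injective there, as this step delicately balances the surjectivity in the defining property of the left approximation $\varphi_X$ against the higher-Hom vanishings coming from the non-positive grading of $\cA$, and must be arranged consistently with the identification of $\add\cA$ as the projectives.
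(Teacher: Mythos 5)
Your handling of part (i), of the identification of the projectives with $\add\cA$, of the forward inclusion for the injectives (objects of the Bongartz co-completion are injective), and of the $0$-Auslander property in part (iii) follows essentially the same route as the paper: extension-closure via the cohomological functor $\Hom_{\cA}(\blank,\Sigma E)$, the splitting of the presentation triangle of a projective object, the two long exact sequences obtained from the approximation triangle $X\to E_X\to E'_X\to\Sigma X$, and the observation that the presentation triangles of objects of $\lperp(\Sigma\cE)$ already provide length-one projective resolutions. These steps are correct as you describe them.

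The genuine gap is in the converse inclusion for the injectives, where you assert that ``any injective $I\in\lperp(\Sigma\cE)$ embeds into a Bongartz object via such a conflation and then splits off.'' The conflation you have constructed, $X\infl E_X\defl E'_X$, exists only for $X\in\add\cA$, i.e.\ for \emph{projective} objects of $\lperp(\Sigma\cE)$; an arbitrary injective object need not be projective, and nothing in your argument produces an inflation from it into the Bongartz co-completion. (Completing a left $\cE$-approximation $I\to E_I$ of a general two-term object $I$ to a triangle does not help directly, since its cone lies in $\add\cA*\add\Sigma\cA*\add\Sigma^2\cA$ and need not be a two-term object.) To close this you would either have to first prove that $\lperp(\Sigma\cE)$ has enough injectives contained in the Bongartz co-completion---which is essentially the self-duality of the $0$-Auslander axioms and requires a separate horseshoe-type construction not available for free at this stage---or argue as the paper does: the triangle $X\to E_X\to E'_X$ shows that the Bongartz co-completion generates $\cA$ and hence $\per\cA$, so it is silting and in particular maximal rigid among two-term objects; since for any injective $I$ and any $Q$ in the Bongartz co-completion all the groups $\Hom_{\cA}(I\oplus Q,\Sigma(I\oplus Q))$ are extension groups with injective target and therefore vanish, maximality forces $I$ into the Bongartz co-completion.
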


Compare with \cite[Prop.~2.9]{GNP2}, where it was shown that~$\lperp(\Sigma\cal{E})\cap(\Sigma^{-1}\cal{E})\rperp$ is hereditary, in the case where~$\cE$ is rigid in~$\per\cA$, and with \cite[\S3.3.8]{GNP2} where it was explained that $\cal{A}\ast\Sigma\cal{A}$ is $0$-Auslander. Both results were stated for dg algebras, but apply to dg categories as well. 

\begin{theorem}
\label{t:rel-2-term-functors}
Keep notation as in Theorem~\ref{t:rel-2-term}. Then in the commutative diagram
\begin{equation}
\begin{tikzcd}[column sep=35pt]
\label{eq:$2$-term-diag}
\per{\cal{A}}\arrow{r}{\blank\ltens_{\cal{A}}\cohom{0}\cal{A}}&\per\cohom{0}\cal{A}\arrow{r}{\blank\ltens_{\cohom{0}\cal{A}}\cohom{0}\cal{A}/\cohom{0}\cal{E}}&\per\cohom{0}\cal{A}/\cohom{0}\cal{E}\\
\lperp(\Sigma\cal{E})\arrow[hookrightarrow]{u}\arrow{r}{G}&\lperp(\Sigma\cohom{0}\cal{E})\arrow[hookrightarrow]{u}\arrow{r}{F}&\harp{\proj\cohom{0}\cal{A}/\cohom{0}\cal{E}},\arrow[hookrightarrow]{u}
\end{tikzcd}
\end{equation}
the functors $F$ and $G$, given by restricting the derived tensor products, are full and essentially surjective extriangulated functors. Moreover, $\ker(F G)$ is the ideal of $\lperp(\Sigma\cal{E})$ generated by morphisms from an injective object to a projective object.
\end{theorem}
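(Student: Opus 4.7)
The plan is to verify each claim in sequence, exploiting that $G$ and $F$ are restrictions of triangle functors between perfect derived categories, with the non-positive grading of $\cal{A}$ providing the key technical leverage.

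First I would show that $G$ and $F$ are well-defined and extriangulated. The functor $\blank\ltens_{\cal{A}}\cohom{0}\cal{A}\colon\per\cal{A}\to\per\cohom{0}\cal{A}$ is triangulated and maps $\add\cal{A}$ into $\add\cohom{0}\cal{A}$, so it sends $\cal{A}\ast\Sigma\cal{A}$ into $\cohom{0}\cal{A}\ast\Sigma\cohom{0}\cal{A}$. A computation using the defining triangle of $X\in\cal{A}\ast\Sigma\cal{A}$ and the non-positivity of $\cal{A}$ produces a natural isomorphism $\Hom_{\per\cal{A}}(X,\Sigma E)\cong\Hom_{\per\cohom{0}\cal{A}}(GX,\Sigma GE)$ for $E\in\cal{E}$, so $G$ sends $\lperp(\Sigma\cal{E})$ into $\lperp(\Sigma\cohom{0}\cal{E})$; the analogous check for $F$ is simpler since everything is already concentrated in degree zero. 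As restrictions of triangle functors to extension-closed subcategories (Theorem \ref{t:rel-2-term}), both $G$ and $F$ are extriangulated.

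Essential surjectivity and fullness both follow from a single lifting principle. For essential surjectivity of $G$, given $Y=(Q^{-1}\to Q^0)\in\lperp(\Sigma\cohom{0}\cal{E})$ one picks $P^i\in\add\cal{A}$ with $\cohom{0}P^i\cong Q^i$ and lifts the differential via the surjection $\Hom_{\per\cal{A}}(P^{-1},P^0)\twoheadrightarrow\Hom_{\cohom{0}\cal{A}}(Q^{-1},Q^0)$ coming from non-positivity; the cone lies in $\lperp(\Sigma\cal{E})$ by the isomorphism above. Essential surjectivity of $F$ and fullness of both functors are established by variants of the same argument. For the inclusion $J\subseteq\ker(FG)$, an injective $I$ of $\lperp(\Sigma\cal{E})$ is a summand of $E_X\oplus E'_X$ from a triangle $X\to E_X\to E'_X\to\Sigma X$ with $X\in\cal{A}$ and $E_X\in\cal{E}$; since $FG$ annihilates $\cal{E}$, the object $FG(I)$ is a summand of $FG(E'_X)$, which in the resulting triangle is a shift of $FG(X)$ and is therefore injective in $\harp{\proj\cohom{0}\cal{A}/\cohom{0}\cal{E}}$. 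Combined with the fact that $FG$ sends $\add\cal{A}$ to projectives of $\harp$ and that morphisms from injectives to projectives vanish in any $\harp{\proj R}$, this gives $FG(J)=0$.

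The main obstacle is the reverse inclusion $\ker(FG)\subseteq J$, equivalently that the induced functor $\overline{FG}\colon\lperp(\Sigma\cal{E})/J\to\harp{\proj\cohom{0}\cal{A}/\cohom{0}\cal{E}}$ is faithful. The crucial intermediate lemma is that for $A,A'\in\add\cal{A}$, the subgroup $J\cap\Hom_{\per\cal{A}}(A,A')$ consists precisely of morphisms factoring through an object of $\add\cal{E}$: on the one hand, any $E\in\cal{E}$ is both projective (as $E\in\cal{A}$) and injective (taking $E_E=E$, $E'_E=0$ in the Bongartz co-completion), so any morphism through $\cal{E}$ lies in $J$; on the other hand, a morphism $A\to I$ into an injective of the Bongartz co-completion decomposes into components landing in $E_Z$ and $E'_Z$ summands, and the vanishing $\Hom_{\per\cal{A}}(A,\Sigma Z)=0$ forced by non-positivity makes $\Hom_{\per\cal{A}}(A,E_Z)\twoheadrightarrow\Hom_{\per\cal{A}}(A,E'_Z)$ surjective, so every composite $A\to I\to P\to A'$ factors through $\cal{E}$. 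Given this lemma, I would reduce the general case to morphisms out of a projective by applying $\Hom(\blank,Y)$ to a projective presentation $P_1^X\infl P_0^X\defl X$ and invoking the long exact sequences available in the quotient $\lperp(\Sigma\cal{E})/J$ (Corollary \ref{cor: 6term_in_quotient}). A direct comparison then shows that $\Hom_{\lperp(\Sigma\cal{E})/J}(P,Y)$ and $\Hom_{\harp}(FGP,FGY)$, computed via a projective presentation of $Y$, are both canonically isomorphic to $\Hom_{\per\cal{A}}(P,P_0^Y)$ modulo the image of $\Hom_{\per\cal{A}}(P,P_1^Y)$ together with the morphisms factoring through $\cal{E}$, completing the proof.
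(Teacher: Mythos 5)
Your treatment of the more routine parts is essentially sound: the extriangulated-ness of $F$ and $G$ as restrictions of triangle functors, the inclusion $J\subseteq\ker(FG)$ via the triangle $X\to E_X\to E'_X\to\Sigma X$, and the observation that for $A,A'\in\add\cA$ the intersection $J\cap\Hom_{\per\cA}(A,A')$ is exactly the set of morphisms factoring through $\add\cE$ are all correct (the last of these is also implicit in the paper's argument). Two steps, however, do not go through as written.

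First, a smaller issue: essential surjectivity of $F$ is not ``a variant of the same argument'' as for $G$. Lifting the terms and the differential of an object of $\harp{\proj\cohom{0}\cA/\cohom{0}\cE}$ produces an object of $\harp{\proj\cohom{0}\cA}$ that in general does \emph{not} lie in $\lperp(\Sigma\cohom{0}\cE)$: membership in the perpendicular category is invisible after applying $F$, since $FE=0$ for $E\in\cE$, so there is no isomorphism of $\Hom$-groups to appeal to as there was for $G$. The paper repairs the lift by replacing the differential $\bar{x}$ with $\left(\begin{smallmatrix}\bar{x}\\\varphi\end{smallmatrix}\right)$, where $\varphi$ is a left $\cohom{0}\cE$-approximation of the degree $-1$ term; this Bongartz-type modification is where covariant finiteness of $\cE$ enters, and it is absent from your sketch.

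Second, and more seriously, the reduction of $\ker(FG)\subseteq J$ to morphisms out of a projective fails. Your computation identifying $\Hom_{\lperp(\Sigma\cE)/J}(P,Y)$ with $\Hom(FGP,FGY)$ for $P\in\add\cA$ is correct, but to deduce faithfulness of the induced functor on a general object $X$ from its projective presentation $P_1^X\infl P_0^X\defl X$ you would need the precomposition map $\Hom_{\lperp(\Sigma\cE)/J}(X,Y)\to\Hom_{\lperp(\Sigma\cE)/J}(P_0^X,Y)$ to be injective, i.e.\ exactness of the sequence of Corollary~\ref{cor: 6term_in_quotient} at its \emph{leftmost} term. This is not part of the extriangulated axioms and genuinely fails here: in $\per\cA$ the kernel of $\Hom(X,Y)\to\Hom(P_0^X,Y)$ is the image of $\Hom(\Sigma P_1^X,Y)$, which is nonzero as soon as $\cA$ has cohomology in degree $-1$, and the object $\Sigma P_1^X$ through which such morphisms factor does not lie in $\lperp(\Sigma\cE)$, so you cannot conclude that they belong to $J$. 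This is exactly the crux of the paper's proof of Lemma~\ref{l:kerF}, which instead works with the injective coresolution $X\to Q_X\to Q_X'$ taken \emph{inside} $\lperp(\Sigma\cE)$ (coming from the Bongartz co-completion) and performs two intermediate homotopy corrections before it can exhibit a morphism of $\ker(FG)$ as a composite through an injective followed by a projective. Your outline needs an argument of this kind to close the gap.
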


After some of the results of this paper were presented at ARTA IX in June 2023, Xiaofa Chen \cite{chen20230} proved versions of part \ref{t:rel-2-term-0Aus} of \cref{t:rel-2-term} and a part of \cref{t:rel-2-term-functors}  for exact dg categories.

\begin{remark}
When~$\cA$ is concentrated in degree zero, the leftmost square in \cref{t:rel-2-term-functors} is superfluous, since its horizontal functors are identities.
\begin{itemize}
 \item An important class of examples where this happens is when~$\cA$ is a finite-dimensional algebra~$\Lambda$ (viewed as a dg category with one object and morphisms in degree~$0$).
 \item This observation implies that we may apply Theorem~\ref{t:rel-2-term-functors} to $\cohom{0}\cal{A}$ to see that the kernel of the functor $F$ from~\eqref{eq:$2$-term-diag} is also that generated by morphisms from an injective to a projective object, now inside $\lperp{(\Sigma\cohom{0}\cal{E})}$.
\end{itemize}
\end{remark}

\begin{corollary} \label{cor:$2$-term_quotient_rigid}
The functors $F$, $G$, and $FG$ send rigid objects to rigid objects and silting objects to silting objects. Moreover, they induce isomorphisms of the posets of silting objects and hence, in particular, the Hasse graphs of these posets \cite{BrustleYang,AiharaIyama}.
\end{corollary}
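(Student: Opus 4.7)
The plan is to reduce the corollary to \cref{prop::rigid} and \cref{prop:poset_silting}, which describe the behaviour of rigid and silting subcategories under extriangulated ideal quotients by ideals generated by morphisms from injectives to projectives.

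First, I would use \cref{t:rel-2-term-functors} to identify $FG$, up to an equivalence of the target, with an extriangulated ideal quotient of $\lperp(\Sigma\cal{E})$ by such an ideal; applying the same theorem to $\cohom{0}\cal{A}$ (viewed as a dg category concentrated in degree zero) identifies $F$ with the analogous quotient of $\lperp(\Sigma\cohom{0}\cal{E})$. A small technicality here is that \cref{t:rel-2-term-functors} only provides a full, essentially surjective extriangulated functor with the stated kernel, rather than a literal quotient functor; I would bridge this using \cref{fact:equivalences-BTS} to transport the extriangulated ideal-quotient structure produced by \cref{t:extriangulatedQuotient} across the induced additive equivalence with $\harp{\proj\cohom{0}\cal{A}/\cohom{0}\cal{E}}$. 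Once this identification is made, \cref{prop::rigid} and \cref{prop:poset_silting} apply directly to $FG$ and to $F$, yielding preservation and reflection of rigidity and silting, bijections of silting subcategories, and preservation and reflection of irreducible mutations, hence isomorphisms of the associated Hasse graphs.

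The corresponding statements for $G$ will then fall out of a two-out-of-three argument on the factorisation $FG = F \circ G$. For any subcategory $S \subseteq \lperp(\Sigma\cal{E})$, its image $G(S)$ is rigid (respectively silting) if and only if $F(G(S)) = FG(S)$ is, since $F$ reflects both properties, and the latter in turn is equivalent to $S$ itself being rigid (respectively silting), since $FG$ also reflects both. The induced map on silting classes is then forced to be a bijection by the identity $G_\ast = F_\ast^{-1} \circ (FG)_\ast$, in which both $F_\ast$ and $(FG)_\ast$ are the bijections coming from \cref{prop:poset_silting}; the same factorisation argument, applied to covering relations, will give compatibility of $G$ with irreducible mutations and hence the Hasse graph isomorphism.

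The main obstacle I anticipate is the technical step of upgrading $F$ and $FG$ to genuine extriangulated ideal quotients rather than just full, essentially surjective extriangulated functors with the correct kernels; once that is in hand, the rest of the argument is a formal consequence of \cref{prop::rigid,prop:poset_silting} together with the factorisation $FG = F \circ G$.
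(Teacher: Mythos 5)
Your proposal is correct and follows essentially the same route as the paper, whose entire proof is ``Apply Proposition~\ref{prop:poset_silting}''; you are simply making explicit the implicit steps, namely factoring $F$ and $FG$ through the extriangulated ideal quotients of Theorem~\ref{t:extriangulatedQuotient} using Theorem~\ref{t:rel-2-term-functors}, and deducing the statements for $G$ from those for $F$ and $FG$ by two-out-of-three. The only mild caveat is that when you invoke Fact~\ref{fact: equivalences}\ref{fact:equivalences-BTS} to transport structure you should note that the resulting extriangulated structure on $\harp{\proj\cohom{0}\cal{A}/\cohom{0}\cal{E}}$ is the standard one (as the paper verifies in the corollary following Corollary~\ref{c:kerG}), but this does not affect the substance of the argument.
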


\begin{proof}
Apply \cref{prop:poset_silting}.
\end{proof}

For the rest of the section we fix notation as in Theorems~\ref{t:rel-2-term} and \ref{t:rel-2-term-functors} and prove these results, dividing them into a series of lemmas.  To compress notation, we will be writing~$\Hom_{\cA}$ instead of~$\Hom_{\per\cA}$.

\begin{lemma}
\label{l:ext-closed}
The subcategory $\lperp(\Sigma\cal{E})\subseteq\cal{A}*\Sigma\cal{A}$ is extension-closed in $\per{\cal{A}}$.
\end{lemma}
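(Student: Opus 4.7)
The plan is to take an arbitrary conflation in $\per\cal{A}$ with outer terms in $\lperp(\Sigma\cal{E})$ and verify that the middle term lies in $\lperp(\Sigma\cal{E})$ as well. By definition this requires two things: membership in $\cal{A}\ast\Sigma\cal{A}$, and vanishing of $\Hom_{\cal{A}}(\blank,\Sigma E)$ for every $E\in\cal{E}$.

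Concretely, suppose
\[
X\longrightarrow Y\longrightarrow Z\longrightarrow\Sigma X
\]
is a triangle in $\per\cal{A}$ with $X,Z\in\lperp(\Sigma\cal{E})$. First, since both $X$ and $Z$ belong to $\cal{A}\ast\Sigma\cal{A}$, and since this subcategory was already observed in the excerpt to be extension-closed in $\per\cal{A}$ (using that $\cal{A}$ is non-positively graded, via \cite[Lem.~2.6]{IyamaYang18}), we obtain $Y\in\cal{A}\ast\Sigma\cal{A}$ for free.

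Second, for any $E\in\cal{E}$, applying the cohomological functor $\Hom_{\cal{A}}(\blank,\Sigma E)$ to the triangle yields the long exact sequence
\[
\Hom_{\cal{A}}(Z,\Sigma E)\longrightarrow\Hom_{\cal{A}}(Y,\Sigma E)\longrightarrow\Hom_{\cal{A}}(X,\Sigma E),
\]
whose outer terms vanish by the hypothesis that $X,Z\in\lperp(\Sigma\cal{E})$. Hence $\Hom_{\cal{A}}(Y,\Sigma E)=0$, and $Y\in\lperp(\Sigma\cal{E})$ as required.

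There is no serious obstacle in this statement; the only subtlety is remembering that extension-closedness in $\per\cal{A}$ requires \emph{both} halves of the defining conditions of $\lperp(\Sigma\cal{E})$ to be preserved under extensions, and that the $\cal{A}\ast\Sigma\cal{A}$-condition is itself non-trivial but has already been established using the non-positivity of the grading.
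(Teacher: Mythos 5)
Your proof is correct and takes essentially the same approach as the paper, which simply notes that the claim follows from $\Hom_{\cal{A}}(\blank,\Sigma E)$ being a cohomological functor; you additionally make explicit the (already established) extension-closure of $\cal{A}\ast\Sigma\cal{A}$, which the paper treats as part of the ambient setup.
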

\begin{proof}
This follows directly from the fact that $\Hom_{\cal{A}}(\blank,\Sigma E)$ is a cohomological functor for any object $E\in\cal{E}$.
\end{proof}

\begin{lemma}
\label{l:projectives}
An object of $\lperp(\Sigma\cal{E})$ is projective if and only if it is an object of $\add\cal{A}$.
\end{lemma}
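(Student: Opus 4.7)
The plan is to prove both implications using the triangulated structure on $\per\cA$ together with the extension-closure of $\lperp(\Sigma\cal{E})$ established in Lemma~\ref{l:ext-closed}, which means that conflations in $\lperp(\Sigma\cal{E})$ are exactly the triangles in $\per\cA$ all of whose terms lie in $\lperp(\Sigma\cal{E})$. Throughout, the key computational input will be that the morphism complexes $\cA(A,A')$ for $A,A'\in\cA$ are concentrated in non-positive degrees, so that $\Hom_{\per\cA}(A,\Sigma^{i}A')=H^{i}\cA(A,A')=0$ for every $i\geq 1$.

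For the forward direction, I would first check that $\add\cA\subseteq\lperp(\Sigma\cal{E})$: for $A\in\cA$ and $E\in\cal{E}\subseteq\cA$, the non-positivity of $\cA$ gives $\Hom_{\per\cA}(A,\Sigma E)=0$. To verify projectivity in $\lperp(\Sigma\cal{E})$, pick any $Y\in\lperp(\Sigma\cal{E})$ and use $Y\in\cA*\Sigma\cA$ to obtain a triangle $B\to Y\to\Sigma B'\to\Sigma B$ with $B,B'\in\add\cA$. Applying $\Hom_{\per\cA}(A,\Sigma\blank)$ and invoking non-positivity once more to kill both $\Hom_{\per\cA}(A,\Sigma B)$ and $\Hom_{\per\cA}(A,\Sigma^{2}B')$, one concludes $\Hom_{\per\cA}(A,\Sigma Y)=0$, which is precisely the vanishing of the extriangulated $\EE$-group inherited from $\per\cA$.

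For the converse, suppose $X\in\lperp(\Sigma\cal{E})$ is projective. By the definition of $\cA*\Sigma\cA$ there is a triangle $A\to X\to\Sigma A'\to\Sigma A$ in $\per\cA$ with $A,A'\in\add\cA$; rotating yields a triangle
\[A'\to A\to X\to\Sigma A'\]
in $\per\cA$. All three terms lie in $\lperp(\Sigma\cal{E})$ (the two leftmost ones by the forward direction already proved), so by Lemma~\ref{l:ext-closed} this is a conflation $A'\rightarrowtail A\twoheadrightarrow X$ in $\lperp(\Sigma\cal{E})$. Since $X$ is projective, the realising class in $\EE(X,A')$ vanishes, so this conflation splits; hence $X$ is a direct summand of $A\in\add\cA$, and in particular $X\in\add\cA$ (using stability of $\add\cA$ under summands).

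I do not expect any substantive obstacle. The only mildly delicate step is producing a conflation with codomain $X$ rather than with codomain $\Sigma A'$, which is handled by a single rotation of the defining triangle, and one must confirm that the rotated triangle really is a conflation in the induced extriangulated structure on $\lperp(\Sigma\cal{E})$ — but this is immediate from the forward direction together with Lemma~\ref{l:ext-closed}. After that, the converse reduces to the standard extriangulated fact that any conflation ending at a projective object splits.
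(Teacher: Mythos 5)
Your proof is correct and follows essentially the same route as the paper: the forward direction uses non-positivity of $\cal{A}$ and the homologicity of $\Hom_{\per\cal{A}}(A,\blank)$ applied to the presentation triangle of $Y$, and the converse rotates the defining triangle of $X$ into a conflation $A'\rightarrowtail A\twoheadrightarrow X$ and splits it using projectivity. The paper states the converse slightly more directly (the connecting map $\varphi\in\Hom_{\per\cal{A}}(X,\Sigma A')$ vanishes since $X$ is projective and $A'\in\add\cal{A}\subseteq\lperp(\Sigma\cal{E})$), but this is the same argument.
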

\begin{proof}
In $\per\cal{A}$, there are no non-zero maps from an object of $\add\Sigma^i\cal{A}$ to an object of $\add\Sigma^j\cal{A}$ whenever $i<j$. In particular, this means that $\add\cal{A}\subseteq\lperp(\Sigma\cal{E})$. Let $P\in\add\cal{A}$ and $X\in\cal{A}*\Sigma\cal{A}$. Then $\Sigma X\in\Sigma(\cal{A}*\Sigma\cal{A})$ so $\Hom_{\cal{A}}(P,\Sigma X)=0$ because $\Hom_{\cal{A}}(P,\blank)$ is homological. Thus $P$ is projective in $\cal{A}*\Sigma\cal{A}$, hence in particular projective in $\lperp(\Sigma\cal{E})$.

Conversely, let $P\in\lperp(\Sigma\cal{E})\subseteq\cal{A}*\Sigma\cal{A}$ be projective, and consider a triangle
\[\begin{tikzcd}
X\arrow{r}&Y\arrow{r}&P\arrow{r}{\varphi}&\Sigma X
\end{tikzcd}\]
with $X,Y\in\add\cal{A}$. Since $X\in\add\cal{A}\subseteq\lperp(\Sigma\cal{E})$, where $P$ is projective, it follows that $\varphi=0$, and so $P$ is a summand of $Y\in\add\cal{A}$.
\end{proof}

\begin{lemma}
\label{l:injectives}
An object of $\lperp(\Sigma\cal{E})$ is injective if and only if it lies in the Bongartz co-completion of $\cal{E}$.
\end{lemma}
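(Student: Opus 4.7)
The plan is to prove both inclusions separately, relying crucially on the non-positive grading of $\cal{A}$ and on the approximation property of the Bongartz triangle.

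For the inclusion \emph{Bongartz co-completion $\subseteq$ injectives}, I would start from a Bongartz triangle $X \to E_X \to E'_X \to \Sigma X$ with $X \in \cal{A}$ and $X \to E_X$ a left $\cal{E}$-approximation. First verify $E'_X \in \lperp(\Sigma\cal{E})$ by applying $\Hom_{\cal{A}}(\blank, E)$ for $E \in \cal{E}$: non-positivity gives $\Hom_{\cal{A}}(X, \Sigma E) = \Hom_{\cal{A}}(E_X, \Sigma E) = 0$, and the approximation property forces the map $\Hom_{\cal{A}}(E_X, E) \to \Hom_{\cal{A}}(X, E)$ to be surjective, so that $\Hom_{\cal{A}}(E'_X, \Sigma E) = 0$. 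Injectivity of $E_X \in \add\cal{E}$ holds by the very definition of $\lperp(\Sigma\cal{E})$, while injectivity of $E'_X$ follows by rotating the Bongartz triangle and applying $\Hom_{\cal{A}}(Y, \blank)$ for $Y \in \lperp(\Sigma\cal{E})$: the term $\Hom_{\cal{A}}(Y, \Sigma E_X)$ vanishes by definition of $\lperp(\Sigma\cal{E})$, and $\Hom_{\cal{A}}(Y, \Sigma^2 X)$ vanishes by writing $Y$ in its defining $2$-term triangle $A \to Y \to \Sigma A'$ and again invoking non-positivity.

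For the converse, let $I \in \lperp(\Sigma\cal{E})$ be injective. By the definition of $\cal{A} \ast \Sigma\cal{A}$, there is a conflation $P \to P' \to I$ with $P, P' \in \add\cal{A}$. Take a left $\cal{E}$-approximation $P' \to F$ producing a Bongartz triangle $P' \to F \to F' \to \Sigma P'$; by the forward direction, both $F$ and $F'$ are injective in $\lperp(\Sigma\cal{E})$. Apply the octahedral axiom to the composition $P \to P' \to F$: the fourth triangle reads $I \to C \to F' \to \Sigma I$, where $C$ is the cone of the composite $P \to F$. Since $I$ is injective and $F' \in \lperp(\Sigma\cal{E})$, the connecting morphism $F' \to \Sigma I$ represents a class in $\mathbb{E}(F', I) = 0$, so the triangle splits and $C \cong I \oplus F'$. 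This yields a triangle $P \to F \to I \oplus F' \to \Sigma P$ in $\lperp(\Sigma\cal{E})$, and it remains to verify that the composite $P \to F$ is itself a left $\cal{E}$-approximation of $P$: applying $\Hom_{\cal{A}}(\blank, E)$ for $E \in \cal{E}$ and using that $\Hom_{\cal{A}}(I \oplus F', \Sigma E) = 0$, since both summands lie in $\lperp(\Sigma\cal{E})$, forces $\Hom_{\cal{A}}(F, E) \to \Hom_{\cal{A}}(P, E)$ to be surjective. Consequently $I$ is a summand of $F \oplus (I \oplus F')$, which is of the form $E_P \oplus E'_P$ for a genuine Bongartz triangle attached to $P$, completing the proof.

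The main subtlety, apart from carefully tracking the vanishings coming from non-positivity, is the choice of which projective to approximate: approximating $P'$ rather than the more immediate choice $P$ is what enables the octahedral construction to interact correctly with the injectivity of $I$, producing a split triangle whose pieces can be reorganised into a genuine Bongartz triangle for $P$.
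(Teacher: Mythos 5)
Your proof is correct. The forward direction (Bongartz co-completion $\Rightarrow$ injective) is essentially identical to the paper's argument: the same two applications of $\Hom_{\cal{A}}(\blank,\Sigma E)$ and $\Hom_{\cal{A}}(Y,\blank)$ to the triangle \eqref{eq:bong-co-comp}, with the same vanishings coming from non-positivity and the approximation property. The converse is where you genuinely diverge. The paper disposes of it in two lines by observing that the Bongartz co-completion is a silting subcategory of $\per\cal{A}$ (it generates $\cal{A}$ via the triangle $X\to E_X\to E'_X$), hence maximal rigid, and that any injective $I\in\lperp(\Sigma\cal{E})$ is rigid together with it; this is short but leans on silting theory. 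You instead give a constructive octahedral argument: starting from a presentation triangle $P\to P'\to I\to\Sigma P$, you approximate $P'$, use the octahedron to produce $I\to C\to F'\to\Sigma I$, split it using injectivity of $I$ and $F'\in\lperp(\Sigma\cal{E})$ (which your forward direction already established), and then check that the composite $P\to F$ is a left $\cal{E}$-approximation of $P$ by reading off surjectivity of $\Hom_{\cal{A}}(F,E)\to\Hom_{\cal{A}}(P,E)$ from $\Hom_{\cal{A}}(I\oplus F',\Sigma E)=0$. All of these steps check out, and your approach has the merit of being self-contained and of exhibiting explicitly \emph{which} Bongartz triangle contains $I$ as a summand of its cone — namely the one based at the degree $-1$ term of a presentation of $I$ — at the cost of being longer than the paper's appeal to maximal rigidity.
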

\begin{proof}
Applying the cohomological functor $\Hom_{\cal{C}}(\blank,\Sigma E)$ to \eqref{eq:bong-co-comp}, for any $E\in\cal{E}$, we obtain the exact sequence
\[\begin{tikzcd}
\Hom_{\cal{A}}(\Sigma E_X,\Sigma E)\arrow{r}{(\Sigma\varphi)^*}&\Hom_{\cal{A}}(\Sigma X,\Sigma E)\arrow{r}&\Hom_{\cal{A}}(E'_X,\Sigma E)\arrow{r}&\Hom_{\cal{A}}(E_X,\Sigma E).
\end{tikzcd}\]
Now $\Hom_{\cal{A}}(E_X,\Sigma E)=0$ since $E,E_X\in\add\cal{A}$, while $(\Sigma\varphi)^*$ is surjective since $\varphi$ is a left $\cal{E}$-approximation, and so $\Hom_{\cal{A}}(E'_X,\Sigma E)=0$. It follows that $\Hom_{\cal{A}}(Q,\Sigma E)=0$, i.e.\ that $Q\in\lperp(\Sigma\cal{E})$.

Now to see that $Q$ is injective in $\lperp(\Sigma\cal{E})$, choose $Y\in\lperp(\Sigma\cal{E})$ and apply $\Hom_{\cal{A}}(Y,\blank)$ to the triangle \eqref{eq:bong-co-comp}. This yields the exact sequence
\[\begin{tikzcd}
\Hom_{\cal{A}}(Y,\Sigma E_X)\arrow{r}&\Hom_{\cal{A}}(Y,\Sigma E'_X)\arrow{r}&\Hom_{\cal{A}}(Y,\Sigma^2X)
\end{tikzcd}\]
in which $\Hom_{\cal{A}}(Y,\Sigma E_X)=0$ because $Y\in\lperp(\Sigma\cal{E})$ and $E_X\in\add\cal{E}$, and $\Hom_{\cal{A}}(Y,\Sigma^2X)=0$ because $Y\in\cal{A}*\Sigma\cal{A}$ and $X\in\cal{A}$. Thus $\Hom_{\cal{A}}(Y,\Sigma E'_X)=0$, and hence $Q\in\add(E_X\oplus E'_X)$ is injective in $\lperp(\Sigma\cal{E})$.

Conversely, the Bongartz co-completion of $\cal{E}$ is silting in $\per{\cal{A}}$ (since the triangle $X\to E_X\to E'_X$ shows that the Bongartz co-completion generates $\cal{A}$, hence $\per{\cA}$) and hence maximal rigid. In particular, any injective object of $\lperp(\Sigma\cal{E})$ is contained in this Bongartz co-completion.
\end{proof}

\begin{corollary}
\label{c:injectives}
The injective objects of $\cal{A}*\Sigma\cal{A}$ are those in $\add\Sigma\cal{A}$.
\end{corollary}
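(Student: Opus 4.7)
The plan is to derive this corollary by specialising \cref{l:injectives} to the degenerate case $\cal{E}=0$. Taking $\cal{E}$ to be the zero subcategory of $\cal{A}$ is legitimate because it is trivially covariantly finite in $\per{\cal{A}}$: any object $X$ admits the zero morphism $X\to 0$ as a left $\cal{E}$-approximation. Moreover, with this choice the defining Hom-vanishing condition for $\lperp(\Sigma\cal{E})$ is vacuous, so $\lperp(\Sigma\cal{E})=\cal{A}\ast\Sigma\cal{A}$, and the injective objects in this perpendicular category are precisely the injective objects of $\cal{A}\ast\Sigma\cal{A}$ that we wish to identify.

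It then remains to compute the Bongartz co-completion of $\cal{E}=0$ inside $\cal{A}\ast\Sigma\cal{A}$, and to invoke \cref{l:injectives} to conclude. The triangle \eqref{eq:bong-co-comp} simplifies in this case: for $X\in\cal{A}$ the left $\cal{E}$-approximation is $\varphi\colon X\to 0$, so rotating the triangle yields $X\to 0\to\Sigma X\xrightarrow{\id}\Sigma X$, giving $E_X=0$ and $E'_X=\Sigma X$. Consequently any $Q$ in the Bongartz co-completion of $0$ lies in $\add(0\oplus\Sigma X)\subseteq\add\Sigma\cal{A}$, and conversely every object of $\add\Sigma\cal{A}$ arises in this way. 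This identifies the Bongartz co-completion of $0$ with $\add\Sigma\cal{A}$, and the corollary follows.

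I do not expect any genuine obstacle here: the only thing to check is that the proof of \cref{l:injectives} goes through unchanged when $\cal{E}=0$, which it does because that proof relies only on the existence of a left $\cal{E}$-approximation and on cohomological Hom-arguments that remain valid. Alternatively, one could give a direct proof by applying $\Hom_{\per\cal{A}}(\blank,\Sigma^2 A)$ to the defining triangle $A_1\to X\to\Sigma A_0\to\Sigma A_1$ of an object $X\in\cal{A}\ast\Sigma\cal{A}$, using non-positivity of the grading of $\cal{A}$ to show $\Hom_{\per\cal{A}}(X,\Sigma^2 A)=0$ for all $A\in\cal{A}$, and hence $\Sigma\cal{A}$ consists of injectives; but the reduction to \cref{l:injectives} is cleaner and also supplies the converse direction automatically via the maximality statement at the end of that lemma's proof.
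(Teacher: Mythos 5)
Your proposal is correct and matches the paper's own proof, which simply observes that the corollary is Lemma~\ref{l:injectives} in the case $\cal{E}=0$; you additionally spell out the (easy) verification that the Bongartz co-completion of $0$ is $\add\Sigma\cal{A}$ and that $\lperp(\Sigma 0)=\cal{A}*\Sigma\cal{A}$, which the paper leaves implicit.
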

\begin{proof}
This is Lemma~\ref{l:injectives} in the case that $\cal{E}=0\subseteq\per{\cA}$.
\end{proof}

\begin{lemma}
\label{l:0-Aus}
The category $\lperp(\Sigma\cal{E})$ is a $0$-Auslander extriangulated category.
\end{lemma}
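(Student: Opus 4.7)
The plan is to verify the three defining properties of a $0$-Auslander extriangulated category from Definition~\ref{def:0-Auslander}---enough projectives, (co)dominant dimension at least one, and hereditariness---using the structure already built up in Lemmas~\ref{l:ext-closed},~\ref{l:projectives}, and~\ref{l:injectives}. Since the descriptions of projectives and injectives in $\lperp(\Sigma\cal{E})$ are in hand, each axiom should reduce either to producing an explicit conflation inherited from $\per\cal{A}$, or to restricting a known property of the ambient $2$-term category.

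First, I would handle enough projectives by a direct pull-back from $\cal{A}\ast\Sigma\cal{A}$: every $X \in \lperp(\Sigma\cal{E})$ sits, by the very definition of $\cal{A}\ast\Sigma\cal{A}$, in a triangle $P_1 \to P_0 \to X \to \Sigma P_1$ of $\per\cal{A}$ with $P_0,P_1 \in \add\cal{A}$, and since $\add\cal{A} \subseteq \lperp(\Sigma\cal{E})$ (as already used in the proof of Lemma~\ref{l:projectives}) this is already a conflation in the subcategory with projective outer terms.

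For (co)dominant dimension at least one, I would take a projective $P \in \add\cal{A}$, invoke the covariant finiteness of $\cal{E}$ in $\per\cal{A}$ to obtain a left $\cal{E}$-approximation $\varphi\colon P \to E_P$, and complete to a cofibre triangle $P \to E_P \to E_P' \to \Sigma P$ in $\per\cal{A}$. The plan is then to read off from Lemma~\ref{l:injectives} applied with $X = P$ that both $E_P$ and $E_P'$ lie in the Bongartz co-completion of $\cal{E}$, while Lemma~\ref{l:projectives} makes $E_P \in \add\cal{E} \subseteq \add\cal{A}$ projective. This exhibits the triangle as a conflation $P \infl E_P \defl E_P'$ in $\lperp(\Sigma\cal{E})$ with $E_P$ projective-injective and $E_P'$ injective, which is precisely the codominant dimension condition.

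For hereditariness I would simply observe that the ambient category $\cal{A}\ast\Sigma\cal{A}$ is already $0$-Auslander (as recalled before the statement of Theorem~\ref{t:rel-2-term}, see~\cite[\S3.3.8]{GNP2}), hence in particular hereditary; alternatively this follows from the vanishing $\Hom_{\per\cal{A}}(Z,\Sigma^2 W) = 0$ for $Z,W \in \cal{A}\ast\Sigma\cal{A}$, which is forced by the non-positive grading of $\cal{A}$ applied to the defining triangles of $Z$ and $W$. Since $\lperp(\Sigma\cal{E})$ is extension-closed in $\cal{A}\ast\Sigma\cal{A}$, its $\EE$-bifunctor is the restriction of the ambient one, so the right-exactness characterisation of hereditariness in~\cite[Prop.~2.1]{GNP2} transfers to the subcategory without modification. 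The hardest step, by a thin margin, is the dominant-dimension verification: the essential observation is that the middle term $E_P$ of the approximation triangle is not merely projective but is also injective, so that the approximation triangle itself already serves as the required conflation, sparing us from having to construct a separate embedding of $P$ into a projective-injective object.
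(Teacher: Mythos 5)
Your proof is correct and follows essentially the same route as the paper: enough projectives and projective dimension at most one come from the defining triangles $P_1\to P_0\to X\to\Sigma P_1$, and dominant dimension at least one comes from the left $\cal{E}$-approximation triangle together with Lemmas~\ref{l:projectives} and~\ref{l:injectives}. The only cosmetic difference is that the paper deduces hereditariness directly from the length-one projective resolutions (as in Definition~\ref{def:0-Auslander}), whereas you also restrict the hereditariness of the ambient category $\cal{A}\ast\Sigma\cal{A}$; both are valid.
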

\begin{proof}
This category is extriangulated as a result of Lemma~\ref{l:ext-closed}. The category $\cal{A}*\Sigma\cal{A}$ has enough projectives and all objects have projective dimension at most $1$, as a consequence of the triangles
\[\begin{tikzcd}
P_1\arrow{r}&P_0\arrow{r}&X\arrow{r}&\Sigma P_1,
\end{tikzcd}\]
with $P_i\in\cal{A}$, exhibiting that an object $X\in\per{\cal{A}}$ belongs to the subcategory $\cal{A}*\Sigma\cal{A}$. The same is therefore true for the subcategory $\lperp(\Sigma\cal{E})$, this having the same projective objects as $\cal{A}*\Sigma\cal{A}$ by Lemma~\ref{l:projectives}. Since $\cal{E}$ is covariantly finite, a triangle of the form \eqref{eq:bong-co-comp} exists for any $X\in\cal{A}$, i.e.\ any projective object in $\lperp(\Sigma\cal{E})$. By Lemma~\ref{l:injectives}, the term $E'_X$ in this triangle is injective and the term $E_X\in\cal{E}$ is projective-injective. Thus $\lperp(\Sigma\cal{E})$ has dominant dimension at least $1$.
\end{proof}

This completes the proof of Theorem~\ref{t:rel-2-term}. We observe that $\cohom{0}\cal{A}$ is also a non-positively graded dg category (concentrated in degree $0$), and that $\cohom{0}\cal{E}$ is covariantly finite in $\per\cohom{0}\cal{A}$ since $\blank\varltens{\cal{A}}\cohom{0}\cal{A}\colon\per\cA\to \per\cohom{0}\cA$ sends~$\cE$ to~$\cohom{0}\cE$ and takes left approximations to left approximations. Thus we may also apply Theorem~\ref{t:rel-2-term} (and the constituent Lemmas~\ref{l:ext-closed}--\ref{l:0-Aus}) to this input data, which we will do below in proving Theorem~\ref{t:rel-2-term-functors}.

\begin{lemma}
\label{l:G-full-dense}
The functor $G$ from \eqref{eq:$2$-term-diag} is well-defined, full and essentially surjective.
\end{lemma}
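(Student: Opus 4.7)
The plan hinges on a single $\Hom$-identification. Since $\cA$ is non-positively graded, for $A, B \in \cA$ viewed as objects of $\per\cA$ via Yoneda we have
\[\Hom_{\per\cA}(A,B) = \cohom{0}\Hom_\cA(A,B) = \Hom_{\cohom{0}\cA}(A,B) = \Hom_{\per\cohom{0}\cA}(GA,GB),\]
and $G$ induces this identification. By biadditivity, the restriction of $G$ to $\add\cA$ is fully faithful, and all three claims will follow by combining this observation with the long exact sequences attached to $2$-term projective presentations.

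For well-definedness, take $X\in\lperp(\Sigma\cE)$ and a triangle $P_1 \to P_0 \to X \to \Sigma P_1$ with $P_i\in\add\cA$; applying $G$ exhibits $GX \in \cohom{0}\cA * \Sigma\cohom{0}\cA$. For $E \in \cE$, applying $\Hom_{\per\cA}(\blank, \Sigma E)$ to this triangle collapses to
\[\Hom_{\cohom{0}\cA}(P_0,E) \to \Hom_{\cohom{0}\cA}(P_1,E) \to \Hom_{\per\cA}(X,\Sigma E) \to 0,\]
since $\Hom_{\per\cA}(P_i,\Sigma E) = \cohom{1}\Hom_\cA(P_i,E)$ vanishes by non-positivity of $\cA$. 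The identical computation in $\per\cohom{0}\cA$ expresses $\Hom_{\per\cohom{0}\cA}(GX,\Sigma GE)$ as the cokernel of the same map; hence $G$ induces an isomorphism $\Hom(X,\Sigma E) \xrightarrow{\sim} \Hom(GX,\Sigma GE)$ and the required vanishing transfers.

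For fullness, take $X, Y \in \lperp(\Sigma\cE)$ with projective presentations $P_\bullet \to X$ and $Q_\bullet \to Y$. The analogous cokernel computation (using the triangle for $Y$ in place of $\Sigma E$) shows that $\Hom_{\per\cA}(P_i, Y) \xrightarrow{\sim} \Hom_{\per\cohom{0}\cA}(GP_i, GY)$ is an isomorphism, and a parallel argument applied to $\Sigma^{-1}Y$ shows that the map $\Hom_{\per\cA}(\Sigma P_1, Y) \to \Hom_{\per\cohom{0}\cA}(\Sigma GP_1, GY)$ is surjective---its kernel, coming from $\cohom{-1}\Hom_\cA(P_1, Q_0)$, is precisely where $G$ typically fails to be faithful. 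A routine diagram chase on the ladder of long exact sequences obtained by applying $\Hom(\blank, Y)$ and $\Hom(\blank, GY)$ to the triangles for $X$ and $GX$ then yields surjectivity of $\Hom_{\per\cA}(X, Y) \to \Hom_{\per\cohom{0}\cA}(GX, GY)$.

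Essential surjectivity mirrors well-definedness: given $Y \in \lperp(\Sigma\cohom{0}\cE)$ with presentation $\bar P_1 \xrightarrow{\bar f} \bar P_0 \to Y$, lift each $\bar P_i$ to some $P_i \in \add\cA$ and $\bar f$ to $f\colon P_1 \to P_0$ via the $\Hom$-identification, and set $X = \cone(f) \in \cA * \Sigma\cA$. Then $GX \cong Y$ by construction, and reading the cokernel identification of the first step backwards establishes $X\in\lperp(\Sigma\cE)$. The central technical point throughout is the vanishing of $\cohom{n}\Hom_\cA(P, Q)$ for $n > 0$ and $P, Q \in \add\cA$, which forces the same long exact sequences to collapse in $\per\cA$ and $\per\cohom{0}\cA$; the rest is formal bookkeeping with triangles.
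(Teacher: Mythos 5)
Your argument is correct, and it reaches the same three conclusions as the paper, but by a more self-contained route. The paper's proof is very short: it observes that $\blank\varltens{\cal{A}}\cohom{0}\cal{A}$ restricts to an equivalence $\cal{A}\to\cohom{0}\cal{A}$, deduces essential surjectivity of the restriction $\cal{A}*\Sigma\cal{A}\to\cohom{0}\cal{A}*\Sigma\cohom{0}\cal{A}$ from that, and cites Br\"ustle--Yang \cite[Prop.~A.5]{BrustleYang} for fullness; it then asserts that these two facts pin down the essential image as $\lperp(\Sigma\cohom{0}\cal{E})$. You instead re-prove the fullness statement inline via the ladder of long exact sequences and the four lemma (correctly identifying that the only vertical map which fails to be injective is $\Hom_{\per\cal{A}}(\Sigma P_1,Y)\to\Hom(\Sigma GP_1,GY)$, whose kernel comes from $\cohom{-1}\Hom_{\cal{A}}(P_1,Q_0)$, and that the four lemma only needs surjectivity there), and you make explicit, via the cokernel presentation of $\Hom(X,\Sigma E)$, why $X\in\lperp(\Sigma\cal{E})$ if and only if $GX\in\lperp(\Sigma\cohom{0}\cal{E})$ --- a point the paper leaves implicit when it claims the essential image is as stated. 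What your version buys is independence from the external reference and a cleaner justification of the ``well-definedness'' and essential-image claims; what the paper's version buys is brevity. Both rest on the same central fact, namely the degree-zero $\Hom$-identification on $\add\cal{A}$ and the vanishing of positive-degree $\Hom$ between objects of $\add\cal{A}$.
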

\begin{proof}
The functor $\blank\varltens{\cal{A}}\cohom{0}\cal{A}$ is essentially surjective because its restriction to~$\cA\subset\per\cA$ is an equivalence onto $\cohom{0}\cal{A}\subset\per\cohom{0}\cA$. For the same reason, it restricts to an essentially surjective functor from~$\cal{A}*\Sigma\cal{A}$ to~$\cohom{0}\cal{A}*\Sigma\cohom{0}\cal{A}=\harp{\proj\cohom{0}\cal{A}}$, and this restriction is full by a result of Brüstle and Yang \cite[Prop.~A.5]{BrustleYang}. (While their result is stated for dg algebras, the argument generalises directly to dg categories.) This implies both that the essential image of $G$ is $\lperp(\Sigma\cohom{0}\cal{E})$ as claimed, and that $G$ is itself full.
\end{proof}

While we use notation compatible with that of Theorem~\ref{t:rel-2-term-functors}, the proof of the next Lemma applies to $k$-linear categories (concentrated in degree $0$) in general, without requiring dg enhancements.

\begin{lemma}
\label{l:F-full-dense}
The functor $F$ from \eqref{eq:$2$-term-diag} is well-defined, full and essentially surjective.
\end{lemma}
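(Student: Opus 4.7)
The plan is to verify the three assertions in sequence, working throughout in the reduced setting suggested in the paragraph just preceding the lemma: everything is concentrated in degree zero, so $F$ is simply the restriction of $\blank\otimes_{\cal{A}}\cal{A}/\cal{E}\colon\per\cal{A}\to\per\cal{A}/\cal{E}$ to the subcategory $\lperp(\Sigma\cal{E})\subseteq\harp{\proj\cal{A}}$. Well-definedness is immediate, since componentwise tensoring carries $\harp{\proj\cal{A}}$ into $\harp{\proj\cal{A}/\cal{E}}$. The mechanism I will exploit repeatedly is that any morphism factoring through $\add\cal{E}$ dies under $\blank\otimes_{\cal{A}}\cal{A}/\cal{E}$, and in particular $E\otimes_{\cal{A}}\cal{A}/\cal{E}=0$ for any $E\in\add\cal{E}$.

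For essential surjectivity, given $Y=(Q_{-1}\to Q_0)$ in $\harp{\proj\cal{A}/\cal{E}}$, I would first lift each $Q_i$ to a projective $P_i\in\proj\cal{A}$ with $P_i\otimes_{\cal{A}}\cal{A}/\cal{E}\isom Q_i$, and lift the differential to some $g\colon P_{-1}\to P_0$. The resulting $2$-term complex may fail to lie in $\lperp(\Sigma\cal{E})$, so the key correction is to choose a left $\add\cal{E}$-approximation $\phi\colon P_{-1}\to E_0$ (which exists by covariant finiteness of $\cal{E}$) and form $X=P_{-1}\xrightarrow{(g,\phi)}P_0\oplus E_0$. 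Any map $P_{-1}\to E'$ with $E'\in\add\cal{E}$ factors through $\phi$ by the approximation property, placing $X$ in $\lperp(\Sigma\cal{E})$; and since $E_0\otimes_{\cal{A}}\cal{A}/\cal{E}=0$, the added summand disappears under $F$, giving $F(X)\isom Y$.

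For fullness, given $X_1,X_2\in\lperp(\Sigma\cal{E})$ with differentials $g_i$ and a chain map $(f_{-1},f_0)\colon F(X_1)\to F(X_2)$, I would lift each $f_i$ componentwise to some $\tilde f_i$ in $\proj\cal{A}$, using that $\Hom_{\cal{A}}(P,P')\to\Hom_{\cal{A}/\cal{E}}(P\otimes\cal{A}/\cal{E},P'\otimes\cal{A}/\cal{E})$ is surjective with kernel the morphisms factoring through $\add\cal{E}$. The commutation defect $\psi=g_2\tilde f_{-1}-\tilde f_0 g_1$ then maps to zero in $\cal{A}/\cal{E}$, so it factors as $\psi=h_2\circ h_1$ with $h_1\colon P_{-1}^1\to E$ and $h_2\colon E\to P_0^2$ for some $E\in\add\cal{E}$. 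At this point the hypothesis $X_1\in\lperp(\Sigma\cal{E})$ is invoked to force a factorisation $h_1=h'_1 g_1$, whereupon replacing $\tilde f_0$ with $\tilde f_0+h_2 h'_1$ turns the pair into an honest chain map $X_1\to X_2$ whose image under $F$ is still $(f_{-1},f_0)$.

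The main technical point---and the one I expect to require the most care---is the symmetric exploitation of the two hypotheses: covariant finiteness of $\cal{E}$ powers the essential-surjectivity fix through the approximation $\phi$, while the defining $\lperp(\Sigma\cal{E})$-condition powers the fullness fix by absorbing the first factor of the commutation defect into $g_1$. Both corrections are invisible to $F$ because they add only morphisms factoring through $\add\cal{E}$.
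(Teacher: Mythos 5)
Your argument is correct, and its essential-surjectivity half is exactly the paper's: lift the object componentwise, then repair membership in $\lperp(\Sigma\cohom{0}\cal{E})$ by augmenting the degree-zero term with a left $\cohom{0}\cal{E}$-approximation $\varphi$ of the degree-$(-1)$ term, the added summand being invisible to $F$ since objects of $\add\cohom{0}\cal{E}$ are annihilated. (The paper phrases the verification that the corrected object lies in $\lperp(\Sigma\cohom{0}\cal{E})$ via the long exact sequence of the defining triangle, but this is the same computation as your observation that every map $P_{-1}\to E'$ factors through the new differential.) Where you genuinely diverge is fullness. The paper disposes of it in one line: $\blank\varltens{\cohom{0}\cal{A}}\cohom{0}\cal{A}/\cohom{0}\cal{E}$ is full and essentially surjective on $\proj\cohom{0}\cal{A}$, hence on all of $\harp{\proj\cohom{0}\cal{A}}$, and $F$ is its restriction to a full subcategory. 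You instead lift the two components of a chain-map representative, factor the commutation defect $\psi=g_2\tilde f_{-1}-\tilde f_0g_1$ through $\add\cohom{0}\cal{E}$, and use the orthogonality $\Hom(X_1,\Sigma E)=0$ --- equivalently, that every map $X_1^{-1}\to E$ factors through the differential of $X_1$ --- to absorb the defect into a correction of $\tilde f_0$ that dies under $F$. This is a sharper and more self-contained treatment of the point that actually needs checking: componentwise surjectivity of the map of Hom-complexes does not formally give surjectivity on homotopy classes, and your use of the $\lperp(\Sigma\cohom{0}\cal{E})$-condition is precisely what kills the obstruction. The trade-off is that your argument only yields fullness on $\lperp(\Sigma\cohom{0}\cal{E})$ rather than on the ambient $\harp{\proj\cohom{0}\cal{A}}$; since that is all the lemma asserts, this is no loss.
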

\begin{proof}
The fact that $\bar{F}=\blank\varltens{\cohom{0}\cal{A}}\cohom{0}\cal{A}/\cohom{0}\cal{E}$ restricts to a full and essentially surjective functor $\proj\cohom{0}\cal{A}\to\proj\cohom{0}\cal{A}/\cohom{0}\cal{E}$ means that it also restricts to a full and essentially surjective functor $\harp{\proj\cohom{0}\cal{A}}\to\harp{\proj\cohom{0}\cal{A}/\cohom{0}\cal{E}}$.
Thus $F$ is full, and takes values in $\harp{\proj\cohom{0}\cal{A}/\cohom{0}\cal{E}}$ as claimed.

To see that $F$ is essentially surjective, choose $X=(X^{-1}\map{x}X^0)\in\harp{\proj\cohom{0}\cal{A}/\cohom{0}\cal{E}}$ and $\bar{X}=(\bar{X}^{-1}\map{\bar{x}}\bar{X}^0)\in\harp{\proj{\cohom{0}\cal{A}}}$ such that $\bar{F}\bar{X}=X$.
Let $\varphi\colon\bar{X}^{-1}\to E_X$ be a minimal left $\cohom{0}\cal{E}$-approximation of $\bar{X}^{-1}$. Since $E_X\in\cohom{0}\cal{E}\subseteq\ker(\bar{F})$, it follows that the object
\[\bar{X}'=\bar{X}^{-1}\stackrel{\left(\begin{smallmatrix}\bar{x}\\\varphi\end{smallmatrix}\right)}{\longrightarrow}\bar{X}^0\oplus E_X\]
satisfies $\bar{F}\bar{X}'=\bar{F}\bar{X}=X$.

To finish the proof, we show that $\bar{X}'\in\lperp(\Sigma\cohom{0}\cal{E})$. Pick $E\in\cal{E}$ and apply the functor $\Hom_{\cohom{0}\cal{C}}(\blank,\Sigma\cohom{0}E)$ to the triangle
\[\begin{tikzcd}
\bar{X}'=\bar{X}^{-1}\arrow{r}{\left(\begin{smallmatrix}\bar{x}\\\varphi\end{smallmatrix}\right)}&\bar{X}^0\oplus E_X\arrow{r}&\bar{X}'\arrow{r}&\Sigma\bar{X}^{-1}
\end{tikzcd}\]
to obtain the exact sequence
\[\begin{tikzcd}[ampersand replacement=\&,column sep=20pt]
\Hom_{\cohom{0}\cal{A}}(\Sigma\bar{X}^0\oplus\Sigma E_X,\Sigma E)\arrow{rr}{(\begin{smallmatrix}\bar{x}^*&\varphi^*\end{smallmatrix})}\&\&
\Hom_{\cohom{0}\cal{A}}(\Sigma\bar{X}^{-1},\Sigma E)\arrow{r}\&
\Hom_{\cohom{0}\cal{A}}(\bar{X}',\Sigma E)\arrow{r}\&0,
\end{tikzcd}\]
observing that $\Hom_{\cohom{0}\cal{A}}(\bar{X}^0\oplus E_X,\Sigma E)=0$ because $\bar{X}^0\oplus E_X$ is projective in $\lperp(\Sigma\cohom{0}\cal{E})$ as in Lemma~\ref{l:projectives}.
Since $\varphi$ is a left $\cohom{0}\cal{E}$-approximation, the first map in this sequence is surjective.
Thus $\Hom_{\cohom{0}\cal{A}}(\bar{X}',\Sigma E)=0$, and so $\bar{X}'\in\lperp(\Sigma\cohom{0}\cal{E})$.
\end{proof}

\begin{lemma}
\label{l:extri-fun}
Let $H\colon\cal{T}\to\cal{T}'$ be a triangle functor, let $\cal{C}\subseteq\cal{T}$ and $\cal{C}'\subseteq\cal{T}'$ be full and extension-closed subcategories, equipped with their canonical extriangulated structures, and assume that $H$ restricts to an additive functor $\cal{C}\to\cal{C}'$.
Then $H\colon\cal{C}\to\cal{C}'$ is an extriangulated functor. \end{lemma}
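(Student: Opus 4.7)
The plan is to exhibit a natural transformation $\alpha$ making $(H,\alpha)$ into an extriangulated functor and then check compatibility with the realizations. Recall that the canonical extriangulated structure on an extension-closed subcategory $\cal{C}\subseteq\cal{T}$ of a triangulated category is given by $\EE_{\cal{C}}(Z,X)=\Hom_{\cal{T}}(Z,\Sigma X)$, with realization $\s$ sending a class $\delta\colon Z\to\Sigma X$ to (the equivalence class of) any triangle $X\to Y\to Z\overset{\delta}{\to}\Sigma X$ in $\cal{T}$ with $Y\in\cal{C}$; such a $Y$ exists by extension-closure, and the same recipe applies to $\cal{C}'\subseteq\cal{T}'$.

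First I would define $\alpha$ on $(Z,X)\in\cal{C}^{\op}\times\cal{C}$ as the composite
\[
\EE_{\cal{C}}(Z,X)=\Hom_{\cal{T}}(Z,\Sigma X)\xrightarrow{H}\Hom_{\cal{T}'}(HZ,H\Sigma X)\xrightarrow{\sim}\Hom_{\cal{T}'}(HZ,\Sigma HX)=\EE_{\cal{C}'}(HZ,HX),
\]
where the last isomorphism is induced by post-composition with the coherence isomorphism $H\Sigma\isoto\Sigma H$ that is part of the data of the triangle functor $H$. Naturality of $\alpha$ in both variables then follows from functoriality of $H$ together with naturality of $H\Sigma\isoto\Sigma H$.

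Next, to verify $\s'(\alpha(\delta))=H(\s(\delta))$ for $\delta\in\EE_{\cal{C}}(Z,X)$: choose a representative triangle $X\overset{f}{\to}Y\overset{g}{\to}Z\overset{\delta}{\to}\Sigma X$ in $\cal{T}$ with $Y\in\cal{C}$, so that by definition $H(\s(\delta))$ is represented by $HX\overset{Hf}{\to}HY\overset{Hg}{\to}HZ$. Applying $H$ and using its triangle-functor structure, the sequence
\[
HX\overset{Hf}{\to}HY\overset{Hg}{\to}HZ\overset{\alpha(\delta)}{\to}\Sigma HX
\]
is a triangle in $\cal{T}'$, and by hypothesis $HY\in\cal{C}'$, so it is a representative of $\s'(\alpha(\delta))$. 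This gives the required equality of equivalence classes of conflations.

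The only mild subtlety is keeping the coherence isomorphism $H\Sigma\isoto\Sigma H$ consistent throughout the identifications $\EE_{\cal{C}}(Z,X)=\Hom_{\cal{T}}(Z,\Sigma X)$ and $\EE_{\cal{C}'}(HZ,HX)=\Hom_{\cal{T}'}(HZ,\Sigma HX)$; once this is fixed, both the naturality of $\alpha$ and the compatibility with realizations are formal consequences of $H$ being a triangle functor and $\cal{C},\cal{C}'$ being extension-closed. No genuine obstacle is expected.
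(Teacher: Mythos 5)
Your proposal is correct and follows essentially the same route as the paper: both define the natural transformation on extension groups by applying $H$ and post-composing with the coherence isomorphism $H\Sigma\isoto\Sigma H$, and then deduce compatibility with the realizations from the triangle-functor axioms. The paper leaves the verification as a routine calculation, whereas you spell out the check on representative triangles, but there is no substantive difference.
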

\begin{proof}
Write $\Sigma$ and $\Sigma'$ for the suspension functors on $\cal{T}$ and $\cal{T}'$, and let $\gamma\colon H\circ\Sigma \to \Sigma'\circ H$ be the natural transformation with respect to which $H$ is a triangle functor.
Then we may define a natural transformation $\Gamma\colon\EE_{\cal{C}}(\blank,\blank)\to\EE_{\cal{C}'}(H\blank,H\blank)$ with components
\[\Gamma_{X,Y}\colon\EE_{\cal{C}}(X,Y)=\Hom_{\cal{T}}(X,\Sigma Y)\to\Hom_{\cal{T}'}(HX,\Sigma'HY)=\EE_{\cal{C'}}(HX,HY)\]
defined by $\Gamma_{X,Y}(\varphi)=\gamma_Y\circ H\varphi$, for $X,Y\in\cal{C}$ and $\varphi\colon X\to\Sigma Y$. Checking that $\Gamma$ is indeed a natural transformation, and that $(H,\Gamma)\colon\cal{C}\to\cal{C}'$ is an extriangulated functor, then follows by a routine calculation from the fact that $(H,\gamma)\colon\cal{T}\to\cal{T}'$ is a triangle functor.
\end{proof}

\begin{corollary}
\label{c:extri-fun}
Both $F$ and $G$ are extriangulated functors.
\end{corollary}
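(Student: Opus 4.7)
The plan is to apply \cref{l:extri-fun} to both functors. To do so, I need to verify three things for each of $F$ and $G$: that the ambient functor on perfect derived categories is a triangle functor, that the subcategories involved are extension-closed in their respective triangulated ambient categories, and that the functor restricts to an additive functor between the subcategories.

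For $G$, the ambient functor $\blank\ltens_{\cal{A}}\cohom{0}\cal{A}\colon\per\cal{A}\to\per\cohom{0}\cal{A}$ is a triangle functor, being a derived tensor product. The subcategory $\lperp(\Sigma\cal{E})\subseteq\per\cal{A}$ is extension-closed by \cref{l:ext-closed}, and likewise $\lperp(\Sigma\cohom{0}\cal{E})\subseteq\per\cohom{0}\cal{A}$ is extension-closed by the same lemma applied to $\cohom{0}\cal{A}$ and $\cohom{0}\cal{E}$. The fact that $\blank\ltens_{\cal{A}}\cohom{0}\cal{A}$ restricts to an additive functor $\lperp(\Sigma\cal{E})\to\lperp(\Sigma\cohom{0}\cal{E})$ is precisely the content of \cref{l:G-full-dense}. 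Thus \cref{l:extri-fun} furnishes $G$ with the structure of an extriangulated functor.

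For $F$, the ambient functor $\blank\ltens_{\cohom{0}\cal{A}}\cohom{0}\cal{A}/\cohom{0}\cal{E}\colon\per\cohom{0}\cal{A}\to\per\cohom{0}\cal{A}/\cohom{0}\cal{E}$ is again a triangle functor. The subcategory $\lperp(\Sigma\cohom{0}\cal{E})\subseteq\per\cohom{0}\cal{A}$ is extension-closed as noted above, and $\harp{\proj\cohom{0}\cal{A}/\cohom{0}\cal{E}}\subseteq\per\cohom{0}\cal{A}/\cohom{0}\cal{E}$ is a two-term subcategory of a perfect derived category of an ordinary algebra (dg category concentrated in degree zero), so it is extension-closed; indeed, it too falls under \cref{l:ext-closed} by taking $\cal{A}=\cohom{0}\cal{A}/\cohom{0}\cal{E}$ and $\cal{E}=0$. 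That $\blank\ltens_{\cohom{0}\cal{A}}\cohom{0}\cal{A}/\cohom{0}\cal{E}$ restricts to an additive functor with the claimed source and target is the content of \cref{l:F-full-dense}. A second application of \cref{l:extri-fun} then gives the extriangulated functor structure on $F$.

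There is no real obstacle here: the work has already been done in \cref{l:ext-closed,l:G-full-dense,l:F-full-dense,l:extri-fun}, and the corollary is a direct assembly of these. The only point that requires care is identifying the correct choice of extension-closed subcategory in each case (so that the induced extriangulated structures match the ones used throughout the paper), but this follows immediately from the way $\lperp(\Sigma\cal{E})$ and $\harp{\proj\cohom{0}\cal{A}/\cohom{0}\cal{E}}$ were defined.
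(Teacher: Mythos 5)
Your proposal is correct and follows exactly the paper's route: the paper's own proof of this corollary is the one-line observation that $F$ and $G$ are restrictions of triangle functors, so Lemma~\ref{l:extri-fun} applies. You have simply spelled out the hypotheses of that lemma (extension-closedness via Lemma~\ref{l:ext-closed} and well-definedness of the restrictions via Lemmas~\ref{l:G-full-dense} and~\ref{l:F-full-dense}), all of which check out.
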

\begin{proof}
This follows immediately from Lemma~\ref{l:extri-fun} since both $F$ and $G$ are restrictions of triangle functors.
\end{proof}

\begin{lemma}
\label{l:kerF}
The kernel of $FG$ is the ideal of $\lperp(\Sigma\cal{E})$ generated by morphisms from an injective object to a projective object.
\end{lemma}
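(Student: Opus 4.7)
The plan is to prove both inclusions, with the substantive work lying in $\ker(FG) \subseteq J$. For the easier direction $J \subseteq \ker(FG)$, I will argue directly: for any morphism $\iota \colon I \to P$ with $I$ injective and $P$ projective in $\lperp(\Sigma\cE)$, \cref{l:injectives,l:projectives} give $I \in \add(\cE \cup \{E'_A : A \in \add\cA\})$ and $P \in \add\cA$. In either case $FG(I)$ is zero (if $I \in \add\cE$) or supported only in degree $-1$ of the target 2-term category (if $I = E'_A$, since the target term $E_A$ is killed), while $FG(P)$ is supported only in degree $0$, so $\Hom(FG(I), FG(P)) = 0$ in $\harp{\proj \cohom{0}\cA/\cohom{0}\cE}$. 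Since $\ker(FG)$ is an ideal, this yields $J \subseteq \ker(FG)$.

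For the reverse inclusion, suppose $f \colon X \to Y$ has $FG(f) = 0$, and fix projective presentations $X^{-1} \xrightarrow{x} X^0 \to X$ and $Y^{-1} \xrightarrow{y} Y^0 \to Y$, so that $f$ is represented by a chain map $(f^{-1}, f^0)$. The null-homotopy witnessing $FG(f) = 0$ is a map $\bar h \colon FG(X^0) \to FG(Y^{-1})$; since $\Hom_{\per\cA}(X^0, Y^{-1}) = \cohom{0}\cA(X^0, Y^{-1})$ surjects onto the corresponding Hom-space in $\proj \cohom{0}\cA/\cohom{0}\cE$ with kernel the morphisms factoring through $\cohom{0}\cE$, I can lift $\bar h$ to $h \colon X^0 \to Y^{-1}$ in $\per\cA$. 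Subtracting the coboundary of $h$ produces a chain map $(f'^{-1}, f'^0)$ representing $f$ whose two components both vanish under $FG$; by surjectivity of the same quotient, each therefore factors through $\add\cE$ in $\per\cA$, say $f'^{-1} = \beta^{-1}\alpha^{-1}$ and $f'^0 = \beta^0 \alpha^0$, and I may arrange that $\alpha^{-1} \colon X^{-1} \to E^{-1}$ is a left $\cE$-approximation.

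The next step exploits $X \in \lperp(\Sigma\cE)$: applying $\Hom(\blank, \Sigma E)$ to the triangle $X^{-1} \to X^0 \to X \to \Sigma X^{-1}$ and using that $\cA$ is non-positively graded shows that every morphism $X^{-1} \to E$ with $E \in \cE$ factors through $x$. In particular, $\alpha^{-1} = \tilde\alpha^{-1} x$ for some $\tilde\alpha^{-1}\colon X^0 \to E^{-1}$, and subtracting the coboundary of the homotopy $\beta^{-1}\tilde\alpha^{-1}$ leaves the representative $(0, \hat f^0)$ of $f$, where $\hat f^0 = \beta^0 \alpha^0 - y\beta^{-1}\tilde\alpha^{-1}$ still factors through some $\hat E \in \add\cE$, say $\hat f^0 = \hat\beta \hat\alpha$, and satisfies $\hat f^0 x = 0$. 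By enlarging $\hat E$ and correcting $\hat\beta$ by zero on the new summand, I can further assume that $\hat\alpha \colon X^0 \to \hat E$ is itself a left $\cE$-approximation.

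The key construction then sets $M := (X^{-1} \xrightarrow{\hat\alpha x} \hat E) \in \cA * \Sigma\cA$. Combining the two approximation properties---that any $X^{-1} \to E$ with $E \in \cE$ factors through $x$, and that any $X^0 \to E$ factors through $\hat\alpha$---shows that $\hat\alpha x$ is itself a left $\cE$-approximation of $X^{-1}$; hence $M$ lies in $\lperp(\Sigma\cE)$ and is injective by \cref{l:injectives}, as it is in the Bongartz co-completion of $\cE$. The chain maps
\[
X \xrightarrow{(\id_{X^{-1}}, \hat\alpha)} M \xrightarrow{(0, \hat\beta)} Y^0 \xrightarrow{(0, \id_{Y^0})} Y
\]
are well defined (the middle one thanks to $\hat\beta\hat\alpha x = \hat f^0 x = 0$) and compose to $(0, \hat f^0)$, exhibiting $f$ as a factorisation through the morphism $(0, \hat\beta) \colon M \to Y^0$ from an injective to a projective object of $\lperp(\Sigma\cE)$; thus $f \in J$. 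The main subtlety is the careful interplay of the two successive modifications of the chain-map representative and the enlargement that makes $\hat\alpha$ a left $\cE$-approximation, which is precisely what allows $M$ to be injective while admitting a non-trivial morphism from $X$.
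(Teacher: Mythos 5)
Your proof is correct, and while the easy inclusion $J\subseteq\ker(FG)$ and the opening reduction of the hard inclusion (lifting the null-homotopy and arranging that both components of the chain map factor through $\add\cal{E}$) match the paper, your treatment of the rest of the hard inclusion is genuinely different. The paper proceeds abstractly: it uses $X\in\lperp(\Sigma\cal{E})$ to kill the rightmost square and factor $f$ through $Y^0$ via some $t$, then corrects $t$ by a lift $\bar u$ so that $t-y\bar u\in\ker(FG)$, and finally invokes the injective coresolution $X\to Q_X\to Q'_X$ supplied by the $0$-Auslander property (Lemma~\ref{l:0-Aus}) together with the orthogonality $\Hom(\Sigma^{-1}Q'_X,\cal{E})=0$ to factor $t-y\bar u$ through $Q_X$. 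You instead exploit the surjectivity of $\Hom(x,E)$ (which follows from $\Hom(X,\Sigma E)=0$) to annihilate the degree $-1$ component of the chain map outright, and then build the injective object by hand as $M=\cone(\hat\alpha x)$, where the key observation---that the composite of the two left approximations $x$ (in the weak sense above) and $\hat\alpha$ is again a left $\cal{E}$-approximation of $X^{-1}$, placing $M$ in the Bongartz co-completion---replaces the paper's appeal to an abstract injective coresolution. Your route is more explicit and avoids the paper's intermediate morphisms $t$, $\bar u$, $\ell$, $m$, $p$ entirely, at the cost of some bookkeeping with successive homotopy corrections; the paper's version is more uniform in that the injective $Q_X$ depends only on the $0$-Auslander structure and not on a chosen presentation of $X$. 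The only point worth tightening is in the easy direction: an injective object is a direct summand of $(E_A\oplus E'_A)^{k}$ rather than literally of the form ``$E$ or $E'_A$'', but since $FG(E_A\oplus E'_A)$ is concentrated in degree $-1$ the degree argument goes through for all summands, so this is purely a matter of phrasing.
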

\begin{proof}
We write $J$ for the relevant ideal, and first show that $J\subseteq\ker{FG}$. By Lemmas~\ref{l:projectives} and \ref{l:injectives}, it is enough to show that $\Hom_{\cal{A}/\cal{E}}(FGQ,FGP)=0$ for any $P\in\cal{A}$ and any $Q$ in the Bongartz co-completion of $\cal{E}$. Applying $\Hom_{\cohom{0}\cal{A}/\cohom{0}\cal{E}}(FG\blank,FGX)$ to the triangle \eqref{eq:bong-co-comp} we get an exact sequence
\[\begin{tikzcd}[column sep=8pt]
\Hom_{\cohom{0}\cal{A}/\cohom{0}\cal{E}}(\Sigma FGX,FGP)\arrow{r}&\Hom_{\cohom{0}\cal{A}/\cohom{0}\cal{E}}(FGE'_X,FGP)\arrow{r}&\Hom_{\cohom{0}\cal{A}/\cohom{0}\cal{E}}(FGE_X,FGP).
\end{tikzcd}\]
The leftmost space is zero because both $FGX$ and $FGP$ are concentrated in degree $0$, while the rightmost space is zero because $E_X\in\cal{E}$ and hence $FGE_X=0$. Thus we have $\Hom_{\cohom{0}\cal{A}/\cohom{0}\cal{E}}(FGE'_X,FGP)=0$ also. Since $Q\in\add(E_X\oplus E'_X)$ for some $X\in\cal{A}$, it follows that $\Hom_{\cohom{0}\cal{A}/\cohom{0}\cal{E}}(FGQ,FGP)=0$, and so $J\subseteq\ker{FG}$.

Conversely, let $f\colon X\to Y$ be a morphism from $\ker{FG}$. We write $X=\cone(X^{-1}\map{x}X^0)$ and $Y=\cone(Y^{-1}\map{y}Y^0)$, with~$X^{-1}, X^0, Y^{-1}, Y^0\in \add\cA$.   Then~$f$ fits into a commutative diagram

\[
\begin{tikzcd}
 X^{-1} \ar{r}{x}\ar{d}{f^{-1}} & X^0 \ar{r}{x'}\ar{d}{f^{0}} & X \ar{r}{x''}\ar{d}{f} & \Sigma X^{-1}\ar{d}{\Sigma f^{-1}} \\
 Y^{-1} \ar{r}{y} & Y^0 \ar{r}{y'} & Y \ar{r}{y''} & \Sigma Y^{-1}
\end{tikzcd}
\]
whose rows are triangles.  By assumption,~$FG(f) = 0$, so there exists~$h\colon FGX^0\to FGY^{-1}$ such that~$FGy\circ h = FGf^0$ and~$h\circ FGx = FGf^{-1}$; in other words, the pair~$(FGf^{-1},FGf^0)$ is null-homotopic, and~$h$ is a null-homotopy.  We lift~$h$ to a map~$\bar h\colon X^0\to Y^{-1}$. Because~$G$ restricts to an equivalence~$\add\cA \to \add\cohom{0}\cA$ and the kernel of~$F$ restricted to~$\add\cohom{0}\cA$ is the ideal of morphisms factoring through~$\add\cohom{0}\cE$, it follows that~$\bar h$ fits into a commutative square
\[
 \begin{tikzcd}
  X^{-1} \ar{r}{x}\ar{d}[swap]{f^{-1}} & X^0 \ar{dl}[swap]{\bar h}\ar{d}{f^{0}} \\
  Y^{-1} \ar{r}{y} & Y^0 
 \end{tikzcd}
\]
in which the two triangles commute modulo a morphism factoring through~$\add\cE$.  This leads to a commutative diagram
\[
\begin{tikzcd}
 X^{-1} \ar{r}{x}\ar{d}[swap]{f^{-1}-\bar h x} & X^0 \ar{r}{x'}\ar{d}[swap]{f^{0} - y \bar h} & X \ar{r}{x''}\ar{d}{f}\ar[dotted]{dl}[swap]{t} & \Sigma X^{-1}\ar{d}{\Sigma (f^{-1}-\bar h x)} \\
 Y^{-1} \ar{r}{y} & Y^0 \ar{r}{y'} & Y \ar{r}{y''} & \Sigma Y^{-1}
\end{tikzcd}
\]
where the two leftmost vertical morphisms factor through~$\add\cE$ and the rightmost one factors through~$\Sigma\cE$.  Since~$X\in \lperp(\Sigma\cE)$, this implies that the rightmost square composes to zero. In particular, there exists~$t\colon X\to Y^0$ such that~$f = y't$.

Now, since~$FG(y't) = FGf = 0$, the morphism~$FGt$ factors through~$FGy$ via a morphism~$u\colon FGX\to FGY^{-1}$.  Since~$F$ and~$G$ are full by Lemmas~\ref{l:G-full-dense} and~\ref{l:F-full-dense}, there exists a lift~$\bar u\colon X\to Y^{\-1}$ of~$u$, with the property that~$t-y\bar u$ is in~$\ker FG$.  Note that~$y'(t-y\bar u) = f$.

By Lemma~\ref{l:0-Aus}, the object $X$ has an injective co-resolution in~$\lperp(\Sigma\cE)$, which corresponds to a triangle~$X\xrightarrow{\ell'} Q_X\to Q'_X\xrightarrow{\Sigma\ell} \Sigma X$ in~$\per\cA$.  Consider the following diagram.
\[
 \begin{tikzcd}
 & \Sigma^{-1}Q'_X \ar{d}{\ell}\ar[dotted]{dl}[swap]{m} & \\
 X^0 \ar{r}{x'} & X\ar{r}{x''}\ar{d}{t-y\bar u} & \Sigma X^{-1} \\
 Y^{-1} \ar{r}{y} & Y^0 \ar{r}{y'} & Y
 \end{tikzcd}
\]
The composition~$x''\ell$ vanishes since $Q'_X\in \cA * \add\cA$ and~$X^{-1} \in \add\cA$, meaning that $\Hom_{\cA}(Q'_X, \Sigma^2 X^{-1})=0$.  Thus~$\ell$ factors through~$x'$ via a morphism~$m\colon\Sigma^{-1}Q'_X\to X^0$.  Note that~$(t-y\bar u)x' \in \ker FG$; since the domain~$X^0$ and codomain~$Y^0$ of this morphism are in~$\add\cA$, this implies that~$(t-y\bar u)x'$ factors through~$\add\cE$.  Since~$Q'_X \in \lperp(\Sigma\cE)$, this implies that~$(t-y\bar u)\ell = (t-y\bar u)x'm = 0$.  Thus~$t-y\bar u$ factors through~$\ell'$ via a morphism~$p\colon Q_X\to Y^0$, so that~$p\ell' = t-y\bar u$.  

Putting all this together, we get that~$f = y'(t-y\bar u) = y'p\ell'$, illustrated thus:
\[
 X\xrightarrow{\ell'} Q_X \xrightarrow{p} Y_0 \xrightarrow{y'} Y.
\]
Since~$Q_X$ is injective and~$Y_0$ is projective in~$\lperp(\Sigma\cE)$, this proves the claim that~$f$ is in the ideal generated by morphisms from an injective object to a projective object.
\end{proof}

This completes the proof of Theorem~\ref{t:rel-2-term-functors}.

\begin{corollary}
\label{c:kerG}
The kernel of $G$ is contained in the ideal of $\lperp(\Sigma\cal{E})$ generated by maps from injective to projective objects.
\end{corollary}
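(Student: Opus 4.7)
The plan is to observe that this corollary follows almost immediately from Lemma~\ref{l:kerF} (equivalently, the final assertion of Theorem~\ref{t:rel-2-term-functors}), since $\ker G$ is a subfunctor of $\ker(FG)$.

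More precisely, I would argue as follows. Let $f$ be a morphism in $\lperp(\Sigma\cal{E})$ with $G(f)=0$. Applying $F$ to both sides yields $FG(f) = F(0) = 0$, so $f \in \ker(FG)$. By Lemma~\ref{l:kerF}, this kernel coincides with the ideal of $\lperp(\Sigma\cal{E})$ generated by morphisms from an injective object to a projective object, which gives the desired containment.

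There is no real obstacle: the only subtlety is to note that this containment is a one-way statement, and one should not expect equality. Indeed, $\ker G$ may be strictly smaller than $\ker(FG)$ because $F$ may kill morphisms between objects of $\lperp(\Sigma\cohom{0}\cal{E})$ that are not themselves in the image of a null morphism under $G$. We therefore do not attempt to characterise $\ker G$ intrinsically, and simply record the containment that is needed in subsequent applications.
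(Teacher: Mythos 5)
Your argument is correct and is precisely the one the paper intends: Corollary~\ref{c:kerG} is stated as an immediate consequence of Lemma~\ref{l:kerF} via the trivial containment $\ker G\subseteq\ker(FG)$. Your additional remark that the containment may be strict is also accurate and does not affect the claim.
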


\begin{corollary}
There are equivalences of extriangulated categories
\begin{align*}
&\lperp(\Sigma\cohom{0}\cal{E})/(\cal{I}\to\cal{P})\isoto\harp{\proj\cohom{0}\cal{A}/\cohom{0}\cal{E}},\\
&\lperp(\Sigma\cal{E})/\ker(G)\isoto\lperp(\Sigma\cohom{0}\cal{E}),\\
&\lperp(\Sigma\cal{E})/(\cal{I}\to\cal{P})\isoto\harp{\proj\cohom{0}\cal{A}/\cohom{0}\cal{E}}
\end{align*}
induced from $F$, $G$ and $F G$ respectively, where $(\cal{I}\to\cal{P})$ denotes the ideal generated by maps from injective to projective objects in either $\lperp(\Sigma\cohom{0}\cal{E})$ or $\lperp(\Sigma\cal{E})$ as appropriate.
\end{corollary}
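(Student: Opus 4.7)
The three equivalences arise by combining Theorem \ref{t:rel-2-term-functors} with Theorem \ref{t:extriangulatedQuotient}, and my plan is to handle them in the order second, first, third. The second equivalence is the most immediate: by Lemma \ref{l:G-full-dense}, $G$ is full and essentially surjective, so the universal property of the additive ideal quotient produces an additive equivalence $\bar G \colon \lperp(\Sigma\cal{E})/\ker G \to \lperp(\Sigma\cohom{0}\cal{E})$. Transporting the extriangulated structure of $\lperp(\Sigma\cohom{0}\cal{E})$ across $\bar G$ using Fact \ref{fact: equivalences}\ref{fact:equivalences-BTS} upgrades $\bar G$ to an equivalence of extriangulated categories induced from $G$.

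For the first equivalence I would apply Theorem \ref{t:rel-2-term-functors} itself to the dg category $\cohom{0}\cal{A}$ in place of $\cal{A}$, and $\cohom{0}\cal{E}$ in place of $\cal{E}$. Since $\cohom{0}\cal{A}$ is concentrated in degree zero, the leftmost tensor product $\blank\varltens{\cohom{0}\cal{A}}\cohom{0}\cal{A}$ is the identity, so the corresponding $G$ is the identity and $FG$ reduces to $F$. Lemma \ref{l:kerF} then identifies $\ker F$ with the ideal $(\cal{I}\to\cal{P})$ inside $\lperp(\Sigma\cohom{0}\cal{E})$. Combined with fullness and essential surjectivity from Lemma \ref{l:F-full-dense}, this yields an induced additive equivalence $\bar F\colon \lperp(\Sigma\cohom{0}\cal{E})/(\cal{I}\to\cal{P}) \to \harp{\proj\cohom{0}\cal{A}/\cohom{0}\cal{E}}$. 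Theorem \ref{t:extriangulatedQuotient} equips the quotient with an extriangulated structure for which $\EE_J = \EE$, and, since $F$ is extriangulated by Corollary \ref{c:extri-fun}, the induced $\bar F$ acquires a natural transformation $\bar\alpha$ making it an extriangulated functor.

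The main obstacle is to verify that $\bar\alpha$ is a natural isomorphism rather than just a natural transformation. For this I would apply the $6$-term exact sequences \eqref{eq:6term1}--\eqref{eq:6term2} to a projective presentation $P_1\infl P_0\defl X$ of an object $X$ in $\lperp(\Sigma\cohom{0}\cal{E})$; together with its image under $F$, this realises $\EE(X,Y)$ and $\EE'(FX,FY)$ as cokernels of parallel maps connected by the action of $F$. Since $F$ is full on Hom-spaces and its kernel on morphisms into $Y$ coincides with the ideal $(\cal{I}\to\cal{P})$ by Lemma~\ref{l:kerF}, a diagram chase identifies the two cokernels via $\bar\alpha$. (An alternative route is simply to transport the extriangulated structure along $\bar F$ via Fact \ref{fact: equivalences}\ref{fact:equivalences-BTS}.)

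The third equivalence is then a formal consequence of the previous two. Corollary \ref{c:kerG} shows $\ker G \subseteq (\cal{I}\to\cal{P})$ inside $\lperp(\Sigma\cal{E})$, and Proposition \ref{prop::rigid} ensures that $\bar G$ matches projectives with projectives and injectives with injectives. Hence the image of $(\cal{I}\to\cal{P}) \subseteq \lperp(\Sigma\cal{E})$ under $\bar G$ is precisely $(\cal{I}\to\cal{P}) \subseteq \lperp(\Sigma\cohom{0}\cal{E})$, so composing $\bar G$ with the first equivalence produces the desired extriangulated equivalence $\lperp(\Sigma\cal{E})/(\cal{I}\to\cal{P}) \to \harp{\proj\cohom{0}\cal{A}/\cohom{0}\cal{E}}$, which by Lemma \ref{l:kerF} is the one induced by $FG$.
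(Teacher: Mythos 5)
Your overall strategy---combine Theorem~\ref{t:rel-2-term-functors} with Theorem~\ref{t:extriangulatedQuotient}, using fullness, essential surjectivity and the kernel computations---is the same as the paper's, and your treatment of the first and third equivalences is correct in substance. You are in fact more careful than the paper in one respect: the paper concludes that the induced additive equivalences are extriangulated equivalences by citing \cite[Prop.~2.13]{NOS}, which also requires the natural transformation on extension bifunctors to be an isomorphism, and does not verify this explicitly. Your cokernel comparison via projective presentations and the sequences \eqref{eq:6term1}--\eqref{eq:6term2} is a legitimate way to check it; to close the diagram chase you should add that a morphism in $(\cal{I}\to\cal{P})$ induces the zero map under $\delta^\sharp$ because it factors through an injective object, so that the lift of a preimage is well defined modulo the image of $\Hom(P_0,\blank)$.

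The one place where you diverge in a way that matters is the second equivalence. There you only transport the extriangulated structure of $\lperp(\Sigma\cohom{0}\cal{E})$ across $\bar{G}$ via Fact~\ref{fact: equivalences}\ref{fact:equivalences-BTS}, which makes the statement tautological: any additive equivalence can be upgraded in this way. The content of the corollary is that $\lperp(\Sigma\cal{E})/\ker(G)$ carries the quotient structure of Theorem~\ref{t:extriangulatedQuotient}---the one with $\EE_J=\EE$, for which the projection is an extriangulated ideal quotient---and that the induced functor is an equivalence for \emph{that} structure. This is exactly where Corollary~\ref{c:kerG} enters in the paper's proof: it is what allows Theorem~\ref{t:extriangulatedQuotient} to be applied to $J=\ker(G)$ in the first place, after which $\bar{G}$ is an extriangulated additive equivalence and one argues as for $F$. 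Your parenthetical transport ``alternative'' for the first equivalence has the same defect; your primary argument there is the right one. Relatedly, in the third equivalence you invoke Proposition~\ref{prop::rigid} for $\bar{G}$, but that proposition concerns the quotient functor for the natural quotient structure, so it only becomes available once the second equivalence has been established in the non-transport form; alternatively, one can identify the projectives and injectives on both sides directly from Theorem~\ref{t:rel-2-term} applied to $\cal{A}$ and to $\cohom{0}\cal{A}$.
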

\begin{proof}
Note that the domains carry natural extriangulated structures, making the quotient functors extriangulated, by Theorem~\ref{t:extriangulatedQuotient}; in the second case this makes use of Corollary~\ref{c:kerG}. All three functors are full and essentially surjective by Lemmas~\ref{l:G-full-dense} and \ref{l:F-full-dense}. The second isomorphism then follows immediately, and the other two from Theorem~\ref{t:rel-2-term-functors} using the calculation of $\ker(F)$ and $\ker(F G)$.

Since both $F$ and $G$ are restrictions of triangle functors, they are also canonically extriangulated functors. Since the projection functor from Theorem~\ref{t:extriangulatedQuotient} is an extriangulated functor when combined with the identity natural transformation on extension groups (these being the same in the quotient as in the original category), the functors induced by $F$ and $G$ on the relevant quotient categories are also canonically extriangulated. Since they are additive equivalences, they are also equivalences of extriangulated categories by \cite[Prop.~2.13]{NOS}.
\end{proof}

For convenience, we state dual versions of Theorems~\ref{t:rel-2-term} and \ref{t:rel-2-term-functors}. 
We denote by~$\Transp\colon\per\cA \isoto \per\cA^\op$ the usual contravariant equivalence and call it the transpose.  It satisfies~$\Transp\Sigma=\Sigma^{-1}\Transp$.
Thus we may apply Theorems~\ref{t:rel-2-term} and \ref{t:rel-2-term-functors} to $\cal{E}^\op\subset\cal{A}^\op$ and then take the transpose of the result to obtain statements concerning $\cal{A}$. Since $\Transp(\cal{A}^\op*\Sigma\cal{A}^\op)=\Sigma^{-1}\cal{A}*\cal{A}$, we then apply $\Sigma$ to obtain Theorems~\ref{t:rel-2-term-dual} and \ref{t:rel-2-term-dual-functors} as stated below; this is why the category $\lperp(\Sigma\cal{E})$ from Theorem~\ref{t:rel-2-term} becomes $\cal{E}\rperp$ in the dual.

\begin{theorem}
\label{t:rel-2-term-dual}
Let $\cal{A}$ be a non-positively graded dg category, and let $\cal{E}\subseteq\cal{A}$ be a full subcategory which is contravariantly finite as a subcategory of~$\per\cA$. Write
\[\cal{E}\rperp=\{X\in\cal{A}*\Sigma\cal{A}:\text{$\Hom_{\per\cal{A}}(E,X)=0$ for all $E\in\cal{E}$}\}\]
for the right perpendicular category of $\cal{E}$ in~$\cal{A}*\Sigma\cal{A}$. Then
\begin{enumerate}
\item\label{t:rel-2-term-dual-ext-closed} $\cal{E}\rperp$ is extension-closed in $\per\cal{A}$, hence extriangulated,
\item\label{t:rel-2-term-dual-proj-inj} the injective objects in $\cal{E}\rperp$ are those of $\add\Sigma\cal{A}$, while the projective objects are those of the Bongartz completion of $\Sigma\cal{E}\subseteq\cal{A}*\Sigma\cal{A}$, and
\item\label{t:rel-2-term-dual-0Aus} $\cal{E}\rperp$ is a $0$-Auslander extriangulated category.
\end{enumerate}
\end{theorem}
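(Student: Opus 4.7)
The plan is to deduce the statement from \cref{t:rel-2-term} by duality, following the hint already given in the paragraph preceding the theorem. The argument has two parts: verifying that the hypotheses of \cref{t:rel-2-term} apply to the opposite data $\cal{E}^{\op} \subseteq \cal{A}^{\op}$, and then transporting the conclusions back through the contravariant equivalence $\Sigma \circ \Transp$.

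First, I would check the hypotheses. Non-positive grading is a property of $\Hom$-complexes, which are unchanged (up to reversing composition) by passing to the opposite, so $\cal{A}^{\op}$ is again non-positively graded. Since the transpose swaps left and right approximations, contravariant finiteness of $\cal{E} \subseteq \per \cal{A}$ implies covariant finiteness of $\cal{E}^{\op} \subseteq \per \cal{A}^{\op}$, so \cref{t:rel-2-term} applies to $\cal{E}^{\op} \subseteq \cal{A}^{\op}$.

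Next, I would transport the conclusions. Using $\Transp \Sigma = \Sigma^{-1} \Transp$, applying $\Transp$ to a defining triangle $A \to X \to \Sigma A' \to \Sigma A$ for $X \in \cal{A}^{\op} \ast \Sigma \cal{A}^{\op}$ shows that $\Sigma \Transp X \in \cal{A} \ast \Sigma \cal{A}$, so $\Sigma \circ \Transp$ induces a contravariant equivalence between $\cal{A}^{\op} \ast \Sigma \cal{A}^{\op}$ and $\cal{A} \ast \Sigma \cal{A}$. The computation
\[
\Hom_{\per \cal{A}^{\op}}(X, \Sigma E^{\op}) \cong \Hom_{\per \cal{A}}(\Sigma^{-1} E, \Transp X) \cong \Hom_{\per \cal{A}}(E, \Sigma \Transp X)
\]
then shows that this equivalence restricts to $\lperp(\Sigma \cal{E}^{\op}) \isoto \cal{E}\rperp$.

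Under this contravariant equivalence, each conclusion of \cref{t:rel-2-term} for the opposite data transfers to the corresponding statement about $\cal{E}\rperp$: extension-closedness is preserved (item \ref{t:rel-2-term-dual-ext-closed}); projectives are sent to injectives and conversely, so the projectives $\add \cal{A}^{\op}$ become the injectives $\add \Sigma \cal{A}$ while the injective Bongartz co-completion of $\cal{E}^{\op} \subseteq \cal{A}^{\op} \ast \Sigma \cal{A}^{\op}$ becomes the projective Bongartz completion of $\Sigma \cal{E} \subseteq \cal{A} \ast \Sigma \cal{A}$ (item \ref{t:rel-2-term-dual-proj-inj}); and the $0$-Auslander property is self-dual by the characterisation recalled via \cite[Prop.~3.6]{GNP2} in the discussion after \cref{def:0-Auslander} (item \ref{t:rel-2-term-dual-0Aus}). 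The main (and really only) obstacle is bookkeeping: keeping careful track of how $\Transp$ and $\Sigma$ interact with triangles, approximations, and perpendicular conditions, and confirming that the dualised notions match the precise formulations in the statement---in particular that the relevant subcategory is the Bongartz completion of $\Sigma\cal{E}$ rather than of $\cal{E}$, the extra suspension arising from the $\Sigma$ in $\Sigma \circ \Transp$.
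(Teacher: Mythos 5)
Your proposal is correct and is exactly the paper's argument: the paper proves this theorem by applying Theorem~\ref{t:rel-2-term} to $\cal{E}^\op\subseteq\cal{A}^\op$ and transporting the conclusions through $\Sigma\circ\Transp$, using $\Transp\Sigma=\Sigma^{-1}\Transp$ and $\Transp(\cal{A}^\op*\Sigma\cal{A}^\op)=\Sigma^{-1}\cal{A}*\cal{A}$, precisely as you describe. Your extra bookkeeping (the $\Hom$ computation identifying $\lperp(\Sigma\cal{E}^\op)$ with $\cal{E}\rperp$, and the self-duality of the $0$-Auslander property via \cite[Prop.~3.6]{GNP2}) just makes explicit what the paper leaves implicit.
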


We recall here that an object $P\in\per{\cal{A}}$ is in the \emph{Bongartz completion} of $\Sigma\cal{E}\subseteq \add\cA\ast\add\Sigma\cA$ if and only if there is a triangle
\begin{equation}
\label{eq:bong-comp}
\begin{tikzcd}
\Sigma E'_X\arrow{r}&\Sigma E_X\arrow{r}{\varphi}&X\arrow{r}&\Sigma X
\end{tikzcd}
\end{equation}
in which $X\in\cal{A}$, the morphism $\varphi$ is a right $\Sigma\cal{E}$-approximation of $X$, and $P\in\add(E_X\oplus E'_X)$.

\begin{theorem}
\label{t:rel-2-term-dual-functors} Keep notation as in Theorem~\ref{t:rel-2-term-dual}. Then in the commutative diagram
\begin{equation}
\begin{tikzcd}[column sep=35pt]
\label{eq:$2$-term-diag-dual}
\per{\cal{A}}\arrow{r}{\blank\ltens_{\cal{A}}\cohom{0}\cal{A}}&\per\cohom{0}\cal{A}\arrow{r}{\blank\ltens_{\cohom{0}\cal{A}}\cohom{0}\cal{A}/\cohom{0}\cal{E}}&\per\cohom{0}\cal{A}/\cohom{0}\cal{E}\\
\cal{E}\rperp\arrow[hookrightarrow]{u}\arrow{r}{G}&(\cohom{0}\cal{E})\rperp\arrow[hookrightarrow]{u}\arrow{r}{F}&\harp{\proj\cohom{0}\cal{A}/\cohom{0}\cal{E}},\arrow[hookrightarrow]{u}
\end{tikzcd}
\end{equation}
the functors $F$ and $G$, given by restricting the derived tensor products, are full and essentially surjective extriangulated functors. Moreover, $\ker{F}$ is the ideal of $(\cohom{0}\cal{E})\rperp$ generated by morphisms from an injective object to a projective object, and $\ker(F G)$ is the analogous ideal of~$\cal{E}\rperp$. 
\end{theorem}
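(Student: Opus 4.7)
The plan is to derive this theorem from \cref{t:rel-2-term-functors} by means of the transpose duality, so no new arguments of substance should be needed; the work lies entirely in the translation.

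First, I would verify that the hypotheses transport. Since $\Transp\colon\per\cA\isoto\per\cA^\op$ is a contravariant equivalence, it sends left approximations to right approximations and conversely. Therefore, $\cE$ is contravariantly finite in $\per\cA$ if and only if $\cE^\op$ is covariantly finite in $\per\cA^\op$, so \cref{t:rel-2-term-functors} applies to the non-positively graded dg category $\cA^\op$ with the full subcategory $\cE^\op\subseteq\cA^\op$. This yields full, essentially surjective extriangulated functors $G'\colon\lperp(\Sigma\cE^\op)\to\lperp(\Sigma\cohom{0}\cE^\op)$ and $F'\colon\lperp(\Sigma\cohom{0}\cE^\op)\to\harp{\proj\cohom{0}\cA^\op/\cohom{0}\cE^\op}$, with $\ker(F'G')$ generated by morphisms from an injective object to a projective object (and similarly for $\ker F'$ within $\lperp(\Sigma\cohom{0}\cE^\op)$, by applying the theorem to $\cohom{0}\cA^\op$).

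Second, I would transport the resulting diagram back to $\per\cA$ using the composite $\Sigma\circ\Transp$. Because $\Transp\Sigma=\Sigma^{-1}\Transp$, we have $\Sigma\circ\Transp(\cA^\op*\Sigma\cA^\op)=\cA*\Sigma\cA$, and the defining vanishing condition for $\lperp(\Sigma\cE^\op)$ transports to $\Hom_{\per\cA}(E,X)=0$ for $E\in\cE$, giving an identification $\Sigma\circ\Transp\colon\lperp(\Sigma\cE^\op)\isoto\cE\rperp$; the analogous statement holds at the cohomological level. The key technical compatibility to verify is that the derived tensor product $\blank\ltens_{\cA^\op}\cohom{0}\cA^\op$ commutes with $\Sigma\circ\Transp$ up to natural isomorphism, and similarly for $\blank\ltens_{\cohom{0}\cA^\op}\cohom{0}\cA^\op/\cohom{0}\cE^\op$. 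Once this is established, the functors $F$ and $G$ of the dual diagram are identified with $F'$ and $G'$ conjugated by $\Sigma\circ\Transp$, and \cref{l:extri-fun} (applied to the shifted transposed triangle functors) confirms that they are extriangulated. Fullness and essential surjectivity are preserved under conjugation by an equivalence.

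Third, I would check that the kernel descriptions transport. The contravariant equivalence $\Sigma\circ\Transp$ reverses arrows, but it also exchanges the roles of projective and injective objects: by \cref{t:rel-2-term-dual-proj-inj} the projectives of $\cE\rperp$ come from the Bongartz completion of $\Sigma\cE$, which is the image under $\Sigma\circ\Transp$ of the Bongartz co-completion of $\cE^\op$, i.e.\ the injectives of $\lperp(\Sigma\cE^\op)$; and symmetrically for injectives. These two reversals cancel, so the ideal generated by morphisms from injectives to projectives in $\lperp(\Sigma\cE^\op)$ corresponds bijectively to the ideal of $\cE\rperp$ generated by morphisms from injectives to projectives. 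The same holds at the cohomological level, which gives the description of $\ker F$.

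The main obstacle is the bookkeeping in the second step: one must be careful in tracking how the derived tensor products, the shift, and the transpose interact, and in verifying that the natural isomorphism one uses to identify $F$ (resp.\ $G$) with a conjugate of $F'$ (resp.\ $G'$) is compatible with the extriangulated structures. Apart from this verification, the proof reduces entirely to reindexing the conclusions of \cref{t:rel-2-term-functors}.
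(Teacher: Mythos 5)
Your proposal is correct and follows essentially the same route as the paper, which derives \cref{t:rel-2-term-dual-functors} by applying \cref{t:rel-2-term-functors} to $\cal{E}^\op\subseteq\cal{A}^\op$ and then transporting the result back via $\Sigma\circ\Transp$, using $\Transp\Sigma=\Sigma^{-1}\Transp$ and the identification $\Sigma\Transp(\cal{A}^\op*\Sigma\cal{A}^\op)=\cal{A}*\Sigma\cal{A}$. The only difference is that you spell out the bookkeeping (transport of the finiteness hypothesis, the swap of projectives and injectives cancelling in the kernel description) that the paper leaves implicit.
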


We end this section with a discussion of the seemingly more general case where~$\cA$ is \emph{$1$-rigid} (that is to say,~$\Hom_{\cA}(\cA, \Sigma \cA) = 0$) instead of being non-positively graded.  We show that this situation can be reduced to the previous setting by considering the truncation~$\cA^{\leq 0}$, where~
\[
 (\cA^{\leq 0})^i(\blank,\blank) = \begin{cases} \cA^i(\blank,\blank), & i<0 \\
                                        \mathrm{Z}^0\cA^0(\blank,\blank), & i=0 \\
                                        0, & \text{otherwise}, \end{cases}
\]
and where the differential of~$\cA^{\leq 0}$ is obtained by restriction of that of~$\cA$.  Then the inclusion of~$\cA^{\leq 0}$ into~$\cA$ induces a functor
\[H = - \ltens_{\cA^{\leq 0}} \cA \colon \per\cA^{\leq 0}\to \per\cA.\]

\begin{proposition}
\label{p:1-rigid}
 Let~$\cA$ be a~$1$-rigid dg category, and consider~$\per\cA$ with the relative structure whose bifunctor is~$\bE_{\add\cA}$ \cite[Prop.~3.16]{HerschendLiuNakaoka}. Then
 \begin{enumerate}
  \item $\cA \ast \Sigma \cA$ is closed under extensions in~$\per\cA$ equipped with the above relative structure, and so inherits a structure of an extriangulated category, and
  \item the functor~$H = - \varltens{\cA^{\leq 0}} \cA$ induces an equivalence of extriangulated categories
  \[H\colon \cA^{\leq 0} \ast \Sigma \cA^{\leq 0}\to \cA \ast \Sigma \cA,\]
  where~$\cA \ast \Sigma \cA$ is taken with the above relative structure.
 \end{enumerate}
\end{proposition}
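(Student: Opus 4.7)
The plan is to establish part (ii) first, and then to deduce part (i) as a direct consequence. The functor $H = -\varltens{\cA^{\leq 0}}\cA$ is a left adjoint, hence triangulated, and visibly sends representables of $\cA^{\leq 0}$ to representables of $\cA$. Therefore it restricts to an additive functor $\cA^{\leq 0} \ast \Sigma\cA^{\leq 0} \to \cA \ast \Sigma\cA$. Essential surjectivity follows from lifting any $2$-term presentation $X^{-1} \xrightarrow{x} X^0$ of an object $Z \in \cA \ast \Sigma\cA$ back to $\cA^{\leq 0}$: the objects lift canonically, and the morphism lifts because $H^0\cA^{\leq 0}(A, B) = H^0\cA(A, B)$ for any representables $A, B$ (immediate from the definition of the non-positive truncation, since the relevant closed morphisms and their boundaries live in degrees $\leq 0$).

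For fullness and faithfulness, I plan to devissage via the defining triangles $X'^{-1} \to X'^0 \to X' \to \Sigma X'^{-1}$, and the analogous one for $Y'$. A five-lemma argument reduces the statement to checking that $H$ induces isomorphisms on the finitely many Hom spaces $\Hom_{\per\cA^{\leq 0}}(\Sigma^i A, \Sigma^j B) \isoto \Hom_{\per\cA}(\Sigma^i A, \Sigma^j B)$ arising in the LES, for $A, B \in \add\cA^{\leq 0}$ and small $i, j$. These are controlled by $H^{j-i}$ of the relevant morphism complex: for $j - i \leq 0$ both sides agree since the inclusion $\cA^{\leq 0} \hookrightarrow \cA$ is an isomorphism in non-positive degrees, while for $j - i = 1$ both sides vanish, one trivially (as $\cA^{\leq 0}$ is non-positively graded) and the other by the $1$-rigidity hypothesis on $\cA$.

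The key and most delicate step is comparing extension groups under $H$. In the source, the canonical extriangulated structure on $\cA^{\leq 0} \ast \Sigma\cA^{\leq 0}$ has $\mathbb{E}(X', Y') = \Hom_{\per\cA^{\leq 0}}(X', \Sigma Y')$, and the LES yields $\mathbb{E}(X', Y') \cong \coker\bigl(x'^*\colon \Hom(X'^0, Y') \to \Hom(X'^{-1}, Y')\bigr)$, since the higher terms $\Hom(X'^i, \Sigma Y')$ vanish when $\cA^{\leq 0}$ is non-positively graded. In the target, $1$-rigidity of $\cA$ ensures that every morphism from an object of $\add\cA$ to $HX'$ factors through the canonical map $HX'^0 \to HX'$, so by Proposition~\ref{p:HLN-substruct}, $\mathbb{E}_{\add\cA}(HX', HY')$ equals the kernel of precomposition with this canonical map. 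The same LES computation—now with possibly nonzero $\Hom(HX'^i, \Sigma HY')$ contributed by $H^{\geq 2}\cA$, but with that extra term cut off by taking the kernel—again yields the cokernel $\coker(x'^*\colon \Hom(HX'^0, HY') \to \Hom(HX'^{-1}, HY'))$. Combined with the Hom matching, this provides a natural isomorphism $\mathbb{E} \isoto \mathbb{E}_{\add\cA}\circ (H^{\op}\times H)$; by Fact~\ref{fact: equivalences}, the pair $(H, \id)$ is therefore an equivalence of extriangulated categories, proving~(ii).

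Part (i) then follows immediately. Given $X, Y \in \cA \ast \Sigma\cA$ and $\delta \in \mathbb{E}_{\add\cA}(X, Y)$, the equivalence from (ii) yields $\delta = H\delta'$ for some extension $\delta' \in \mathbb{E}(X', Y')$ in $\cA^{\leq 0} \ast \Sigma\cA^{\leq 0}$ with $HX' \cong X$ and $HY' \cong Y$. The $\mathfrak{s}$-conflation realising $\delta'$ is a triangle in $\per\cA^{\leq 0}$ whose middle term $Z'$ lies in $\cA^{\leq 0} \ast \Sigma\cA^{\leq 0}$, since that subcategory is extension-closed in $\per\cA^{\leq 0}$ by Theorem~\ref{t:rel-2-term}\ref{t:rel-2-term-ext-closed} applied with $\cE = 0$. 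Applying the triangulated functor $H$ produces a triangle in $\per\cA$ with middle term $HZ' \in \cA \ast \Sigma\cA$ realising $\delta$, which establishes that $\cA \ast \Sigma\cA$ is closed under $\mathbb{E}_{\add\cA}$-extensions.
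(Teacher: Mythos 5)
Your proposal is correct, and the core of your argument for (ii) --- d\'evissage along the defining triangles, with the five lemma reducing everything to Hom spaces between shifted representables, where $1$-rigidity kills the degree-$1$ terms in the target and non-positivity kills them in the source --- is essentially the paper's proof. You diverge in two places, both legitimately. For the comparison of extension bifunctors, the paper argues in three steps (injectivity of $H$ on $\Hom(X,\Sigma Y)$ via the four lemma, containment of the image in $\bE_{\add\cA}$, and surjectivity onto it by factoring a class $u\in\bE_{\add\cA}(HX,H\Sigma Y)$ through $H\Sigma X^{-1}$), whereas you identify both sides at once with $\coker\bigl(x'^*\colon\Hom(X'^0,Y')\to\Hom(X'^{-1},Y')\bigr)$ via the long exact sequence, using $1$-rigidity to recognise $\bE_{\add\cA}(HX',HY')$ as $\ker(c^*)=\im(w^*)$; this is a tidier packaging of the same computation, at the cost of having to check that the identification is the one induced by $H$ (it is, since $H$ is a triangle functor and so commutes with the connecting maps). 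For part (i), the paper gives a direct one-line argument from the projectivity of $\add\cA$ in the relative structure, while you deduce it from the equivalence; your deduction works, but note the ordering subtlety: as stated, (ii) presupposes the inherited extriangulated structure on $\cA\ast\Sigma\cA$, i.e.\ presupposes (i). The fix is only cosmetic --- first establish the additive equivalence together with the isomorphism of bifunctors $\EE\cong\EE_{\add\cA}(H\blank,H\blank)$ (neither of which needs (i)), then run your middle-term argument to get (i), and only then assemble (ii) --- but you should say this explicitly rather than announcing that you prove (ii) before (i).
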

\begin{proof}
That~$\cA \ast \Sigma \cA$ is closed under extensions follows from the fact that~$\add\cA$ is projective for the relative structure.
 
 We prove that~$H\colon\cA^{\leq 0} \ast \Sigma \cA^{\leq 0}\to \cA \ast \Sigma \cA$ is an equivalence of additive categories in several steps, as follows.
 \begin{itemize}
  \item First, observe that $H$ induces an equivalence~$\add\cA^{\leq 0} \to \add\cA$.
  \item Secondly, observe that the essential image of~$H$ restricted to~$\cA^{\leq 0} \ast \Sigma \cA^{\leq 0}$ lies in~$\cA \ast \Sigma \cA$, and~$H\colon \cA^{\leq 0} \ast \Sigma \cA^{\leq 0}\to \cA \ast \Sigma \cA$ is essentially surjective.
  \item If~$X\in \cA^{\leq 0} \ast \Sigma\cA^{\leq 0}$, let~$X^{-1}\to X^0 \to X \to \Sigma X^{-1}$ be a triangle with~$X^{-1},X^0\in \add\cA^{\leq 0}$.  If~$P\in\add\cA^{\leq 0}$, then~$\Hom_{\cA^{\leq 0}}(P,X) \cong \Hom_{\cA}(HP,HX)$ thanks to the diagram
  \[
  \adjustbox{scale=.8,center}{
   \begin{tikzcd}
    \Hom_{\cA^{\leq 0}}(P,X^{-1}) \ar{r}\ar{d} & \Hom_{\cA^{\leq 0}}(P,X^{0}) \ar{r}\ar{d} & \Hom_{\cA^{\leq 0}}(P,X) \ar{r}\ar{d} & \Hom_{\cA^{\leq 0}}(P,\Sigma X^{-1}) \ar{r}\ar{d} & \Hom_{\cA^{\leq 0}}(P,\Sigma X^{0}) \ar{d} \\
    \Hom_{\cA}(HP,HX^{-1}) \ar{r} & \Hom_{\cA}(HP,HX^{0}) \ar{r} & \Hom_{\cA}(HP,HX) \ar{r} & \Hom_{\cA}(HP,\Sigma HX^{-1}) \ar{r} & \Hom_{\cA}(HP,\Sigma HX^{0}) 
   \end{tikzcd}
   }
   \]
  where the two leftmost vertical arrows are isomorphisms by the previous arguments, the two rightmost vertical lines consist of zero spaces since~$\cA$ is~$1$-rigid, and so the middle morphism is an isomorphism by the five lemma.
  \item Similarly, one proves that~$\Hom_{\cA^{\leq 0}}(X,\Sigma P) \cong \Hom_{\cA}(HX, H\Sigma P)$.
  
  \item One proves that~$\Hom_{\cA^{\leq 0}}(X, P) \cong \Hom_{\cA}(HX, H P)$ by using the diagram
  \[
  \adjustbox{scale=.8,center}{
   \begin{tikzcd}
    \Hom_{\cA^{\leq 0}}(\Sigma X^0,P) \ar{r}\ar{d} & \Hom_{\cA^{\leq 0}}(\Sigma X^{-1},P) \ar{r}\ar{d} & \Hom_{\cA^{\leq 0}}(X,P) \ar{r}\ar{d} & \Hom_{\cA^{\leq 0}}(X^0,P) \ar{r}\ar{d} & \Hom_{\cA^{\leq 0}}(X^{-1},P) \ar{d} \\
    \Hom_{\cA}(H\Sigma X^0,HP) \ar{r} & \Hom_{\cA}(H\Sigma X^{-1},HP) \ar{r} & \Hom_{\cA}(HX,HP) \ar{r} & \Hom_{\cA}(HX^0,HP) \ar{r} & \Hom_{\cA}(HX^{-1},HP) 
   \end{tikzcd}   
   }
  \]
  where the two rightmost vertical arrows are isomorphisms by the above arguments, the two leftmost ones are isomorphisms because~$\cohom{-1}\cA^{\leq 0} \cong \cohom{-1}\cA$, and so the middle one is an isomorphism by the five lemma.
  
  \item Similarly, one proves that~$\Hom_{\cA^{\leq 0}}(\Sigma P, X) \cong \Hom_{\cA}(H\Sigma P, H X)$.
  
  \item Finally, if~$X,Y$ are both in~$\cA^{\leq 0} * \Sigma \cA^{\leq 0}$, then the five lemma gives that~$\Hom_{\cA^{\leq 0}}(X, Y) \cong \Hom_{\cA}(HX, HY)$ by using the above reasoning and the diagram
  \[
  \adjustbox{scale=.8,center}{
   \begin{tikzcd}
    \Hom_{\cA^{\leq 0}}(\Sigma X^0,Y) \ar{r}\ar{d} & \Hom_{\cA^{\leq 0}}(\Sigma X^{-1},Y) \ar{r}\ar{d} & \Hom_{\cA^{\leq 0}}(X,Y) \ar{r}\ar{d} & \Hom_{\cA^{\leq 0}}(X^0,Y) \ar{r}\ar{d} & \Hom_{\cA^{\leq 0}}(X^{-1},Y) \ar{d} \\
    \Hom_{\cA}(H\Sigma X^0,HY) \ar{r} & \Hom_{\cA}(H\Sigma X^{-1},HY) \ar{r} & \Hom_{\cA}(HX,HY) \ar{r} & \Hom_{\cA}(HX^0,HY) \ar{r} & \Hom_{\cA}(HX^{-1},HY). 
   \end{tikzcd}   
   }
  \]
  Thus~$H$ is an equivalence of additive categories.
 \end{itemize}
 
 It remains to be shown that~$H$ is an equivalence of extriangulated categories.  That it is an extriangulated functor follows, by Lemma~\ref{l:extri-fun}, from the fact that it is a restriction of a triangulated functor.  Thus we only need to show that it induces an isomorphism of bifunctors~$\Ext_{\cA^{\leq 0}}(\blank,\blank)\cong \bE_{\add \cA}(H\blank,H\blank)$.  Recall that~$\bE_{\add \cA}(U,V)$ is the subset of~$\Hom_{\cA}(U,\Sigma V)$ consisting of those~$f$ such that $fa = 0$ for any~$a\colon A\to U$ with~$A\in \add\cA$.  
 
 Firstly,~$H\colon\Hom_{\cA^{\leq 0}}(X,\Sigma Y) \to \Hom_{\cA}(HX,H\Sigma Y)$ is injective, as can be deduced from the four lemma and the diagram
 \[
  \adjustbox{scale=.9,center}{
   \begin{tikzcd}
    \Hom_{\cA^{\leq 0}}(\Sigma X^0,\Sigma Y) \ar{r}\ar{d} & \Hom_{\cA^{\leq 0}}(\Sigma X^{-1},\Sigma Y) \ar{r}\ar{d} & \Hom_{\cA^{\leq 0}}(X,\Sigma Y) \ar{r}\ar{d} & 0\ar{d}  \\
    \Hom_{\cA}(H\Sigma X^0,H\Sigma Y) \ar{r} & \Hom_{\cA}(H\Sigma X^{-1},H\Sigma Y) \ar{r} & \Hom_{\cA}(HX,H\Sigma Y) \ar{r} & \Hom_{\cA}(HX^0,H\Sigma Y), 
   \end{tikzcd}
   }
 \]
 where the two leftmost vertical arrows are isomorphisms by the above arguments and the rightmost one is obviously injective.

 Secondly, if~$X,Y\in\cA^{\leq 0} \ast \Sigma\cA^{\leq 0}$, then~$\Hom_{\cA^{\leq 0}}(\cA^{\leq 0},\Sigma Y) = 0$ since~$\cA^{\leq 0}$ is non-positively graded.  Thus the image of~$H\colon\Hom_{\cA^{\leq 0}}(X,\Sigma Y) \to \Hom_{\cA}(HX,H\Sigma Y)$ lies in~$\bE_{\add \cA}(HX,HY)$.  
 
 Conversely, let~$u\in\bE_{\add\cA}(HX,H\Sigma Y)$. Then its precomposition with~$HX^0\to HX$ vanishes, since~$HX^0\in \add\cA$.  Thus~$u$ is a composition of two morphisms~$HX\to H\Sigma X^{-1} \to \Sigma Y$, both of which have preimages under~$H$ since this functor induces isomorphisms~$\Hom_{\cA^{\leq_0}}(X,\Sigma X^{-1})\cong \Hom_{\cA}(HX, H\Sigma X^{-1})$ and~$\Hom_{\cA^{\leq_0}}(\Sigma X^{-1}, \Sigma Y) \cong \Hom_{\cA}(\Sigma HX^{-1}, \Sigma HY)$.  Thus~$u$ itself has a preimage under~$H$.  
 
 This shows that~$H\colon\Hom_{\cA^{\leq 0}}(X,\Sigma Y) \to \bE_{\add \cA}(HX,HY)$ is an isomorphism, completing the proof.
\end{proof}

\section{Application to Frobenius exact categories} \label{section:Frobenius}

In this section, we are concerned with the case of an exact Frobenius category~$\cC$ with a cluster-tilting subcategory~$\cT$ (see \cref{def:CT}).  We use the extriangulated substructures~$\cC^{\cT}$ and~$\cC_{\cT}$ from Proposition~\ref{p:HLN-substruct}, in which we will show that $\cT$ becomes the category of injective or projective objects, respectively; moreover, we prove in Theorems~\ref{t:addEquivalenceYoneda} and~\ref{t:addEquivalenceYonedaDual} that sending an object of~$\cC$ to its~$\cT$-(co)-resolution can be done functorially, and that the underlying functor is extriangulated.  This allows us to interpret~$\cC^{\cT}$ and~$\cC_{\cT}$ in the language of Section~\ref{s:rel-2-term}, see Theorem~\ref{t:ClustCat-to-2term-frobenius}.

Our main motivation for this section is the additive categorification of cluster algebras by means of Frobenius categories, as they appear for instance in~\cite{GLS-ParFlag,JKS,FK,KPQ,Pressland22, Pressland23}.

\subsection{Extriangulated substructures and the index}

When $\cal{C}$ is a Frobenius extriangulated category and $\cal{T}\subseteq\cal{C}$ is cluster-tilting, we may characterise conflations in $\cal{C}_{\cal{T}}$ and $\cal{C}^{\cal{T}}$ in terms of the triangulated stable category $\underline{\cal{C}}$. Note that $\cal{T}$ is in particular rigid, i.e.\ $\EE_{\cal{C}}(T,T')=0$ for all $T,T'\in\cal{T}$.

\begin{proposition}
\label{p:T-proj-substruct}
If $\cal{C}$ is Frobenius and $\cal{T}$ is cluster-tilting, then a conflation $X\infl Y\defl Z\dashrightarrow$ 
in $\cal{C}$ is a conflation in $\cal{C}_{\cal{T}}$ if and only if in the induced triangle
\[\begin{tikzcd}
X\arrow{r}&Y\arrow{r}{g}&Z\arrow{r}{\delta}&\Sigma X
\end{tikzcd}\]
in the stable category $\stab{\cal{C}}$, the morphism $\delta$ factors over $\Sigma\cal{T}$. Moreover, $\EE_{\cal{T}}$ is the maximal extriangulated substructure of $\cal{C}$ in which objects of $\cal{T}$ are projective, and simultaneously the maximal extriangulated substructure in which the cosyzygies of objects in~$\cT$ are injective. 
\end{proposition}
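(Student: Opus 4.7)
The plan is to translate everything into the stable triangulated category $\stab\cC$, using the identification $\EE_{\cC}(Z,X) = \stab\cC(Z,\Sigma X)$ available when $\cC$ is Frobenius. Under this identification, the defining condition $(\delta_\sharp)_T = 0$ of $\EE_\cT$ becomes: $\delta \circ f = 0$ in $\stab\cC(T, \Sigma X)$ for every $f\colon T \to Z$ with $T \in \cT$. For the characterization of conflations, I will invoke the fact that $\cT$ is strongly functorially finite (\cref{r:sff}), so $Z$ admits a right $\cT$-approximation that is a deflation, fitting into a conflation $T_1 \infl T_0 \defl Z$; the kernel $T_1$ lies in $\cT$ by the long exact sequence and the second defining equation of a cluster-tilting subcategory. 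The associated stable triangle $T_1 \to T_0 \xrightarrow{a} Z \xrightarrow{\partial} \Sigma T_1$ then shows that $\delta\circ f = 0$ for every such $f$ is equivalent to $\delta\circ a = 0$, equivalently to $\delta$ factoring through $\partial$, i.e.\ through $\Sigma\cT$. The converse follows from rigidity, since any $T \to \Sigma T'$ in $\stab\cC$ represents an element of $\EE(T,T') = 0$.

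The first maximality statement is essentially formal. Taking $f = \id_T$ in the defining condition shows that each $T \in \cT$ is $\EE_\cT$-projective. Conversely, if $\FF$ is any substructure in which every $T \in \cT$ is projective, then for $\delta \in \FF(Z,X)$ and $f\colon T \to Z$ one has $f^*\delta \in \FF(T, X) = 0$, so $(\delta_\sharp)_T = 0$ and thus $\FF \le \EE_\cT$. The assertion that $\EE_\cT$ makes every cosyzygy $\Sigma T$ (with $T \in \cT$) injective is also straightforward: any $\delta \in \EE_\cT(Z, \Sigma T)$ factors in $\stab\cC$ as $Z \to \Sigma T' \xrightarrow{\Sigma g} \Sigma^2 T$ with $T' \in \cT$, and $g \in \stab\cC(T', \Sigma T) = \EE(T', T)$ vanishes by rigidity, forcing $\delta = 0$.

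The main obstacle is the second maximality statement, which I plan to handle as follows. Suppose $\FF$ is a substructure in which every cosyzygy $\Sigma T$ (for $T \in \cT$) is injective, and let $\delta \in \FF(Z,X)$. For any $\eta\colon X \to \Sigma T$, the push-forward $\eta_*\delta$ lies in $\FF(Z, \Sigma T) = 0$, so $\Sigma \eta \circ \delta = 0$ in $\stab\cC$. To upgrade this pointwise vanishing into a factorization of $\delta$ through $\Sigma\cT$, I apply the conflation construction of the first paragraph with $X$ in place of $Z$, producing a triangle $T_1 \to T_0 \xrightarrow{a} X \xrightarrow{\partial} \Sigma T_1$ in $\stab\cC$ with $T_0, T_1 \in \cT$. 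Applying $\stab\cC(\blank, \Sigma T')$ to this triangle and using rigidity of $\cT$ shows that $\partial$ is a left $\Sigma\cT$-approximation of $X$, so the vanishing of $\Sigma\eta\circ\delta$ for all $\eta$ reduces to the single condition $\Sigma\partial \circ \delta = 0$. Applying $\stab\cC(Z, \blank)$ to the rotated triangle $X \xrightarrow{\partial} \Sigma T_1 \to \Sigma T_0 \xrightarrow{\Sigma a} \Sigma X$ then yields a factorization $\delta = \Sigma a \circ \alpha$ with $\alpha\colon Z \to \Sigma T_0$ and $a\colon T_0 \to X$, placing $\delta$ in $\EE_\cT$ and completing the proof.
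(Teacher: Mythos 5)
Your proof is correct, and it takes a genuinely more self-contained route than the paper's. For the characterisation of conflations in $\cC_{\cT}$, the paper identifies $(\delta_\sharp)_T=0$ with surjectivity of $g_*\colon\stabHom_{\cC}(T,Y)\to\stabHom_{\cC}(T,Z)$ and then invokes Koenig--Zhu's theorem (applied to the cluster-tilting subcategory $\Sigma\cT\subseteq\stab{\cC}$) as a black box to translate this into $\delta\in(\Sigma\cT)$; you instead construct the approximation conflation $T_1\infl T_0\defl Z$ by hand, check $T_1\in\cT$ via the long exact sequence, and read off the factorisation of $\delta$ through $\Sigma T_1$ from the resulting stable triangle --- in effect reproving the direction of Koenig--Zhu that is needed. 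For the maximality statements, the paper shows that objects of $\cT$ are $\EE_{\cT}$-projective by a lifting argument through projective-injective objects and only sketches the cosyzygy half (``the arguments are similar''), whereas your purely formal sub-bifunctor arguments ($f^*\delta\in\FF(T,X)=0$ for the projective side, $\eta_*\delta\in\FF(Z,\Sigma T)=0$ for the injective side) are more elementary, and your reduction of the pointwise vanishing $\Sigma\eta\circ\delta=0$ to the single condition $\Sigma\partial\circ\delta=0$ via the left $\Sigma\cT$-approximation $\partial$ of $X$ cleanly fills in the dualisation that the paper leaves implicit. Both arguments ultimately rest on the same ingredients: the identification $\EE(Z,X)=\stabHom_{\cC}(Z,\Sigma X)$, rigidity of $\cT$, and the existence of approximating conflations guaranteed by \cref{r:sff}.
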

\begin{proof}
Let $T\in\cal{T}$ and note that $(\delta_{\sharp})_T$ factors as
\[\Hom_{\cal{C}}(T,Z)\longrightarrow\stabHom_{\cal{C}}(T,Z)\stackrel{\delta_*}{\longrightarrow}\stabHom_{\cal{C}}(T,\Sigma X)=\EE(T,X),\]
where the first map is the canonical surjection. Thus $(\delta_{\sharp})_T=0$ if and only if $\delta_*=0$, if and only if $g_*\colon\stabHom_{\cal{C}}(T,Y)\to\stabHom_{\cal{C}}(T,Z)$ is surjective. This is equivalent to asking that $\delta\in(\Sigma\cal{T})$ by \cite[Thm.~2.3]{KoenigZhu} (applied to $\Sigma\cal{T}\subseteq\stab{\cal{C}}$, which is cluster-tilting since $\cal{T}$ is).

The second statement is proved in greater generality in \cite[Prop.~3.17]{GNP2}, but for convenience we give our own argument here. Let $\delta\in\EE(Z,X)=\stabHom_{\cal{C}}(Z,\Sigma X)$, with associated conflation
\[\begin{tikzcd}
X\arrow[infl]{r}{f}&Y\arrow[defl]{r}{g}&Z\arrow[dashed]{r}{\delta}&.
\end{tikzcd}\]
Assume $\FF$ is an extriangulated substructure of $\cal{C}$ in which objects of $\cal{T}$ are projective, and that $\delta\in\FF(Z,X)$. 
Then any morphism $T\to Z$ with $T\in\cal{T}$ factors over $g$ in $\cal{C}$, hence also in $\stab{\cal{C}}$. Thus $g_*$, defined as above, is surjective.
Since $\cal{T}$ is cluster-tilting, it follows from \cite[Thm.~2.3]{KoenigZhu} again that $\delta\in(\Sigma \cal{T})$ when viewed as a morphism in $\stab{\cal{C}}$, and hence $\delta\in\EE_{\cal{T}}(Z,X)$.

Conversely, assume that $\delta\in(\Sigma \cal{T})$. Since $\cal{T}$ is rigid, it follows that the morphism $g_*$ is surjective for any $T\in\cal{T}$. In other words, given any morphism $h\colon T\to Z$ in $\cal{C}$, there is a projective-injective object $\Pi$ and morphisms $p\colon T\to \Pi$ and $q\colon\Pi\to Z$ such that $h-qp$ factors over $g$.
Since $\Pi$ is in particular projective, $q$ factors over the deflation $g$, which implies that $h$ itself factors over $g$. Thus $T$ is projective in $\cal{C}_{\cal{T}}$.

The arguments concerning injectivity of cosyzygies of objects in $\cal{T}$ are similar, using that $X\in\cal{C}$ is a cosyzygy of an object in $\cal{T}$ if and only if it lies in $\Sigma\cal{T}$ when viewed as an object of $\stab{\cal{C}}$, together with the fact that $\delta\in(\Sigma\cal{T})$ if and only if $\delta\in\EE_{\cal{T}}(Z,X)$. 
\end{proof}

The following statement is dual to Proposition~\ref{p:T-proj-substruct}.

\begin{proposition}
\label{p:T-inj-substruct}
If $\cal{C}$ is Frobenius and $\cal{T}$ is cluster-tilting, then a conflation $X\infl Y\defl Z\dashrightarrow$ 
in $\cal{C}$ is a conflation in $\cal{C}^{\cal{T}}$ if and only if in the induced triangle
\[\begin{tikzcd}
X\arrow{r}{f}&Y\arrow{r}&Z\arrow{r}{\delta}&\Sigma X
\end{tikzcd}\]
in the stable category $\underline{\cal{C}}$, the morphism $\delta$ factors over $\cal{T}$. Moreover, $\EE^{\cal{T}}$ is the maximal extriangulated substructure of $\cal{C}$ in which objects of $\cal{T}$ are injective, and simultaneously the maximal extriangulated substructure in which the syzygies of objects in~$\cT$ are projective. 
\end{proposition}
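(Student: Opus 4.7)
The plan is to mirror the proof of \cref{p:T-proj-substruct}, interchanging the roles of domain and codomain, projective and injective; equivalently, one can invoke the duality $\cC \leftrightarrow \cC^{\op}$, under which $\EE^{\cT}$ on $\cC$ corresponds to $\EE_{\cT^{\op}}$ on $\cC^{\op}$, and apply the previous proposition formally. I outline the direct approach below.

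For the characterisation of conflations, I will first note that, for $T \in \cT$, the connecting map $(\delta^{\sharp})_{T}\colon \Hom_{\cC}(X, T) \to \EE_{\cC}(Z, T)$ factors as the canonical surjection onto $\stabHom_{\cC}(X, T)$ followed by the pre-composition $(\Sigma^{-1}\delta)^{*}\colon \stabHom_{\cC}(X, T) \to \stabHom_{\cC}(\Sigma^{-1}Z, T) = \EE_{\cC}(Z, T)$. The long exact sequence on $\stabHom_{\cC}(\blank, T)$ induced by the triangle $X \xrightarrow{f} Y \to Z \xrightarrow{\delta} \Sigma X$ then shows this vanishes if and only if $f^{*}\colon \stabHom_{\cC}(Y, T) \to \stabHom_{\cC}(X, T)$ is surjective, and applying \cite[Thm.~2.3]{KoenigZhu} to the cluster-tilting subcategory $\cT \subseteq \stab{\cC}$ reformulates this, for all $T \in \cT$, as the statement that $\delta$ factors through an object of $\cT$ in $\stab{\cC}$.

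The two maximality claims will be treated next. That $\cT$ is injective in $\cC^{\cT}$ is immediate from the defining condition applied to $\id_{T}$; more generally, for any substructure $\FF \leq \EE$ in which $\cT$ is injective and any $\delta \in \FF(Z, X)$, the image of $(\delta^{\sharp})_{T}$ lies in $\FF(Z, T) = 0$, so $\delta \in \EE^{\cT}$. For syzygies, I will use rigidity: if $W$ is a syzygy of some $T \in \cT$, then $\stabHom_{\cC}(W, T') \cong \EE_{\cC}(T, T') = 0$ for every $T' \in \cT$, so in any factorisation $W \to T' \to \Sigma Y$ witnessing $\delta \in \EE^{\cT}(W, Y)$ the first factor vanishes in $\stab{\cC}$, forcing $\delta = 0$, and $W$ is projective in $\cC^{\cT}$. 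Conversely, given $\FF$ with all such syzygies projective and $\delta \in \FF(Z, X)$, I will lift the leftmost morphism of a cluster-tilting triangle $\Sigma^{-1}T^{1} \to Z \to T^{0} \to T^{1}$ in $\stab{\cC}$ (associated with a left $\cT$-approximation $Z \to T^{0}$) to a morphism $W \to Z$ in $\cC$, where $W$ is a syzygy of $T^{1} \in \cT$; the pullback of $\delta$ along this map lies in $\FF(W, X) = 0$, and the long exact sequence from the triangle then places $\delta$ in the image from $\stabHom_{\cC}(T^{0}, \Sigma X)$, so $\delta$ factors through $T^{0} \in \cT$.

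The technical obstacle will be the bookkeeping of suspensions and the switch between pulling back and pushing forward, together with ensuring that the cluster-tilting approximations and syzygies lift consistently from $\stab{\cC}$ to $\cC$; beyond that, the argument runs parallel to the proof of \cref{p:T-proj-substruct}.
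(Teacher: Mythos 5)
Your proposal is correct and takes essentially the same approach as the paper: the paper gives no separate argument for this proposition, stating only that it is dual to Proposition~\ref{p:T-proj-substruct}, which is exactly the reduction you invoke. The direct argument you outline is a faithful dualisation of the paper's proof of that proposition (with the minor, harmless variation that you derive maximality from closure of the sub-bifunctor $\FF$ under pushforward and pullback rather than from the lifting/extension property of $\FF$-deflations and $\FF$-inflations), and its details check out.
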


Because of Proposition~\ref{p:T-proj-substruct}, we call the extriangulated substructure $\EE_{\cal{T}}$ the \emph{$\cal{T}$-projective extriangulated structure} on $\cal{C}$. Dually, we call $\EE^{\cal{T}}$ the \emph{$\cal{T}$-injective extriangulated structure}. Note that the projective-injective objects in $\cal{C}$ remain projective-injective in either of these substructures, since all of them lie in $\cal{T}$, and are simultaneously syzygies and cosyzygies of $0\in\cal{T}$.

\begin{corollary}
\label{c:easy_confl}
Let $X\infl Y\defl Z\dashrightarrow$ be a conflation in $\cal{C}$. If $X\in\cal{T}$, then this is also a conflation in $\cal{C}_{\cal{T}}$, whereas if $Z\in\cal{T}$ then it is also a conflation in $\cal{C}^{\cal{T}}$.
\end{corollary}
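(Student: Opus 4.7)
The statement is essentially a direct consequence of the definitions of $\EE_{\cal{T}}$ and $\EE^{\cal{T}}$ from Proposition~\ref{p:HLN-substruct}, combined with the fact that any cluster-tilting subcategory is in particular rigid (indeed, $\cT$ is contained in the class $\{X:\EE(X,T)=0\ \forall T\in\cT\}$).

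The plan is as follows. Suppose $X\in\cT$ and let $\delta\in\EE(Z,X)$ realize the given conflation. To show that this conflation lies in $\cC_{\cT}$, I must verify that $(\delta_{\sharp})_S\colon\Hom_{\cC}(S,Z)\to\EE(S,X)$ vanishes for every $S\in\cT$. But rigidity of $\cT$ gives $\EE(S,X)=0$ outright, so the map is necessarily zero. Dually, when $Z\in\cT$, the map $(\delta^{\sharp})_S\colon\Hom_{\cC}(X,S)\to\EE(Z,S)$ targets the zero group $\EE(Z,S)=0$ for every $S\in\cT$, giving $\delta\in\EE^{\cT}(Z,X)$ and proving that the conflation lies in $\cC^{\cT}$.

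There is essentially no obstacle: the argument is a one-line application of the definitions together with rigidity of $\cT$. If one prefers a more conceptual route, one can instead invoke Proposition~\ref{p:T-proj-substruct} directly: when $X\in\cT$, the morphism $\delta\colon Z\to\Sigma X$ in $\stab\cC$ trivially factors through $\Sigma\cT$ (via the identity on $\Sigma X$), so the conflation belongs to $\cC_{\cT}$; the case $Z\in\cT$ is handled by the dual Proposition~\ref{p:T-inj-substruct} in exactly the same way.
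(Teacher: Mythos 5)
Your proof is correct. The paper states this as a corollary of Propositions~\ref{p:T-proj-substruct} and~\ref{p:T-inj-substruct} without writing out the argument, and the intended route is exactly your ``alternative'' one: when $X\in\cT$ the connecting morphism $\delta\colon Z\to\Sigma X$ in $\stab{\cC}$ lands in $\Sigma\cT$ and hence trivially factors through it, and dually for $Z\in\cT$. Your primary argument is in fact slightly more elementary: it works directly from the definition of $\EE_{\cT}$ and $\EE^{\cT}$ in Proposition~\ref{p:HLN-substruct}, using only that $\cT$ is rigid (so the target $\EE(S,X)$, resp.\ $\EE(Z,S)$, of the relevant connecting map vanishes), and therefore does not need $\cC$ to be Frobenius or $\cT$ to be cluster-tilting rather than merely rigid. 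Both routes are valid; the directions of $(\delta_\sharp)_S$ and $(\delta^\sharp)_S$ are handled correctly, so there is nothing to fix.
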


For the rest of the subsection, we assume that $\cT$ is essentially small, so that it has a well-defined Grothendieck group $\GrothGp{\cal{T}}$. Since $\cal{T}$ is rigid, there are no non-split extensions between objects in $\cal{T}$, and so $\GrothGp{\cal{T}}$ is just the split Grothendieck group of $\cT$ as an additive category. If $\cT$ is Krull--Schmidt, this is just the free abelian group
generated by the set of isomorphism classes of indecomposable objects in $\cT$.

\begin{definition}
\label{d:ind-coind}
For an object $X\in\cal{C}$, the \emph{index} of $X$ with respect to $\cal{T}$ is the Grothendieck group element $[R_X]-[K_X]\in\GrothGp{\cal{T}}$ for any conflation
\begin{equation}
\label{eq:index-conflation}
\begin{tikzcd}
K_X\arrow[infl]{r}&R_X\arrow[defl]{r}&X\arrow[dashed]{r}&\phantom{0}
\end{tikzcd}
\end{equation}
in $\cal{C}$ such that $R_X,K_X\in\cal{T}$. Dually, the \emph{coindex} of $X$ with respect to $\cal{T}$ is the Grothendieck group element $[L_X]-[C_X]\in\GrothGp{\cal{T}}$ for any conflation
\begin{equation}
\label{eq:coindex-conflation}
\begin{tikzcd}
X\arrow[infl]{r}&L_X\arrow[defl]{r}&C_X\arrow[dashed]{r}&\phantom{0}
\end{tikzcd}
\end{equation}
in $\cal{C}$ such that $L_X,C_X\in\cal{T}$.
\end{definition}

It follows from the definition of a cluster-tilting subcategory that $R_X,K_X\in\cal{T}$ in the conflation \eqref{eq:index-conflation} if and only if the deflation $R_X\defl X$ is a right $\cal{T}$-approximation of $X$. Such a deflation always exists, as in Remark~\ref{r:sff}, since $\cal{C}$ has enough projectives.

This means that there exists a conflation of the form \eqref{eq:index-conflation} for any $X\in\cal{C}$. If
$K_X'\infl R_X'\defl X$ is a second such conflation, then $[R_X]-[K_X]=[R_X']-[K_X']$ by \cite[Lem.~4.36, Rem.~4.37]{PPPP} (see also \cite[Lem.~3.8]{wang2023indices}), and so the index is well-defined. One shows dually that the coindex is also well-defined, with conflations \eqref{eq:coindex-conflation} characterised by the fact that the inflation $X\infl L_X$ is a left $\cal{T}$-approximation of $X$.

From now on, we assume that $(\cal{C},\EE,\s)$ is a weakly idempotent complete Frobenius extriangulated category and that $\cal{T}$ is a cluster-tilting subcategory of $\cal{C}$.
We note that $(\cal{C},\EE,\s)$ satisfies the condition:
\begin{enumerate}[label=(WIC)]
    \item\label{WIC}\cite[Cond.~5.8]{NakaokaPalu} Let $h=gf$ be a morphism in $\cal{C}$. If $h$ is a deflation, then $g$ is a deflation. Dually, if $h$ is an inflation, $f$ is an inflation. 
\end{enumerate}
Indeed, for an arbitrary extriangulated category $(\cal{C},\EE,\s)$, condition \ref{WIC} is equivalent to weak idempotent completeness of $\cal{C}$ by \cite[Prop.~2.7]{Klapproth} (and \cite[Prop.~3.33]{Msapato}, \cite[\S II.1.3]{Tattar-thesis}).

\begin{remark}
When $\cal{C}$ is weakly idempotent complete we do not in fact need to appeal to Remark~\ref{r:sff} for the existence of the conflation \eqref{eq:index-conflation}, since any right $\cal{T}$-approximation is automatically a deflation. Indeed, given a right $\cal{T}$-approximation $r\colon R\to X$, a projective cover of $X$ factors over $r$ since $\cal{T}$ contains all projective objects in $\cal{C}$, and so $r$ is a deflation by \ref{WIC}. Similarly, any left $\cal{T}$-approximation is an inflation. Our argument that the index and coindex are well-defined is, however, valid without the weak idempotent completeness assumption.
\end{remark}

\begin{definition}
Let $\mathbb{A}$ be an abelian group, and let $\varphi\colon\cal{C}\to\mathbb{A}$ be a function. We say that $\varphi$ is \emph{additive} on a conflation $X\infl Y\defl Z\dashrightarrow$ in $\cal{C}$ if $\varphi(X)+\varphi(Z)=\varphi(Y)$.
\end{definition}

\begin{proposition}
\label{p:ind-additive}
The index is additive on all conflations in $\cal{C}_{\cal{T}}$, while the coindex is additive on all conflations in $\cal{C}^{\cal{T}}$.
\end{proposition}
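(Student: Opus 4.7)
The plan is to establish the statement for the index via a horseshoe-lemma argument in $\cC_{\cT}$, and then invoke duality for the coindex.

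First, I would fix a conflation
\[\xi\colon X\infl Y\defl Z\dashrightarrow\]
in $\cC_{\cT}$ and choose $\cT$-resolutions
\[K_X\infl R_X\defl X,\qquad K_Z\infl R_Z\defl Z\]
in $\cC$, where $R_X,R_Z,K_X,K_Z\in\cT$; these exist because $\cT$ is cluster-tilting (hence contravariantly finite in $\cC$, and right $\cT$-approximations are deflations by \ref{WIC}). By \cref{p:T-proj-substruct}, every object of $\cT$ is projective in $\cC_{\cT}$, so in particular $R_Z$ is. The deflation $R_Z\defl Z$ therefore lifts through $Y\defl Z$ along the conflation $\xi$ (using the exact sequence \eqref{eq:6term2} for $\EE_{\cT}$), yielding a morphism $R_Z\to Y$. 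Combined with $R_X\to X\infl Y$, we obtain a morphism $\binom{\ast}{\ast}\colon R_X\oplus R_Z\to Y$.

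The next step is a horseshoe lemma. I would apply (ET4$^{\op}$) (or equivalently the $3\times 3$-lemma of \cite[Lem.~3.14]{NakaokaPalu}) to the composition $R_X\oplus R_Z\to Y\defl Z$ together with the deflation $R_Z\defl Z$, to produce a commutative diagram of conflations whose middle column reads $K_X\oplus K_Z\infl R_X\oplus R_Z\defl Y$ (after identifying the kernel of $R_X\oplus R_Z\defl Y$ as an extension of $K_Z$ by $K_X$, which must split since $\EE(K_Z,K_X)=0$ by rigidity of $\cT$). This exhibits $K_X\oplus K_Z\infl R_X\oplus R_Z\defl Y$ as a $\cT$-resolution of $Y$ in $\cC$, so by definition and well-definedness of the index,
\[\operatorname{ind}(Y)=[R_X\oplus R_Z]-[K_X\oplus K_Z]=\operatorname{ind}(X)+\operatorname{ind}(Z)\]
in $\GrothGp{\cT}$.

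The main technical point is to verify that the putative kernel really splits as $K_X\oplus K_Z$, and for this one relies on rigidity of $\cT$ (so that any extension of $K_Z$ by $K_X$ in $\cC$ is split) together with \ref{WIC} to ensure that the induced sequence is indeed a conflation. The statement for the coindex follows by dualising: one uses \cref{p:T-inj-substruct} in place of \cref{p:T-proj-substruct}, so that $\cT$ consists of injective objects in $\cC^{\cT}$, and applies (ET4) to the left $\cT$-approximations $X\infl L_X$ and $Z\infl L_Z$ to build a coresolution $Y\infl L_X\oplus L_Z\defl C_X\oplus C_Z$.
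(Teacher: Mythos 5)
Your proof is correct and follows essentially the same route as the paper: the paper simply observes that the conflation \eqref{eq:index-conflation} is a projective resolution in $\cC_{\cT}$ (by \cref{c:easy_confl} and \cref{p:T-proj-substruct}) and invokes an extriangulated version of the horseshoe lemma, which is exactly the argument you unpack, including the splitting of the kernel term via rigidity of $\cT$ and the dual argument for the coindex.
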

\begin{proof}
First note that \eqref{eq:index-conflation} is a conflation in $\cal{C}_{\cal{T}}$ by Corollary~\ref{c:easy_confl}, since $K_X\in\cal{T}$. Moreover, since $\cal{T}$ is the full subcategory of projective objects in $\cal{C}_{\cal{T}}$, the conflation \eqref{eq:index-conflation} is even a projective resolution of $X$ in this extriangulated category, and so the result follows by (an extriangulated version of) the horseshoe lemma. The proof for $\cal{C}^{\cal{T}}$ is dual.
\end{proof}

For generalised cluster categories in the sense of Amiot \cite{Amiot09}, under the assumption that the exchange matrix associated to $\cal{T}$ has full rank, the conflations in $\cal{C}_{\cal{T}}$ are precisely those on which the index is additive (and similarly for $\cal{C}^{\cal{T}}$ and the coindex), as a consequence of \cite[Prop.~2.2]{Palu08}. However, this is not the case in general, as follows. Let $\cal{C}$ be the cluster category of type $\mathsf{A}_1$, with indecomposable objects $T$ and $\Sigma T$, and Auslander--Reiten triangles 
\[\begin{tikzcd}T\arrow{r}&0\arrow{r}&\Sigma T\arrow{r}&\Sigma T,\end{tikzcd}\qquad
\begin{tikzcd}\Sigma T\arrow{r}&0\arrow{r}&T\arrow{r}&T.\end{tikzcd}\]
Taking $\cal{T}=\add(T)$, the index and coindex are additive on both of these triangles (since $\Sigma T$ has both index and coindex equal to $-[T]$). However, only the first is a conflation in $\cal{C}_{\cal{T}}$, and only the second is a conflation in $\cal{C}^{\cal{T}}$.

We may obtain a similar result in the case that $\cal{C}$ is an exact category; here the assumption that the cluster-tilting subcategory $\cal{T}$ has finite global dimension replaces the assumption on the rank of the exchange matrix, cf.~\cite[Rem.~4.5]{FK}.

\begin{proposition}
\label{p:ind-additive-exact}
Assume that $\cal{C}$ is a Frobenius exact category and that $\cal{T}$ has finite global dimension. Then the conflations in $\cal{C}_{\cal{T}}$ are precisely those from $\cal{C}$ on which the index is additive.
\end{proposition}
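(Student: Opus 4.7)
The one direction is \cref{p:ind-additive}, so I focus on the converse. Given a conflation $X \overset{f}{\infl} Y \overset{g}{\defl} Z \overset{\delta}{\dashrightarrow}$ in $\cC$ with $\operatorname{ind} Y = \operatorname{ind} X + \operatorname{ind} Z$ in $\GrothGp{\cT}$, I would invoke \cref{p:T-proj-substruct} to reduce the desired conclusion to surjectivity of $\cC(T,g)$ for every $T \in \cT$. Equivalently, writing $\Theta \colon \cC \to \mod \cT$ for the restricted Yoneda functor $W \mapsto \cC(-,W)|_{\cT}$, the goal is to show that the cokernel $M$ of $\Theta g$ vanishes.

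The first step is to identify $\operatorname{ind} W$ with a class in $\GrothGp{\mod \cT}$. For each $W \in \cC$, choose a conflation $K_W \infl R_W \overset{p_W}{\defl} W$ realising $\operatorname{ind} W = [R_W] - [K_W]$ as in \cref{d:ind-coind}, where $p_W$ is a right $\cT$-approximation (in particular a deflation by \ref{WIC} and the inclusion $\cP \subseteq \cT$). Applying $\Theta$ and using the defining surjectivity of $p_W$ on $\cT$-maps together with the vanishing $\EE(T,K_W) = 0$ coming from cluster-tilting rigidity, the sequence \eqref{eq:6term2} collapses to a short exact sequence $0 \to \Theta K_W \to \Theta R_W \to \Theta W \to 0$ in $\mod \cT$. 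Since the Yoneda embedding sends $\cT$ into $\proj \cT$, this is a projective resolution of $\Theta W$ of length at most one, and $[\Theta W] = [\Theta R_W] - [\Theta K_W]$ is the image of $\operatorname{ind} W$ under the natural map $\GrothGp{\cT} \to \GrothGp{\mod \cT}$.

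The second step is to apply $\Theta$ to the given conflation, yielding the four-term exact sequence $0 \to \Theta X \to \Theta Y \to \Theta Z \to M \to 0$ in $\mod \cT$, so that $[M] = [\Theta X] + [\Theta Z] - [\Theta Y]$ in $\GrothGp{\mod \cT}$. The finite global dimension hypothesis on $\cT$ ensures that the Euler-characteristic map $\chi \colon \GrothGp{\mod \cT} \to \GrothGp{\cT}$ --- defined by alternating sums over projective resolutions, or equivalently given by the inverse Cartan matrix of $\End_{\cC}(\bigoplus_i T_i)$, which is invertible over $\ZZ$ --- is a well-defined isomorphism. Combining with the previous step gives $\chi M = \operatorname{ind} X + \operatorname{ind} Z - \operatorname{ind} Y = 0$, whence $[M] = 0$ in $\GrothGp{\mod \cT}$.

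The hard part will be to upgrade $[M] = 0$ to $M = 0$. Under the Hom-finite assumptions implicit in a standard cluster-tilting setup (so that $\cC$ is $k$-linear Hom-finite with $\cT = \add T$ and $\End_{\cC}T$ finite-dimensional), $M$ is a finitely generated module over the Artinian algebra $\End_{\cC}(\bigoplus_i T_i)$. The Jordan--H\"older theorem then identifies $[M] \in \GrothGp{\mod \cT}$ with the multiset of simple composition factors of $M$, so $[M] = 0$ forces $M$ to have no composition factors, i.e.\ $M = 0$, as required.
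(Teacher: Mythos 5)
Your proposal is correct and follows essentially the same route as the paper: reduce via Proposition~\ref{p:T-proj-substruct} to surjectivity of $\Hom_{\cal{C}}(\cal{T},g)$, identify the index with the class of $\Hom_{\cal{C}}(\cal{T},X)$ in the Grothendieck group via the length-one projective resolution coming from \eqref{eq:index-conflation}, and deduce $[M]=0$ hence $M=0$ from finite global dimension. Your last paragraph merely spells out (via the Cartan isomorphism and Jordan--H\"older) the step the paper states tersely as ``because $\cal{T}$ has finite global dimension, this means that $M=0$''.
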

\begin{proof}
Applying $\Hom_{\cal{C}}(\cal{T},\blank)$ (that is, the functor $X\mapsto\Hom_{\cal{C}}(\blank,X)|_\cal{T}$) to a conflation \eqref{eq:index-conflation} in $\cal{C}$, which is a short exact sequence since $\cal{C}$ is exact, yields the exact sequence
\[\begin{tikzcd}
0\arrow{r}&\Hom_{\cal{C}}(\cal{T},K_X)\arrow{r}&\Hom_{\cal{C}}(\cal{T},R_X)\arrow{r}&\Hom_{\cal{C}}(\cal{T},X)\arrow{r}&\EE_{\cal{C}}(\cal{T},K_X)=0.
\end{tikzcd}\]
This is a projective resolution of the $\cal{T}$-module $\Hom_{\cal{C}}(\cal{T},X)$ since $R_X,K_X\in\cal{T}$. Thus the Yoneda isomorphism $\GrothGp{\cal{T}}\isoto\GrothGp{\proj\cal{T}}$ takes the index of $X$ to the class of $[\Hom_{\cal{C}}(\cal{T},X)]$.

Now let
\begin{equation}
\label{eq:conf-for-ind-add}
\begin{tikzcd}
X\arrow[infl]{r}&Y\arrow[defl]{r}{g}&Z\arrow[dashed]{r}{\delta}&\phantom{}
\end{tikzcd}
\end{equation}
be a conflation in $\cal{C}$, on which the index is assumed to be additive. Applying $\Hom_{\cal{C}}(\cal{T},\blank)$ again, we get a short exact sequence
\[\begin{tikzcd}
0\arrow{r}&\Hom_{\cal{C}}(\cal{T},X)\arrow{r}&\Hom_{\cal{C}}(\cal{T},Y)\arrow{r}{g_*}&\Hom_{\cal{C}}(\cal{T},Z)\arrow{r}&M\arrow{r}&0
\end{tikzcd}\]
for some $\cal{T}$-module $M$, defined as the cokernel of $g_*$. In $\GrothGp{\proj\cal{T}}$, we have
\[[M]=[\Hom_{\cal{C}}(\cal{T},Z)]-[\Hom_{\cal{C}}(\cal{T},Y)]+[\Hom_{\cal{C}}(\cal{T},X)]=0\]
by the assumption that the index is additive on \eqref{eq:conf-for-ind-add}. But because $\cal{T}$ has finite global dimension, this means that $M=0$, and so $g_*$ is surjective. Thus $\delta\in\EE_{\cal{T}}(Z,X)$ by Proposition~\ref{p:T-proj-substruct}.
\end{proof}
By considering $\cal{C}^\op$, it follows from Proposition~\ref{p:ind-additive-exact} that if $\cal{T}^\op$ has finite global dimension then the conflations in $\cal{C}^{\cal{T}}$ are precisely those from $\cal{C}$ on which the coindex is additive.

\begin{remark}
The assumption of $\cal{T}$ having finite global dimension is rather mild, as can be seen using \cite[Prop.~3.7]{Pressland17}. Indeed, it follows from this result that $\cal{T}$ has finite global dimension whenever
\begin{enumerate}
\item\label{i:GP} $\cal{C}\simeq\GP(B)$ is equivalent to the category of Gorenstein projective modules for an Iwanaga--Gorenstein algebra $B$, 
\item\label{i:add} $\cal{T}=\add(T)$ is the additive closure of an object $T$, and
\item\label{i:Noeth} the algebra $\Lambda=\End_{B}(T)^\op$ is Noetherian.
\end{enumerate}
In fact, using a result of Kalck--Iyama--Wemyss--Yang \cite[Thm.~2.7]{KIWY}, we may even show that in the presence of conditions \ref{i:add} and \ref{i:Noeth}, condition \ref{i:GP} holds if and only if $\cal{C}$ is idempotent complete and $\cal{T}$ (or equivalently $\Lambda$) has finite global dimension. See also \cite[Thm.~3.34]{KW} for a version of this statement without conditions \ref{i:add} and \ref{i:Noeth}. Conditions \ref{i:GP}--\ref{i:Noeth} hold in many cases of interest, for example for the categorifications of partial flag varieties by Geiß--Leclerc--Schröer \cite{GLS-ParFlag}, the Grassmannian cluster categories of Jensen--King--Su \cite{JKS}, and the categorifications of positroid varieties by the fifth author \cite{Pressland22}.
\end{remark}

\subsection{\texorpdfstring{$2$-term complexes for exact categories}{2-term complexes for exact categories}}
\label{ss:exact-2term}

In this subsection, we gather the technical tools on exact categories that will be necessary in \cref{ss:Frobenius} in order to apply \cref{t:rel-2-term} to the case of exact categories with cluster-tilting subcategories.

Let $\cal{C}$ be a weakly idempotent complete exact category with enough projectives and enough injectives.
Let $\cal{P}\subseteq\cal{C}$ be the full subcategory of projective objects and $\cal{I}\subseteq\cal{C}$ the full subcategory of injective objects.
We further assume that every object in $\cal{C}$ has injective dimension at most one (which implies that the projective dimension of every object in $\cal{C}$ is also at most one).

As usual, we write
\[\harp{\proj{\cal{I}}}\subseteq\Kb{\proj{\cal{I}}}\]
for the full subcategory of complexes concentrated in degrees $0$ and $-1$. We will always use the extriangulated structure on $\harp{\proj{\cal{I}}}$ arising from its embedding as an extension-closed subcategory of the triangulated category $\Kb{\proj\cal{I}}$. Consistent with Section~\ref{s:rel-2-term}, we denote by $\yo\colon\cal{I}\to\proj{\cal{I}}$ the covariant Yoneda equivalence $I\mapsto\Hom_{\cal{C}}(\blank,I)|_{\cal{I}}$. This induces a further equivalence $\cal{I}\ast\Sigma\cal{I}\isoto\harp{\proj{\cal{I}}}$, where $\cal{I}\ast\Sigma\cal{I}$ is taken inside the homotopy category $\Kb{\cal{I}}$ of bounded complexes with terms in $\cal{I}$.

Since every object of $\cal{C}$ has finite injective dimension by assumption, every object of the bounded derived category of $\cal{C}$ is quasi-isomorphic to a bounded complex of objects from $\cal{I}$, and so this bounded derived category identifies with $\Kb{\proj\cal{I}}$ under $\yo$.

We may thus abuse notation by identifying $\cal{P}\subseteq\cal{C}$ with its inclusion into this bounded derived category, and thus obtain the orthogonal subcategory $\cal{P}\rperp\subseteq\harp{\proj{\cI}}$. The induced extriangulated structure on $\cal{P}\rperp$, as an extension-closed subcategory of $\harp{\proj\cal{I}}$, is also compatible with the embedding into $\Kb{\proj\cal{I}}$.

\begin{theorem}
\label{thm:exact categories and 2-term}
Let $\cal{C}$ be a weakly idempotent complete exact category with enough projectives~$\cal{P}$ and enough injectives~$\cal{I}$.
Assume that all objects of $\cal{C}$ have injective dimension at most one.
Let $\Psi\colon\cal{C}\to\harp{\proj{\cI}}\subseteq \Kb{\proj\cI}$ be the canonical inclusion of~$\cC$ in its derived category.

Then the extriangulated subcategory $\cal{P}\rperp$ of~$\harp{\proj{\cI}}$ is an exact category, and~$\Psi$ induces an equivalence~$\Psi\colon\cal{C}\to\cal{P}\rperp$ of exact categories.
\end{theorem}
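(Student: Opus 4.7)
The plan is to construct $\Psi$ explicitly via two-term injective resolutions, verify it is fully faithful with essential image $\cP\rperp$, and then check that the inherited extriangulated structure on $\cP\rperp$ is actually exact and matches the exact structure on $\cC$ transported along $\Psi$.

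First, I would define $\Psi$ on objects by choosing, for each $X\in\cC$, a conflation $X\infl I^0\defl I^1$ in $\cC$ with $I^0, I^1\in\cI$ (which exists by the injective dimension hypothesis), and setting $\Psi(X)$ to be the two-term complex $[\yo(I^0)\to\yo(I^1)]$ placed in degrees $-1$ and $0$ so as to land in $\harp{\proj\cI}$. On morphisms, a lift to morphisms of injective resolutions exists and is unique up to homotopy, yielding functoriality. Under the identification $\cD^b(\cC)\cong\Kb{\proj\cI}$, $\Psi$ is a cohomological shift of the canonical embedding $\cC\hookrightarrow\cD^b(\cC)$, and full faithfulness then follows from the standard identity $\Hom_\cC(X,Y)=\Hom_{\cD^b(\cC)}(X,Y)$.

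Next, I would identify the essential image of $\Psi$ with $\cP\rperp$. For the inclusion $\Psi(\cC)\subseteq\cP\rperp$, the group $\Hom_{\Kb{\proj\cI}}(P,\Psi(X))$ translates under the derived equivalence into $\Ext^1_\cC(P,X)$ (a degree-one shift appears because $\Psi$ lands in $\harp{\proj\cI}$ rather than as a degree-zero stalk), and this vanishes since $P$ is projective. For the converse, given $Y\in\cP\rperp$ represented in the derived category by a complex~$Z$, the two-term nature of $Y$ constrains the cohomology of $Z$ to at most two adjacent degrees; the $\cP\rperp$-vanishing, together with enough projectives in $\cC$, then forces this cohomology to be concentrated in a single degree, so that $Z$ is quasi-isomorphic to a shift of some $X\in\cC$. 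The key point is that a projective cover in $\cC$ of a hypothetical ``wrong-degree'' cohomology would produce a non-zero element of $\Hom_{\Kb{\proj\cI}}(P,Y)$, contradicting $Y\in\cP\rperp$.

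Third, I would compare the extriangulated structure on $\cP\rperp$, inherited from $\harp{\proj\cI}$ and in turn from $\Kb{\proj\cI}$, with the exact structure on $\cC$. A short exact sequence in $\cC$ lifts, via the horseshoe lemma, to a short exact sequence of two-term injective resolutions, whose associated triangle in $\Kb{\proj\cI}$ has all three terms in $\Psi(\cC)=\cP\rperp$ and thus defines a conflation. Conversely, a conflation in $\cP\rperp$ corresponds under the derived equivalence to a triangle in $\cD^b(\cC)$ between objects of $\cC$, and this yields a genuine short exact sequence because the connecting morphism lives in $\Ext^1_\cC$ and higher Exts vanish by the dimension hypothesis. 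This simultaneously upgrades $\Psi$ to an equivalence of exact categories and shows that $\cP\rperp$ is itself an exact category.

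The main obstacle is the converse inclusion in the essential image calculation: showing that every object of $\cP\rperp$ arises from an object of $\cC$. A delicate point is that $\cC$ is not assumed abelian, so the ``cohomology'' of a representing complex in $\cD^b(\cC)$ must be interpreted carefully. A natural workaround is to embed $\cC$ into an ambient abelian category (such as the Yoneda category $\mod \cC^{\op}$ of additive presheaves, or the exact hull of $\cC$) and perform the cohomology analysis there; one then argues that the object produced by the analysis does in fact lie in $\cC$, using both that $\cC$ has enough projectives and enough injectives and that every object has injective dimension at most one.
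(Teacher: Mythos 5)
Your overall strategy coincides with the paper's for most steps: the same functor $\Psi$ built from two-term injective coresolutions, full faithfulness via the embedding of $\cC$ into its derived category, the containment $\Psi(\cC)\subseteq\cP\rperp$ via $\Hom_{\Kb{\proj\cI}}(\cP,\Psi(\blank))\cong\Ext^1_{\cC}(\cP,\blank)=0$, and the horseshoe lemma for exactness of $\Psi$. One structural difference: the paper proves that $\cP\rperp$ is exact \emph{directly}, by a diagram chase showing that its inflations are monomorphisms and its deflations are epimorphisms, whereas you deduce it by transporting the exact structure of $\cC$ along the equivalence; your route is admissible provided the two-way comparison of conflations is carried out, which your third paragraph sketches adequately.

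The genuine gap is exactly at the point you flag as the ``main obstacle'', and your proposed workaround does not close it. An object of $\harp{\proj\cI}$ is of the form $\yo I^0\xrightarrow{\yo d}\yo I^1$ for a morphism $d\colon I^0\to I^1$ between injectives of $\cC$, and membership in $\cP\rperp$ translates into the concrete statement that every morphism $P\to I^1$ with $P$ projective factors through $d$. The crux is to upgrade this to the statement that $d$ is a \emph{deflation} of $\cC$: then its kernel $X$ lies in $\cC$, the conflation $X\infl I^0\defl I^1$ exhibits $\Psi(X)\cong(\yo I^0\to\yo I^1)$, and essential surjectivity follows with no cohomology analysis at all. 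This upgrade is precisely where weak idempotent completeness enters: choosing a projective precover $p\colon P\defl I^1$, the map $(\begin{smallmatrix}d&p\end{smallmatrix})\colon I^0\oplus P\to I^1$ is a deflation, $p$ factors through $d$ by the $\cP\rperp$-condition, and one concludes that $d$ itself is a deflation --- but only because $\cC$ is weakly idempotent complete. Your sketch never invokes this hypothesis; instead it proposes computing cohomology in an ambient abelian category and then ``arguing that the object produced lies in $\cC$'', which is exactly the unproved step. Concentration of cohomology in a single degree in the presheaf category (or exact hull) does not by itself produce a kernel of $d$ \emph{inside} $\cC$; the weak idempotent completeness argument is what does, and its absence is why the hypothesis appears in the statement of the theorem.
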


Note that the action of~$\Psi$ on an object~$X$ is given as follows: fix
a conflation $X\overset{i_X}{\infl} I_X^0\overset{p_X}{\defl} I_X^1$, with $I_X^0,I_X^1\in\cal{I}$.  Then
\[\Psi(X)=(\begin{tikzcd}\yo I_X^0\arrow{r}{\yo p_X}&\yo I_X^1\end{tikzcd}).\]

\begin{proof}
The fact that $\Psi$ is a well-defined functor and is fully faithful is fairly well-known (see e.g.\ \cite[Prop.\ in \S A.7]{Positselski}).  That its image lies in~$\cP\rperp$ follows from the fact that $\Hom_{\Kb{\cal{I}}}(\cP,\Psi-) \cong \Ext^1_{\cC}(\cP, -) = 0$.

In order to prove that its essential image coincides with $\cal{P}\rperp$, we observe that $\yo I_0 \xrightarrow{\yo d}\yo I_1$ belongs to $\cal{P}\rperp$ if and only if any morphism in $\cal{C}$ from a projective to $I_1$ factors through $d$.
The latter holds if and only if $d$ is a deflation because $\cal{C}$ has enough projectives and is weakly idempotent complete.
This shows that $\Psi$ is essentially surjective.

Next, we show that $\cal{P}\rperp$ is exact.
Using \cite[Cor.~3.18]{NakaokaPalu}, it is enough to show that its inflations are monomorphisms and its deflations are epimorphisms.
Consider a conflation in $\harp{\proj\cal{I}}$.
It is the image under the Yoneda functor of a componentwise split conflation in $\cal{I}\ast\Sigma\cal{I}$, coming from a commutative diagram
\begin{equation}
\label{eq:sample-confl}
\begin{tikzcd}
I\arrow[infl]{r}\arrow[defl]{d}{d}&E\arrow[defl]{r}{e}\arrow[defl]{d}{d''}&I'\arrow[defl]{d}{d'}\\
J\arrow[infl]{r}&F\arrow[defl]{r}&J'
\end{tikzcd}
\end{equation}
in $\cal{C}$, in which all objects lie in $\cal{I}$ and each row is a (necessarily split) conflation.

Consider a morphism $\alpha$ from some $I_0\stackrel{i}{\defl} I_1$ to $I\defl J$ such that the composition with the inflation to $E\defl F$ is null-homotopic.
Let $I_1\xrightarrow{h} E$ be such a homotopy, and let $r$ be a deflation splitting the inflation $I\infl E$. Because $i$ is an epimorphism, $rh$ gives a null-homotopy of $\alpha$, and we have thus shown that the inflations of $\cal{P}\rperp$ are monomorphic. The argument is illustrated in the following diagram.
\[\begin{tikzcd}
I_0\arrow{r}{\alpha_0}\arrow[defl]{d}{i}&I\arrow[infl]{r}\arrow[defl]{d}{d}&E\arrow[bend right,defl]{l}[swap]{r}\arrow[defl]{d}{d''}\\
I_1\arrow{r}[swap]{\alpha_1}\arrow[bend left=10pt, dotted]{urr}[pos=0.2]{h}&J\arrow[infl]{r}&F
\end{tikzcd}\]

Next, consider a morphism $\beta$ from $I'\defl J'$ to some $J_0\stackrel{j}{\defl} J_1$ with a null-homotopy $F\xrightarrow{k}J_0$ of its composition with the deflation from $E\twoheadrightarrow F$.
We claim that $\beta$ is null-homotopic:
first let $s$ split the deflation $F\defl J'$ and note that the composition $ks$ gives a morphism $J'\to J_0$ such that $jks=\beta_1\colon J'\to J_1$.
However, the composition $ksd'$ might not agree with $\beta_0\colon I'\to J_0$ and so we have to modify $ks$ to obtain the desired homotopy.
Let $X,Y,Z$ be the kernels indicated in the commutative diagram below
\[\begin{tikzcd}
Z\arrow[defl]{r}{p}\arrow[infl]{d}{z}&Y\arrow{r}\arrow[infl]{d}{y}&X\arrow[infl]{d}{x}\\
E\arrow[defl]{r}{e}\arrow[defl]{d}{d''}&I'\arrow{r}{\beta_0}\arrow[defl]{d}{d'}\arrow[dotted]{ur}{b}&J_0\arrow[defl]{d}{j}\\
F\arrow[defl]{r}\arrow[bend left=10pt, dotted]{urr}[pos=0.2]{k}&J'\arrow[bend left]{l}{s}\arrow{r}{\beta_1}&J_1
\end{tikzcd}\]
and note that, by the $3\times 3$ lemma, the induced morphism $Z\xrightarrow{p}Y$ is a deflation.
Because the compositions $jksd'$ and $j\beta_0$ coincide, there is a morphism $I'\xrightarrow{b}X$ such that $\beta_0 = ksd' + xb$.
Moreover, we have $xbyp = \beta_0yp - ksd'yp = \beta_0yp = \beta_0ez = \beta_0kd''z=0$.
Since $x$ is a monomorphism and $p$ is an epimorphism, we have $by=0$, giving us a morphism $J'\xrightarrow{k'}X$ such that $k'd'=b$.
The required null-homotopy is then given by $ks +xk'$, and we conclude that the deflations of $\cal{P}\rperp$ are epimorphic.

To see that $\Psi$ is an exact functor, consider a conflation $Y\infl E\defl X$ in $\cal{C}$ and apply the horseshoe lemma to construct the diagram
\[\begin{tikzcd}[ampersand replacement=\&, row sep=30pt, column sep=30pt]
Y\arrow[infl]{r}{y}\arrow[infl]{d}[swap]{i_Y}\&E\arrow[defl]{r}{x}\arrow[infl]{d}{\left(\begin{smallmatrix}u\\ i_Xx\end{smallmatrix}\right)}\&X\arrow[infl]{d}{i_X}\\
I_Y^0\arrow[infl]{r}{\left(\begin{smallmatrix}1\\0\end{smallmatrix}\right)}\arrow[defl]{d}[swap]{p_Y}\&I_Y^0\oplus I_X^0\arrow[defl]{r}{\left(\begin{smallmatrix}0&1\end{smallmatrix}\right)}\arrow[defl]{d}{\left(\begin{smallmatrix}-p_Y&v\\0&p_X\end{smallmatrix}\right)}\&I_X^0\arrow[defl]{d}{p_X}\\
I_Y^1\arrow[infl]{r}{\left(\begin{smallmatrix}-1\\0\end{smallmatrix}\right)}\&I_Y^1\oplus I_X^1\arrow{r}{\left(\begin{smallmatrix}0&1\end{smallmatrix}\right)}\&I_X^1
\end{tikzcd}\]
whose rows and columns are conflations.
Here the map $E\xrightarrow{u}I_Y^0$ exists because $y$ is an inflation and $I_Y^0$ is injective.
Having chosen $u$, there is a map $v\colon I_X^0\to I_Y^1$ such that
\[\begin{tikzcd}
Y\arrow[infl]{r}{y}\arrow[equal]{d}&E\arrow[defl]{r}{x}\arrow{d}{u}&X\arrow{d}{vi_X}\\
Y\arrow[infl]{r}{i_Y}&I_Y^0\arrow[defl]{r}{p_Y}&I_Y^1
\end{tikzcd}\]
is a morphism of conflations.
To see this, observe that there is a unique map $X\to I_Y^1$ making the diagram commute, which must factor over $i_X$ since $I_Y^1$ is injective and $i_X$ is an inflation.
\end{proof}

\subsection{\texorpdfstring{$2$-term complexes and cluster-tilting for exact categories}{2-term complexes and cluster-tilting for exact categories}} \label{ss:Frobenius}

Let $\cal{C}$ be a weakly idempotent complete Frobenius exact category, let $\cal{P}\subseteq\cal{C}$ be the full subcategory of projective objects, and let $\cal{T}\subseteq\cal{C}$ be cluster-tilting. In particular, this means that $\cal{P}\subseteq\cal{T}$, and so identifying $\cal{P}$ with its image under $\yo\colon\cal{T}\isoto\proj\cal{T}$ we may consider the subcategories $\cal{P}\rperp$ and $\lperp{(\Sigma\cal{P})}$ inside $\harp{\proj{\cal{T}}}$, as in Section~\ref{s:rel-2-term}.

\begin{remark}
In many cases of interest there will be an object $T$ such that $\cal{T}=\add{T}$, and we may set $\Lambda=\End_{\cal{C}}(T)^{\op}$. There is an idempotent element $e\in \Lambda$ given by projection onto a maximal projective summand of $T$, so that $\Lambda/\Lambda e\Lambda=\stabEnd_{\cal{C}}(T)^{\op}$ is the projectively stable endomorphism algebra. Under the natural (Yoneda) identification
\[\harp{\cal{T}}=\harp{\proj \Lambda}:=\{P^\bullet\in\Kb{\proj \Lambda}:\text{$P^i=0$ for $i\neq 0,1$}\}\]
the full subcategory $\cal{P}\rperp$ is identified with
\[(\Lambda e)\rperp=\{P^\bullet\in\harp{\proj \Lambda}:e\cohom{0}(P^\bullet)=0\}.\]
The subcategory $\lperp(\Sigma\cal{P})$ does not have such a straightforward description inside $\Kb{\proj\Lambda}$, but may also be described as the image of
\[(e\Lambda)\rperp=\{P^\bullet\in\harp{\proj\Lambda^\op}:\cohom{0}(P^\bullet)e=0\}\]
under $\Sigma^{-1}\circ\Transp$, where $\Transp=\Hom_{\Lambda^\op}(\blank,\Lambda)\colon\Kb{\proj\Lambda^\op}\isoto\Kb{\proj\Lambda}$ is the contravariant transpose functor. Note here that we use left modules, to aid comparison with examples of Frobenius exact cluster categorifications such as \cite{GLS-ParFlag,JKS,Pressland22,Pressland23}.
\end{remark}

Our aim in this section is to prove a stronger version of Theorem~\ref{thm:exact categories and 2-term} when~$\cC$ is a Frobenius exact category as above, by noting that Theorem~\ref{thm:exact categories and 2-term} applies to the exact category~$\cC^{\cT}$.  We need a technical result first.

\begin{lemma}[{cf.~\cite[Prop.~4(b)]{KelRei}}]
\label{l:KR-lem}
Let $\cal{C}$ be a weakly idempotent complete extriangulated category with enough projectives $\cal{P}$, and let $g\colon T^{1}\to T^0$ be a morphism such that the $\cal{C}$-module $\coker(\Hom_{\cal{C}}(\blank,T^{1})\to\Hom_{\cal{C}}(\blank,T^0))$ vanishes on $\cal{P}$. Then $g$ is a deflation.
\end{lemma}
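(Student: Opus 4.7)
The plan is to exploit the definition of "enough projectives" together with the weak idempotent completeness hypothesis via condition \ref{WIC}. First, since $\cal{C}$ has enough projectives, I would choose a deflation $p\colon P \defl T^0$ with $P$ projective; such a deflation exists by definition of having enough projectives.

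Next, I would unwind the cokernel hypothesis: the assumption that $\coker(\Hom_{\cal{C}}(\blank,T^{1})\to\Hom_{\cal{C}}(\blank,T^0))$ vanishes on $\cal{P}$ means precisely that every morphism from a projective object to $T^0$ factors through $g$. Applied to our projective $P$ and morphism $p$, this yields $h\colon P\to T^1$ with $gh=p$.

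Finally, since $p=gh$ is a deflation, condition \ref{WIC}---which the discussion before the lemma recalls is equivalent to weak idempotent completeness of $\cal{C}$ by \cite[Prop.~2.7]{Klapproth}---forces $g$ itself to be a deflation. This gives the conclusion.

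The argument is essentially a one-line application of (WIC), so there is no real obstacle; the only subtlety worth flagging in the writeup is that we must invoke the extriangulated form of (WIC) (rather than its exact-category counterpart) to justify the final step, which is legitimate by the reference already discussed in the paper.
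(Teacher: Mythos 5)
Your proof is correct and follows essentially the same route as the paper's: take a projective deflation $p\colon P\defl T^0$, use the vanishing of the cokernel on $\cal{P}$ to write $p=gq$, and conclude by weak idempotent completeness. The only difference is presentational: rather than citing condition (WIC) directly, the paper unfolds that step via the explicit retraction $(\begin{smallmatrix}1&q\end{smallmatrix})\colon T^1\oplus P\to T^1$ over the deflation $(\begin{smallmatrix}g&p\end{smallmatrix})\colon T^1\oplus P\defl T^0$.
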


\begin{proof}
Since $\cal{C}$ has enough projectives, we may take a projective precover $p\colon P\to T^0$ of $T^0$ to obtain the deflation
\[\begin{tikzcd}[ampersand replacement=\&]
T^{1}\oplus P\arrow[defl]{r}{(\begin{smallmatrix}g&p\end{smallmatrix})}\&T^0.
\end{tikzcd}\]
However, since $\coker(\Hom_{\cal{C}}(\blank,T^{1})\to\Hom_{\cal{C}}(\blank,T^0))$ vanishes on $P$, we have $p=gq$ for some $q\colon P\to T^{1}$. It follows that
\[\begin{tikzcd}[ampersand replacement=\&]
T^{1}\oplus P\arrow[defl]{r}{(\begin{smallmatrix}g&p\end{smallmatrix})}\arrow{d}[swap]{(\begin{smallmatrix}1&q\end{smallmatrix})}\&T^0\arrow[equal]{d}\\
T^{1}\arrow{r}{g}\&T^0
\end{tikzcd}\]
is a retraction, and so $g$ is itself a deflation, using that $\cal{C}$ is weakly idempotent complete.
\end{proof}

Dually, if $\cal{C}$ is a weakly idempotent complete extriangulated category with enough injectives $\cal{I}$, then any morphism $f\colon T_1\to T_0$ with the property that $\coker(\Hom_{\cal{C}}(T_0,\blank)\to\Hom_{\cal{C}}(T_1,\blank))$ vanishes on $\cal{I}$ is an inflation.

Let $\widetilde{\cP}$ be the category of projectives in~$\cC^{\cT}$, which in particular contains the category $\cP$ of projectives in $\cal{C}$, and recall from Proposition~\ref{p:T-inj-substruct} that the category of injectives in~$\cC^{\cT}$ is nothing but $\cT$. Using notation as in Section~\ref{ss:exact-2term}, we get a full subcategory~$(\widetilde{\cP})\rperp \subset \harp{\proj\cT}$.
Since~$\cP \subset \widetilde{\cP}$, we see that~$(\widetilde{\cP})\rperp \subset \cP\rperp$ inside~$\harp{\proj\cT}$. Both categories  $(\widetilde{\cP})\rperp$ and $\cP\rperp$ carry extriangulated structures induced by their respective embeddings into~$\harp{\proj\cT}$, and we know from Theorem~\ref{thm:exact categories and 2-term} that this extriangulated structure on $(\widetilde{\cP})\rperp$ is exact.

\begin{corollary} \label{cor:widetilde_p_perp_vs_p_perp}
\begin{enumerate}
\item\label{c:no-tilde} The subcategories $(\widetilde{\cP})\rperp \subset \cP\rperp$ of ~$\harp{\proj\cT}$ coincide.
\item\label{c:exact} The extriangulated structure on $\cP\rperp$ induced by its embedding into~$\harp{\proj\cT}$ is exact.
\end{enumerate}
\end{corollary}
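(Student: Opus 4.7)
The plan is to reduce both parts to the identification of $\cP\rperp$ with the essential image of the functor $\Psi\colon\cC^{\cT}\to\harp{\proj\cT}$. Applied to the exact category $\cC^{\cT}$ (whose injectives are $\cT$ by Proposition~\ref{p:T-inj-substruct}), Theorem~\ref{thm:exact categories and 2-term} tells us that this essential image is exactly $(\widetilde{\cP})\rperp$ and carries a natural exact structure, making part~\ref{c:exact} a consequence of part~\ref{c:no-tilde} together with the theorem. Since the inclusion $(\widetilde{\cP})\rperp\subseteq\cP\rperp$ is already noted in the paragraph preceding the corollary, the core work is the reverse inclusion $\cP\rperp\subseteq(\widetilde{\cP})\rperp$.

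To that end, I would fix $X\in\cP\rperp$ and, via the Yoneda identification of $\harp{\proj\cT}$ with the homotopy category of $2$-term complexes in $\cT$, realise $X$ as the image of a morphism $d\colon T^{-1}\to T^0$ with $T^{-1},T^0\in\cT$. A direct computation of morphisms in $\harp{\proj\cT}$ shows that the vanishing of $\Hom_{\harp{\proj\cT}}(\yo P,X)$ for all $P\in\cP$ is precisely the statement that the cokernel of the natural transformation
\[d_*\colon\Hom_{\cC}(\blank,T^{-1})\longrightarrow \Hom_{\cC}(\blank,T^0)\]
vanishes on $\cP$. Since $\cC$ is weakly idempotent complete with enough projectives $\cP$, Lemma~\ref{l:KR-lem} then guarantees that $d$ is itself a deflation in $\cC$, producing a conflation $K\infl T^{-1}\defl T^0$ there.

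The remaining step promotes this conflation to one in $\cC^{\cT}$: by Proposition~\ref{p:T-inj-substruct} this reduces to checking that the associated connecting morphism $\delta\colon T^0\to\Sigma K$ in $\stab{\cC}$ factors through an object of $\cT$, which holds tautologically because $T^0\in\cT$. The resulting conflation in $\cC^{\cT}$ is thus an injective coresolution of $K$ (its outer terms being in $\cT$, the injectives of $\cC^{\cT}$), so the equivalence $\Psi\colon\cC^{\cT}\isoto(\widetilde{\cP})\rperp$ of Theorem~\ref{thm:exact categories and 2-term} sends $K$ to $X$. Consequently $X\in(\widetilde{\cP})\rperp$, establishing part~\ref{c:no-tilde}; part~\ref{c:exact} then follows from the exactness of $(\widetilde{\cP})\rperp$ recorded before the corollary.

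The only mildly delicate point is the Yoneda-level bookkeeping that translates the perpendicularity $X\in\cP\rperp$ into the hypothesis of Lemma~\ref{l:KR-lem} as phrased inside $\cC$ itself; once that translation is in place, everything else is a direct application of tools established earlier in the paper.
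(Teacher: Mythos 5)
Your proposal is correct and follows essentially the same route as the paper: translate $X\in\cP\rperp$ into the hypothesis of Lemma~\ref{l:KR-lem} to get a deflation $d$ in $\cC$, upgrade it to a deflation in $\cC^{\cT}$ because its codomain lies in $\cT$ (the paper cites Corollary~\ref{c:easy_confl}, you invoke Proposition~\ref{p:T-inj-substruct} directly, which is the same content), and then conclude via the essential-image description from Theorem~\ref{thm:exact categories and 2-term} applied to $\cC^{\cT}$. The Yoneda bookkeeping you flag as the delicate point is exactly the observation already made in the proof of that theorem, so nothing further is needed.
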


\begin{proof}
Let $g\colon T^{1}\to T^0$ be a morphism in $\cal{T}$ such that~$\yo g\colon \yo T^{1}\to\yo T^0$ belongs to $\cal{P}^\perp$. Then $g$ is a deflation in $\cal{C}$ by Lemma~\ref{l:KR-lem}, and a deflation in $\cC^{\cT}$ by Corollary~\ref{c:easy_confl}. By the argument in the proof of Theorem~\ref{thm:exact categories and 2-term} applied to $\cC^{\cT}$, it follows that $\yo g\colon\yo T^{1}\to\yo T^0$ belongs to $(\widetilde{\cP})\rperp$. In other words, we have $\cal{P}^\perp \subset (\widetilde{\cP})\rperp$. Since we already had the converse inclusion, this implies part \ref{c:no-tilde}. Part \ref{c:exact} follows immediately.
\end{proof}

Recall from Proposition~\ref{p:T-inj-substruct} that~$\cC^{\cT}$ has enough injectives, that its injectives are the objects of~$\cT$, and that all its objects have injective dimension at most one.  Moreover, it is exact, because $\cal{C}$ is exact.  Thus the inclusion of~$\cC^{\cT}$ in its derived category $\Kb{\proj\cal{\cT}}$ induces an inclusion~$\cC^{\cT}\to \harp{\proj\cT}$.

\begin{theorem}
\label{t:addEquivalenceYoneda}
Let $\cal{C}$ be a weakly idempotent complete Frobenius exact category, let $\cal{T}\subset\cal{C}$ be a cluster-tilting subcategory, and let $\cal{P}$ be the category of projective-injective objects in $\cal{C}$. 
Let~$\Psi\colon\cC^{\cT} \to \harp{\proj\cT}$ be induced from the natural inclusion of~$\cC^{\cT}$ in its derived category.
Then~$\Psi$ induces an equivalence of extriangulated categories $\Psi\colon\cal{C}^{\cal{T}}\to\cal{P}\rperp$.
\end{theorem}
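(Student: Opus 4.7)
The approach is to apply Theorem~\ref{thm:exact categories and 2-term} to the exact category $\cC^{\cT}$ and then combine it with Corollary~\ref{cor:widetilde_p_perp_vs_p_perp}. First I would verify the hypotheses of that theorem for $\cC^{\cT}$: weak idempotent completeness is inherited from $\cC$, while exactness follows from \cite[Cor.~3.18]{NakaokaPalu}, since inflations and deflations of the substructure remain monomorphisms and epimorphisms in $\cC$. By Proposition~\ref{p:T-inj-substruct}, the injective objects of $\cC^{\cT}$ form precisely $\cT$; for any $X\in\cC$, the cluster-tilting property yields a conflation $X\infl T^0\defl T^1$ in $\cC$ with $T^0,T^1\in\cT$, which is a conflation in $\cC^{\cT}$ by Corollary~\ref{c:easy_confl}. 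Thus $\cC^{\cT}$ has enough injectives and every object has injective dimension at most one.

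The hard part will be to establish that $\cC^{\cT}$ has enough projectives, since the usual projective covers in $\cC$ generally fail to lie in $\cC^{\cT}$. My plan is, given $X\in\cC$, to combine the conflation $X\infl T^0\defl T^1$ above with a syzygy conflation $\Omega T^0\infl J\defl T^0$ in $\cC$ (with $J\in\cP$), by forming the pullback $W=J\times_{T^0}X$. This produces two conflations: one of the form $\Omega T^0\infl W\defl X$, whose connecting morphism in $\underline{\cC}$ is the stabilisation of the inflation $X\infl T^0$ and hence factors through $T^0\in\cT$, so the conflation is in $\cC^{\cT}$ by Proposition~\ref{p:T-inj-substruct}; and one of the form $W\infl J\defl T^1$, which is in $\cC^{\cT}$ by Corollary~\ref{c:easy_confl}. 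The latter exhibits $W$ as a syzygy of $T^1\in\cT$ in the Frobenius category $\cC$, hence projective in $\cC^{\cT}$ by Proposition~\ref{p:T-inj-substruct}, so the former provides the desired projective cover of $X$.

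With the hypotheses verified, Theorem~\ref{thm:exact categories and 2-term} applied to $\cC^{\cT}$ produces an equivalence of exact categories $\Psi\colon\cC^{\cT}\isoto\widetilde{\cP}\rperp$, where $\widetilde{\cP}$ denotes the projectives of $\cC^{\cT}$ and the orthogonal is computed inside $\harp{\proj\cT}$. Invoking Corollary~\ref{cor:widetilde_p_perp_vs_p_perp}\ref{c:no-tilde} identifies $\widetilde{\cP}\rperp$ with $\cP\rperp$, so $\Psi$ becomes an equivalence $\cC^{\cT}\isoto\cP\rperp$ of exact categories. Since any equivalence of exact categories is in particular an equivalence of extriangulated categories, this completes the proof.
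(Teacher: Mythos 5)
Your proposal is correct and follows essentially the same route as the paper: apply Theorem~\ref{thm:exact categories and 2-term} to the exact category $\cC^{\cT}$ and then identify $(\widetilde{\cP})\rperp$ with $\cP\rperp$ inside $\harp{\proj\cT}$ via Corollary~\ref{cor:widetilde_p_perp_vs_p_perp}. Your explicit pullback argument showing that $\cC^{\cT}$ has enough projectives (as syzygies of objects of $\cT$) is a correct and welcome verification of a hypothesis that the paper leaves implicit.
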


\begin{proof}
Applying Theorem~\ref{thm:exact categories and 2-term} to~$\cC^{\cT}$ yields an equivalence~$\Psi\colon\cC^{\cT} \xrightarrow{\sim} (\widetilde{\cP})\rperp\subseteq\harp{\proj\cT}$. The result then follows by Corollary~\ref{cor:widetilde_p_perp_vs_p_perp}.
\end{proof}

\begin{corollary}
\label{c:0-Auslander-substructure}
In the context of Theorem~\ref{t:addEquivalenceYoneda}, the category $\cal{C}^{\cT}$ is~$0$-Auslander.
\end{corollary}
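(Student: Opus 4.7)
The plan is to combine Theorem~\ref{t:addEquivalenceYoneda} with Theorem~\ref{t:rel-2-term-dual}. First, by Theorem~\ref{t:addEquivalenceYoneda}, the functor $\Psi$ gives an equivalence of extriangulated categories $\cC^{\cT}\isoto\cP\rperp$, where $\cP\rperp$ is computed inside $\harp{\proj\cT}$. Since the $0$-Auslander property is defined entirely in terms of the extriangulated structure (projectives, injectives, conflations, and $\mathbb{E}$), it is preserved by extriangulated equivalences, so it suffices to establish that $\cP\rperp$ is $0$-Auslander.

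Regarding $\cT$ as a non-positively graded dg category concentrated in degree~$0$, the Yoneda identification $\harp{\proj\cT}\simeq\cT\ast\Sigma\cT\subseteq\per\cT$ makes $\cP\rperp$ into the right perpendicular category appearing in Theorem~\ref{t:rel-2-term-dual}, taken with $\cA=\cT$ and $\cE=\cP$. The only hypothesis then to check is that $\cP$ is contravariantly finite in $\per\cT$. Since $\cC$ is Frobenius with category of projective-injectives $\cP$, every $T\in\cT\subseteq\cC$ admits a projective cover $P\twoheadrightarrow T$ by an object of $\cP$, which is in particular a right $\cP$-approximation of $T$ in $\per\cT$; inspection of the dual of Lemma~\ref{l:0-Aus} shows that the $0$-Auslander conclusion only requires such approximations for objects in $\cA=\cT$ itself. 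Theorem~\ref{t:rel-2-term-dual} then gives that $\cP\rperp$, and hence $\cC^{\cT}$, is $0$-Auslander.

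The main obstacle in this plan is the compatibility between two different viewpoints on $\cC^{\cT}$: as an extriangulated substructure of the Frobenius category $\cC$, and as the right perpendicular subcategory $\cP\rperp$ of the $2$-term homotopy category $\harp{\proj\cT}$. Obtaining an equivalence of \emph{extriangulated} categories (rather than merely of additive categories) between these two descriptions is exactly what Theorem~\ref{t:addEquivalenceYoneda} delivers, and is what allows the $0$-Auslander property to transfer cleanly from the general machinery of Section~\ref{s:rel-2-term}.
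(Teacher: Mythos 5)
Your proposal is correct and follows essentially the same route as the paper: Theorem~\ref{t:addEquivalenceYoneda} gives the extriangulated equivalence $\cC^{\cT}\isoto\cP\rperp$, and Theorem~\ref{t:rel-2-term-dual} with $\cA=\cT$, $\cE=\cP$ then yields the $0$-Auslander property. Your extra check of contravariant finiteness of $\cP$ (via projective precovers in the Frobenius category $\cC$, which restrict to right $\cP$-approximations in $\per\cT$) is a welcome verification of a hypothesis the paper leaves implicit, but it does not change the argument.
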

\begin{proof}
 By Theorem~\ref{t:addEquivalenceYoneda}, we have that $\cC^{\cT}$ is equivalent to~$\cP\rperp$ as an extriangulated category.  The result then follows from Theorem~\ref{t:rel-2-term-dual} by taking~$\cA = \cT$ and~$\cE = \cP$.
\end{proof}
Note that Corollary~\ref{c:0-Auslander-substructure} can also be proved directly by using Corollary~\ref{c:easy_confl}, without referring to Theorem~\ref{t:rel-2-term-dual}. 
The dual of Theorem~\ref{t:addEquivalenceYoneda} is the following.

\begin{theorem}
\label{t:addEquivalenceYonedaDual}
With assumptions as in Theorem~\ref{t:addEquivalenceYoneda}, let~$\Psi^{\op}\colon\cC_{\cT}\to \harp{\proj\cT}$ be induced from the inclusion of the exact category~$\cC_{\cT}$ into its derived category.  Then~$\Psi^\op$ induces an equivalence of extriangulated categories~$\cC_{\cT}\to\lperp(\Sigma \cP)$.
\end{theorem}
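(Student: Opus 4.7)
The proof mirrors that of Theorem~\ref{t:addEquivalenceYoneda}, with the roles of projective and injective objects exchanged. The plan is to establish a dual form of Theorem~\ref{thm:exact categories and 2-term} and apply it to $\cC_{\cT}$, then identify the resulting subcategory of $\harp{\proj\cT}$ with $\lperp(\Sigma\cP)$ via an argument dual to Corollary~\ref{cor:widetilde_p_perp_vs_p_perp}.

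First I would verify that $\cC_{\cT}$ is a weakly idempotent complete exact category with enough projectives $\cT$ and enough injectives, in which every object has projective dimension at most one. That $\cC_{\cT}$ is exact with projectives $\cT$ is part of Proposition~\ref{p:T-proj-substruct}; weak idempotent completeness is inherited from $\cC$. For the projective dimension bound, given $X\in\cC$ I would take the conflation $K_X\infl R_X\defl X$ from Definition~\ref{d:ind-coind}, which has $K_X,R_X\in\cT$ by the cluster-tilting hypothesis and remains a conflation in $\cC_{\cT}$ by Corollary~\ref{c:easy_confl}. For enough injectives in $\cC_{\cT}$ I would invoke the dual of Corollary~\ref{c:0-Auslander-substructure}: applying Theorem~\ref{t:rel-2-term} (rather than Theorem~\ref{t:rel-2-term-dual}) to $\cA=\cT$ and $\cE=\cP$, one sees that $\cC_{\cT}$ is $0$-Auslander, hence has enough injectives.

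Next I would apply the dual of Theorem~\ref{thm:exact categories and 2-term} to $\cC_{\cT}$. This dual asserts that if $\cal{D}$ is a weakly idempotent complete exact category with enough projectives $\cal{P}_{\cal{D}}$ and injectives $\cal{I}_{\cal{D}}$ in which every object has projective dimension at most one, then the functor sending $X$ to the Yoneda image of a projective resolution $P^{-1}\infl P^0\defl X$ defines an equivalence of exact categories $\cal{D}\isoto \lperp(\Sigma\cal{I}_{\cal{D}})$ inside $\harp{\proj\cal{P}_{\cal{D}}}$. This follows from Theorem~\ref{thm:exact categories and 2-term} applied to $\cal{D}^{\op}$ by transporting along the shifted transpose $\Sigma\circ\Transp\colon\harp{\proj\cal{P}_{\cal{D}}^{\op}}\isoto\harp{\proj\cal{P}_{\cal{D}}}$, mirroring the passage between Theorems~\ref{t:rel-2-term} and~\ref{t:rel-2-term-dual}. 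Applied to $\cal{D}=\cC_{\cT}$, this produces an extriangulated equivalence $\Psi^{\op}\colon\cC_{\cT}\isoto\lperp(\Sigma\widetilde{\cI})$, where $\widetilde{\cI}$ denotes the injectives of $\cC_{\cT}$.

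Finally I would establish the dual of Corollary~\ref{cor:widetilde_p_perp_vs_p_perp}, namely $\lperp(\Sigma\widetilde{\cI})=\lperp(\Sigma\cP)$ inside $\harp{\proj\cT}$. The inclusion $\lperp(\Sigma\widetilde{\cI})\subseteq\lperp(\Sigma\cP)$ is immediate from $\cP\subseteq\widetilde{\cI}$, which holds since every object of $\cP$ is projective-injective in $\cC$ and thus remains so in any extriangulated substructure, in particular $\cC_{\cT}$. For the reverse inclusion, given a morphism $d\colon T^{-1}\to T^0$ in $\cT$ with $\yo d\in\lperp(\Sigma\cP)$, the defining condition translates to the $\cC$-module $\coker(\Hom_{\cC}(T^0,\blank)\to\Hom_{\cC}(T^{-1},\blank))$ vanishing on $\cP$; by the dual of Lemma~\ref{l:KR-lem}, $d$ is an inflation in $\cC$, and since $T^{-1}\in\cT$, Corollary~\ref{c:easy_confl} shows it is also an inflation in $\cC_{\cT}$, whence $\yo d\in\lperp(\Sigma\widetilde{\cI})$ by the essential-image description in the dual of Theorem~\ref{thm:exact categories and 2-term}. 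The main obstacle is the bookkeeping in the dual of Theorem~\ref{thm:exact categories and 2-term}: one must consistently track the interplay between the covariant Yoneda embedding $\yo\colon\cT\to\proj\cT$ used in Section~\ref{s:rel-2-term} and the contravariant nature of projective resolutions, which introduces the transpose and a shift in indexing; once this is handled, the remaining steps parallel those of Theorem~\ref{t:addEquivalenceYoneda} almost verbatim.
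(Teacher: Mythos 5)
Your overall plan is the one the paper intends: the paper gives no separate argument for Theorem~\ref{t:addEquivalenceYonedaDual}, presenting it as the formal dual of Theorem~\ref{t:addEquivalenceYoneda}, and your three steps (dualise Theorem~\ref{thm:exact categories and 2-term}, apply it to $\cC_{\cT}$, then identify the target with $\lperp(\Sigma\cP)$ via the dual of Corollary~\ref{cor:widetilde_p_perp_vs_p_perp}) are exactly the duals of the steps used there. In particular, your translation of $\yo d\in\lperp(\Sigma\cP)$ into the vanishing of $\coker(\Hom_{\cC}(T^0,\blank)\to\Hom_{\cC}(T^{-1},\blank))$ on $\cP$, and the subsequent use of the dual of Lemma~\ref{l:KR-lem} (valid because $\cC$ is Frobenius, so $\cP$ is also the class of injectives of $\cC$) together with Corollary~\ref{c:easy_confl}, are correct.

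One step is circular as written: you justify ``$\cC_{\cT}$ has enough injectives'' by the dual of Corollary~\ref{c:0-Auslander-substructure}, i.e.\ by applying Theorem~\ref{t:rel-2-term} to $\cA=\cT$, $\cE=\cP$. That theorem only says that $\lperp(\Sigma\cP)$ is $0$-Auslander; transferring this to $\cC_{\cT}$ requires the equivalence $\cC_{\cT}\simeq\lperp(\Sigma\cP)$, which is precisely the statement being proved (the paper deduces Corollary~\ref{c:0-Auslander-substructure} \emph{from} Theorem~\ref{t:addEquivalenceYoneda}, not the other way round). The gap is repairable. Either prove enough injectives in $\cC_{\cT}$ directly: realising the class $\underline{r_X}\in\stabHom_{\cC}(R_X,X)\cong\EE(\Sigma R_X,X)$ of the index conflation gives a conflation $X\infl E\defl\Sigma R_X$ whose connecting morphism factors through $\Sigma R_X\in\Sigma\cT$, so it lies in $\cC_{\cT}$ by Proposition~\ref{p:T-proj-substruct}, and its middle term $E$ is a cosyzygy of $K_X\in\cT$ up to projective-injective summands, hence $\cC_{\cT}$-injective. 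Or, more economically, note that enough injectives in $\cC_{\cT}$ is only needed for essential surjectivity onto $\lperp(\Sigma\widetilde{\cI})$, whereas your final step already proves essential surjectivity onto $\lperp(\Sigma\cP)$ using only that $\cC$ itself has enough injectives; so the intermediate category $\lperp(\Sigma\widetilde{\cI})$ and this hypothesis can be dropped. (The paper is equally silent about the corresponding point---enough projectives in $\cC^{\cT}$---in its proof of Theorem~\ref{t:addEquivalenceYoneda}, so you are being more careful than the source, just with the wrong justification.)
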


Note that the functor~$\Psi^\op$ is defined on objects as follows. Fix for each $X\in\cal{C}$ a short exact sequence
\[\begin{tikzcd}
0\arrow{r}&K_X\arrow{r}{k_X}\arrow{r}&R_X\arrow{r}{r_X}&X\arrow{r}&0
\end{tikzcd}\]
with $K_X,R_X\in\cal{T}$. Then
\[\Psi^\op(X)=(\begin{tikzcd}[column sep=35pt]\yo R_X\arrow{r}{\yo k_X}&\yo K_X\end{tikzcd}).\]

\begin{corollary}
 In the context of Theorem~\ref{t:addEquivalenceYonedaDual}, the category~$\lperp(\Sigma \cP)$ is exact.
\end{corollary}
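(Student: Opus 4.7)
The plan is to transfer exactness from the domain to the codomain of the equivalence provided by Theorem~\ref{t:addEquivalenceYonedaDual}. Since this theorem gives an equivalence of extriangulated categories $\cC_{\cT} \xrightarrow{\sim} \lperp(\Sigma\cP)$, and exactness (in the extriangulated sense) is preserved under such equivalences, it suffices to establish that $\cC_{\cT}$ itself is an exact category.

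To this end, recall that $\cC_{\cT}$ is by construction an extriangulated substructure of the exact category $\cC$: its class of conflations forms a subclass of the class of conflations of $\cC$. In particular, every inflation (respectively deflation) in $\cC_{\cT}$ is also an inflation (respectively deflation) in $\cC$, and since $\cC$ is exact, it is a monomorphism (respectively epimorphism) in $\cC$. As monicity and epicity are properties of morphisms in the underlying additive category, these morphisms remain monomorphisms and epimorphisms when viewed in $\cC_{\cT}$.

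Invoking the characterisation of exact extriangulated categories \cite[Cor.~3.18]{NakaokaPalu} (already used in the proof of Theorem~\ref{thm:exact categories and 2-term}), which states that an extriangulated category is exact if and only if every inflation is monic and every deflation is epic, we conclude that $\cC_{\cT}$ is exact. Transporting this structure along the equivalence of Theorem~\ref{t:addEquivalenceYonedaDual} then shows that $\lperp(\Sigma\cP)$ is exact as well. The argument is essentially a formal observation; there is no genuine obstacle, the key point being simply that passing to a relative extriangulated substructure cannot destroy the exactness of an ambient exact category.
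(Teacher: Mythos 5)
Your proof is correct and follows essentially the same (implicit) route as the paper: the corollary is obtained by transporting exactness of $\cC_{\cT}$ across the extriangulated equivalence of Theorem~\ref{t:addEquivalenceYonedaDual}, where $\cC_{\cT}$ is exact because it is an extriangulated substructure of the exact category $\cC$, so its inflations and deflations are monic and epic respectively, and one concludes via \cite[Cor.~3.18]{NakaokaPalu}. This is precisely the criterion the paper itself invokes in the proof of Theorem~\ref{thm:exact categories and 2-term}, so your argument supplies exactly the details the paper leaves unstated.
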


\begin{corollary}
 In the context of Theorem~\ref{t:addEquivalenceYonedaDual}, the category~$\cC_{\cT}$ is~$0$-Auslander.
\end{corollary}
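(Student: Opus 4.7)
The plan is to mirror the proof of the immediately preceding $0$-Auslander corollary for $\cal{C}^{\cal{T}}$: use Theorem~\ref{t:addEquivalenceYonedaDual} to transport the $0$-Auslander property from $\lperp(\Sigma\cal{P})$ to $\cal{C}_{\cal{T}}$, and then obtain the $0$-Auslander property on $\lperp(\Sigma\cal{P})$ as a direct instance of Theorem~\ref{t:rel-2-term}\ref{t:rel-2-term-0Aus}, taking $\cal{A}=\cal{T}$ (viewed as a dg category concentrated in degree $0$, hence non-positively graded) and $\cal{E}=\cal{P}$.

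The only hypothesis of Theorem~\ref{t:rel-2-term} that requires verification in this specialisation is that $\cal{P}$ is covariantly finite in $\per\cal{T}$. Since $\cal{C}$ is Frobenius, every $T\in\cal{T}$ admits an inflation $T\infl I_T$ into its injective hull $I_T\in\cal{P}$; as $\cal{P}\subseteq\cal{T}$ and every $P\in\cal{P}$ is injective in $\cal{C}$, any morphism $T\to P$ factors through this inflation, so $T\to I_T$ is a left $\cal{P}$-approximation in $\cal{T}$ (which is all that is actually needed in the proof of Lemma~\ref{l:0-Aus} in order to build the Bongartz co-completion triangles for objects of $\cal{A}$). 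Once this is in hand, Theorem~\ref{t:rel-2-term}\ref{t:rel-2-term-0Aus} gives that $\lperp(\Sigma\cal{P})$ is $0$-Auslander, and the extriangulated equivalence $\cal{C}_{\cal{T}}\simeq\lperp(\Sigma\cal{P})$ of Theorem~\ref{t:addEquivalenceYonedaDual} transports this property to $\cal{C}_{\cal{T}}$.

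A slightly more hands-on alternative, which does not require any appeal to Theorem~\ref{t:rel-2-term}, would be to verify the three defining conditions of Definition~\ref{def:0-Auslander} directly in $\cal{C}_{\cal{T}}$: hereditariness is inherited from the hereditary ambient category $\harp{\proj\cal{T}}$ through the embedding supplied by Theorem~\ref{t:addEquivalenceYonedaDual}; enough projectives comes from Proposition~\ref{p:T-proj-substruct}, using that right $\cal{T}$-approximations are deflations in $\cal{C}_{\cal{T}}$ by Corollary~\ref{c:easy_confl}; and dominant dimension at least $1$ follows because, for any projective $T\in\cal{T}\subseteq\cal{C}_{\cal{T}}$, the injective hull conflation $T\infl I_T\defl \Omega^{-1}T$ in $\cal{C}$ is also a conflation in $\cal{C}_{\cal{T}}$ (its connecting morphism in $\stab{\cal{C}}$ factors through $\Sigma T\in\Sigma\cal{T}$), with $I_T\in\cal{P}$ projective-injective and $\Omega^{-1}T$ injective by Proposition~\ref{p:T-proj-substruct}.

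The principal subtlety, and the only step that is not purely formal, is the covariant-finiteness check for $\cal{P}$; everything else is bookkeeping, matching the structure of projectives and injectives of $\cal{C}_{\cal{T}}$ described by Proposition~\ref{p:T-proj-substruct} against the description of projectives and injectives of $\lperp(\Sigma\cal{P})$ in Theorem~\ref{t:rel-2-term}\ref{t:rel-2-term-proj-inj}.
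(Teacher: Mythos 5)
Your proposal is correct and takes essentially the same route as the paper: the intended proof is the dual of that of Corollary~\ref{c:0-Auslander-substructure}, namely combining the extriangulated equivalence $\cC_{\cT}\simeq\lperp(\Sigma\cP)$ from Theorem~\ref{t:addEquivalenceYonedaDual} with Theorem~\ref{t:rel-2-term} applied to $\cA=\cT$ and $\cE=\cP$, and your hands-on alternative matches the paper's own remark that the result can also be obtained directly via Corollary~\ref{c:easy_confl}. Your explicit check that inflations into injectives in the Frobenius category $\cC$ provide the left $\cP$-approximations needed to form the Bongartz co-completion triangles is a worthwhile addition that the paper leaves implicit.
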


\begin{remark}
Assumptions in all the results in this subsection can be weakened: we do not need $\cC$ to be Frobenius. Indeed, all the proofs apply to the case where $\cC$ is a weakly idempotent complete exact category with enough projectives and enough injectives, and $\cT$ is a cluster-tilting subcategory  of $\cC$. In fact, even this is not completely necessary: to prove Theorem \ref{t:addEquivalenceYoneda}, one does not need $\cC$ to have enough injectives, but only enough $\cC^{\cT}$-injectives, i.e.\ for each object in $\cC$ to admit a conflation \eqref{eq:coindex-conflation}. Dually, to prove Theorem \ref{t:addEquivalenceYonedaDual}, one does not need $\cC$ to have enough projectives, but only enough $\cC_{\cT}$-projectives, i.e.\ for each object in $\cC$ to admit a conflation \eqref{eq:index-conflation}. 
The stronger, but more concise, assumptions we impose are motivated by examples related to the additive categorification of cluster algebras.
\end{remark}

Combining the previous results, we get the following.

\begin{theorem}
\label{t:ClustCat-to-2term-frobenius}
Let $\cal{C}$ be a weakly idempotent complete Frobenius exact category, and let $T\in\cal{C}$ be a cluster-tilting object. Then there are full and essentially surjective extriangulated functors
\[\Phi_T\colon\cal{C}_{T}\to\harp{\proj{\stab{A}}},\qquad\Phi^{T}\colon\cal{C}^T\to\harp{\proj{\stab{A}}},\]
where $\stab{A}=\stabEnd_{\cal{C}}(T)^\op$ is the stable endomorphism algebra of $T$. In each case the kernel is that generated by maps from injective to projective objects (in the relevant extriangulated substructure on $\cal{C}$).
\end{theorem}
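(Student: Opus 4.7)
The plan is to exhibit $\Phi_T$ and $\Phi^T$ as compositions of two extriangulated functors: first the equivalences $\Psi^{\op}\colon\cC_T \isoto \lperp(\Sigma \cP)$ and $\Psi\colon\cC^T \isoto \cP\rperp$ provided by Theorems~\ref{t:addEquivalenceYonedaDual} and~\ref{t:addEquivalenceYoneda}, followed by the full, essentially surjective extriangulated functors furnished by Theorems~\ref{t:rel-2-term-functors} and~\ref{t:rel-2-term-dual-functors} applied with $\cA = \cT = \add T$ (viewed as a dg category concentrated in degree~$0$) and $\cE = \cP$.

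As a preliminary step I would identify the target. The Yoneda equivalence $\cT \simeq \proj A$, where $A = \End_{\cC}(T)^{\op}$, identifies $\cP$ with $\add(eA)$ for an idempotent $e \in A$ projecting onto a maximal projective summand of $T$. Consequently the quotient additive category $\cT/\cP$ identifies with $\proj \stab{A}$, where $\stab{A} = A/AeA$, and so $\harp{\proj(\cT/\cP)} \simeq \harp{\proj \stab{A}}$ as extriangulated categories, both sides inheriting their structure by extension-closed embeddings into equivalent bounded homotopy categories of projectives.

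Because $\cT$ is concentrated in degree~$0$, $\cohom{0}\cT = \cT$ and $\cohom{0}\cP = \cP$, so the leftmost square in the diagram of Theorem~\ref{t:rel-2-term-functors} is superfluous. The theorem then directly produces a full, essentially surjective extriangulated functor $F\colon \lperp(\Sigma \cP) \to \harp{\proj \stab{A}}$ whose kernel is the ideal of $\lperp(\Sigma \cP)$ generated by morphisms from an injective to a projective object. Setting $\Phi_T := F \circ \Psi^{\op}$ yields the desired functor, and an analogous construction using Theorem~\ref{t:rel-2-term-dual-functors} gives $\Phi^T$.

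The kernel description is then a matter of transferring ideals along the equivalences $\Psi^{\op}$ and $\Psi$. Since these are equivalences of extriangulated categories, Proposition~\ref{prop::rigid} ensures that they preserve and reflect both projectivity and injectivity; hence the ideal of $\cC_T$ (resp.~$\cC^T$) generated by injective-to-projective morphisms corresponds exactly to the analogous ideal of $\lperp(\Sigma \cP)$ (resp.~$\cP\rperp$), which is $\ker F$. The main obstacle, such as it is, is clean bookkeeping: checking that the extriangulated equivalences of Theorems~\ref{t:addEquivalenceYoneda} and~\ref{t:addEquivalenceYonedaDual} really do match the intrinsic projectives and injectives of $\cC^T$ and $\cC_T$ (for which Propositions~\ref{p:T-proj-substruct} and~\ref{p:T-inj-substruct} provide the needed information) with those of $\cP\rperp$ and $\lperp(\Sigma \cP)$ described in Theorems~\ref{t:rel-2-term-dual} and~\ref{t:rel-2-term}.
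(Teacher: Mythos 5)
Your proposal is correct and follows essentially the same route as the paper: the paper's proof is precisely to compose the extriangulated equivalence of Theorem~\ref{t:addEquivalenceYonedaDual} (resp.~Theorem~\ref{t:addEquivalenceYoneda}) with the functor $FG$ of Theorem~\ref{t:rel-2-term-functors} (resp.~Theorem~\ref{t:rel-2-term-dual-functors}), applied with $\cal{A}=\cal{T}$ concentrated in degree~$0$ and $\cal{E}=\cal{P}$. The only cosmetic quibble is that transferring the injective-to-projective ideal along an extriangulated equivalence needs no appeal to Proposition~\ref{prop::rigid} (which concerns the quotient functor $\cal{C}\to\cal{C}/J$); it is automatic for equivalences.
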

\begin{proof}
Combine either Theorem~\ref{t:addEquivalenceYonedaDual} with Theorem~\ref{t:rel-2-term-functors} or Theorem~\ref{t:addEquivalenceYoneda} with Theorem~\ref{t:rel-2-term-dual-functors}. In either case, the relevant functor is induced from the composition of the extriangulated equivalence obtained in Section~\ref{section:Frobenius} with the functor $FG$ from Section~\ref{s:rel-2-term}.
\end{proof}

\section{Application to Higgs categories} \label{section:Higgs}

\subsection{Higgs categories and fundamental domains} \label{ss:Higgs}

Observe that Theorem~\ref{thm:exact categories and 2-term} and Corollary~\ref{cor:widetilde_p_perp_vs_p_perp}, used in
the proof of Theorem~\ref{t:addEquivalenceYoneda}, depended crucially on the fact that we were working in an exact category.
Thus,
Theorem~\ref{t:addEquivalenceYoneda} does not generalise in a straightforward way to the case that $\cal{C}$ is a Frobenius extriangulated category (for example, a triangulated category), even though conflations of the form \eqref{eq:coindex-conflation} may still be constructed in this case.

We do, however, have an analogue of Theorem~\ref{t:addEquivalenceYoneda} in situations in which we have access to certain algebraic enhancements. Precisely, we restrict to certain extriangulated cluster categories arising from ice quivers with potential, called Higgs categories, constructed by Yilin Wu \cite{Wu23}. These include Amiot's generalised cluster categories as special cases. These categories come with a preferred initial cluster-tilting subcategory $\cal{T}$, which has a natural dg enhancement $\proj{\Gamma}$, where $\Gamma$ is the relative Ginzburg dg algebra of the ice quiver with potential \cite{Ginzburg,KellerCYComp,Wu23} (see Definition~\ref{d:relGinz} below) and which is concentrated in non-positive degrees. If $\cal{C}$ is the Higgs category associated to $\Gamma$ in \cite{Wu23}, we will be able to realise the extriangulated categories $\cal{C}_{\cal{T}}$ and $\cal{C}^{\cal{T}}$ as appropriate orthogonal categories in $\Gamma*\Sigma\Gamma\subseteq\per{\Gamma}$, again putting us in a situation to which the results of Section~\ref{s:rel-2-term} may be applied.

Before recalling Wu's construction, we begin with some preliminaries that are not specific to relative Ginzburg algebras. To aid comparison with Section~\ref{section:Frobenius}, we continue to use left modules.

\begin{proposition}
Let $\Lambda$ be a dg algebra concentrated in non-positive degrees and let $e\in \Lambda$ be an idempotent. Then the subcategories $\lperp(\Sigma\Lambda e)$ and $(\Lambda e)\rperp$ of $\Lambda*\Sigma\Lambda$ are $0$-Auslander extriangulated categories.
\end{proposition}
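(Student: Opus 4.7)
The plan is to view the proposition as a direct specialisation of Theorems~\ref{t:rel-2-term} and~\ref{t:rel-2-term-dual}. I will regard $\Lambda$ as a non-positively graded dg category $\cA$ with a single object whose endomorphism dg algebra is $\Lambda$, so that $\per\cA = \per\Lambda$, the Yoneda embedding identifies $\cA$ with $\add\Lambda$, and $\cA\ast\Sigma\cA = \Lambda\ast\Sigma\Lambda$. Setting $\cE = \add(\Lambda e) \subseteq \cA$, the perpendicular subcategories $\lperp(\Sigma\cE)$ and $\cE\rperp$ appearing in those theorems become, by unwinding the definitions, precisely the subcategories $\lperp(\Sigma\Lambda e)$ and $(\Lambda e)\rperp$ in the statement.

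The only hypothesis of~\cref{t:rel-2-term} (respectively~\cref{t:rel-2-term-dual}) that I will need to verify is that $\cE$ is covariantly (respectively contravariantly) finite in $\per\cA$. Inspecting the proofs in \cref{s:rel-2-term}, this condition is used only to produce left (resp.\ right) $\cE$-approximations of objects of $\cA$, in order to construct the Bongartz co-completion and Bongartz completion triangles~\eqref{eq:bong-co-comp} and~\eqref{eq:bong-comp}. Since $e^2=e$, the dg left $\Lambda$-module $\Lambda$ decomposes as $\Lambda = \Lambda e \oplus \Lambda(1-e)$, so every $X \in \add\Lambda$ inherits a corresponding decomposition $X = Xe \oplus X(1-e)$ with $Xe \in \cE$. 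The only concrete computation I foresee is to check that the projection $X \twoheadrightarrow Xe$ is a left $\cE$-approximation and the inclusion $Xe \hookrightarrow X$ is a right $\cE$-approximation; both follow easily from the identification $\Hom_\Lambda(\blank,\Lambda e) = \Hom_\Lambda(\blank,\Lambda)\cdot e$ of right $\Lambda$-modules.

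With this observation in hand, \cref{t:rel-2-term}\ref{t:rel-2-term-0Aus} immediately yields that $\lperp(\Sigma\Lambda e)$ is $0$-Auslander, while \cref{t:rel-2-term-dual}\ref{t:rel-2-term-dual-0Aus} yields the same for $(\Lambda e)\rperp$. I do not anticipate any substantive obstacle: once the data $(\cA,\cE)$ is set up as above, the proposition reduces to a direct invocation of the main theorems of \cref{s:rel-2-term}, with the approximation check being the sole piece of additional input.
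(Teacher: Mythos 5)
Your overall reduction is precisely the paper's proof: the paper disposes of this proposition in a single sentence, as ``a direct application of Theorems~\ref{t:rel-2-term} and \ref{t:rel-2-term-dual}'', with $\cA$ the one-object dg category $\Lambda$ and $\cE=\add(\Lambda e)$. You are also right that the only hypothesis of those theorems deserving comment is the covariant (resp.\ contravariant) finiteness of $\cE$, and in flagging it you are being more careful than the paper. However, the verification you propose for that hypothesis is incorrect.

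The projection $p\colon X\twoheadrightarrow Xe$ is a left $\add(\Lambda e)$-approximation only if every morphism $X\to\Lambda e$ factors through $p$, i.e.\ vanishes on the complementary summand $X(1-e)$. Taking $X=\Lambda$, a morphism $h\colon\Lambda\to\Lambda e$ in $\per\Lambda$ corresponds to an element $h(1)\in\cohom{0}(\Lambda)e$, whereas the morphisms factoring through $p$ are exactly those with $h(1)\in e\cohom{0}(\Lambda)e$; so $p$ is a left approximation if and only if $(1-e)\cohom{0}(\Lambda)e=0$. Already for $\Lambda=kQ$ with $Q\colon 1\to 2$ and $e$ the idempotent at the vertex for which $(1-e)\Lambda e\neq 0$, the morphism $\Lambda(1-e)\to\Lambda e$ given by the arrow does not factor through the projection. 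The identification $\Hom_\Lambda(\blank,\Lambda e)=\Hom_\Lambda(\blank,\Lambda)\cdot e$ only says that every map $X\to\Lambda e$ factors as $X\to\Lambda\to\Lambda e$, which is a statement about the projection out of $\Lambda$, not about a map out of $X$; the dual claim for $Xe\hookrightarrow X$ fails for the symmetric reason. The correct argument is to choose generators $f_1,\dots,f_n$ of $\Hom_{\per\Lambda}(\Lambda,\Lambda e)\cong\cohom{0}(\Lambda)e$ as a module over $\End_{\per\Lambda}(\Lambda e)\cong e\cohom{0}(\Lambda)e$ and take $(f_i)\colon\Lambda\to(\Lambda e)^n$. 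Such an approximation exists precisely when this module is finitely generated, which holds in all of the paper's applications (where $\cohom{0}\Lambda$ is finite-dimensional) but is not automatic for an arbitrary non-positive dg algebra: for the path algebra of the quiver with a loop at $1$ and an arrow $2\to1$, with $e=e_2$, the space $e_1\cohom{0}(\Lambda)e_2$ is infinite-dimensional over $e_2\cohom{0}(\Lambda)e_2=k$ and no left $\add(\Lambda e)$-approximation of $\Lambda e_1$ exists. So either a finiteness hypothesis should be made explicit or the approximation must be constructed as above; the naive idempotent splitting does not do the job.
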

\begin{proof}
This is a direct application of Theorems~\ref{t:rel-2-term} and \ref{t:rel-2-term-dual}.
\end{proof}

When $\Gamma$ is the relative Ginzburg algebra of an ice quiver with potential, the extriangulated category $\Gamma*\Sigma\Gamma$ is denoted by $\FundDom{\Gamma}$ in \cite{Wu23} (or just $\cal{F}$ in \cite{Amiot09}). The next proposition demonstrates that if $e$ is the idempotent determined by the frozen vertices then the subcategory $\lperp(\Sigma\Gamma e)\subseteq\Gamma*\Sigma\Gamma$ is that denoted by $\RelFundDom{\Gamma}$ in \cite{Wu23}, and called the relative fundamental domain.

\begin{proposition}
Let $\Lambda$ be a dg algebra concentrated in non-positive degrees and let $e\in \Lambda$ be an idempotent. Then the subcategory $\lperp(\Sigma\Lambda e)$ is the full subcategory of $\per\Lambda$ consisting of cones of morphisms $f$ in $\add{\Lambda}$ for which $\Hom_{\Lambda}(f,\Lambda e)$ is surjective, while $(\Lambda e)\rperp$ is the full subcategory on cones of those $f$ for which $\Hom_{\Lambda}(\Lambda e,f)$ is surjective.
\end{proposition}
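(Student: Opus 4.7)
The plan is a direct application of the cohomological long exact sequences in $\per\Lambda$, exploiting the hypothesis that $\Lambda$ is concentrated in non-positive degrees.

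First I would take $X\in\Lambda*\Sigma\Lambda$, fix a triangle $P_1\xrightarrow{f}P_0\to X\to \Sigma P_1$ in $\per\Lambda$ with $P_0,P_1\in\add\Lambda$, and apply the cohomological functor $\Hom_{\per\Lambda}(\blank,\Sigma\Lambda e)$ to it. The key input is the vanishing $\Hom_{\per\Lambda}(\Lambda,\Sigma\Lambda e)=\cohom{1}(\Lambda e)=0$, which holds because $\Lambda$ (and hence $\Lambda e$) is concentrated in non-positive degrees; additivity propagates this vanishing to all of $\add\Lambda$, giving $\Hom_{\per\Lambda}(P_i,\Sigma\Lambda e)=0$ for $i=0,1$. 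Combined with the identifications $\Hom_{\per\Lambda}(\Sigma P_i,\Sigma\Lambda e)=\Hom_{\per\Lambda}(P_i,\Lambda e)$ under which the connecting map becomes $\Hom_{\per\Lambda}(f,\Lambda e)$, the long exact sequence collapses to
\[\Hom_{\per\Lambda}(P_0,\Lambda e)\xrightarrow{\Hom(f,\Lambda e)}\Hom_{\per\Lambda}(P_1,\Lambda e)\to\Hom_{\per\Lambda}(X,\Sigma\Lambda e)\to 0.\]
Thus $X\in\lperp(\Sigma\Lambda e)$ (that is, $\Hom_{\per\Lambda}(X,\Sigma\Lambda e)=0$) if and only if $\Hom_{\per\Lambda}(f,\Lambda e)$ is surjective, which proves the first claim. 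The full subcategory $\add(\Lambda e)$ is handled by the same argument since the vanishing of $\Hom(X,\Sigma\Lambda e)$ propagates to summands and finite direct sums.

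For the second claim I would argue dually: apply $\Hom_{\per\Lambda}(\Lambda e,\blank)$ to the same triangle. Since $\Lambda e$ is a direct summand of $\Lambda$, the group $\Hom_{\per\Lambda}(\Lambda e,\Sigma\Lambda)$ is a summand of $\Hom_{\per\Lambda}(\Lambda,\Sigma\Lambda)=\cohom{1}(\Lambda)=0$, so $\Hom_{\per\Lambda}(\Lambda e,\Sigma P_i)=0$ for $i=0,1$. The long exact sequence then gives
\[\Hom_{\per\Lambda}(\Lambda e,P_1)\xrightarrow{\Hom(\Lambda e,f)}\Hom_{\per\Lambda}(\Lambda e,P_0)\to\Hom_{\per\Lambda}(\Lambda e,X)\to 0,\]
so that $X\in(\Lambda e)\rperp$ if and only if $\Hom_{\per\Lambda}(\Lambda e,f)$ is surjective.

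I do not expect any serious obstacle: the statement is essentially a bookkeeping consequence of the fact that the cohomological cut-off of the long exact sequences at one end is enforced by the non-positive grading, and at the other end by passing to cones of morphisms between projectives. The only minor care is in correctly identifying the connecting map in the long exact sequences with $\Hom(f,\blank)$ or $\Hom(\blank,f)$ under the canonical suspension isomorphism.
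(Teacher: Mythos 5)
Your argument is correct and is essentially the paper's own proof: the paper likewise applies $\Hom_{\Lambda}(\blank,\Lambda e)$ to the defining triangle, uses the vanishing $\Hom_{\Lambda}(P,\Sigma\Lambda e)=0$ forced by the non-positive grading to truncate the long exact sequence, and identifies the connecting map with $\Hom_{\Lambda}(f,\Lambda e)$ (your version merely shifts the functor by $\Sigma$, which is the same computation). The paper leaves the $(\Lambda e)\rperp$ case as ``proved similarly''; your explicit dual argument, using that $\Hom_{\Lambda}(\Lambda e,\Sigma P)=0$ since $\Lambda e$ is a summand of $\Lambda$, fills this in correctly.
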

\begin{proof}
Let $f\colon P_1\to P_0$ be a morphism in $\add{\Lambda}$, and consider the triangle
\[\begin{tikzcd}
P_1\arrow{r}{f}&P_0\arrow{r}&X\arrow{r}&\Sigma P_1
\end{tikzcd}\]
in $\per{\Lambda}$. Applying $\Hom_{\Lambda}(\blank,\Lambda e)$ yields the long exact sequence
\[\begin{tikzcd}[column sep=23pt]
\Hom_{\Lambda}(P_0,\Lambda e)\arrow{r}{f^*}&\Hom_{\Lambda}(P_1,\Lambda e)\arrow{r}&\Hom_{\Lambda}(\Sigma^{-1}X,\Lambda e)\arrow{r}&\Hom_{\Lambda}(\Sigma^{-1}P_1,\Lambda e)=0,
\end{tikzcd}\]
where the last term is zero since $\Lambda$ is concentrated in non-positive degrees. Hence we have $\Hom_{\Lambda}(\Sigma^{-1}X,\Lambda e)=\Hom_{\Lambda}(X,\Sigma\Lambda e)=0$ if and only if $f^*=\Hom_{\Lambda}(f,\Lambda e)$ is surjective. The statement concerning $(\Lambda e)\rperp$ is proved similarly.
\end{proof}

\begin{lemma}
\label{l:proj-inj-RelFundDom}
An object $X\in(\Lambda e)\rperp$ is injective if and only if it is isomorphic to $\Sigma P$ for $P\in\add{\Lambda}$, and is projective if and only if it is isomorphic to the cone of a morphism $P_1\longmap{p}P_0$ with $P_0\in\add\Lambda$ and $P_1\in\add\Lambda e$.
\end{lemma}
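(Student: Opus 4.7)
The plan is to apply Theorem~\ref{t:rel-2-term-dual} with $\cA = \Lambda$ viewed as a one-object dg category and $\cE = \add \Lambda e$, which immediately puts $(\Lambda e)\rperp$ within the $0$-Auslander framework. For the injective half, part~(ii) of that theorem identifies the injectives of $\cE\rperp$ with those of $\add \Sigma \cA = \add \Sigma \Lambda$; one needs only to verify the inclusion $\add \Sigma \Lambda \subseteq (\Lambda e)\rperp$, which reduces to the computation $\Hom_{\per \Lambda}(\Lambda e, \Sigma P) = \cohom{1}(eP) = 0$ for $P \in \add \Lambda$, valid because $\Lambda$ is non-positively graded.

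For the projective half, the easier direction assumes $X = \cone(p\colon P_1 \to P_0)$ lies in $(\Lambda e)\rperp$ with $P_1 \in \add \Lambda e$ and $P_0 \in \add \Lambda$. Since $(\Lambda e)\rperp$ is extension-closed in $\per \Lambda$, for $W \in (\Lambda e)\rperp$ the group $\Ext^1_{(\Lambda e)\rperp}(X, W)$ coincides with $\Hom_{\per \Lambda}(X, \Sigma W)$. Applying $\Hom_{\per \Lambda}(-, \Sigma W)$ to the defining triangle $P_1 \to P_0 \to X \to \Sigma P_1$ sandwiches this group between $\Hom(\Sigma P_1, \Sigma W) = \Hom(P_1, W)$, which vanishes because $P_1 \in \cE$ and $W \in \cE\rperp$, and $\Hom(P_0, \Sigma W)$, which vanishes because $P_0 \in \add \Lambda$ is projective in $\Lambda * \Sigma \Lambda$ by Lemma~\ref{l:projectives}.

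The converse direction carries the main obstacle: identifying the projective-injectives of $(\Lambda e)\rperp$ with $\add \Sigma \Lambda e$. One inclusion is straightforward, since $\Sigma \Lambda e$ is injective by the first part, belongs to $(\Lambda e)\rperp$ by the same $\cohom{1}$ vanishing, and is projective because $\Hom(\Sigma \Lambda e, \Sigma W) = \Hom(\Lambda e, W) = 0$ for every $W \in (\Lambda e)\rperp$. For the reverse inclusion, I would write any projective-injective $Q$ as $\Sigma P$ with $P \in \add \Lambda$, choose a right $\add \Lambda e$-approximation $a\colon E \to P$ (which exists by the contravariant finiteness of $\add \Lambda e$ implicit in the proposition preceding the lemma), and set $W = \cone(a)$. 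The approximation property together with the vanishing $\Hom(\Lambda e, \Sigma E) = 0$ implies $W \in (\Lambda e)\rperp$, while the long exact sequence for $\Hom_{\per \Lambda}(P, -)$ evaluates $\Hom(P, W)$ as $\coker(\Hom(P, E) \to \Hom(P, P))$; projectivity of $Q$ forces this cokernel to vanish, so $1_P$ factors through $a$ and hence $P \in \add E \subseteq \add \Lambda e$. With this characterisation in hand, the $0$-Auslander condition from Theorem~\ref{t:rel-2-term-dual}(iii) applied to the given projective $X$ supplies a conflation $X \infl Q \defl I$ with $Q = \Sigma P_1$, $P_1 \in \add \Lambda e$, and $I = \Sigma P_0$, $P_0 \in \add \Lambda$; rotating the associated triangle in $\per \Lambda$ yields $P_1 \to P_0 \to X \to \Sigma P_1$, exhibiting $X = \cone(P_1 \to P_0)$ as required.
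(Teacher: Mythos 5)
Your proof is correct, and although both it and the paper's argument rest on Theorem~\ref{t:rel-2-term-dual}, the projective half is handled quite differently. For the direction ``cone of $P_1\to P_0$ with $P_1\in\add\Lambda e$ implies projective'', the paper computes directly with homotopies: a map $X\to\Sigma Y$ is represented by $\varphi\colon P_1\to Q_0$, and the surjectivity of $\Hom_\Lambda(\Lambda e,q)$ (the defining property of $(\Lambda e)\rperp$ from the preceding proposition) gives $\im(\varphi)\subseteq eQ_0\subseteq\im(q)$, so $\varphi$ lifts along $q$ to a null-homotopy. Your sandwich $\Hom(P_1,W)\to\Hom(X,\Sigma W)\to\Hom(P_0,\Sigma W)$, with both outer terms vanishing, reaches the same conclusion more cleanly and without descending to $\cohom{0}$-modules. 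For the converse direction, the paper simply reads the shape of the projectives off the definition of the Bongartz completion of $\Sigma\Lambda e$ (``all of which have the required form by construction''), whereas you re-derive it: you first identify the projective-injectives as $\add\Sigma\Lambda e$ via a right $\add\Lambda e$-approximation argument, then invoke the dominant-dimension axiom of the $0$-Auslander structure to obtain a conflation $X\infl\Sigma P_1\defl\Sigma P_0$ and rotate the underlying triangle. This costs you an extra approximation step (which, like the paper's appeal to the Bongartz completion, tacitly uses contravariant finiteness of $\add\Lambda e$ in $\per\Lambda$), but it is more self-contained---it does not require unwinding the tersely stated triangle \eqref{eq:bong-comp}---and it establishes explicitly that the projective-injectives are exactly $\add\Sigma\Lambda e$, a fact the paper only records in the remark following the lemma.
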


We note that \cref{l:proj-inj-RelFundDom} sheds new light on \cite[Lem.~2.12]{MarshPalu}.

\begin{proof}
By Theorem~\ref{t:rel-2-term-dual}\ref{t:rel-2-term-dual-proj-inj} the injective objects are exactly those as claimed. Moreover, the projective objects are those in the Bongartz completion of $\Sigma\Lambda e\subseteq\Lambda*\Sigma\Lambda$, all of which have the required form by construction. Thus it only remains to check that every object of this form is projective.

So take $X=(P_1\longmap{p}P_0)\in(\Lambda e)\rperp$ with $P_1\in\add \Lambda e$. Then for $Y=(Q_1\longmap{q}Q_0)\in(\Lambda e)\rperp$, a map $X\to\Sigma Y$ is given by a morphism $\varphi\colon P_1\to Q_0$. Since $P_1\in\add{\Lambda e}$ and $\Hom_\Lambda(\Lambda e,q)$ is surjective, we have
\[\im(\varphi)\subset eQ_0\subset\im(q)\]
and so, since $P_1$ is projective, $\varphi$ factors over $q$. Thus the map $X\to\Sigma Y$ determined by $\varphi$ is null-homotopic, and we conclude that $\Hom_\Lambda(X,\Sigma Y)=0$, and hence that $X$ is projective.
\end{proof}

\begin{remark}
Let $n$ be the number of isomorphism classes of indecomposable projective (or injective, or simple) $\Lambda$-modules. Then it follows directly from Lemma~\ref{l:proj-inj-RelFundDom} that there are $n$ indecomposable injective objects in $(\Lambda e)\rperp$. In fact, it follows from the same lemma that there are also $n$ indecomposable projective objects in $(\Lambda e)\rperp$---these are are the shifts of indecomposable summands of $\Lambda e$ (which are the indecomposable projective-injective objects), together with the minimal projective presentations of the projective $\Lambda/\Lambda e\Lambda$-modules viewed as $\Lambda$-modules.
\end{remark}

Using Proposition~\ref{p:1-rigid}, all the results of the section so far also apply under the weaker assumption that $\Lambda$ is $1$-rigid. We now recall Wu's construction of the Higgs category of an ice quiver with potential, beginning with the definition of the relative Ginzburg algebra, which is in fact concentrated in non-positive degrees.

\begin{definition}
\label{d:relGinz}
Let $(Q,F,W)$ be an ice quiver with potential, meaning that $(Q,W)$ is a quiver with potential and $F$ is a (not necessarily full) subquiver of $Q$. The \emph{relative Ginzburg (dg) algebra} $\Gamma=\Gamma(Q,F,W)$ attached to this data has underlying algebra $\K\widehat{Q}^{\rel}$, where $\widehat{Q}^{\rel}$ is the quiver obtained from $Q$ by adjoining
\begin{itemize}
\item an arrow $\alpha^*\colon j\to i$ for each arrow $\alpha\colon i\to j$ in $Q_1\setminus F_1$, and
\item a loop $t_i\colon i\to i$ for each vertex $i\in Q_0\setminus F_0$.
\end{itemize}
The grading on this algebra is induced by giving paths of $Q$ degree $0$, the arrows $\alpha^*$ degree $-1$, and the loops $t_i$ degree $-2$. Finally, the differential is completely determined by the values
\begin{align*}
d\alpha^*&=\der{\alpha}{W},\\
dt_i&=\sum_{\alpha\in Q_1}e_i[\alpha,\alpha^*]e_i,
\end{align*}
where $\der{\alpha}{W}$ denotes the cyclic derivative of $W$ with respect to $\alpha$. The \emph{frozen Jacobian algebra} $A=J(Q,F,W)$ of the ice quiver with potential is the $0$-th cohomology $\cohom{0}\Gamma$.
We fix the idempotent $e=\sum_{i\in F_0}e_i\in\Gamma$, the sum of the primitive idempotents at the frozen vertices. Being an idempotent, $e$ necessarily has degree $0$, and we abuse notation by denoting its image in $A=\cohom{0}\Gamma$ again by $e$.
\end{definition}

Wu \cite{Wu23} shows how the data of the relative Ginzburg algebra $\Gamma$, and its distinguished idempotent $e$, can be used to construct a Frobenius stably $2$-Calabi--Yau extriangulated category; we briefly summarise this construction. 
Let $\pp{\Gamma}=\Gamma/\Gamma e\Gamma$---equivalently, this is the Ginzburg algebra of the quiver with potential $(\pp{Q},\pp{W})$, where $\pp{Q}$ is the quiver obtained from $Q$ by removing all frozen vertices and their incident arrows, and $\pp{W}$ is obtained from $W$ by removing all cycles through frozen vertices. For any dg algebra $\Lambda$, the perfectly valued derived category is
\[\pvd{\Lambda}=\Bigl\{M\in\dcat{\Lambda}:\sum_{n\in\Z}\dim\cohom{n}M<\infty\Bigr\}.\]
This is sometimes called the bounded derived category of $\Lambda$, but the alternative notation and terminology emphasises that its elements are bounded both horizontally---in the sense that they have cohomology in a bounded set of degrees---and vertically, in the sense that these cohomology groups are individually finite-dimensional. Since $\Gamma$ is homologically smooth \cite{Wu23}, we may treat $\pvd{\pp{\Gamma}}$ as a full subcategory of $\per{\Gamma}$, where it is identified with the thick subcategory generated by simple $\Gamma$-modules $S_i$ for $i\in Q_0\setminus F_0$.

Now assume that $A=\cohom{0}\Gamma$ is a finite-dimensional algebra, in which case we call $(Q,F,W)$ \emph{Jacobi-finite}; we restrict to this case for simplicity, but note that the non-Jacobi-finite case has been addressed recently in \cite{KW}. The following definition is due to Yilin Wu \cite{Wu23}.

\begin{definition}
Let $(Q,F,W)$ be a Jacobi-finite ice quiver with potential, with relative Ginzburg algebra $\Gamma$. Then the \emph{relative cluster category} of $(Q,F,W)$ is
\[\RelClustCat{\Gamma}=\per{\Gamma}/\pvd{\pp{\Gamma}},\]
and the \emph{Higgs category} $\HiggsCat{\Gamma}$ of $(Q,F,W)$ is the image of $\lperp(\Sigma\Gamma e)\subseteq\Gamma*\Sigma\Gamma$ under the projection to $\RelClustCat{\Gamma}$.
\end{definition}

The Higgs category is a Frobenius extriangulated category in the sense of \cite{NakaokaPalu}, whose full subcategory of projective-injective objects is additively generated by the image $P$ of $\Gamma e$. Moreover, $\HiggsCat{\Gamma}$ has almost split sequences \cite{IyamaNakaokaPalu}, and hence an autoequivalence $\tau$ of the stable category $\StabHiggsCat{\Gamma}$. When $F=\varnothing$, so that $\pp{\Gamma}=\Gamma$ and $\lperp(\Sigma\Gamma e)=\Gamma*\Sigma\Gamma$, we have an equality $\HiggsCat{\Gamma}=\RelClustCat{\Gamma}$ by \cite[Prop.~2.9]{Amiot09}, and this category is Amiot's generalised cluster category of $(Q,W)$. Despite the terminology, in the general case it is the Higgs category $\HiggsCat{\Gamma}$ that additively categorifies a cluster algebra in the traditional way, with reachable rigid objects in bijection with cluster monomials, and so on. In particular, the image of $\add{\Gamma}$ in $\HiggsCat{\Gamma}$ is a cluster-tilting subcategory. This subcategory has an additive generator $T$, and there is an isomorphism $\End_{\HiggsCat{\Gamma}}(T)^{\op}\isoto A$. This isomorphism identifies the idempotent $e$ with projection onto the projective generator $P$.
The Higgs category is $2$-Calabi--Yau extriangulated in the sense of \cite{chang2019cluster}, as its stable category is a $2$-Calabi--Yau triangulated category.

Wu has shown that the projection $\lperp(\Sigma\Gamma e)\to\HiggsCat{\Gamma}$ is an additive equivalence. Using this, we will show that the projection of $(\Gamma e)\rperp$ to $\RelClustCat{\Gamma}$ is also an additive equivalence onto its image, which we will denote by $\HiggsCatDual{\Gamma}$.

\begin{proposition}
\label{p:Wu-vs-FGPPP}
The contravariant equivalence $\Sigma\Transp\colon\per{\Gamma^\op}\to\per{\Gamma}$ induces contravariant equivalences $\lperp(\Sigma e\Gamma)\isoto(\Gamma e)\rperp$ and $\HiggsCatWu{\Gamma^\op}\isoto\HiggsCatDual{\Gamma}$.
\end{proposition}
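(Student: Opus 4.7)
The plan is to prove the proposition in two stages. First, I will establish the contravariant equivalence $\lperp(\Sigma e\Gamma)\isoto(\Gamma e)\rperp$ on the $2$-term categories as a direct consequence of the hom-duality properties of $\sigma:=\Sigma\Transp$. Second, I will upgrade this to a contravariant equivalence of the Higgs-like images in the relative cluster categories by transporting Wu's full faithfulness result along $\sigma$.

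For the first stage, the commutation $\Transp\circ\Sigma=\Sigma^{-1}\circ\Transp$ gives $\sigma(\Gamma^{\op})=\Sigma\Gamma$, $\sigma(\Sigma\Gamma^{\op})=\Gamma$ and $\sigma(\Sigma e\Gamma)=\Gamma e$, so $\sigma$ restricts to a contravariant equivalence $\Gamma^{\op}\ast\Sigma\Gamma^{\op}\isoto\Gamma\ast\Sigma\Gamma$. The contravariance of $\sigma$ then supplies, for any $X\in\Gamma^{\op}\ast\Sigma\Gamma^{\op}$, a natural isomorphism $\Hom_{\Gamma^{\op}}(X,\Sigma e\Gamma)\cong\Hom_{\Gamma}(\Gamma e,\sigma X)$, whose vanishing corresponds exactly to the perpendicularity condition defining the respective $2$-term subcategory. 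This establishes the first equivalence.

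For the second stage, I plan to show that $\sigma$ descends to a contravariant equivalence of Verdier quotients $\RelClustCat{\Gamma^{\op}}\isoto\RelClustCat{\Gamma}$. Since $\pvd\pp{\Gamma^{\op}}$ and $\pvd\pp{\Gamma}$ are thick subcategories generated by the simple modules at non-frozen vertices, this reduces to checking that $\sigma S_i^{\op}\in\pvd\pp{\Gamma}$ for each non-frozen $i$, which I will verify using the relative $3$-Calabi--Yau structure of $\Gamma$ coming from its bimodule resolution to identify $\Transp S_i^{\op}$ with a shift of $S_i$. Once this descent is in place, Wu's theorem---that the projection $\lperp(\Sigma e\Gamma)\to\HiggsCatWu{\Gamma^{\op}}$ is an additive equivalence---transports across $\sigma$ to show that the projection $(\Gamma e)\rperp\to\RelClustCat{\Gamma}$ is also an equivalence onto its essential image, which we take as the definition of $\HiggsCatDual{\Gamma}$; the required contravariant equivalence $\HiggsCatWu{\Gamma^{\op}}\isoto\HiggsCatDual{\Gamma}$ is then the induced one.

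The hard part will be verifying the Calabi--Yau compatibility, namely that $\sigma$ preserves the subcategories $\pvd\pp{(\blank)}$ on both sides. Once that is secured, the remainder is routine diagram-chasing combining contravariant triangle equivalences with Wu's full faithfulness.
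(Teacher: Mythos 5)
Your proposal is correct, and its overall architecture coincides with the paper's: first the identity $\Sigma\Transp(\lperp(\Sigma e\Gamma))=(\Gamma e)\rperp$ via the duality $\Hom_{\Gamma^{\op}}(X,\Sigma e\Gamma)\cong\Hom_{\Gamma}(\Gamma e,\Sigma\Transp X)$ (the paper dispatches this as ``a direct calculation''), and then descent to the relative cluster categories by showing that $\Transp$ exchanges $\pvd{\pp{\Gamma}^{\op}}$ and $\pvd{\pp{\Gamma}}$. Where you differ is in how this last, key step is verified. The paper avoids any object-level identification: it uses the characterisation that $M\in\pvd{\pp{\Gamma}}$ if and only if $\sum_{n\in\Z}\dim\Hom_\Gamma(\Gamma,\Sigma^nM)<\infty$ and $\Hom_\Gamma(\Gamma e,\Sigma^n M)=0$ for all $n$, and checks both conditions for $\Transp M$ with $M\in\pvd{\pp{\Gamma}^{\op}}$ arbitrary, using only the relative $3$-Calabi--Yau property as a duality of morphism spaces, $\Hom_\Gamma(M,X)\cong\Hom_\Gamma(X,\Sigma^3M)^*$. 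You instead reduce to the simple generators $S_i$ ($i$ non-frozen) of the thick subcategory and identify $\Transp S_i^{\op}$ with $\Sigma^{-3}DS_i^{\op}$, a shift of $S_i$; this is legitimate because $\pvd{\pp{\Gamma}}$ is thick and $\Transp$ is a triangle anti-equivalence (and the situation is symmetric in $\Gamma\leftrightarrow\Gamma^{\op}$, so both inclusions follow), but it requires the bimodule-level relative Calabi--Yau structure of the relative Ginzburg algebra rather than just the Hom-level duality, so you should make sure you cite or establish that stronger input. Finally, note that the assertion that the projection $(\Gamma e)\rperp\to\HiggsCatDual{\Gamma}$ is an additive equivalence is not part of this proposition but of the subsequent corollary; for the proposition itself it suffices that the descended anti-equivalence of relative cluster categories carries the image of $\lperp(\Sigma e\Gamma)$ onto the image of $(\Gamma e)\rperp$.
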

\begin{proof}
The fact that $\Sigma\Transp(\lperp(\Sigma e\Gamma))=(\Gamma e)\rperp$ is a direct calculation, so we need only show that the resulting equivalence descends to the Higgs categories. For this, it is enough to show that $\Sigma\Transp$ descends to a contravariant equivalence $\RelClustCat{\Gamma^\op}\isoto\RelClustCat{\Gamma}$, which we do now.

Recall that the transpose $\Transp=\RHom(\blank,\Gamma)$ is a contravariant equivalence $\per{\Gamma^\op}\isoto\per{\Gamma}$, with the property that $\Transp(\Gamma^{\op})=\Gamma$, $\Transp(e\Gamma)=\Gamma e$, and $\Sigma\Transp=\Transp\Sigma^{-1}$. We show that $\Sigma\Transp$ restricts to a contravariant equivalence $\pvd{\pp{\Gamma}^\op}\isoto\pvd{\pp{\Gamma}}$, and thus induces a contravariant equivalence $\RelClustCat{\Gamma^\op}\isoto\RelClustCat{\Gamma}$ of the quotient categories. Since $\pvd{\pp{\Gamma}}$ is a thick subcategory of $\per{\Gamma}$ by construction, it suffices to check that $\Transp\colon\pvd{\pp{\Gamma}^\op}\isoto\pvd{\pp{\Gamma}}$.

We use the characterisation that $M\in\pvd{\pp{\Gamma}}$ if and only if $\sum_{n\in\Z}\dim\Hom_\Gamma(\Gamma,\Sigma^nM)<\infty$ and $\Hom_\Gamma(\Gamma e,\Sigma^n M)=0$ for all $n\in\Z$. Recall also that the relative $3$-Calabi--Yau property of $\Gamma$ means that
\[\Hom_\Gamma(M,X)=\Hom_\Gamma(X,\Sigma^3M)^*\]
for all $M\in\pvd{\pp{\Gamma}}$ and all $X\in\per{\Gamma}$.

So let $M\in\pvd{\pp{\Gamma}^{\op}}$. Then
\begin{align*}
\sum_{n\in\Z}\dim\Hom_\Gamma(\Gamma,\Sigma^n\Transp M)&
=\sum_{n\in\Z}\dim\Hom_{\Gamma^\op}(\Sigma^{-n}M,\Gamma^\op)\\
&=\sum_{n\in\Z}\dim\Hom_{\Gamma^{\op}}(\Gamma^{\op},\Sigma^{3-n}M)<\infty.
\end{align*}
Similarly,
\[\Hom_\Gamma(\Gamma e,\Sigma^n\Transp M)=\Hom_{\Gamma^\op}(\Sigma^{-n}M,e\Gamma)=\Hom_{\Gamma^{\op}}(e\Gamma,\Sigma^{3-n}M)^*=0\]
for all $n\in\Z$, and so $\Transp M\in\pvd{\pp{\Gamma}}$.
\end{proof}

\begin{corollary}
\label{c:fund-dom}
The projection $(\Gamma e)\rperp\to\HiggsCatDual{\Gamma}$ is an equivalence of additive categories.
\end{corollary}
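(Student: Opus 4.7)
The plan is to deduce this from Wu's theorem applied to $\Gamma^{\op}$ (which asserts that the projection $\lperp(\Sigma e\Gamma)\to\HiggsCatWu{\Gamma^{\op}}$ is an additive equivalence), by transporting the result across the contravariant equivalences established in Proposition~\ref{p:Wu-vs-FGPPP}.

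More precisely, I would set up the commutative square
\[
\begin{tikzcd}
\lperp(\Sigma e\Gamma)\arrow{r}{\Sigma\Transp}[swap]{\sim}\arrow[two heads]{d}&(\Gamma e)\rperp\arrow[two heads]{d}\\
\HiggsCatWu{\Gamma^{\op}}\arrow{r}{\sim}&\HiggsCatDual{\Gamma}
\end{tikzcd}
\]
whose horizontal arrows are the contravariant equivalences from Proposition~\ref{p:Wu-vs-FGPPP} and whose vertical arrows are the projection functors (the image of $\lperp(\Sigma e\Gamma)$ in $\RelClustCat{\Gamma^{\op}}$ being $\HiggsCatWu{\Gamma^{\op}}$ by definition, and likewise for $\HiggsCatDual{\Gamma}$ on the right). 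Commutativity is immediate from the construction of $\HiggsCatDual{\Gamma}$ as the image of $(\Gamma e)\rperp$ under the projection $\per\Gamma\to\RelClustCat{\Gamma}$, together with the fact that $\Sigma\Transp$ descends to a contravariant equivalence $\RelClustCat{\Gamma^{\op}}\isoto\RelClustCat{\Gamma}$ as verified in the proof of Proposition~\ref{p:Wu-vs-FGPPP}.

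Given the square, Wu's result says that the left vertical arrow is an additive equivalence. Since both horizontal arrows are equivalences, a standard two-out-of-three argument forces the right vertical arrow to be an equivalence as well. There is no real obstacle here; the only thing that needs any care is to confirm that the induced functor $\HiggsCatWu{\Gamma^{\op}}\to\HiggsCatDual{\Gamma}$ really is the restriction of $\Sigma\Transp$ on the quotients, which is built into the proof of Proposition~\ref{p:Wu-vs-FGPPP}.
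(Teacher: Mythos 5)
Your proposal is correct and matches the paper's own proof essentially verbatim: both form the commutative square whose horizontal arrows are the contravariant equivalences $\Sigma\Transp$ from Proposition~\ref{p:Wu-vs-FGPPP} and whose left vertical arrow is Wu's additive equivalence $\lperp(\Sigma e\Gamma)\to\HiggsCatWu{\Gamma^{\op}}$ (i.e.\ \cite[Prop.~5.20]{Wu23} applied to $\Gamma^{\op}$), then conclude by two-out-of-three. The only cosmetic difference is that you label the bottom-left corner $\HiggsCatWu{\Gamma^{\op}}$ where the paper writes $\HiggsCat{\Gamma}$; these agree up to passing to opposite categories, which is immaterial for the conclusion.
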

\begin{proof}
By Proposition~\ref{p:Wu-vs-FGPPP}, there is a commutative diagram
\[\begin{tikzcd}[column sep=30pt]
\lperp(\Sigma e\Gamma)\arrow{r}{\Sigma\Transp}\arrow{d}{\wr}&(\Gamma e)^\perp\arrow{d}\\
\HiggsCat{\Gamma}\arrow{r}{\Sigma\Transp}&\HiggsCatDual{\Gamma}
\end{tikzcd}\]
in which the horizontal arrows are (contravariant) equivalences. Since the left-hand vertical arrow is an equivalence of (additive) categories by \cite[Prop.~5.20]{Wu23} (applied to $\Gamma^\op$, which is the relative Ginzburg algebra of the opposite quiver with potential), so is the right-hand vertical arrow.
\end{proof}

\begin{remark}
\label{r:Wu-comparison}
Since $\HiggsCat{\Gamma^\op}=\HiggsCat{\Gamma}^\op$, it follows from Proposition~\ref{p:Wu-vs-FGPPP} that $\HiggsCatDual{\Gamma}\simeq\HiggsCat{\Gamma}$, although they need not coincide as subcategories of $\RelClustCat{\Gamma}$. Since this equivalence is even restricted from an automorphism of $\RelClustCat{\Gamma}\simeq(\RelClustCat{\Gamma^\op})^\op$, it is an equivalence of extriangulated categories. Thus $\HiggsCatDual{\Gamma}$ is equivalent to $\HiggsCat{\Gamma}$ from the point of view of cluster categorification.

Wu's fundamental domain $\lperp(\Sigma\Gamma e)$ has the advantage that it contains $\Gamma$ itself, the image of which defines the initial cluster-tilting object in $\HiggsCat{\Gamma}$ \cite[Prop.~5.49]{Wu23}.
On the other hand, the fundamental domain $(\Gamma e)\rperp$, from which $\Sigma\Gamma$ projects to an initial cluster tilting object in $\HiggsCatDual{\Gamma}$, is more compatible with the cluster character of \cite{FuPengZhang}, which takes shifts of projectives to initial cluster variables, and with the association of initial cluster variables to negative simple roots in \cite[Thm.~1.9]{FZ-CA2}.
\end{remark}

We conclude this subsection with analogues of Theorem~\ref{t:addEquivalenceYoneda} for Higgs categories.

\begin{theorem}
\label{t:HiggsCatEquiv}
Let $\Gamma$ be the relative Ginzburg algebra of an ice quiver with potential, and let $\cal{T}\subseteq\HiggsCat{\Gamma}$ be the cluster-tilting subcategory given by the image of $\Gamma$ under the defining projection $\lperp(\Sigma\Gamma e)\to\HiggsCat{\Gamma}$. Then this projection becomes an equivalence of extriangulated categories when $\HiggsCat{\Gamma}$ is equipped with the $\cal{T}$-projective extriangulated structure.
\end{theorem}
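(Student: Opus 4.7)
My plan is to use Wu's additive equivalence $\pi\colon\lperp(\Sigma\Gamma e)\to\HiggsCat{\Gamma}$ from \cite[Prop.~5.20]{Wu23}, which is the projection in the statement, and enhance it to an equivalence of extriangulated categories where the target carries the $\cal{T}$-projective structure $\EE_{\cal{T}}$. By Fact~\ref{fact: equivalences}(i) this reduces to constructing a natural transformation $\alpha$ such that $(\pi,\alpha)$ is an extriangulated functor $\lperp(\Sigma\Gamma e)\to(\HiggsCat{\Gamma},\EE_{\cal{T}},\s|_{\EE_{\cal{T}}})$ and $\alpha$ is a natural isomorphism.

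Since the Verdier projection $p\colon\per\Gamma\to\RelClustCat{\Gamma}$ is a triangle functor, Lemma~\ref{l:extri-fun} upgrades its restriction $\pi$ to an extriangulated functor with target $\HiggsCat{\Gamma}$ in its standard inherited structure $(\EE,\s)$; the component $\alpha_{A,B}\colon \Hom_{\per\Gamma}(A,\Sigma B)\to\Hom_{\RelClustCat{\Gamma}}(\pi A,\Sigma\pi B)$ is just the map induced by $p$. The first substantive step is to show that $\alpha$ factors through $\EE_{\cal{T}}$: full faithfulness of $\pi$ lifts any morphism $h\colon\pi T\to\pi A$ with $T\in\add\Gamma$ to some $f\colon T\to A$ in $\lperp(\Sigma\Gamma e)$, and then $\delta\circ f\in\Hom_{\per\Gamma}(T,\Sigma B)$ vanishes since $T$ is projective in $\lperp(\Sigma\Gamma e)$ by Lemma~\ref{l:projectives} and $B$ lies there; pushing through $p$ gives $(\alpha(\delta)_\sharp)_{\pi T}(h)=0$, so $\alpha(\delta)\in\EE_{\cal{T}}(\pi A,\pi B)$.

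The main technical step is to prove that $\alpha\colon\EE_{\lperp(\Sigma\Gamma e)}(A,B)\to\EE_{\cal{T}}(\pi A,\pi B)$ is a bijection. I will compute both sides from a projective resolution $P_1\infl P_0\defl A$ of $A$ in $\lperp(\Sigma\Gamma e)$ with $P_0,P_1\in\add\Gamma$, which exists because Theorem~\ref{t:rel-2-term} makes $\lperp(\Sigma\Gamma e)$ a $0$-Auslander category with projective subcategory $\add\Gamma$. Applying $\Hom_{\per\Gamma}(\blank,B)$ to the corresponding triangle in $\per\Gamma$ and using $\Hom_{\per\Gamma}(P_i,\Sigma B)=0$ identifies the domain of $\alpha_{A,B}$ with $\coker\bigl(\Hom_{\per\Gamma}(P_0,B)\to\Hom_{\per\Gamma}(P_1,B)\bigr)$. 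The projected sequence $\pi P_1\infl\pi P_0\defl\pi A$ realises $\alpha$ of the defining class and, thanks to the first step, is an $\EE_{\cal{T}}$-conflation; applying \eqref{eq:6term1} in $(\HiggsCat{\Gamma},\EE_{\cal{T}},\s|_{\EE_{\cal{T}}})$ together with $\EE_{\cal{T}}(\pi P_0,\pi B)=0$ (as $\pi P_0\in\cal{T}$ is projective there) yields $\EE_{\cal{T}}(\pi A,\pi B)\cong\coker\bigl(\Hom_{\HiggsCat{\Gamma}}(\pi P_0,\pi B)\to\Hom_{\HiggsCat{\Gamma}}(\pi P_1,\pi B)\bigr)$. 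Wu's full faithfulness identifies this cokernel with the previous one, and direct comparison shows the isomorphism coincides with $\alpha_{A,B}$.

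The main delicate point will be the careful handling of the realisations: verifying that the projected resolution is a conflation in the relative substructure $\EE_{\cal{T}}$, not merely in $\EE$, so that the long exact sequence \eqref{eq:6term1} may be invoked in $(\HiggsCat{\Gamma},\EE_{\cal{T}},\s|_{\EE_{\cal{T}}})$. Naturality of $\alpha$ follows from naturality of the construction in Lemma~\ref{l:extri-fun}, and its bijectivity then concludes the proof by Fact~\ref{fact: equivalences}(i).
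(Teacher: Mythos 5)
Your proposal is correct, but it reaches the conclusion by a genuinely different route from the paper's proof. Both arguments begin identically: Wu's additive equivalence \cite[Prop.~5.20]{Wu23} together with Lemma~\ref{l:extri-fun} reduce the theorem to showing that the induced map $\alpha_{X,Y}\colon\Hom_{\per\Gamma}(X,\Sigma Y)\to\Hom_{\RelClustCat{\Gamma}}(\pi X,\Sigma\pi Y)$ is a bijection onto $\EE_{\cal{T}}(\pi X,\pi Y)$. The paper then invokes Proposition~\ref{p:T-proj-substruct} to identify $\EE_{\cal{T}}(\pi X,\pi Y)$ with the set of morphisms $\pi X\to\Sigma\pi Y$ factoring through $\Sigma\cal{T}$, and proves by explicit lifting arguments (using the presentation triangles, the vanishing $\Hom_\Gamma(\Gamma,\Sigma\Gamma)=0$, and a five-lemma diagram built from \cite[Prop.~5.20]{Wu23}) that a morphism factors through $\Sigma\cal{T}$ if and only if it lies in the image of $\alpha$. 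You instead work directly from the definition of $\EE_{\cal{T}}$ via the vanishing of $(\delta_\sharp)_S$ for $S\in\cal{T}$, and then compute both $\Hom_{\per\Gamma}(X,\Sigma Y)$ and $\EE_{\cal{T}}(\pi X,\pi Y)$ as cokernels of the maps induced by a projective presentation $P_1\infl P_0\defl X$ (using Lemma~\ref{l:projectives} on one side and projectivity of $\cal{T}$ in the substructure on the other), comparing the two cokernels via full faithfulness of $\pi$ and naturality of $\alpha$. Your dimension-shifting argument has the advantage of delivering bijectivity in a single stroke, and it bypasses the \cite{KoenigZhu}-based characterisation in Proposition~\ref{p:T-proj-substruct}; the paper's argument, in exchange, yields the more explicit description of $\EE_{\cal{T}}$-conflations as those whose connecting morphism factors through $\Sigma\cal{T}$. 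The one delicate point in your route --- which you correctly flag --- is that the projected presentation $\pi P_1\infl\pi P_0\defl\pi X$ must be an $\EE_{\cal{T}}$-conflation before \eqref{eq:6term1} may be applied in the substructure; this indeed follows from your first step applied to the class of the presentation in $\EE(X,P_1)$.
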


\begin{proof}
The projection $\pi\colon\lperp{(\Sigma\Gamma e)}\to\HiggsCat{\Gamma}$ is restricted from a triangle functor, and so by Lemma~\ref{l:extri-fun} it is an extriangulated functor for the usual extriangulated structure on $\HiggsCat{\Gamma}$.
In particular, for any $X,Y\in\lperp(\Sigma\Gamma e)$, we have a map $\Hom_{\Gamma}(X,\Sigma Y)\to\EE_{\HiggsCat{\Gamma}}(\pi X,\pi Y)=\Hom_{\RelClustCat{\Gamma}}(\pi X,\Sigma\pi Y)$, induced from the additive functor $\pi$ and the natural transformation $\pi\Sigma\to\Sigma\pi$ making it a triangle functor.
It suffices to check that the image of this map coincides with the subgroup $\EE_{\cal{T}}(\pi X,\pi Y)\leq\EE_{\HiggsCat{\Gamma}}(\pi X,\pi Y)$ defining the $\cal{T}$-projective extriangulated structure on the Higgs category.

By Proposition~\ref{p:T-proj-substruct}, this subgroup is the set of maps $\pi X\to\Sigma\pi Y$ in $\RelClustCat{\Gamma}$ factoring over an object from $\Sigma\cal{T}$.
Thus our result will follow from showing that $f\colon\pi X\to\pi\Sigma Y$ factors over $\Sigma\cal{T}$ if and only if $f=\pi g$ for some $g\colon X\to\Sigma Y$ in $\per{\Gamma}$.

Let $g\colon X\to\Sigma Y$.
Since $X,Y\in\Gamma*\Sigma\Gamma$, we may form a map of triangles
\begin{equation}
\label{eq:SigmaGamma}
\begin{tikzcd}
\Gamma_1^X\arrow{r}\arrow{d}{0}&
\Gamma_0^X\arrow{r}\arrow{d}{0}&
X\arrow{d}{g}\arrow{r}&
\Sigma\Gamma_1^X\arrow{d}{0}\arrow[dotted]{dl}
\\
\Sigma\Gamma_1^Y\arrow{r}&
\Sigma\Gamma_0^Y\arrow{r}&
\Sigma Y\arrow{r}&
\Sigma^2\Gamma_1^Y,
\end{tikzcd}
\end{equation}
noting that $\Hom(\Gamma,\Sigma\Gamma)=0$ because $\Gamma$ is concentrated in non-positive degrees (so in particular $1$-rigid). This leads to the existence of the dotted arrow, showing that $g$ factors over $\add\Sigma\Gamma$, and hence $\pi g$ factors over $\Sigma\cal{T}$.

Conversely, assume $f\colon\pi X\to\pi\Sigma Y$ factors as the composition of $f_1\colon \pi X\to\Sigma T$ and $f_2\colon\Sigma T\to\pi\Sigma Y$ for some $T=\pi\Gamma'\in\cal{T}$.
Since $\Sigma T,\Sigma Y\in\Sigma\HiggsCat{\Gamma}$, there is a morphism $g_2\colon\Sigma\Gamma'\to\Sigma Y$ with $\pi g_2=f_2$, by \cite[Prop.~5.20]{Wu23}.
Applying $\Hom_\Gamma(\blank,\Sigma\Gamma')$ to the top row of \eqref{eq:SigmaGamma} yields the diagram
\[\begin{tikzcd}
\Hom_\Gamma(\Sigma\Gamma_0^X,\Sigma\Gamma')\arrow{r}\arrow{d}&
\Hom_\Gamma(\Sigma\Gamma_1^X,\Sigma\Gamma')\arrow{r}\arrow{d}&
\Hom_\Gamma(X,\Sigma\Gamma')\arrow{r}\arrow{d}&
0\\
\Hom_{\RelClustCat{\Gamma}}(\Sigma T_0^X,\Sigma T)\arrow{r}&
\Hom_{\RelClustCat{\Gamma}}(\Sigma T_1^X,\Sigma T)\arrow{r}&
\Hom_{\RelClustCat{\Gamma}}(\pi X,\Sigma T)\arrow{r}&
0,
\end{tikzcd}\]
in which $T_i^X=\pi\Gamma_i^X$. By \cite[Prop.~5.2]{Wu23} again, the first two vertical arrows are isomorphisms, and hence so is the third. In particular, there exists $g_1\colon X\to\Sigma\Gamma'$ such that $\pi g_1=f_1$. We therefore conclude that $f=f_2f_1=\pi(g_2g_1)$, as required.
\end{proof}

For convenience, we also state the dual.

\begin{theorem}
\label{t:HiggsCatDualEquiv}
Let $\Gamma$ be the relative Ginzburg algebra of an ice quiver with potential, and let $\cal{T}\subseteq\HiggsCatDual{\Gamma}$ be the cluster-tilting subcategory given by the image of $\Sigma\Gamma$ under the defining projection $(\Gamma e)\rperp\to\HiggsCatDual{\Gamma}$. Then this projection becomes an equivalence of extriangulated categories when $\HiggsCatDual{\Gamma}$ is equipped with the $\cal{T}$-injective extriangulated structure.
\end{theorem}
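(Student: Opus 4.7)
The plan is to deduce the theorem from Theorem~\ref{t:HiggsCatEquiv} applied to the opposite relative Ginzburg algebra $\Gamma^{\op}$, by transporting along the contravariant equivalences furnished by Proposition~\ref{p:Wu-vs-FGPPP}. First, applying Theorem~\ref{t:HiggsCatEquiv} to $\Gamma^{\op}$ shows that, writing $\cT^{\op}\subseteq\HiggsCat{\Gamma^{\op}}$ for the cluster-tilting subcategory given by the image of $\add\Gamma^{\op}$, the projection $\lperp(\Sigma e\Gamma)\to\HiggsCat{\Gamma^{\op}}$ is an equivalence of extriangulated categories once $\HiggsCat{\Gamma^{\op}}$ is equipped with its $\cT^{\op}$-projective structure. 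Proposition~\ref{p:Wu-vs-FGPPP} then provides the commutative square
\[
\begin{tikzcd}[column sep=30pt]
\lperp(\Sigma e\Gamma)\arrow{r}{\Sigma\Transp}\arrow{d} & (\Gamma e)\rperp\arrow{d}\\
\HiggsCat{\Gamma^{\op}}\arrow{r}{\Sigma\Transp} & \HiggsCatDual{\Gamma},
\end{tikzcd}
\]
whose horizontal arrows are contravariant equivalences arising from the triangulated contravariant equivalence $\Sigma\Transp\colon\per{\Gamma^{\op}}\isoto\per{\Gamma}$, and whose vertical arrows are the defining projection functors. Since $\Sigma\Transp(\Gamma^{\op})=\Sigma\Gamma$, the lower horizontal equivalence carries $\cT^{\op}$ to the cluster-tilting subcategory $\cT\subseteq\HiggsCatDual{\Gamma}$ of the statement.

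The key formal principle I would then invoke is that, under any contravariant equivalence of extriangulated categories $F\colon\cC_1\isoto\cC_2$ with $F(\cS)=\cS'$, the $\cS$-projective substructure on $\cC_1$ corresponds under $F$ to the $\cS'$-injective substructure on $\cC_2$: this is essentially immediate from Proposition~\ref{p:HLN-substruct}, since the defining condition $(\delta_{\sharp})_S=0$ dualises, under contravariance of $F$, to the condition $((F\delta)^{\sharp})_{F(S)}=0$. Applied to the lower horizontal equivalence of the square above, this transports the $\cT^{\op}$-projective structure on $\HiggsCat{\Gamma^{\op}}$ to the $\cT$-injective structure on $\HiggsCatDual{\Gamma}$, and combining with the commutativity of the square yields that the right-hand projection is an equivalence of extriangulated categories as claimed.

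The main obstacle, or at least the step requiring the most care, will be verifying that the restrictions of $\Sigma\Transp$ to the two fundamental domains and to the two Higgs categories really are contravariant equivalences of extriangulated categories (viewed as covariant equivalences onto the opposite category) that genuinely exchange projective-type and injective-type relative substructures as described. An alternative, more self-contained route would be to dualise the proof of Theorem~\ref{t:HiggsCatEquiv} directly: one uses the triangle $\Sigma\Gamma_1^Y\to\Sigma\Gamma_0^Y\to\Sigma Y\to\Sigma^2\Gamma_1^Y$ together with $\Hom_{\Gamma}(X,\Sigma^2\Gamma)=0$ to factor any morphism $g\colon X\to\Sigma Y$ in $\per\Gamma$ through $\Sigma\add\Gamma$, so that its image in $\HiggsCatDual{\Gamma}$ factors through $\cT$; conversely, a factorisation $f_2 f_1$ through some $T=\pi(\Sigma\Gamma')\in\cT$ can be lifted by applying the dual of \cite[Prop.~5.20]{Wu23}, which itself falls out of Proposition~\ref{p:Wu-vs-FGPPP} applied to $\Gamma^{\op}$.
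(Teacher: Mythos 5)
Your proposal is correct and matches the paper's (implicit) argument: Theorem~\ref{t:HiggsCatDualEquiv} is stated in the paper as the formal dual of Theorem~\ref{t:HiggsCatEquiv} with no separate proof, and the intended dualisation is exactly what you carry out, transporting along the contravariant equivalences of Proposition~\ref{p:Wu-vs-FGPPP} (under which $\cS$-projective substructures correspond to $F(\cS)$-injective ones, as your computation with $\delta_\sharp$ and $\delta^\sharp$ confirms). Your alternative, self-contained route of dualising the proof of Theorem~\ref{t:HiggsCatEquiv} directly also works and is consistent with the characterisation of the $\cal{T}$-injective structure in Proposition~\ref{p:T-inj-substruct}.
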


The following corollary is immediate.

\begin{corollary} \label{cor:silting_in_Higgs}
The defining projection $\pi$ in Theorem~\ref{t:HiggsCatEquiv} induces a bijection between 
 silting objects in $\lperp(\Sigma\Gamma e)\subseteq\Gamma*\Sigma\Gamma$ and
silting objects in $\HiggsCat{\Gamma}$ considered with the $\cT$-projective structure.
It commutes with mutations and induces a bijection between $\Gamma$-reachable silting objects and $\cT$-reachable silting objects. 
The dual statement applies to the projection in Theorem~\ref{t:HiggsCatDualEquiv}.
\end{corollary}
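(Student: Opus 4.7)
The strategy is to reduce the statement to a straightforward consequence of the extriangulated equivalence supplied by Theorem~\ref{t:HiggsCatEquiv}. Indeed, once we know that $\pi\colon\lperp(\Sigma\Gamma e)\to\HiggsCat{\Gamma}_{\cT}$ is an equivalence of extriangulated categories, silting objects, projectives, injectives, and conflations are all preserved and reflected, so every structural notion involved in the corollary transfers across $\pi$.

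More concretely, since $\pi$ is an equivalence of extriangulated categories by Theorem~\ref{t:HiggsCatEquiv}, it induces an isomorphism between the bifunctors $\EE_{\per\Gamma}(\blank,\blank)|_{\lperp(\Sigma\Gamma e)}$ and $\EE_{\cT}(\pi\blank,\pi\blank)$. An object $S \in \lperp(\Sigma\Gamma e)$ is rigid if and only if $\pi(S)$ is rigid in $\HiggsCat{\Gamma}_{\cT}$, and similarly for the tilting condition from Definition~\ref{def:tilting}: the projectives on each side correspond (namely, $\add\Gamma$ corresponds to $\cT$) by Theorem~\ref{t:rel-2-term}\ref{t:rel-2-term-proj-inj} combined with Proposition~\ref{p:T-proj-substruct}, and conflations $P \infl S_0 \defl S_1$ in one category lift/descend to conflations in the other. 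Since tilting equals silting in a $0$-Auslander category (both $\lperp(\Sigma\Gamma e)$ by Theorem~\ref{t:rel-2-term}\ref{t:rel-2-term-0Aus} and $\HiggsCat{\Gamma}_{\cT}$ by Corollary~\ref{c:0-Auslander-substructure}, or rather its analogue in the Higgs setting deduced from Theorem~\ref{t:HiggsCatEquiv}), this gives the claimed bijection on silting objects.

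For the compatibility with mutations, we invoke the description of irreducible silting mutation via exchange conflations in \cite[Cor.~4.28]{GNP2}: in a $0$-Auslander category, the left/right mutation of a silting subcategory at an indecomposable summand is determined by the unique exchange conflation having that summand as one of its outer terms. Since $\pi$ preserves and reflects conflations and summands, it identifies the exchange conflations on both sides and so commutes with irreducible mutations, and hence with arbitrary iterated mutations. Finally, the initial silting object $\Gamma \in \lperp(\Sigma\Gamma e)$ satisfies $\pi(\Gamma) = T$ by the definition of $\cT$ in Theorem~\ref{t:HiggsCatEquiv}, so the bijection restricts to a bijection between $\Gamma$-reachable and $\cT$-reachable silting objects.

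The dual statement for $(\Gamma e)\rperp \to \HiggsCatDual{\Gamma}^{\cT}$ follows by applying the same argument to Theorem~\ref{t:HiggsCatDualEquiv} in place of Theorem~\ref{t:HiggsCatEquiv}; alternatively, one may transport the statement via the contravariant equivalence $\Sigma\Transp$ of Proposition~\ref{p:Wu-vs-FGPPP}, which identifies left mutations with right mutations. The only genuine content to verify is therefore that $\pi(\Gamma)$ (respectively $\pi(\Sigma\Gamma)$ in the dual) indeed coincides with the chosen initial cluster-tilting object $T$ defining reachability, which is immediate from the setup.
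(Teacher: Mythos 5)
Your proof is correct and matches the paper's (implicit) argument: the paper simply declares the corollary ``immediate'' from Theorem~\ref{t:HiggsCatEquiv}, since an equivalence of extriangulated categories preserves and reflects conflations, projectives, rigid and silting objects, and hence exchange conflations and mutations, exactly as you spell out. Your small self-correction about Corollary~\ref{c:0-Auslander-substructure} (which is stated in the Frobenius exact setting, not the Higgs setting) is the right instinct, and the rest is a faithful unpacking of Proposition~\ref{prop:poset_silting} applied through the equivalence.
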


Let us note that when we consider $\HiggsCat{\Gamma}$ with its Frobenius extriangulated structure, both defining projections send silting objects to cluster-tilting objects. We already mentioned earlier that the image of $\add \Gamma$ is a cluster-tilting subcategory $\cT = \add T$, where $T$ is the image of $\Gamma$. We sketch the proof of the general statement similar to the one presented in \cite{BrustleYang}.

\begin{proposition}
\label{prop:cluster-tilting_in_Higgs}
The defining projection $\pi$ in Theorem~\ref{t:HiggsCatEquiv} induces a bijection between 
 silting objects in $\lperp(\Sigma\Gamma e)\subseteq\Gamma*\Sigma\Gamma$ and
cluster-tilting objects in $\HiggsCat{\Gamma}$ considered with its Frobenius exact structure defined in \cite{Wu23}.
It commutes with mutations and induces the bijection between $\Gamma$-reachable silting objects and $\cT$-reachable cluster-tilting objects. The dual statement applies to the projection in Theorem~\ref{t:HiggsCatDualEquiv}.
\end{proposition}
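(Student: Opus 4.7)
The strategy is to combine Theorem~\ref{t:HiggsCatEquiv} and Corollary~\ref{cor:silting_in_Higgs} with a direct comparison between silting objects in the $\cT$-projective substructure $\EE_{\cT}$ and cluster-tilting objects in the Frobenius structure $\EE$ of $\HiggsCat{\Gamma}$. By Theorem~\ref{t:HiggsCatEquiv}, the projection $\pi$ is an equivalence of extriangulated categories $\lperp(\Sigma\Gamma e) \isoto (\HiggsCat{\Gamma},\EE_{\cT})$, and by Corollary~\ref{cor:silting_in_Higgs} it induces a bijection of silting objects compatible with mutations and with reachability from $\Gamma$ versus $\cT$.

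The heart of the argument is the claim that a basic object $U \in \HiggsCat{\Gamma}$ is silting in $(\HiggsCat{\Gamma},\EE_{\cT})$ if and only if it is cluster-tilting in $(\HiggsCat{\Gamma},\EE)$. First, observe that the additive generator $P$ of projective-injectives in $\EE$ is projective-injective in $\EE_{\cT}$ as well: projective because $P \in \cT$ (Proposition~\ref{p:T-proj-substruct}), and injective because $\EE_{\cT}(\blank,P) \subseteq \EE(\blank,P) = 0$. So any silting object $U$ in $\EE_{\cT}$ contains $\add P$ as a summand, via the split conflation $P \infl S_0 \defl S_1$ coming from the silting presentation. Next, if $U$ is cluster-tilting in $\EE$, it is trivially $\EE_{\cT}$-rigid, and for each indecomposable $T' \in \cT$ the Iyama--Yoshino exchange triangle $T' \to U_0 \to U_1 \to \Sigma T'$ coming from a left $\add U$-approximation is a conflation in $\EE$; by Proposition~\ref{p:T-proj-substruct} it even lies in $\EE_{\cT}$ since $\Sigma T' \in \Sigma\cT$, yielding the required silting presentation. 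Conversely, if $U$ is silting in $\EE_{\cT}$, then $S := \pi^{-1}U \in \lperp(\Sigma\Gamma e)$ satisfies $\Hom_{\per\Gamma}(S,\Sigma S) = 0$ and $|S| = |\Gamma| = |\cT|$ (silting in the $0$-Auslander setting is complete rigid); using the description of Hom-spaces in the Verdier quotient $\RelClustCat{\Gamma} = \per\Gamma/\pvd\pp{\Gamma}$ from \cite{Wu23} together with the $2$-Calabi--Yau property of $\HiggsCat{\Gamma}$, one deduces $\EE(U,U) = 0$. The equality $|U| = |\cT|$ combined with rigidity then implies that $U$ is cluster-tilting, by a standard argument valid in Hom-finite Krull--Schmidt $2$-Calabi--Yau Frobenius extriangulated categories.

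Once this correspondence is established, compatibility with mutations and reachability is automatic: silting mutation in the $0$-Auslander category $(\HiggsCat{\Gamma},\EE_{\cT})$ is governed by exchange conflations (\cite[Thm.~4.26, Cor.~4.28]{GNP2}), which translate under the above correspondence to the Iyama--Yoshino exchange triangles defining cluster-tilting mutation, and $\pi$ sends $\Gamma$ to an additive generator of $\cT$. The dual statement for $\HiggsCatDual{\Gamma}$ follows by applying the same argument to $\Gamma^{\op}$ and using the contravariant equivalence $\Sigma\Transp$ of Proposition~\ref{p:Wu-vs-FGPPP}. The main obstacle is the ``silting $\Rightarrow$ cluster-tilting'' direction, specifically upgrading $\EE_{\cT}$-rigidity to $\EE$-rigidity, which requires a careful use of the Verdier quotient description of $\RelClustCat{\Gamma}$ in the spirit of \cite{BrustleYang}.
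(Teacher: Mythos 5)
Your proposal is correct and follows essentially the same strategy as the paper's proof, which adapts \cite[Prop.~4.7]{BrustleYang}: the crux in both is that rigidity transfers between $\lperp(\Sigma\Gamma e)$ and $\HiggsCat{\Gamma}$ (the paper cites \cite[Cor.~5.38]{Wu23} for $n=2$, which is exactly the Verdier-quotient/$2$-Calabi--Yau computation you sketch), after which silting and cluster-tilting objects are both characterised as rigid objects with $|\Gamma|$ indecomposable summands (via \cite[Prop.~3.14]{BrustleYang} and \cite[Thm.~3.13]{wang2023indices} respectively), and mutations correspond because $\pi$ preserves conflations and minimal approximations. Your more explicit treatment of the ``cluster-tilting $\Rightarrow$ silting'' direction via $\add U$-copresentations landing in $\EE_{\cT}$ (by Corollary~\ref{c:easy_confl}) is a valid minor variation rather than a different route.
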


\begin{proof}
The proof of \cite[Prop.~4.7]{BrustleYang} applies with minor modifications. By \cite[Cor.~5.38]{Wu23} for $n = 2$, an object $X$ in $\lperp(\Sigma\Gamma e)\subseteq\Gamma\ast\Sigma\Gamma$ is rigid if and only if its image $\pi(X)$ is rigid in $\HiggsCat{\Gamma}$. It is silting if and only if $X$ has precisely $|\Gamma|$ direct summands; see \cite[Prop.~3.14]{BrustleYang} and the references therein. At the same time, its image $\pi(X)$ is cluster-tilting if and only if it has the same number of direct summands by \cite[Thm.~3.13, Rem.~3.14]{wang2023indices}. Thus, the projection $\pi$ induces the desired bijection. The image of a mutation conflation in $\lperp(\Sigma\Gamma e)$ is a mutation conflation in $\HiggsCat{\Gamma}$ since $\pi$ sends conflations to conflations and, being an additive equivalence, sends minimal approximations to minimal approximations. 
\end{proof}

By comparing Corollary~\ref{cor:silting_in_Higgs} with Proposition~\ref{prop:cluster-tilting_in_Higgs}, we immediately get the following corollary.

\begin{corollary}
An object is ($\cT$-reachable) cluster-tilting in $\HiggsCat{\Gamma}$ considered with its Frobenius extriangulated structure if and only if it is ($\cT$-reachable) silting in $\HiggsCat{\Gamma}$ considered with the $\cT$-projective structure.
\end{corollary}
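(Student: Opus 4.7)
The plan is to deduce this corollary directly by combining the two preceding results, using that the defining projection $\pi\colon \lperp(\Sigma\Gamma e)\to\HiggsCat{\Gamma}$ is an additive equivalence (by \cite[Prop.~5.20]{Wu23}), and hence a bijection on isomorphism classes of objects.

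First, I would observe that both the $\cT$-projective extriangulated structure and the Frobenius extriangulated structure on $\HiggsCat{\Gamma}$ share the same underlying additive category. Therefore, to compare the two distinguished classes of objects it is enough to exhibit, for each of them, a class $\mathcal{S}$ in $\lperp(\Sigma\Gamma e)$ such that $\pi$ induces a bijection between $\mathcal{S}$ and the target class, and then check that the two classes $\mathcal{S}$ so obtained coincide.

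Next, I would invoke Corollary~\ref{cor:silting_in_Higgs}: the functor $\pi$ induces a bijection between silting objects of $\lperp(\Sigma\Gamma e)$ and silting objects of $\HiggsCat{\Gamma}$ endowed with the $\cT$-projective structure, and it moreover restricts to a bijection between $\Gamma$-reachable silting objects and $\cT$-reachable silting objects. Then I would invoke Proposition~\ref{prop:cluster-tilting_in_Higgs}: the same functor $\pi$ induces a bijection between silting objects of $\lperp(\Sigma\Gamma e)$ and cluster-tilting objects of $\HiggsCat{\Gamma}$ considered with its Frobenius exact structure, with the analogous statement for reachability. Since in both statements the source class is identical (silting objects of $\lperp(\Sigma\Gamma e)$, respectively $\Gamma$-reachable ones), and $\pi$ is essentially surjective, composing one bijection with the inverse of the other yields the desired bijection between cluster-tilting objects in the Frobenius structure and silting objects in the $\cT$-projective structure, matching reachable classes on both sides.

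There is essentially no obstacle here: the result is a formal consequence of the two preceding statements, together with the fact that both extriangulated structures have the same underlying additive category on which $\pi$ is an additive equivalence. The only subtle point worth making explicit is that ``($\cT$-reachable) cluster-tilting'' and ``($\cT$-reachable) silting'' refer to the same set of underlying objects when one views $\HiggsCat{\Gamma}$ through either extriangulated structure, so the equivalence of the two notions is literally an equality of subsets of $\mathrm{Iso}(\HiggsCat{\Gamma})$, rather than merely a bijection.
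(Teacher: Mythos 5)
Your argument is correct and is exactly the paper's proof: the corollary is stated as an immediate consequence of comparing Corollary~\ref{cor:silting_in_Higgs} with Proposition~\ref{prop:cluster-tilting_in_Higgs}, both bijections being induced by the same projection $\pi$ from the same class of silting objects in $\lperp(\Sigma\Gamma e)$. Your added remark that the conclusion is literally an equality of subsets of isomorphism classes (since the two extriangulated structures share the underlying additive category) is a correct and harmless elaboration of the same point.
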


See also \cite[\S3.3.11]{GNP2} for a discussion on the relationship between rigid objects in extriangulated categories and those in the same additive categories considered with suitable $0$-Auslander relative structures.

In fact, the observation that cluster-tilting objects in a $\Hom$-finite $2$-Calabi--Yau triangulated category $\cC$ with a fixed cluster-tilting object $T$ coincide with silting objects in $\cC$ considered with the $\add T$-projective relative structure was one of the main original motivations to introduce and study $0$-Auslander categories, see \cite[\S3.3.6]{GNP2}, \cite[\S4.7.1]{PPPP}, \cite[\S5]{Palu2023}.

\subsection{\texorpdfstring{Relationship to $2$-term complexes}{Relationship to 2-term complexes}} \label{ss:cl_vs_$2$-term}

We begin by stating the main theorem, which is now a direct application of earlier results.

\begin{theorem}
\label{t:ClustCat-to-2term}
Let~$\cC=\HiggsCat{\Gamma}$ be the Higgs category of an ice quiver with potential and let~$T$ be the image of~$\Gamma$ in~$\cC$.  Then there is a full and essentially surjective extriangulated functor
\[
 \Phi_T\colon\cC_T\to\harp{\proj{\stab{A}}},
\]
where $\stab{A}=\stabEnd_{\cal{C}}(T)^\op$ is the stable endomorphism algebra of $T$.

Dually, if~$\cC = \HiggsCatDual{\Gamma}$ and~$T$ is the image of~$\Sigma\Gamma$ in~$\cC$, then there is a full and essentially surjective extriangulated functor
\[
 \Phi^{T}\colon\cC^T\to\harp{\proj{\stab{A}}}.
\]
In both cases, the kernel is the ideal generated by morphisms from injective to projective objects (in the relevant extriangulated substructure on~$\cC$).
\end{theorem}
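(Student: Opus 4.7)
The plan is to obtain $\Phi_T$ as a composition of two extriangulated functors: the equivalence of Theorem~\ref{t:HiggsCatEquiv} and the functor $FG$ of Theorem~\ref{t:rel-2-term-functors}. First, I would apply Theorem~\ref{t:rel-2-term-functors} to the non-positively graded dg category $\cA=\Gamma$ and the subcategory $\cE\subseteq\add\Gamma$ generated by the projective-injective $\Gamma e$. Since $\Gamma e$ is a summand of $\Gamma$ and hence a perfect object, it is trivially covariantly finite in $\per\Gamma$, so the hypotheses of Theorem~\ref{t:rel-2-term-functors} are met. This produces a full, essentially surjective extriangulated functor
\[
  FG\colon \lperp(\Sigma\Gamma e)\longrightarrow \harp{\proj \cohom{0}\Gamma/\cohom{0}(\Gamma e)},
\]
whose kernel is the ideal generated by morphisms from injective objects to projective objects.

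Next, I would identify the codomain with $\harp{\proj \stab{A}}$. By construction of the Higgs category we have $A=\cohom{0}\Gamma\cong\End_{\cC}(T)^\op$, the image of $\Gamma e$ in $\cC$ additively generates the subcategory of projective-injective objects, and the idempotent $e$ corresponds to the projection onto this summand. Hence the two-sided ideal of $A$ consisting of morphisms in $\End_{\cC}(T)^\op$ that factor through a projective-injective object is exactly $AeA$, yielding the identification $\stab{A}=A/AeA=\cohom{0}\Gamma/\cohom{0}(\Gamma e)$.

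By Theorem~\ref{t:HiggsCatEquiv}, the defining projection $\pi\colon\lperp(\Sigma\Gamma e)\isoto\cC_T$ is an equivalence of extriangulated categories, so $\Phi_T$ is defined as $FG$ precomposed with a quasi-inverse of $\pi$. Fullness, essential surjectivity, and compatibility with the extriangulated structure are all preserved under composition with an equivalence. For the kernel description, since $\pi$ is an equivalence of extriangulated categories it preserves both projective and injective objects by Proposition~\ref{prop::rigid}, and therefore identifies the ideal generated by morphisms from injectives to projectives on both sides. The dual assertion for $\Phi^T$ follows by the identical argument, substituting Theorem~\ref{t:HiggsCatDualEquiv} for Theorem~\ref{t:HiggsCatEquiv} and Theorem~\ref{t:rel-2-term-dual-functors} for Theorem~\ref{t:rel-2-term-functors}.

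The proof is essentially an exercise in bookkeeping, since the substantive work has already been done in the cited results. The main delicate point will be verifying that the abstract data $(\cA,\cE)$ of Section~\ref{s:rel-2-term} specialise correctly: namely, that $\cohom{0}\cE=Ae$ (as an additive subcategory of $\proj A$) generates precisely the two-sided ideal $AeA$ whose quotient is the stable endomorphism algebra $\stab{A}$, so that the codomain $\harp{\proj\cohom{0}\cA/\cohom{0}\cE}$ of $FG$ is genuinely $\harp{\proj\stab{A}}$ as asserted.
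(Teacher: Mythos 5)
Your proposal is correct and follows exactly the paper's own (one-line) proof: combine the extriangulated equivalence of Theorem~\ref{t:HiggsCatEquiv} (resp.\ Theorem~\ref{t:HiggsCatDualEquiv}) with the functor $FG$ from Theorem~\ref{t:rel-2-term-functors} (resp.\ Theorem~\ref{t:rel-2-term-dual-functors}), using that an equivalence identifies projectives, injectives, and hence the relevant ideal. Your explicit check that $\cohom{0}\Gamma/\cohom{0}(\Gamma e)=A/AeA=\stab{A}$ is a worthwhile detail the paper leaves implicit.
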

\begin{proof}
Combine Theorem~\ref{t:HiggsCatEquiv} with Theorem~\ref{t:rel-2-term-functors} or, for the dual result, Theorem~\ref{t:HiggsCatDualEquiv} with Theorem~\ref{t:rel-2-term-dual-functors}. 
\end{proof}

\begin{corollary} 
\label{c:self-injective}
Let $(Q,W)$ be a Jacobi-finite quiver with potential whose Jacobian algebra $A$ is self-injective. Then there is an additive equivalence
\[\cal{C}_{Q,W}\isoto \harp{\proj{A}},\]
where $\cal{C}_{Q,W}$ is the Amiot cluster category of $(Q,W)$.
\end{corollary}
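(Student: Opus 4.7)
The plan is to deduce this corollary by specialising \cref{t:ClustCat-to-2term} to the ice quiver with potential $(Q, \varnothing, W)$ having no frozen vertices. In that setting the frozen idempotent $e$ vanishes, so $\lperp(\Sigma\Gamma e) = \Gamma * \Sigma \Gamma$, the Higgs category $\HiggsCat{\Gamma}$ coincides with the Amiot cluster category $\cC := \cal{C}_{Q,W}$, and the frozen Jacobian algebra agrees with the usual Jacobian algebra $A = J(Q,W)$; moreover, the image $T$ of $\Gamma$ in $\cC$ is cluster-tilting with $\End_{\cC}(T)^{\op} \cong A$. \cref{t:ClustCat-to-2term} then supplies a full and essentially surjective extriangulated functor
\[
\Phi_T \colon \cC_T \to \harp{\proj \stab{A}},
\]
whose kernel is the ideal of $\cC_T$ generated by morphisms from injective to projective objects.

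Two identifications turn this into the claimed additive equivalence with target $\harp{\proj A}$. Firstly, since $\Gamma e = 0$ there are no non-zero projective-injective objects in the Higgs Frobenius structure on $\cC$, so $\stab{A} = A$. Secondly, by \cref{p:T-proj-substruct} the injective objects of $\cC_T$ are the cosyzygies of $\cT = \add T$, which in the $2$-Calabi--Yau triangulated category $\cC$ (where the stable category equals the whole category) coincide with $\add \Sigma T$. The $2$-Calabi--Yau duality together with the rigidity of $T$ yields $\Hom_{\cC}(\Sigma T', T'') \cong D\Ext^1_{\cC}(T'', T') = 0$ for all $T', T'' \in \cT$, so the generators of the kernel ideal are all zero and hence the kernel itself vanishes. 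Thus $\Phi_T$ is full, essentially surjective, and faithful, hence an additive equivalence; since $\cC_T$ and $\cC$ share the same underlying additive category, this yields the claimed equivalence $\cal{C}_{Q,W} \isoto \harp{\proj A}$.

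The main obstacle is conceptual rather than computational: one must unpack the interplay between the triangulated structure on $\cC$ and its (trivial) Higgs Frobenius extriangulated structure when $F = \varnothing$, in particular to justify that cosyzygies of $\cT$ still identify with $\add \Sigma T$. The self-injective hypothesis on $A$ is the natural condition under which no freezing is required to account for projective non-injective summands, ensuring that $\cal{C}_{Q,W}$ itself---rather than a larger Higgs category equipped with a further ideal quotient---is the appropriate categorification and that $\harp{\proj A}$ appears directly on the right-hand side.
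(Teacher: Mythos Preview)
Your overall strategy matches the paper's: specialise \cref{t:ClustCat-to-2term} to $(Q,\varnothing,W)$, identify $\stab{A}=A$, identify projectives and injectives of $\cC_T$ as $\add T$ and $\add\Sigma T$ respectively, and show that $\Hom_{\cC}(\Sigma T,T)=0$ so that the kernel ideal vanishes. The first identifications are fine.

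The gap is in the final step. Your claimed isomorphism $\Hom_{\cC}(\Sigma T',T'')\cong D\Ext^1_{\cC}(T'',T')$ does \emph{not} follow from the $2$-Calabi--Yau property: that gives $\Hom_{\cC}(\Sigma T',T'')\cong D\Hom_{\cC}(T'',\Sigma^3 T')$, not $D\Hom_{\cC}(T'',\Sigma T')$. Indeed, your argument never invokes the self-injectivity of $A$, so if it were correct it would prove $\cC_{Q,W}\simeq\harp{\proj A}$ for \emph{every} Jacobi-finite $(Q,W)$, which is false---already for linearly oriented $\mathsf{A}_3$ one has $\Hom_{\cC}(\Sigma T,T)\ne0$ (cf.\ Figure~\ref{fig:cluster_A_3_linear}). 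The self-injectivity of $A$ is not merely motivational; it is precisely what forces the kernel to vanish.

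The missing ingredient is a result of Koenig--Zhu \cite[Prop.~4.6]{KoenigZhu} (or Iyama--Oppermann \cite[Prop.~3.6]{iyama2013stable}): in a $2$-Calabi--Yau triangulated category with cluster-tilting object $T$, the algebra $\End_{\cC}(T)^\op$ is self-injective if and only if $\Sigma^2 T\cong T$. Granting this, one has
\[
\Hom_{\cC}(\Sigma T,T)\cong\Hom_{\cC}(\Sigma T,\Sigma^2 T)\cong\Ext^1_{\cC}(T,T)=0
\]
by rigidity, and the rest of your argument goes through unchanged.
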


\begin{proof}
We have $\cal{C}_{Q,W}=\cal{C}_{\Gamma}=\HiggsCat{\Gamma}$ for $\Gamma$ the Ginzburg algebra of $(Q,W)$. Let $T$ denote the image of $\Gamma$ in $\cal{C}_{Q,W}$.
Since $A$ is self-injective, by \cite[Prop.~4.6]{KoenigZhu} or \cite[Prop.~3.6]{iyama2013stable} we have $\Sigma^2T = T$. Thus
\[\cal{C}(\Sigma T, T) = \cal{C}(\Sigma T, \Sigma^2T) = 0,\]
by the rigidity of $T$. Combining Theorem~\ref{t:HiggsCatEquiv} with Theorem~\ref{t:rel-2-term}\ref{t:rel-2-term-proj-inj}, we see that $T$ is a projective generator, and $\Sigma T$ an injective cogenerator, of $\cal{C}_{Q,W}$ equipped with the $T$-projective extriangulated structure, and so there are no non-zero maps from injective to projective objects in this extriangulated category. Thus the full and essentially surjective functor $\Phi_T$ from Theorem~\ref{t:ClustCat-to-2term} is also faithful, and hence an additive equivalence. \end{proof}

We may also obtain a slightly more refined theorem, involving the full endomorphism algebra $A=\End_{\cal{C}}(T)^\op$.

\begin{theorem}
\label{t:ClustCat-to-rel2term}
Let $\cal{C}=\HiggsCat{\Gamma}$ be the Higgs category of an ice quiver with potential, and let $T$ be the initial cluster-tilting object given by the projection of $\Gamma$. Then there is a full and essentially surjective extriangulated functor
\[\Psi_T\colon\cal{C}_{T}\to\lperp(\Sigma Ae)\subseteq\harp{\proj{A}},\]
for $A=\End_{\cal{C}}(T)=\cohom{0}\Gamma$, whose kernel is contained in that generated by maps from injective to projective objects (in the $T$-projective extriangulated substructure on $\cal{C}$).

Dually, if $\cal{C}=\HiggsCatDual{\Gamma}$ and $T$ is the projection of $\Sigma\Gamma$ then there is a full and essentially surjective extriangulated functor
\[\Psi^{T}\colon\cal{C}^T\to(Ae)\rperp\subseteq\harp{\proj{A}}\]
with kernel contained in that generated by maps from injective to projective objects (now in the $T$-injective extriangulated structure on $\cal{C}$).
\end{theorem}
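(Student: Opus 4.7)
The plan is to obtain $\Psi_T$ as the composition of two extriangulated functors already established in the paper: the equivalence coming from the Higgs category, and the functor $G$ from the relative $2$-term framework of \cref{s:rel-2-term}.

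First, by \cref{t:HiggsCatEquiv}, the defining projection $\pi\colon \lperp(\Sigma\Gamma e)\to\HiggsCat{\Gamma}$ is an equivalence of extriangulated categories when the Higgs category is equipped with the $\cT$-projective extriangulated structure $\cC_T$. Let $\pi^{-1}$ denote a quasi-inverse. Next, apply \cref{t:rel-2-term-functors} with $\cA=\Gamma$ (non-positively graded) and $\cE=\Gamma e$ (covariantly finite in $\per\Gamma$ since $\Gamma e$ is a direct summand of $\Gamma$). Since $\cohom{0}\Gamma=A$ and $\cohom{0}(\Gamma e)=Ae$, the left square of the diagram in \cref{t:rel-2-term-functors} produces a full and essentially surjective extriangulated functor
\[G\colon \lperp(\Sigma\Gamma e)\longrightarrow \lperp(\Sigma Ae).\]
As $A$ is concentrated in degree $0$, we have $A\ast\Sigma A=\harp{\proj A}$, giving the required inclusion $\lperp(\Sigma Ae)\subseteq\harp{\proj A}$. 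Setting $\Psi_T=G\circ\pi^{-1}$ yields an extriangulated functor as claimed, which is full and essentially surjective because both factors are.

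It remains to control $\ker\Psi_T$. The kernel of $G$ is contained in the ideal of $\lperp(\Sigma\Gamma e)$ generated by morphisms from an injective object to a projective object, by \cref{c:kerG}. Since $\pi$ is an equivalence of extriangulated categories $\cC_T\isoto\lperp(\Sigma\Gamma e)$, it identifies the classes of projective and injective objects on both sides (an extriangulated equivalence preserves vanishing of the bifunctor). Therefore $\ker\Psi_T$ is contained in the ideal of $\cC_T$ generated by morphisms from an injective to a projective object in the $\cT$-projective substructure, as required.

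The dual statement is obtained identically by combining \cref{t:HiggsCatDualEquiv}, which gives an equivalence $\cC^T\isoto(\Gamma e)\rperp$ of extriangulated categories, with the functor $G$ from \cref{t:rel-2-term-dual-functors}, whose kernel is likewise contained in the ideal of maps from injectives to projectives. I do not expect any serious obstacle here: all the conceptual work is already carried out in \cref{t:HiggsCatEquiv,t:HiggsCatDualEquiv} (which translate the Higgs setting into the relative $2$-term framework) and in \cref{t:rel-2-term-functors,t:rel-2-term-dual-functors,c:kerG} (which analyse the tensor-product functor $G$); what remains is simply to assemble these pieces and to check compatibility of projectives and injectives across the equivalences, which is automatic.
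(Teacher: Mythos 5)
Your proposal is correct and follows essentially the same route as the paper: the authors also obtain $\Psi_T$ by composing the extriangulated equivalence of Theorem~\ref{t:HiggsCatEquiv} (resp.\ Theorem~\ref{t:HiggsCatDualEquiv}) with the functor $G$ of Theorem~\ref{t:rel-2-term-functors} (resp.\ Theorem~\ref{t:rel-2-term-dual-functors}), deducing the kernel containment from $\ker(G)\subseteq\ker(FG)$ exactly as you do via Corollary~\ref{c:kerG}. The only slip is cosmetic: $\pi$ goes from $\lperp(\Sigma\Gamma e)$ to $\cC_T$, not the other way around, but since you compose with a quasi-inverse this does not affect the argument.
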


\begin{proof}
This is proved by exactly the same strategy as Theorem~\ref{t:ClustCat-to-2term}, but using only the functor $G$ from Theorem~\ref{t:rel-2-term-functors} or Theorem~\ref{t:rel-2-term-dual-functors} instead of the composition $FG$; since $\ker(FG)$ coincides exactly with the ideal generated by maps from injective to projective objects, $\ker(G)$ is at least contained in this ideal.
\end{proof}

\begin{remark}
Each of the four full and essentially surjective functors appearing in Theorems~\ref{t:ClustCat-to-2term} and \ref{t:ClustCat-to-rel2term} induces an extriangulated equivalence on the quotient of its domain by its cokernel, this quotient being naturally extriangulated by Theorem~\ref{t:extriangulatedQuotient}. The codomain of $\Psi^T$ from Theorem~\ref{t:ClustCat-to-rel2term} has a straightforward description as the full subcategory of $2$-term complexes from $\harp{\proj{A}}$ with the property that their $0$-th cohomology is a module for the stable endomorphism algebra $\stab{A}$.

The conclusions of Theorem~\ref{t:ClustCat-to-rel2term} are also valid when $\cal{C}$ is an exact category as in Theorem~\ref{t:ClustCat-to-2term-frobenius}, and $T\in\cal{C}$ is any cluster-tilting object, but since in this case the functor $G$ from Section~\ref{s:rel-2-term} is itself an equivalence (the relevant additive dg category being concentrated in degree $0$), it does not provide any information beyond that in Theorems~\ref{t:addEquivalenceYoneda} and \ref{t:addEquivalenceYonedaDual}. Indeed, in this case the functors $\Psi_T$ and $\Psi^T$ are equivalences of exact categories, obtained by composing the equivalences from the two aforementioned theorems with the Yoneda equivalence $\yo\colon\cal{T}*\Sigma{\cal{T}}\isoto\harp{\proj{A}}$, where $\cal{T}=\add{T}$.
\end{remark}

\begin{corollary}
Each of the four full and essentially surjective functors appearing in Theorems~\ref{t:ClustCat-to-2term} and \ref{t:ClustCat-to-rel2term} sends rigid objects to rigid objects and induces isomorphisms of posets of silting objects (in the $0$-Auslander extriangulated domains and codomains of these functors).  
\end{corollary}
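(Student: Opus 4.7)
The plan is to factor each of the four functors as the composition of an equivalence of extriangulated categories with an extriangulated ideal quotient in the sense of \cref{t:extriangulatedQuotient}, and then invoke \cref{prop::rigid,prop:poset_silting}. First I would note that in all four cases the domain is $0$-Auslander: for $\cC_T$ and $\cC^T$ when $\cC$ is Frobenius exact this is \cref{c:0-Auslander-substructure} and its dual, while when $\cC=\HiggsCat{\Gamma}$ (resp.\ $\HiggsCatDual{\Gamma}$) the extriangulated equivalences of \cref{t:HiggsCatEquiv,t:HiggsCatDualEquiv} identify $\cC_T$ (resp.\ $\cC^T$) with $\lperp(\Sigma\Gamma e)$ (resp.\ $(\Gamma e)\rperp$), which are $0$-Auslander by \cref{t:rel-2-term}\ref{t:rel-2-term-0Aus} and \cref{t:rel-2-term-dual}\ref{t:rel-2-term-dual-0Aus}.

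Next I would treat $\Phi_T$ and $\Phi^T$ from \cref{t:ClustCat-to-2term}. By construction they are obtained by composing the extriangulated equivalences of \cref{t:HiggsCatEquiv,t:HiggsCatDualEquiv} with the composition $FG$ from \cref{t:rel-2-term-functors} (or its dual \cref{t:rel-2-term-dual-functors}). The statement of those theorems tells us that $\ker(FG)$ is precisely the ideal generated by morphisms from an injective object to a projective object, so after projecting along this ideal we obtain an extriangulated ideal quotient by \cref{t:extriangulatedQuotient}, followed by an extriangulated equivalence onto $\harp{\proj\stab{A}}$. \cref{prop::rigid} then shows that $\Phi_T$ and $\Phi^T$ preserve (maximal) rigidity and preserve/reflect projectivity and injectivity, and \cref{prop:poset_silting} gives the isomorphism of silting posets. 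The same argument applies when $\cC$ is a Frobenius exact category with a cluster-tilting subcategory, via \cref{t:ClustCat-to-2term-frobenius}, using \cref{t:addEquivalenceYoneda,t:addEquivalenceYonedaDual} in place of \cref{t:HiggsCatEquiv,t:HiggsCatDualEquiv}.

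For $\Psi_T$ and $\Psi^T$ from \cref{t:ClustCat-to-rel2term} the situation is essentially the same, but using only the functor $G$ (resp.\ its dual) from \cref{t:rel-2-term-functors}. Here $\ker G$ is only known to be \emph{contained} in the ideal generated by morphisms from injective to projective objects (\cref{c:kerG}). However, this is enough: by \cref{t:extriangulatedQuotient} applied to $\ker G$, which is still an ideal generated by morphisms with injective domain and projective codomain (a sub-ideal of an ideal of this form remains of this form once one remembers that $\ker G$ is a two-sided ideal consisting of such compositions), the quotient $\lperp(\Sigma\cE)/\ker G$ inherits an extriangulated structure making $G$ factor through an extriangulated equivalence. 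Then \cref{prop::rigid,prop:poset_silting} again apply to give the desired conclusions.

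The main obstacle is the verification for $\Psi_T$ and $\Psi^T$ that $\ker G$ is itself generated by morphisms from injective to projective objects, so that \cref{t:extriangulatedQuotient} applies cleanly; this is where the proof must rely on the explicit description of $\ker G \subseteq \ker(FG)$ obtained in \cref{l:kerF}, rather than treating $G$ as a black box. Once this is in hand, the remaining statements are immediate consequences of the two propositions on rigid objects and silting subcategories, together with the compatibility of all constructions with the $0$-Auslander property established in \cref{prop:quotient_0-Auslander_parts}.
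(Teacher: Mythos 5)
Your argument is correct and takes essentially the same route as the paper, whose entire proof is ``Combine Corollary~\ref{cor:$2$-term_quotient_rigid} with Corollary~\ref{cor:silting_in_Higgs}''---you have simply unpacked those two corollaries into their ingredients, namely \cref{prop::rigid,prop:poset_silting} applied to the factorisations of the four functors as extriangulated equivalences followed by extriangulated ideal quotients. The only caveat is your parenthetical claim that a sub-ideal of an ideal generated by injective-to-projective morphisms is automatically again of that form, which is not a valid argument in general; but, as you yourself note, the correct justification for $\Psi_T$ and $\Psi^{T}$ rests on \cref{c:kerG} and the explicit analysis of $\ker(FG)$ in \cref{l:kerF}, which is precisely what the paper relies on when it invokes \cref{t:extriangulatedQuotient} for the quotient by $\ker G$.
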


\begin{proof}
Combine Corollary~\ref{cor:$2$-term_quotient_rigid} with Corollary~\ref{cor:silting_in_Higgs}.
\end{proof}

\begin{remark}
If we consider the domains of the  functors in Theorems~\ref{t:ClustCat-to-2term} and \ref{t:ClustCat-to-rel2term}  with their Frobenius 2-Calabi--Yau structures, then thanks to Proposition~\ref{prop:cluster-tilting_in_Higgs} we see that these functors induce bijections from cluster-tilting objects in $\HiggsCat{\Gamma}$ and $\HiggsCatDual{\Gamma}$ to silting objects in respective codomains. While this makes intuitive sense, the functors are not extriangulated for these structures; they are extriangulated only for $T$-projective, resp.\ $T$-injective, structures on the domains.
\end{remark}

\section{Examples} \label{section:Examples}
\begin{example}
Let $\cal{C}$ be the (triangulated) cluster category of type $\sf{A}_2$, with cluster-tilting object $T=T_1\oplus T_2$, as shown in Figure~\ref{fig:cc-a2}. Then $A=\End_{\cal{C}}(T)^{\op}$ is isomorphic to the path algebra of the $\sf{A}_2$ quiver $1\to 2$, so the category $\harp{\proj A}$ is as shown in Figure~\ref{fig:harp-a2}.
\begin{figure}[b]
\begin{tikzpicture}[scale=1.3]
\node at (-1,1) (T1b) {$T_1$};
\node at (0,0) (X) {$X$};
\node at (1,1) (Y) {$Y$};
\node at (2,0) (Z) {$Z$};
\node at (3,1) (T2) {$T_2$};
\node at (4,0) (T1) {$T_1$};
\node at (5,1) (Xb) {$X$};
\draw[->] (T1b) -- (X);
\draw[->] (X) -- (Y);
\draw[->] (Y) -- (Z);
\draw[->] (Z) -- (T2);
\draw[->] (T2) -- (T1);
\draw[->] (T1) -- (Xb);
\draw[dashed] (-1,0) -- (X) -- (Z) -- (T1) -- (5,0);
\draw[dashed] (T1b) -- (Y) -- (T2) -- (Xb);
\draw[dotted] (-0.5,1.3) -- (-0.5,-0.3);
\draw[dotted] (4.5,1.3) -- (4.5,-0.3);
\end{tikzpicture}
\caption{The Auslander--Reiten quiver of the cluster category $\cal{C}$ of type $\sf{A}_2$.}
\label{fig:cc-a2}\bigskip
\begin{tikzpicture}[scale=1.5]
\node at (0,0) (P2) {$|(0\to P_2)$};
\node at (1,1) (P1) {$|(0\to P_1)$};
\node at (2,0) (S1) {$(P_2\to P_1)$};
\node at (3,1) (P2[1]) {$(P_2\to 0)|$};
\node at (4,0) (P1[1]) {$(P_1\to 0)|$};
\draw[->] (P2) -- (P1);
\draw[->] (P1) -- (S1);
\draw[->] (S1) -- (P2[1]);
\draw[->] (P2[1]) -- (P1[1]);
\draw[dashed] (P1) -- (P2[1]);
\draw[dashed] (P2) -- (S1) -- (P1[1]);
\end{tikzpicture}
\captionof{figure}{The Auslander--Reiten quiver of $\harp{\proj A}$ for $A$ the path algebra of the quiver $1\to 2$.}
\label{fig:harp-a2}
\end{figure}

The reader may observe that one obtains a category equivalent to $\harp{\proj A}$ from $\cal{C}$ by factoring out the ideal generated by the irreducible morphism $T_1\to \Sigma T_1=X$ (which generates the whole ideal of morphisms $T\to \Sigma T$). The equivalence takes $\add{T}$ to the injective objects in $\harp{\proj A}$, i.e.\ the shifts of projective $A$-modules, and takes $\add{\Sigma^{-1}T}$ to the projective objects.

One can further check that the three Auslander--Reiten triangles from $\cal{C}$ that are sent to conflations in $\harp{\proj A}$ are all additive for the coindex with respect to $T$, whereas the two which are not (because some of their morphisms are in the kernel), are not additive for this function. Since the exchange matrix of the $\mathsf{A}_2$ quiver has full rank, this additivity property characterises the conflations in the $\add(T)$-injective extriangulated structure on $\cal{C}$ by \cite[Prop.~2.2]{Palu08}.
\end{example}

\begin{example}
\label{eg:a2_preproj}
Let $\cal{E}$ be the module category of the preprojective algebra of type $\sf{A}_2$, which is a Frobenius exact category, as shown in Figure~\ref{fig:a2-pproj}. Let $T=P_1\oplus P_2\oplus S_1\in\cal{E}$, which is a cluster tilting object, and let $A=\End_{\cal{E}}(T)^\op$, which is isomorphic to the path algebra of the quiver
\begin{equation}
\label{eq:EndT-eg}
\begin{tikzcd}1\arrow{rr}{q}&&2\arrow{dl}{r}\\&a\arrow{ul}{p}\end{tikzcd}
\end{equation}
modulo relations $qp$ and $rq$. Under the isomorphism, vertex $a$ corresponds to the summand $S_1$ of $T$, whereas $1$ corresponds to $P_1$ and $2$ to $P_2$. For $e=e_1+e_2$ (corresponding to the projective-injective summands of $T$), the category $(Ae)\rperp$ is shown in Figure~\ref{fig:pproj-a2-relharp}.
\begin{figure}[b]
\begin{tikzpicture}[scale=1.3]
\node at (-1,1) (P1b) {$|\Pi_1|$};
\node at (0,0) (S1) {$S_1$};
\node at (1,1) (P2) {$|\Pi_2|$};
\node at (2,0) (S2) {$S_2$};
\node at (3,1) (P1) {$|\Pi_1|$};
\node at (4,0) (S1b) {$S_1$};
\draw[->] (P1b) -- (S1);
\draw[->] (S1) -- (P2);
\draw[->] (P2) -- (S2);
\draw[->] (S2) -- (P1);
\draw[->] (P1) -- (S1b);
\draw[dashed] (-1,0) -- (S1) -- (S2) -- (S1b);
\draw[dotted] (-0.5,1.3) -- (-0.5,-0.3);
\draw[dotted] (3.5,1.3) -- (3.5,-0.3);
\end{tikzpicture}
\caption{The Auslander--Reiten quiver of the module category of the preprojective algebra of type $\sf{A}_2$. The indecomposable projective modules are denoted by $\Pi_i$ to avoid confusion with the projective modules of the algebra $A=\End_{\cal{E}}(T)^\op$.}
\label{fig:a2-pproj}\bigskip
\begin{tikzpicture}[yscale=1.5,xscale=1.8]
\node at (-1,1) (P1[1]b) {$|(P_1\to0)|$};
\node at (0,0) (Pa[1]) {$(P_a\to0)|$};
\node at (1,1) (P2[1]) {$|(P_2\to 0)|$};
\node at (2,0) (Sa) {$|(P_1\to P_a)$};
\node at (3,1) (P1[1]) {$|(P_1\to 0)|$};
\node at (4,0) (Pa[1]b) {$(P_a\to0)|$};
\draw[->] (P1[1]b) -- (Pa[1]);
\draw[->] (Pa[1]) -- (P2[1]);
\draw[->] (P2[1]) -- (Sa);
\draw[->] (Sa) -- (P1[1]);
\draw[->] (P1[1]) -- (Pa[1]b);
\draw[dashed] (Sa) -- (Pa[1]b);
\draw[dotted] (-0.5,1.3) -- (-0.5,-0.3);
\draw[dotted] (3.5,1.3) -- (3.5,-0.3);
\draw (Pa[1]) edge[dotted, bend left=40] (Sa);
\end{tikzpicture}
\captionof{figure}{The Auslander--Reiten quiver of $(Ae)\rperp$, for $A$ as shown in \eqref{eq:EndT-eg} and $e=e_1+e_2$. While the sequence $(P_a\to0)\to(P_2\to 0)\to(P_1\to P_a)$ is not an almost split conflation (indeed, it starts in an injective and ends in a projective), the composition of the two morphisms is still zero.}
\label{fig:pproj-a2-relharp}
\end{figure}

The reader may observe that $\cal{E}$ and $(Ae)\rperp$ are equivalent as additive categories, as predicted by Theorem~\ref{t:addEquivalenceYoneda}, and become equivalent as exact categories if we equip $\cal{E}$ with the $\add(T)$-injective substructure, in which the sequence $S_1\to\Pi_2\to S_2$ is no longer exact. Indeed, since $\gldim{A}=3$ we may use Proposition~\ref{p:ind-additive-exact} to see that the exact sequences in this substructure are exactly those for which the coindex with respect to $T$ is additive. We may calculate that the coindex of $S_2$ is $[\Pi_1]-[S_1]$ (the coindices of the other indecomposables in $\cal{E}$ being their own classes, since all of them lie in $\add{T}$). Thus the coindex is not additive on the almost split sequence $S_1\to\Pi_2\to S_2$, which we exclude, whereas it is additive on the almost split sequence $S_2\to P_1\to S_1$, which we retain.
\end{example}

\begin{example}
Let $\cal{C}$ be the cluster category of type $\mathsf{A}_3$.
Let $T = T_1 \oplus T_2 \oplus T_3$ be a cluster-tilting object whose endomorphism algebra is the path algebra of a linearly oriented $\mathsf{A}_3$ quiver. Let $T' = T_1 \oplus T_2' \oplus T_3$ be its mutation at $T_2$. Then $\End_{\cal{C}}(T')^\op$ is the path algebra of the cyclic quiver \eqref{eq:EndT-eg} modulo relations $qp$, $pr$ and $rq$. We write $\cT = \add T$ and $\cT' = \add T'$. The Auslander--Reiten quiver of the triangulated category $\cal{C}$ is drawn in Figure \ref{fig:cluster_A_3}. 

\begin{figure}[b]
 \[\begin{tikzpicture}[scale=0.9]
 \draw (-2,1) node (t) {$\cal{C}=\cal{C}(\mathsf{A}_3)$:};
 \foreach \n/\x/\y/\o in {1/0/0/T_1, 2/2/0/\Sigma T'_2, 3/4/0/T'_2, 4/6/0/\Sigma T_3, 5/1/1/T_2, 6/3/1/, 7/5/1/\Sigma T_2, 8/2/2/T_3, 9/4/2/\Sigma T_1, 4b/0/2/\Sigma T_3, 1b/6/2/T_1, 5b/7/1/T_2}
 {\draw (\x,\y) node (\n) {${\o}$};}
 \draw (-0.3,1) node (l) {};
 \draw (7.2,2) node (ru) {};
 \draw (7.2,0) node (rd) {};
 \foreach \n in {6}
 {\filldraw (\n) circle (0.05);}
 \foreach \n/\i in {1/1, 5/2, 8/3}
 \foreach \s/\t in {1/5, 5/8, 5/2, 8/6, 2/6, 6/9, 6/3, 9/7, 3/7, 7/4, 4b/5, 7/1b, 4/5b, 1b/5b}
 {\path[->] (\s) edge (\t);}
 \foreach \l/\r in {1/2, 2/3, 3/4, 5/6, 6/7, 8/9, 4b/8, 9/1b, l/5, 7/5b, 1b/ru, 4/rd}
 {\path[dashed] (\r) edge (\l);}
 \draw[dotted] (0.5,-0.3) -- (0.5,2.3);
 \draw[dotted] (6.5,-0.3) -- (6.5,2.3);
 \end{tikzpicture}\]
 \caption{The Auslander--Reiten quiver of the cluster category of type $\mathsf{A}_3$ considered as a triangulated category.}
 \label{fig:cluster_A_3}\bigskip
  \[\hspace{0.8em}\begin{tikzpicture}[scale=0.9]
  \draw (-2,1) node (t) {$\cal{C}_{\cal{T}}$:};
  \foreach \n/\x/\y/\o in {1/0/0/|T_1, 2/2/0/\Sigma T'_2, 3/4/0/T'_2, 4/6/0/\Sigma T_3|, 5/1/1/|T_2, 6/3/1/, 7/5/1/\Sigma T_2|, 8/2/2/|T_3, 9/4/2/\Sigma T_1|, 4b/0/2/\Sigma T_3|, 1b/6/2/|T_1, 5b/7/1/|T_2}
  {\draw (\x,\y) node (\n) {${\o}$};}
  \draw (-0.3,1) node (l) {};
  \draw (7.2,2) node (ru) {};
  \draw (7.2,0) node (rd) {};
  \foreach \n in {6}
  {\filldraw (\n) circle (0.05);}
  \foreach \n/\i in {1/1, 5/2, 8/3}
  \foreach \s/\t in {1/5, 5/8, 5/2, 8/6, 2/6, 6/9, 6/3, 9/7, 3/7, 7/4, 1b/5b}
  {\path[->] (\s) edge (\t);}
  \foreach \l/\r in {1/2, 2/3, 3/4, 5/6, 6/7, 8/9, 1b/ru}
  {\path[dashed] (\r) edge (\l);}
  \draw[dotted] (0.5,-0.3) -- (0.5,2.3);
  \draw[dotted] (6.5,-0.3) -- (6.5,2.3);
\foreach \s/\t in {4b/5, 7/1b, 4/5b}
  {\path[->] (\s) edge (\t);}
  \end{tikzpicture}\]
  
  \[\hspace{-3.3em}\begin{tikzpicture}[scale=0.9]
  \draw (-2.5,1) node (t) {$\cal{C}_{\cal{T}}/( \Sigma \cT \to \cT)$:};
  \foreach \n/\x/\y/\o in {1/0/0/|T_1, 2/2/0/\Sigma T'_2, 3/4/0/T'_2, 4/6/0/\Sigma T_3|, 5/1/1/|T_2, 6/3/1/, 7/5/1/\Sigma T_2|, 8/2/2/|T_3, 9/4/2/\Sigma T_1|}
  {\draw (\x,\y) node (\n) {${\o}$};}
  \draw (-0.3,1) node (l) {};
  \draw (7.2,2) node (ru) {};
  \draw (7.2,0) node (rd) {};
  \foreach \n in {6}
  {\filldraw (\n) circle (0.05);}
  \foreach \n/\i in {1/1, 5/2, 8/3}
  \foreach \s/\t in {1/5, 5/8, 5/2, 8/6, 2/6, 6/9, 6/3, 9/7, 3/7, 7/4}
  {\path[->] (\s) edge (\t);}
  \foreach \l/\r in {1/2, 2/3, 3/4, 5/6, 6/7, 8/9}
  {\path[dashed] (\r) edge (\l);}
  \end{tikzpicture}\]
 \captionof{figure}{On the top, the Auslander--Reiten quiver of the cluster category of type $\mathsf{A}_3$ considered as an extriangulated category with the $\cT$-projective structure. As in Example~\ref{e:3-cycleQuotient}, there are some zero relations which are not consequences of Auslander--Reiten conflations (and which are not shown). On the bottom, the quotient of this category by the ideal $( \Sigma \cT \to \cT)$.}
 \label{fig:cluster_A_3_linear}
\end{figure}

We consider the category $\cal{C}$ with the $\cT$-projective extriangulated structure. As an example of Theorem \ref{t:ClustCat-to-2term}, we see that the ideal quotient $\cal{C}_{\cal{T}}/(\Sigma\cal{T}\to\cal{T})$ is equivalent to the category $\harp{\End_{\cC}(T)^{\op}}$. See Figure \ref{fig:cluster_A_3_linear} and compare with the right hand side of Figure \ref{fig::Higgs}.

Consider now $\cal{C}$ with the $\cT'$-projective extriangulated structure. Applying Theorem \ref{t:ClustCat-to-2term} again, we see that the ideal quotient $\cal{C}_{\cal{T}'}/(\Sigma\cal{T}'\to\cal{T}')$ is equivalent to the category $\harp{\End_{\cC}(T')^{\op}}$. Moreover, since $\End_{\cC}(T')^{\op}$ is a self-injective algebra, this is in fact an example of \cref{c:self-injective}: we have 
\[
\cal{C}_{\cal{T}'} = \cal{C}_{\cal{T}'}/(\Sigma\cal{T}'\to\cal{T}') \overset\sim\to \harp{\End_{\cC}(T')^{\op}},
\]
as there are no non-zero maps from injectives to projectives in the extriangulated category $\cal{C}_{\cal{T}'}$. This is depicted in Figure \ref{fig:cluster_A_3_cyclic}.
Compare with the Auslander--Reiten quiver of the category $\harp{{\End (T')}^{\op}}$ in Figure \ref{fig:AR_for_harp_cyclic_A3} from Example \ref{e:3-cycleQuotient}.

\begin{figure}[t]
 \[\hspace{-3.3em}\begin{tikzpicture}[scale=0.9]
 \draw (-3.5,1) node (t) {$\cal{C}_{\cal{T'}} = \cal{C}_{\cal{T'}}/( \Sigma \cT' \to \cT')$:};
 \foreach \n/\x/\y/\o in {1/0/0/|T_1, 2/2/0/\Sigma T'_2|, 3/4/0/|T'_2, 4/6/0/\Sigma T_3|, 5/1/1/T_2, 6/3/1/, 7/5/1/\Sigma T_2, 8/2/2/|T_3, 9/4/2/\Sigma T_1|, 4b/0/2/\Sigma T_3|, 1b/6/2/|T_1, 5b/7/1/T_2}
 {\draw (\x,\y) node (\n) {${\o}$};}
 \draw (-0.3,1) node (l) {};
 \draw (7.2,2) node (ru) {};
 \draw (7.2,0) node (rd) {};
 \foreach \n in {6}
 {\filldraw (\n) circle (0.05);}
 \foreach \n/\i in {1/1, 5/2, 8/3}
 \foreach \s/\t in {1/5, 5/8, 5/2, 8/6, 2/6, 6/9, 6/3, 9/7, 3/7, 7/4, 4b/5, 7/1b, 4/5b, 1b/5b}
 {\path[->] (\s) edge (\t);}
 \foreach \l/\r in {1/2, 3/4, 5/6, 6/7, 8/9, l/5, 7/5b, 1b/ru}
 {\path[dashed] (\r) edge (\l);}
 \draw[dotted] (0.5,-0.3) -- (0.5,2.3);
 \draw[dotted] (6.5,-0.3) -- (6.5,2.3);
 \end{tikzpicture}\]

 \caption{The Auslander--Reiten quiver of the cluster category of type $\mathsf{A}_3$ considered as an extriangulated category with the $\cT'$-projective structure. The ideal $( \Sigma \cT' \to \cT')$ is trivial, hence the category is equal to its quotient by this ideal.}
 \label{fig:cluster_A_3_cyclic}
\end{figure}
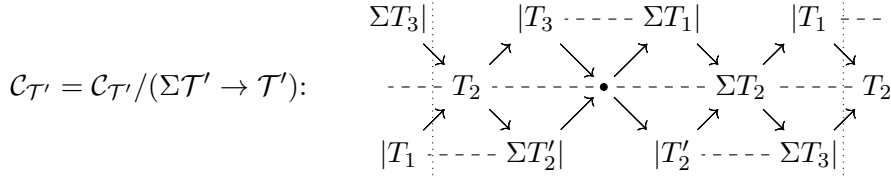

We see that extriangulated categories with different underlying additive categories---in this case $\harp{\End_{\cC}(T)^{\op}}$ and $\harp{\End_{\cC}(T')^{\op}}$---can appear as extriangulated ideal quotients, as in Theorem \ref{t:ClustCat-to-2term}, of the same additive category $\cal{C}$ considered with two different extriangulated structures.
\end{example}

\section{Application to gentle algebras} \label{section:gentle}

In this section, all modules will be right modules, for compatibility with \cite{PaluPilaudPlamondon-nonkissing}. A \emph{gentle quiver with relations} is a pair~$\bar Q = (Q,R)$, where~$Q$ is a quiver (always assumed to be finite in this paper) and~$R$ is a set of paths of length~$2$ in~$Q$, such that
\begin{itemize}
\item any vertex of~$Q$ is the source of at most~$2$ arrows, and the target of at most~$2$ arrows,
\item for any arrow~$\beta$ of~$Q$, there is at most one arrow~$\alpha$ such that~$t(\alpha)=s(\beta)$ and~$\beta\alpha \in R$, and at most one arrow~$\gamma$ such that~$s(\gamma)=t(\beta)$ and~$\gamma\beta \in R$, and
\item for any arrow~$\beta$ of~$Q$, there is at most one arrow~$\alpha'$ such that~$t(\alpha')=s(\beta)$ and~$\beta\alpha' \notin R$, and at most one arrow~$\gamma'$ such that~~$s(\gamma')=t(\beta)$ and~$\gamma'\beta \notin R$.
\end{itemize}
In this case, the algebra~$A = kQ/(R)$ is a \emph{gentle algebra}.  Note that we do not require $A$ to be finite-dimensional at this stage.

The following definition first appeared in \cite{Asashiba12} (see \cite[\S7]{Asashiba16} for the English translation).  It was rediscovered in \cite{BrustleDouvilleMousavandThomasYildirim} and \cite{PaluPilaudPlamondon-nonkissing}; we will use the terminology of the latter.

\begin{definition} \label{def: blossom}
Let~$\bar Q$ be a gentle quiver with relations.  Its \emph{blossoming quiver with relations} is the quiver with relations~$\bar Q\blossom = (Q\blossom, R\blossom)$ obtained from~$\bar Q$ defined as follows:
\begin{enumerate}
\item the quiver~$Q\blossom$ is obtained by adding vertices and arrows to~$Q$ so that
\begin{itemize}
	\item each original vertex of~$Q$ is the source of two arrows and the target of two arrows in~$Q\blossom$, and
	\item each vertex of~$Q\blossom$ which is not a vertex of~$Q$ has valency precisely one; such vertices are called \emph{blossoming vertices};
\end{itemize}
\item the set of relations~$R\blossom$ contains~$R$, and is chosen so that~$(Q\blossom, R\blossom)$ is a gentle quiver with relations.
\end{enumerate}
We denote by~$A\blossom$ the gentle algebra~$kQ\blossom/=(R\blossom)$. Note that~$\bar Q\blossom$ is only unique up to (non-unique) isomorphism of quiver with relations.
\end{definition}

We now fix a gentle quiver with relations~$\bar Q$, with corresponding gentle algebra~$A$, and let~$e\blossom \in A\blossom$ be the sum of the idempotents corresponding to the blossoming vertices of~$Q\blossom$.  The canonical isomorphism~$A \cong A\blossom/( e\blossom)$ induces an extriangulated functor
\[\begin{tikzcd}[column sep=70pt]
\har{\proj A\blossom} \arrow{r}{F = \blank \otimes_{A\blossom} A\blossom/( e\blossom)}& \har{\proj A}
\end{tikzcd}
\]
The first main result of this section is the following.

\begin{proposition}\label{prop::gentle-har}
The functor~$F = \blank \otimes_{A\blossom} A\blossom/(e\blossom)$ induces an equivalence of~$k$-linear categories
\[
{}^\perp (\Sigma e\blossom A\blossom)/(e\blossom A\blossom) \xrightarrow{\sim} \har{\proj A},
\]
where~${}^\perp (\Sigma e\blossom A\blossom)$ is the full subcategory of~$\har{\proj A\blossom}$ whose objects are those~$X$ such that there are no non-zero morphisms from~$X$ to~$\Sigma e\blossom A\blossom$, and~$(e\blossom A\blossom)$ is the ideal of morphisms factoring through an object in~$\add e\blossom A\blossom$.
\end{proposition}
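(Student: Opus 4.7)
The plan is to specialise Theorem~\ref{t:rel-2-term-functors} to the case $\cal{A}=A\blossom$, viewed as a dg category concentrated in degree zero, and $\cal{E}=e\blossom A\blossom\subseteq\cal{A}$. Under this specialisation we have $\cohom{0}\cal{A}=A\blossom$, the functor $G$ of that theorem is the identity, and $\cohom{0}\cal{A}/\cohom{0}\cal{E}=A\blossom/(e\blossom)=A$, so the codomain $\harp{\proj\cohom{0}\cal{A}/\cohom{0}\cal{E}}$ becomes $\harp{\proj A}$ and $FG$ coincides with the tensor functor $F=\blank\otimes_{A\blossom}A\blossom/(e\blossom)$ appearing in the statement. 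The only hypothesis to verify is the covariant finiteness of $\cal{E}$ in $\per A\blossom=\Kb{\proj A\blossom}$, which is immediate since $e\blossom A\blossom$ is a finite-dimensional projective $A\blossom$-module, so $\add(e\blossom A\blossom)$ is the additive closure of finitely many indecomposable projectives. Theorem~\ref{t:rel-2-term-functors} then yields that $F\colon\lperp(\Sigma e\blossom A\blossom)\to\harp{\proj A}$ is full and essentially surjective, with kernel equal to the ideal $(\cal{I}\to\cal{P})$ of $\lperp(\Sigma e\blossom A\blossom)$ generated by morphisms from an injective to a projective object.

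It remains to identify $(\cal{I}\to\cal{P})$ with $(e\blossom A\blossom)$. For the inclusion $(e\blossom A\blossom)\subseteq(\cal{I}\to\cal{P})$, I would verify that every object of $\add\cal{E}$ is projective-injective in $\lperp(\Sigma\cal{E})$: projectivity is given by Theorem~\ref{t:rel-2-term}\ref{t:rel-2-term-proj-inj}, and injectivity follows because each $E\in\add\cal{E}$ lies in its own Bongartz co-completion via the split triangle $E\xrightarrow{\mathrm{id}}E\to 0\to\Sigma E$. Consequently, any morphism factoring through $\add(e\blossom A\blossom)$ factors through a projective-injective object and so lies in $(\cal{I}\to\cal{P})$.

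The reverse inclusion is the main obstacle. I would handle it by refining the explicit factorisation built in the proof of Lemma~\ref{l:kerF}, which writes each $f\in\ker F$ as a composition $X\xrightarrow{\ell'}Q_X\xrightarrow{p}Y^0\xrightarrow{y'}Y$ with $p\colon Q_X\to Y^0$ from an injective to a projective object, where $Q_X$ arises from an injective co-resolution of the source. In the present setting $Q_X$ is a summand of $E_Z\oplus E'_Z$ for some $Z\in\cal{A}$, with $E_Z\in\add(e\blossom A\blossom)$ and $E'_Z=\cone(Z\xrightarrow{\varphi}E_Z)$. The component of $p$ out of $E_Z$ factors through $\add(e\blossom A\blossom)$ tautologically; for the component out of $E'_Z$ one uses that the chain map condition forces any representing morphism $E_Z\to Y^0$ to vanish on $\im\varphi$, together with the fact that the top of $E_Z$ (and hence of the quotient $E_Z/\im\varphi$) is supported at blossom vertices, so that the induced map on the quotient can be routed through a projective in $\add(e\blossom A\blossom)$. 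Combining this identification with the full essential surjectivity of $F$ from Theorem~\ref{t:rel-2-term-functors} yields the desired equivalence.
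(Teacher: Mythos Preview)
Your overall strategy is sound and matches the paper's: apply Theorem~\ref{t:rel-2-term-functors} to get full essential surjectivity with kernel $(\cal{I}\to\cal{P})$, and then identify this ideal with $(e\blossom A\blossom)$. The inclusion $(e\blossom A\blossom)\subseteq(\cal{I}\to\cal{P})$ is correctly handled. The gap is in your argument for the reverse inclusion.

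You write that because $\top(E_Z/\im\varphi)$ is supported at blossom vertices, the map $\bar\psi\colon E_Z/\im\varphi\to Y^0$ ``can be routed through a projective in $\add(e\blossom A\blossom)$''. The natural attempt is to take a projective cover $\pi\colon E''\twoheadrightarrow E_Z/\im\varphi$ with $E''\in\add(e\blossom A\blossom)$ and lift the quotient map $E_Z\to E_Z/\im\varphi$ to some $\chi\colon E_Z\to E''$ using projectivity of $E_Z$. This does give a factorisation $\psi=(\bar\psi\pi)\chi$ through $E''$ at the module level, but to obtain a factorisation of the original morphism $E'_Z\to Y^0$ in $\harp{\proj A\blossom}$ you need $\chi$ to define a chain map from $E'_Z=(Z\xrightarrow{\varphi}E_Z)$ to the stalk complex $E''$, i.e.\ you need $\chi\varphi=0$. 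All you know is that $\pi\chi\varphi=0$, so $\chi\varphi$ lands in $\ker\pi$, which need not vanish. Without further input specific to the gentle blossoming situation, this step does not go through; the fact that the top of a module is supported at certain vertices does not by itself force maps out of it to factor through projectives at those vertices \emph{as morphisms in the homotopy category}.

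The paper avoids this obstruction by computing $\ker F$ directly rather than via the abstract description $(\cal{I}\to\cal{P})$. It first observes (by lifting a null-homotopy along the full functor $\proj A\blossom\to\proj A$) that any $f\in\ker F$ is homotopic to a map $(f^{-1},f^0)$ with both components factoring, in $\mod A\blossom$, through $\add(e\blossom A\blossom)$. Then, for objects of $\lperp(\Sigma e\blossom A\blossom)$, the degree~$-1$ terms $P^{-1}$ and $Q^{-1}$ have no summand in $\add(e\blossom A\blossom)$; invoking the combinatorial fact that every blossom vertex is either a sink (where the indecomposable projective is simple) or a source, one deduces that $f^{-1}=0$. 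Hence $f$ is concentrated in degree~$0$ with $f^0$ factoring through $\add(e\blossom A\blossom)$, and so $f$ itself lies in the ideal $(e\blossom A\blossom)$. This sink--source dichotomy of blossom vertices is precisely the gentle-specific ingredient that your argument lacks.
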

\begin{proof}
Most of the statement is a direct application of Theorem~\ref{t:rel-2-term}, taking~$\cal{A}=A$ viewed as a dg category concentrated in degree~$0$ and~$\cal{E}=\add e\blossom A\blossom$. The only additional item to prove is the more precise description of the kernel of $F$.

We first claim that the kernel of the functor~$F$ is the ideal of morphisms of~$\har{\proj A\blossom}$ homotopic to those of the form~$f=(f^{-1},f^0)$, with~$f^{-1}$ and~$f^0$ factoring through~$\add (e\blossom A\blossom)$ in~$\mod A\blossom$.  

\begin{itemize}
\item[\emph{Step 1.}] It is clear that~$e\blossom A\blossom$ and~$\Sigma e\blossom A\blossom$ are in the kernel of~$F$.  Moreover, by restricting~$F$ to~$\proj A\blossom$, we see that a morphism between two objects of~$\har{\proj A\blossom}$ concentrated in degree~$0$ is in the kernel of~$F$ is and only if it factors through~$\add( e\blossom A\blossom )$.  

\smallskip

\item[\emph{Step 2.}] Assume that a morphism~$f\colon P\to Q$ is in the kernel of~$F$.  Let us depict this morphism by
\[
\begin{tikzcd}
P^{-1} \ar{r}{p}\ar{d}{f^{-1}} & P^0\ar{d}{f^{0}} \\
Q^{-1} \ar{r}{q} & Q^0.
\end{tikzcd}
\]
Since~$f$ is in the kernel of~$F$, the morphism~$Ff$ is null-homotopic in~$\har{\proj A}$.  By lifting this homotopy (which is possible since~$F$ is a full functor when restricted to~$\proj A\blossom$), we get a morphism~$s\colon P^0\to Q^{-1}$ such that~$(f^{-1} -sp)$ and~$(f^0 - qs)$ both vanish when applying~$F$.  By Step~1, these morphisms both factor through~$\add(e\blossom A\blossom)$.  Since~$f=(f^{-1},f^0)$ is homotopic to~$(f^{-1} -sp, f^0 - qs)$, the claim is proved.
\end{itemize}

Let~$P,Q\in {}^\perp(\Sigma e\blossom A\blossom)$, and let~$f\colon P\to Q$ be a morphism.  By the above argument, up to replacing $f$ by a homotopic map we may assume that both~$f^{-1}$ and~$f^0$ factor through~$\add(e\blossom A\blossom)$.  

Since~$P,Q\in {}^\perp(\Sigma e\blossom A\blossom)$, we have that~$P^{-1}$ and~$Q^{-1}$ have no direct factor in~$\add(e\blossom A\blossom)$. This implies that $P^{-1}$ is left Hom-orthogonal to the simple projectives (i.e.\ projectives at sinks) while $Q^{-1}$ is right Hom-orthogonal to projectives at sources. Coupled with the fact that~$f^{-1}$ factors through~$\add(e\blossom A\blossom)$, this implies that~$f^{-1} =0$, since the blossoming vertices are either sinks or sources.

Therefore,~$f$ is concentrated in degree~$0$; moreover, since~$f^0$ factors through~$\add(e\blossom A\blossom)$ in~$\proj A\blossom$, this implies that~$f$ factors through~$\add(e\blossom A\blossom)$ in~$\har{\proj A\blossom}$.
\end{proof}

From now on, assume that the gentle algebra~$A$ is finite-dimensional. This implies that the algebra~$A\blossom$ is also finite-dimensional (since there are no cycles in $Q\blossom$ which were not present in $Q$ already; see Definition \ref{def: blossom}). 

The second main result of this Section allows us to reconstruct~$\har{\proj A}$ from the module category of~$A\blossom$. Consider the full subcategory of~$\mod A\blossom$ defined by
\[
\widetilde{\cW}_A = \{ M\in\mod A\blossom : \text{$\Hom_{A\blossom}(Q,\tau M) = 0$ for all $Q\in\add(e\blossom A\blossom)$}\}.
\]

\begin{proposition}\label{prop::mod-blossom-yields-har-A}
\begin{enumerate} 
\item\label{gentle1} The cohomology functor~$\cohom{0}\colon\har{\proj A\blossom} \to \mod A\blossom$ induces an equivalence 
\(
{}^\perp (\Sigma e\blossom A\blossom) \xrightarrow{\sim} \widetilde{\cW}_A.
\)

\item\label{gentle2} In particular, we have an equivalence~$\har{\proj A} \xrightarrow{\sim} \widetilde{\cW}_A /(e\blossom A\blossom)$.
\end{enumerate}
\end{proposition}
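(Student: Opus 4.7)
Part (ii) will follow formally from part (i) combined with Proposition~\ref{prop::gentle-har}: the equivalence of (i) identifies the ideal of morphisms factoring through $\add e\blossom A\blossom$ on the $\har{\proj A\blossom}$-side with the corresponding ideal on the $\mod A\blossom$-side, since $\cohom{0}$ carries blossoming projectives to themselves. The equivalence thus descends to ideal quotients, and ${}^\perp(\Sigma e\blossom A\blossom)/(e\blossom A\blossom)\simeq\har{\proj A}$ by Proposition~\ref{prop::gentle-har}.

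For part (i), the plan is to verify essential surjectivity and full faithfulness of the functor $\cohom{0}$ restricted to ${}^\perp(\Sigma e\blossom A\blossom)$. The key input is the isomorphism $\Hom_{\har{\proj A\blossom}}(P_{M},\Sigma Q)\cong D\Hom_{A\blossom}(Q,\tau M)$, valid for any minimal $2$-term projective presentation $P_{M}$ of an $A\blossom$-module $M$ (a special case of \eqref{eq:PG_formula}, cf.~\cite[Lem.~2.6]{Plamondon13}). This immediately matches the condition $P_{M}\in{}^\perp(\Sigma e\blossom A\blossom)$ with the condition $M\in\widetilde{\cW}_A$, giving essential surjectivity. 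To show that every $X\in{}^\perp(\Sigma e\blossom A\blossom)$ is isomorphic in $\har{\proj A\blossom}$ to the minimal presentation of $\cohom{0}(X)$, I would decompose $X$ as a direct sum of a minimal presentation $P_{M}$, a stalk summand $(0\to P)$ (which always lies in ${}^\perp$ and just records the projective summand of $\cohom{0}(X)$), and a shifted projective $\Sigma P'$; the perpendicularity condition then forces $P'=0$, because every non-zero projective over $A\blossom$ admits a non-zero homomorphism to some $Q\in\add e\blossom A\blossom$ (follow an outgoing path in $Q\blossom$ to a blossoming vertex, using that $A$ is finite-dimensional).

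Fullness is a standard lifting argument: any $\varphi\colon M\to N$ with $M,N\in\widetilde{\cW}_A$ lifts to a chain map $P_{M}\to P_{N}$ by using projectivity of each term. The heart of the matter is faithfulness: a chain map $(f^{-1},f^{0})\colon P_{M}\to P_{N}$ inducing zero on $\cohom{0}$ can be reduced, modulo homotopy, to a pair $(g,0)$ with $g\colon P_{M}^{-1}\to K_{N}$, where $K_{N}:=\cohom{-1}(P_{N})=\ker y_{N}$. One must show that $(g,0)$ is null-homotopic, i.e.\ that $g$ factors through the differential $x_{M}\colon P_{M}^{-1}\to P_{M}^{0}$ via some $t\colon P_{M}^{0}\to K_{N}$. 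The obstruction lies in $\coker\bigl(\Hom(P_{M}^{0},K_{N})\to\Hom(P_{M}^{-1},K_{N})\bigr)$, and my plan is to show this cokernel vanishes for $M,N\in\widetilde{\cW}_A$ by invoking the string-and-band combinatorics of the blossoming gentle algebra developed in \cite{PaluPilaudPlamondon-nonkissing, GNP2} to control the structure of $K_{N}$ and of maps into it from $P_{M}^{-1}$.

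The main obstacle I expect is this faithfulness step: essential surjectivity and fullness follow from general lifting principles and the Auslander--Reiten formula, but the vanishing of the obstruction cokernel appears to genuinely use the gentle-specific structure rather than purely formal arguments about ${}^\perp(\Sigma e\blossom A\blossom)$. Should this prove delicate, an alternative route is to establish (ii) first by applying Theorem~\ref{t:rel-2-term-functors} with $\cA=A\blossom$ and $\cE=\add e\blossom A\blossom$ in combination with Proposition~\ref{prop::gentle-har}, and then lift the resulting equivalence of quotients back to (i).
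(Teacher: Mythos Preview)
Your essential surjectivity and fullness arguments match the paper's: both use the isomorphism $\Hom_{\har{\proj A\blossom}}(P_M,\Sigma Q)\cong D\Hom_{A\blossom}(Q,\tau M)$ to identify ${}^\perp(\Sigma e\blossom A\blossom)$ with $\widetilde{\cW}_A$ on objects, and fullness of $\cohom{0}$ is standard. Your decomposition argument excluding $\Sigma P'$-summands is a useful detail that the paper leaves implicit.

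The faithfulness step is where you diverge, and your proposed route is unnecessarily hard. The paper observes that $\ker\cohom{0}=(\Sigma A\blossom)$ and shows this ideal meets ${}^\perp(\Sigma e\blossom A\blossom)$ trivially via an injective-envelope trick: after homotopy one reduces to $(f^{-1},0)$ with $qf^{-1}=0$; since $\bar Q^{-1}$ is projective over $A\blossom$, its socle is supported on blossoming vertices, so its injective envelope $\iota\colon\bar Q^{-1}\hookrightarrow R$ has $R\in\add(e\blossom A\blossom)$. The perpendicularity condition then yields $\bar\iota\colon\bar Q^{0}\to R$ with $\bar\iota q=\iota$, whence $\iota f^{-1}=\bar\iota q f^{-1}=0$ and so $f^{-1}=0$. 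This is a two-line argument using only that projective-injectives of $A\blossom$ coincide with $\add(e\blossom A\blossom)$, rather than any detailed string-and-band analysis of $K_N$ and maps into it. Your obstruction-cokernel approach could in principle be completed, but it is considerably more work and you would essentially be reproving this structural fact by hand.

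Your alternative route---establish \ref{gentle2} first via Theorem~\ref{t:rel-2-term-functors} and then lift to \ref{gentle1}---does not work: an equivalence of ideal quotients does not recover an equivalence before quotienting, since different morphisms may become equal in the quotient.
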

\begin{proof}
Point \ref{gentle2} follows from~\ref{gentle1} and \cref{prop::gentle-har}.  To prove \ref{gentle1}, recall (for instance from \cite[Lem.~2.6]{Plamondon13}) that if~$M$ and~$N$ are~$A\blossom$-modules and if~$P_M$ and~$P_N$ are their minimal projective presentations viewed as objects of~$\har{\proj A\blossom}$, then
\[
\Hom_{A\blossom}(M,\tau N) \cong D\Hom_{\har{\proj A\blossom}}(P_N, \Sigma P_M).
\]
Thus an~$A\blossom$-module~$M$ is in~$\widetilde{\cW}_A$ if and only if for each~$Q\in\add(e\blossom A\blossom)$, the space of morphisms~$\Hom_{\har{\proj A\blossom}}(P_M,\Sigma Q)$ vanishes.  This implies that the restriction of~$\cohom{0}$ to~${}^\perp (\Sigma e\blossom A\blossom)$ has image exactly~$\widetilde{\cW}_A$.

Since~$\cohom{0}$ is full, all that remains is to prove that the restriction of~$\cohom{0}$ to~${}^\perp (\Sigma e\blossom A\blossom)$ is faithful.  We know that the kernel of~$\cohom{0}$ is~$(\Sigma A\blossom)$.  Let us show that the intersection of this ideal with~${}^\perp (\Sigma e\blossom A\blossom)$ is zero.

Let~$f\colon\bar P\to \bar Q$ be a morphism in~$(\Sigma A\blossom)$, with~$\bar P$ and~$\bar Q$ in~${}^\perp (\Sigma e\blossom A\blossom)$.  Since~$f\in(\Sigma A\blossom)$, it is homotopic to
\[
\begin{tikzcd}
\bar P^{-1} \ar{r}{p}\ar{d}{f^{-1}} & \bar P^0 \ar{d}{0} \\
\bar Q^{-1} \ar{r}{q} & \bar Q^0,
\end{tikzcd}
\]
where we note that the rightmost morphism vanishes.  Let~$0\to \bar Q^{-1}\xrightarrow{\iota} R$ be the injective envelope of~$Q^{-1}$.  Since~$\bar Q^{-1}$ is a projective~$A\blossom$-module, its socle is supported on the blossoming vertices of~$Q\blossom$.  Thus~$R$ is a projective-injective~$A\blossom$-module, and is therefore in~$\add(e\blossom A\blossom)$.  Thus we have a commutative diagram
\[
\begin{tikzcd}
\bar P^{-1} \ar{r}{p}\ar{d}{f^{-1}} & \bar P^0 \ar{d}{0} \\
\bar Q^{-1} \ar{r}{q} \ar{d}{\iota} & \bar Q^0 \ar{d} \\
R \ar{r} & 0
\end{tikzcd}
\]
Since~$\bar Q \in {}^\perp (\Sigma e\blossom A\blossom)$, there exists a morphism~$\bar \iota\colon\bar Q^0 \to R$ such that~$\bar \iota q = \iota$.  Hence
\[
\iota f^{-1} = \bar \iota q f^{-1} = \bar \iota 0 = 0,
\]
and since~$\iota$ is injective, this implies that~$f^{-1}=0$.  Thus~$f=0$, and the proposition is proved.
\end{proof}

Our main motivation for~\cref{prop::mod-blossom-yields-har-A} is \cref{corollary: category of walks} below.
We first recall some definition and results from~\cite[\S6]{GNP2}.

Define~$\cal{W}_A$ to be the full additive subcategory of~$\mod A\blossom$ whose indecomposable objects are the band modules and the non-simple string modules starting and ending at a blossoming vertex.
The category~$\cal{W}_A$ is extension-closed in~$\mod A\blossom$ \cite[Prop.~6.21]{GNP2} and, when endowed with the inherited exact structure, it is a $0$-Auslander exact category \cite[Cor.~6.23]{GNP2}.
Moreover,~$\cal{W}_A$ categorifies the combinatorics of the non-kissing complex~\cite{McConville,BrustleDouvilleMousavandThomasYildirim,PaluPilaudPlamondon-nonkissing}.
We note that the category $\widetilde{\cW}_A$ is $0$-Auslander by Proposition~\ref{prop::mod-blossom-yields-har-A} and Theorem~\ref{t:rel-2-term}.

\begin{corollary}
\label{corollary: category of walks}
There is an equivalence of extriangulated categories
\[
 \mathcal{W}_A / J \simeq \har{\proj A},
\]
where~$J$ is the ideal generated by the morphisms with injective domain and projective codomain, and coincides with the ideal of morphisms factoring through a projective-injective object.
\end{corollary}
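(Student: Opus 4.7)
The plan is to transfer the statement through the equivalences of \cref{prop::gentle-har,prop::mod-blossom-yields-har-A} by relating $\cW_A$ to $\widetilde{\cW}_A$ inside $\mod A\blossom$.

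First, I would use the combinatorial description of $\cW_A$ from \cite[\S6]{GNP2} together with the condition defining $\widetilde{\cW}_A$ to establish that $\widetilde{\cW}_A$ is the additive closure of $\cW_A$ and $\add(e\blossom A\blossom)$. Concretely, I would compute $\tau$ on string and band modules of the gentle algebra $A\blossom$ (using the standard combinatorial recipe for $\tau$ on gentle algebras) and check that the condition $\Hom_{A\blossom}(Q,\tau M) = 0$ for all $Q\in\add(e\blossom A\blossom)$ is satisfied precisely by the band modules, the non-simple blossom-to-blossom string modules (which are by definition the indecomposables of $\cW_A$), and the indecomposable summands of $e\blossom A\blossom$ themselves.

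Second, I would identify the projective-injective objects of $\cW_A$ with $\cW_A \cap \add(e\blossom A\blossom)$, matching the straight walks of \cite[\S6]{GNP2}, and then identify the ideal $J$ with the ideal of morphisms factoring through a projective-injective. The inclusion $(\cal{P}\cap\cal{I})\subseteq J$ is immediate since $\id_Q$ for a projective-injective $Q$ is itself a morphism from an injective to a projective, hence a generator of $J$. For the reverse inclusion, I would use the explicit combinatorial description of morphisms between strings and bands of $A\blossom$: any non-zero morphism between an indecomposable injective and an indecomposable projective of $\cW_A$ admits a factorisation through a straight blossom-to-blossom substring, i.e.\ through an object of $\cW_A \cap \add(e\blossom A\blossom)$.

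Finally, I would combine these: quotienting $\widetilde{\cW}_A$ by $(e\blossom A\blossom)$ simultaneously annihilates the extra indecomposables of $\widetilde{\cW}_A \setminus \cW_A$ (which all lie in $\add(e\blossom A\blossom)$) and identifies morphisms in $\cW_A$ factoring through $\cW_A \cap \add(e\blossom A\blossom) = \cal{P}\cap\cal{I}$. This produces an additive equivalence
\[
\cW_A/J \isoto \widetilde{\cW}_A/(e\blossom A\blossom),
\]
which composed with \cref{prop::mod-blossom-yields-har-A}\ref{gentle2} yields the desired equivalence with $\har{\proj A}$. Since both sides are extriangulated ideal quotients in the sense of \cref{t:extriangulatedQuotient}, with compatible structures inherited from $\mod A\blossom$ and from the homotopy category of $\proj A$, this additive equivalence upgrades to an equivalence of extriangulated categories.

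The main obstacle will be the first step: the combinatorial identification of $\widetilde{\cW}_A$ via explicit computation of $\tau$ on strings and bands of the gentle algebra $A\blossom$. While the individual cases are standard, carrying out the case analysis for every type of (sub)string with endpoints at blossoming or non-blossoming vertices and cleanly matching the resulting list with the indecomposables of $\cW_A$ from \cite[\S6]{GNP2} is where the bookkeeping lies; the remaining steps are then largely formal.
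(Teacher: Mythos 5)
Your proposal is correct and follows essentially the same route as the paper: both arguments reduce the statement to \cref{prop::gentle-har} and \cref{prop::mod-blossom-yields-har-A} by comparing $\cW_A$ with $\widetilde{\cW}_A$ (your description of $\widetilde{\cW}_A$ as the additive hull of $\cW_A$ and $\add(e\blossom A\blossom)$ is equivalent to the paper's identification $\cW_A=\widetilde{\cW}_A/(e_\text{sink}\blossom A\blossom)$), and both identify $J$ with the ideal of morphisms factoring through projective-injective objects via the same string combinatorics on endpoints of walks. The one point to watch in your first step is that the simple modules at blossoming \emph{sources} must be excluded from $\widetilde{\cW}_A$ (their Auslander--Reiten translates are supported at other blossoming vertices), which your formulation via $\add(e\blossom A\blossom)$ correctly does.
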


\begin{proof}
First note that a string $A\blossom$-module~$M$ belongs to~$\widetilde{\cW}_A$ if and only if~$\tau M$ is not supported at blossoming vertices, if and only if both endpoints of the string of~$M$ are blossoming vertices.
It follows that~$\mathcal{W}_A=\widetilde{\cW}_A/(e_\text{sink}\blossom A\blossom)$, where~$e_\text{sink}\blossom$ is the sum of the idempotents corresponding to the sinks of~$Q\blossom$. 
The equivalence is then a restatement of \cref{prop::gentle-har}.
The statement on the ideal~$J$ immediately follows from string combinatorics noting that the string of
 \begin{itemize}
  \item a non-projective injective indecomposable starts and ends at sources,
  \item a projective-injective indecomposable starts at a source and ends at a sink,
  \item a non-injective projective indecomposable starts and ends at sinks.\qedhere
 \end{itemize}
\end{proof}

\begin{remark}
 The equality~$\mathcal{W}_A=\widetilde{\cW}_A/(e_\text{sink}\blossom A\blossom)$ provides a new proof that~$\cW_A$ is~$0$-Auslander, as it expresses it as a quotient of another~$0$-Auslander category by some projective-injective objects. 
\end{remark}

\bibliographystyle{plain}

\end{document}